\definecolor{blue4}{HTML}{00008E}
\definecolor{red2}{HTML}{CE0000}
\declaretheoremstyle[
  headfont=\color{blue4}\normalfont\bfseries,
  bodyfont=\color{blue4}\normalfont\itshape,
]{colored}
\declaretheorem[
  style=colored,
  name=Lemma,
]{lemma}
\newcommand{\trarxiv}[2]{#2}
\begin{document}
\pagenumbering{gobble}

\def\listinglabel#1{\llap{\tiny\ttfamily\the#1}\hskip\listingoffset\relax}

\newcommand{\trtitle}{Derivation of Symmetric PCA Learning Rules from
  a Novel Objective Function}
\newcommand{\tryear}{2019}

\trarxiv{%
  \titlehead{%
    \includegraphics[width=8cm]{Faculty-of-Technology_vektorisiert.eps}}
}{%
  \titlehead{\hspace*{1cm}}
}
\subject{\hspace*{1cm}}
\title{%
  \vspace*{-3cm}
  \trtitle%
}
\trarxiv{%
  \author{\textsf{Ralf Möller}}
}{%
  \author{%
    Ralf Möller\\
    Computer Engineering Group, Faculty of Technology\\
    Bielefeld University, Bielefeld, Germany\\
    \url{www.ti.uni-bielefeld.de}
  }
}
\trarxiv{%
  \date{\normalsize\textsf{\tryear\\[5mm]version of \today}}
}{%
  \date{\hspace*{1cm}}
}
\maketitle

%
%
%
%
%
%

\newcommand{\summe}[2]{\sum\limits_{#1}^{#2}}
\newcommand{\produkt}[2]{\prod\limits_{#1}^{#2}}
\newcommand{\sumin}{\summe{i=1}{n}}
\newcommand{\sumjn}{\summe{j=1}{n}}
\newcommand{\sumkn}{\summe{k=1}{n}}
\newcommand{\sumim}{\summe{i=1}{m}}
\newcommand{\sumjm}{\summe{j=1}{m}}
\newcommand{\prodin}{\produkt{i=1}{n}}
\newcommand{\prodim}{\produkt{i=1}{m}}
\newcommand{\prodjm}{\produkt{j=1}{m}}
\newcommand{\matA}{\mathbf{A}}
\newcommand{\matAnull}{\bar{\mathbf{A}}}
\newcommand{\matB}{\mathbf{B}}
\newcommand{\matBnull}{\bar{\mathbf{B}}}
\newcommand{\matC}{\mathbf{C}}
\newcommand{\matD}{\mathbf{D}}
\newcommand{\matDnull}{\bar{\mathbf{D}}}
\newcommand{\matE}{\mathbf{E}}
\newcommand{\matF}{\mathbf{F}}
\newcommand{\matG}{\mathbf{G}}
\newcommand{\matH}{\mathbf{H}}
\newcommand{\matI}{\mathbf{I}}
\newcommand{\matJ}{\mathbf{J}}
\newcommand{\matK}{\mathbf{K}}
\newcommand{\matL}{\mathbf{L}}
\newcommand{\matM}{\mathbf{M}}
\newcommand{\matMnull}{\bar{\mathbf{M}}}
\newcommand{\matN}{\mathbf{N}}
\newcommand{\matO}{\mathbf{O}}
\newcommand{\matP}{\mathbf{P}}
\newcommand{\matPhi}{\boldsymbol{\Phi}}
\newcommand{\matQ}{\mathbf{Q}}
\newcommand{\matR}{\mathbf{R}}
\newcommand{\matS}{\mathbf{S}}
\newcommand{\matSnull}{\bar{\mathbf{S}}}
\newcommand{\matT}{\mathbf{T}}
\newcommand{\matTheta}{\boldsymbol{\Theta}}
\newcommand{\matU}{\mathbf{U}}
\newcommand{\matUnull}{\bar{\mathbf{U}}}
\newcommand{\matV}{\mathbf{V}}
\newcommand{\matVnull}{\bar{\mathbf{V}}}
\newcommand{\matW}{\mathbf{W}}
\newcommand{\matWnull}{\bar{\mathbf{W}}}
\newcommand{\matX}{\mathbf{X}}
\newcommand{\matXnull}{\bar{\mathbf{X}}}
\newcommand{\matY}{\mathbf{Y}}
\newcommand{\matZ}{\mathbf{Z}}
\newcommand{\matWdot}{{\dot{\mathbf{W}}}}
\newcommand{\matWe}{{\mathbf{W}^e}}
\newcommand{\matBeta}{\mathbf{B}}
\newcommand{\matDelta}{{\mathbf\Delta}}
\newcommand{\matGamma}{{\mathbf\Gamma}}
\newcommand{\matLambda}{{\mathbf\Lambda}}
\newcommand{\matOmega}{{\mathbf\Omega}}
\newcommand{\matXi}{{\mathbf\Xi}}
\newcommand{\matLambdanull}{\bar{\mathbf{\Lambda}}}
\newcommand{\matNull}{\mathbf{0}}
\newcommand{\veca}{\mathbf{a}}
\newcommand{\vecalpha}{\boldsymbol{\alpha}}
\newcommand{\vecb}{\mathbf{b}}
\newcommand{\vecc}{\mathbf{c}}
\newcommand{\vecd}{\mathbf{d}}
\newcommand{\vece}{\mathbf{e}}
\newcommand{\veceta}{\boldsymbol{\eta}}
\newcommand{\vecf}{\mathbf{f}}
\newcommand{\vecg}{\mathbf{g}}
\newcommand{\vech}{\mathbf{h}}
\newcommand{\veci}{\mathbf{i}}
\newcommand{\vecj}{\mathbf{j}}
\newcommand{\veck}{\mathbf{k}}
\newcommand{\vecl}{\mathbf{l}}
\newcommand{\vecm}{\mathbf{m}}
\newcommand{\vecmnull}{\bar{\mathbf{m}}}
\newcommand{\vecmu}{\boldsymbol{\mu}}
\newcommand{\vecn}{\mathbf{n}}
\newcommand{\vecnu}{\boldsymbol{\nu}}
\newcommand{\veco}{\mathbf{o}}
\newcommand{\vecp}{\mathbf{p}}
\newcommand{\vecphi}{\boldsymbol{\varphi}}
\newcommand{\vecq}{\mathbf{q}}
\newcommand{\vecr}{\mathbf{r}}
\newcommand{\vecs}{\mathbf{s}}
\newcommand{\vect}{\mathbf{t}}
\newcommand{\vecu}{\mathbf{u}}
\newcommand{\vecunull}{\bar{\mathbf{u}}}
\newcommand{\vecv}{\mathbf{v}}
\newcommand{\vecvnull}{\bar{\mathbf{v}}}
\newcommand{\vecx}{\mathbf{x}}
\newcommand{\vecxdot}{\dot{\mathbf{x}}}
\newcommand{\vecxxi}{\boldsymbol{\xi}}
\newcommand{\vecy}{\mathbf{y}}
\newcommand{\vecz}{\mathbf{z}}
\newcommand{\veczeta}{\boldsymbol{\zeta}}
\newcommand{\vecw}{\mathbf{w}}
\newcommand{\vecwe}{{\mathbf{w}^e}}
\newcommand{\vecwei}{{\mathbf{w}^e_i}}
\newcommand{\vecwek}{{\mathbf{w}^e_k}}
\newcommand{\vecwdot}{\dot{\mathbf{w}}}
\newcommand{\vecwnull}{\bar{\mathbf{w}}}
\newcommand{\vecdwdt}{\frac{d\vecw}{dt}}
\newcommand{\vecnull}{\mathbf{0}}
\newcommand{\vecone}{\mathbf{1}}
\newcommand{\vecvdot}{\dot{\mathbf{v}}}
\newcommand{\vecomega}{\boldsymbol{\omega}}
\newcommand{\norm}[1]{\|#1\|}
\newcommand{\normF}[1]{\|#1\|_F}
\newcommand{\normm}{\norm{\vecm}}
\newcommand{\normw}{\norm{\vecw}}
\newcommand{\normx}{\norm{\vecx}}
\newcommand{\xp}[1]{\langle #1\rangle}
\newcommand{\xpvecm}{\xp{\vecm}}
\newcommand{\normxpm}{\norm{\xpvecm}}
\newcommand{\betanull}{\bar{\beta}}
\newcommand{\lambdanull}{\bar{\lambda}}
\newcommand{\Wnull}{\bar{W}}
\newcommand{\half}{\frac{1}{2}}
\newcommand{\third}{\frac{1}{3}}
\newcommand{\quarter}{\frac{1}{4}}
\newcommand{\order}[1]{{\cal O}(#1)}
\newcommand{\ddt}{\frac{d}{dt}}
\newcommand{\vecstk}[1]{\left(\begin{array}{c}#1\end{array}\right)}
\newcommand{\blkstk}[1]{\left(\begin{array}{c|c}#1\end{array}\right)}
\newcommand{\pmat}[1]{\begin{pmatrix}#1\end{pmatrix}}
%
\newcommand{\tr}{\operatorname{tr}\nolimits}
\newcommand{\dg}{\operatorname{dg}\nolimits}
\newcommand{\diag}{\operatorname{diag}\nolimits}
\newcommand{\Diag}[2]{\operatorname*{diag}\limits_{#1}^{#2}}
\newcommand{\cov}{\operatorname{cov}\nolimits}
\newcommand{\var}{\operatorname{var}\nolimits}
\newcommand{\sign}{\operatorname{sign}\nolimits}

\newcommand{\rank}{\operatorname{rank}}
\newcommand{\sgn}{\operatorname{sgn}}
\newcommand{\atantwo}{\operatorname{atan2}}
\newcommand{\asin}{\operatorname{asin}}
\newcommand{\acos}{\operatorname{acos}}
\newcommand{\atan}{\operatorname{atan}}
\newcommand{\adj}{\operatorname{adj}}

\newcommand{\ddf}[2]{\frac{\partial #1}{\partial #2}}
\newcommand{\df}[1]{\ddf{}{#1}}
\newcommand{\ddfs}[3]{\frac{\partial^2 #1}{\partial #2\partial #3}}
\newcommand{\ddfsq}[2]{\frac{\partial^2 #1}{\partial {#2}^2}}
\newcommand{\pihalf}{\frac{\pi}{2}}

\newcommand{\grad}{\nabla}
\newcommand{\gradv}{\boldsymbol{\nabla}}

\newcommand{\eps}{\varepsilon}



\trarxiv{%
}{ %
  \renewcommand{\showlabelsetlabel}[1]{}
}


\trarxiv{%
  \newcommand{\lemmaref}[1]{\textbf{\color{red2}Lemma~#1}}
  \newcommand{\lemmasep}{\vspace*{10mm}}
}{%
  \newcommand{\lemmaref}[1]{\textbf{Lemma~#1}}
  \newcommand{\lemmasep}[1]{}
}


\trarxiv{%
  \newcommand{\todo}[1]{\textbf{\color{orange}$\bullet$~TODO: #1}\\}
  \newcommand{\todocomment}[1]{\textbf{\color{orange}(TODO: #1)}}
}{%
  \newcommand{\todo}[1]{}
  \newcommand{\todocomment}[1]{}
}

\trarxiv{\newpage}{\vspace*{-1cm}}

\begin{abstract}
\trarxiv{\noindent\sloppy \textbf{Abstract}\\[0.5cm]}{}
\noindent Neural learning rules for principal component / subspace
analysis (PCA / PSA) can be derived by maximizing an objective
function (summed variance of the projection on the subspace axes)
under an orthonormality constraint. For a subspace with a single axis,
the optimization produces the principal eigenvector of the data
covariance matrix. Hierarchical learning rules with deflation
procedures can then be used to extract multiple eigenvectors. However,
for a subspace with multiple axes, the optimization leads to PSA
learning rules which only converge to axes spanning the principal
subspace but not to the principal eigenvectors. A modified objective
function with distinct weight factors had to be introduced produce PCA
learning rules. Optimization of the objective function for multiple
axes leads to symmetric learning rules which do not require deflation
procedures. For the PCA case, the estimated principal eigenvectors are
ordered (w.r.t. the corresponding eigenvalues) depending on the order
of the weight factors.

Here we introduce an alternative objective function where it is not
necessary to introduce fixed weight factors; instead, the alternative
objective function uses squared summands. Optimization leads to
symmetric PCA learning rules which converge to the principal
eigenvectors, but without imposing an order. In place of the diagonal
matrices with fixed weight factors, variable diagonal matrices appear
in the learning rules. We analyze this alternative approach by
determining the fixed points of the constrained optimization. The
behavior of the constrained objective function at the fixed points is
analyzed which confirms both the PCA behavior and the fact that no
order is imposed. Different ways to derive learning rules from the
optimization of the objective function are presented. The role of the
terms in the learning rules obtained from these derivations is
explored.

\trarxiv{%
  \vspace*{1cm}
  Please cite as: Ralf Möller. {\em \trtitle}. Technical Report, Computer
  Engineering, Faculty of Technology, Bielefeld University, \tryear,
  version of \today, \url{www.ti.uni-bielefeld.de}.
}{%
}
\end{abstract}


\newpage
\tableofcontents
\newpage
\pagenumbering{arabic}


\sloppypar
\allowdisplaybreaks

\newpage


\section{Introduction}

Neural network approaches to Principal Component Analysis (PCA) or
Principal Subspace Analysis (PSA) have received continuous attention
since the initial contributions by \cite{nn_Oja82}, \cite{nn_Oja89},
\cite{nn_Sanger89}, \cite{nn_Oja92a}, \cite{nn_Oja92},
\cite{nn_Oja92b}, and \cite{nn_Xu93}. A recent overview is given in
the comprehensive textbook by \cite{nn_Kong17}; an influential early
textbook was published by \cite{nn_Diamantaras96}.

With respect to network structure, there are two different
approaches. {\em Hierarchical networks} are chains of multiple
single-component principal component analyzers. Deflation
\cite[]{nn_Sanger89} is used to remove the projection onto the
estimated principal eigenvector (corresponding to the largest
eigenvalue) from the data, such that the next unit will estimate the
next principal eigenvector (with the second-largest eigenvalue), and
so on. In {\em symmetrical networks}, all units see the same input and
compete to represent the principal eigenvectors or principal subspace
axes. An order with respect to the corresponding eigenvalues is
imposed in some of these learning rules by the introduction of
distinct weight factors (but this not necessary as will be shown in
this work).

The operation of both hierarchical and symmetrical networks is
ultimately determined by the objective function from with they are
derived. Traditionally, the variance of the projection on the weight
vectors (subspace axes) is maximized; this is identical to the
minimization of the reconstruction error \cite[see
  e.g.][p.45]{nn_Diamantaras96}. If a {\em single} subspace axis is
determined, maximization of the projected variance leads to PCA
learning rules where the subspace axis converges to the principal
eigenvector (since a 1D subspace is confined to its axis). This forms
the basis of hierarchical PCA networks. However, if the {\em sum} of
the projected variances on {\em multiple} subspace axis is maximized
(resulting in symmetric rules), the subspace axes only converge
towards the principal subspace, i.e. they span the same subspace as
the principal eigenvectors, but do not necessarily coincide with them
\cite[]{nn_Oja89,nn_Xu93}. Weight factors with pairwise different
values had to be introduced into the objective function to break the
symmetry such that the network converges towards the principal
eigenvectors \cite[]{nn_Oja92,nn_Oja92b,nn_Xu93}. If symmetric rules
derived from weighed objective functions are written in matrix form,
these weight factors appear as fixed diagonal matrices. The order
of the weight factors in these matrices determines the order of the
eigenvectors estimated by the network, thus the networks are
ultimately not fully symmetric. Moreover, the chosen values and range
of the fixed weight factors may affect the convergence speed, thus it
may be necessary to use different sets for different data
distributions.

Our goal in this work was to produce {\em fully} symmetric learning
rules which nevertheless converge towards the principal
eigenvectors. We approach this at the root of the methods by
suggesting an alternative objective function. This objective function
resembles the sum of projected variances, but uses squared summands
instead. Interestingly, the learning rules derived from this novel
objective function also contain diagonal matrices in the same location
as the fixed weight factor matrices, but these matrices depend on the
covariance matrix of the data and on the estimated axes. We will show
that squaring the terms in the objective function leads to additional
fixed points which do not coincide with the principal
eigenvectors. However, as we will also demonstrate (at least
implicitly via an analysis of the behavior the objective function),
these fixed points are local minima or saddle points and will thus be
avoided by the learning rule. The network will therefore converge
towards the principal eigenvectors which are found at local maxima of
the alternative objective function.

After recapitulating the traditional objective function in section
\ref{sec_tradof} and introducing the alternative objective in section
\ref{sec_newof}, we look at four special cases in section
\ref{sec_special_cases}. Three of these relate to the traditional
objective function. Of these three, one special case is related to PSA
rules, the other two are weighted versions which show PCA
behavior. The fourth special case relates to the novel objective
function. In most parts of the work, we derive results for all four
special cases.

In section \ref{sec_fp_constrained_opt}, we derive fixed points for
the optimization under orthonormality constraints of the weight
matrices (containing the estimated subspace axes). We use the
Lagrange-multiplier method to express the constrained optimization
problem. The Lagrange multipliers are isolated from the
equations. However, this requires a non-equivalent transformation
which presumably affects the solution sets of the resulting
fixed-point equations. There are two different ways to re-insert and
thus eliminate the Lagrange multipliers, resulting in two groups of
fixed-point equations, one showing 'uninteresting' PSA behavior, the
other 'interesting' PCA behavior (except for the case derived from the
non-weighted traditional objective function).

After an analysis of the overall fixed points in section
\ref{sec_overall_fp} --- which can coarsely differentiate between PCA
and PSA rules --- we proceed by determining the fixed points of all
special cases in the two groups in section \ref{sec_fp}. An analysis
of the solution space shows that three learning rules from the first
group contain spurious solutions which results in PSA behavior. In
contrast, the corresponding three learning rules of the second group
exhibit no spurious solutions and show PCA behavior. An exception is
the case derived from the novel objective function. Since the
equations contain a variable diagonal matrix instead of fixed
weight-factor matrices, additional fixed points appear where the
weight vectors are not principal eigenvectors.

Section \ref{sec_behavior} analyzes the behavior of the constrained
objective functions at the critical points. Here we use techniques
from the treatment of Stiefel manifolds to determine which critical
points of the objective function are maxima, minima, or saddle
points. For one case derived from the weighted traditional objective
function we can confirm its well-known PCA behavior. For the case
derived from the novel objective function we find that only critical
points in the principal eigenvectors are maxima, while all other
critical points are either minima or saddle points. It can be expected
(but has not been established formally) that the corresponding
learning rules will therefore avoid the minima and saddle points and
converge towards the maxima and therefore towards principal
eigenvectors.

In section \ref{sec_derivation} we derive learning rules by three
approaches. In the {\em short form}, we just turn the fixed-point
equations into an ordinary differential equations. In the {\em long
  form}, we turn the Lagrange multipliers from fixed-point versions
into variables, insert them into the objective function, and determine
the gradient. The third approach derives learning rules from gradients
on Stiefel manifolds. The presumed role played by the different terms
appearing in all learning rules is explored. In this work, we only
analyze {\em averaged} learning rules which explicitly contain the
covariance matrix of the data $\matC = E\{\vecx \vecx^T\}$ where
$\vecx$ is a zero-mean data vector. {\em Online learning rules} can be
derived by informally approximating $\matC \approx \vecx \vecx^T$
\cite[see e.g.][]{own_Moeller04a} and using decaying learning rates,
but this has so far not been explored.


\section{Abbreviations}

\begin{description}
\item[PSA] Principal Subspace Analysis; refers to learning rules which
  converge to weight vectors spanning the principal subspace
\item[PCA] Principal Component Analysis; refers to learning rules
  where the weight vectors converge to the eigenvectors of the
  covariance matrix
\end{description}


\section{Notation}

Matrix and vector notation: expressions $(\matA)_{ij} = \matA_{ij} =
A_{ij}$ exchangeably denote element $(i,j)$ of matrix
$\matA$. Expression $(a_{ij})_{ij}$ denotes a matrix with elements
$a_{ij}$ at row $i$ and column $j$. Expression $a_j$ denotes element
$j$ of vector $\veca$. Expression $(\veca_j)_i$ denotes element $i$ of
vector $\veca_j$.

$\delta_{ij}$ is Kronecker's delta. $\matI$ is the identity matrix,
sometimes with dimension $n$ indicated as $\matI_n$. $\matNull$ is a
zero matrix, sometimes with dimensions $n, m$ indicated as
$\matNull_{n,m}$, or a zero vector, sometimes with dimension $n$
indicated as $\vecnull_n$. In cases where the dimensions of null
matrices should be obvious, we just write $\matNull$ for null matrices
of different sizes, even if they appear in the same equation.

$\matXi$ is a diagonal sign matrix where the diagonal elements are
$\xi_i = \pm 1$, sometimes with dimension $n$ indicated as
$\matXi_n$. Note that $\matXi^T \matXi = \matXi \matXi^T = \matI$
(thus $\matXi$ is orthogonal) and $\matXi^T \matD \matXi = \matD$ if
$\matD$ is diagonal.

$n$ denotes the dimension of the problem, $m$ the number of
eigenvector estimates ($1 \leq m \leq n$). $\matC$ is the $n \times n$
covariance matrix $\matC = E\{\vecx\vecx^T\}$ where $\vecx$ is the
zero-mean data vector ($E\{\vecx\} = \vecnull$). The $n \times m$
weight matrix $\matW$ contains the $m$ weight vectors $\vecw_j$ in its
columns. The $n \times n$ matrix $\matV$ contains the $n$ eigenvectors
of $\matC$ in its columns, denoted by $\vecv_i$. $\matV$ is
orthogonal: $\matV^T\matV = \matV\matV^T=\matI$. The $n \times n$
diagonal matrix $\matLambda$ contains the eigenvalues $\lambda_i$ on
the main diagonal. We assume the following order of the eigenvalues
\begin{equation}\label{eq_order_lambda}
\lambda_1 > \ldots > \lambda_n > 0.
\end{equation}
The spectral decomposition of $\matC$ is (see \lemmaref{\ref{lemma_spectral}}):
\begin{equation}\label{eq_C_spectral}
  \matC
  = \matV \matLambda \matV^T
  = \summe{i = 1}{n} \lambda_i \vecv_i \vecv_i^T.
\end{equation}

Some learning rules use pairwise different, strictly positive, fixed
coefficients $\theta_j$ in their objective functions ($\theta_i \neq
\theta_j$ for $i \neq j$), combined in a diagonal $m \times m$ matrix
$\matTheta$. Other learning rules use fixed diagonal matrices with
pairwise different, strictly positive entries $\varOmega_j$ in their
weight vector constraints ($\varOmega_i \neq \varOmega_j$ for $i \neq
j$), combined in a diagonal $m \times m$ matrix $\matOmega$. Without
loss of generality, we assume that the coefficients are sorted:
\begin{eqnarray}
\label{eq_order_theta}
&&\theta_1 > \ldots > \theta_n > 0\\
\label{eq_order_omega}
&&\varOmega_1 > \ldots > \varOmega_n > 0.
\end{eqnarray}

In some derivations, the weight vectors are projected into the space
of the eigenvectors:
\begin{eqnarray}
  \veca_j &=& \matV^T \vecw_j\\
  \matA &=& \matV^T \matW \label{eq_AVW}\\
  \vecw_j &=& \matV \veca_j = \summe{i=1}{n} (\veca_j)_i \vecv_i\\
  \matW &=& \matV \matA. \label{eq_WVA}
\end{eqnarray}
If $\vecw_i^T \vecw_j = \delta_{ij} \varOmega_i$, we see that $\veca_i^T
\veca_j = \vecw_i^T \matV \matV^T \vecw_j = \vecw_i^T \vecw_j =
\delta_{ij} \varOmega_i$, thus
\begin{eqnarray}
  \veca_i^T \veca_j &=& \delta_{ij} \varOmega_i\\
  \matA^T \matA &=& \matOmega.
  \label{eq_ATA}
\end{eqnarray}
In this case we have $\|\vecw_j\| = \|\veca_j\| = \varOmega_j^\half$.

Note that in the transformation of several equations we multiply by
orthogonal matrices from the left or from the right which is an
equivalent transformation (and thus invertible): If a matrix
expression $\matF$ (size $n \times n$) is left-multiplied by an
orthogonal matrix $\matV$ (size $n \times n$, $\matV^T\matV =
\matV\matV^T = \matI$), we get $\matV \matF$. Again left-multiplying
by $\matV^{-1} = \matV^T$ gives $\matV^T \matV \matF = \matI \matF =
\matF$, thus the transformation can be inverted. The same holds for
left-multiplication with $\matV^T$ since also $\matV \matV^T \matF =
\matI \matF = \matF$. The argument can also be applied to
multiplication from the right.

However, multiplying by a semi-orthogonal $n \times m$ matrix $\matA$
with $m < n$ (which is defined by $\matA^T \matA = \matI$, but
generally not $\matA \matA^T = \matI$) is a non-equivalent
transformation (which may introduce spurious solutions): If $\matF$ is
a matrix expression of size $n \times m$, left-multiplication by
$\matA^T$ gives $\matA^T \matF$. This transformation can't be inverted
since $\matA^{-1}$ does not exist. Trying a left-multiplication with
$\matA$ as for an orthogonal matrix doesn't help since it would give
$\matA \matA^T \matF$ which is generally not identical to $\matF$
since $\matA \matA^T = \matI$ does not generally hold. In this work,
we are forced to left-multiply by a semi-orthogonal matrix to isolate
Lagrange multipliers in section \ref{sec_first_variant}. This seems to
introduce spurious solutions into the set of fixed points (section
\ref{sec_fp}).

The matrix $\diag_{i=1}^n\{x_i\}$ is a diagonal matrix of dimension
$n$ with diagonal elements $x_i$. The diagonalization operator
$\dg\{\matX\}$ applied to a square matrix $\matX$ of dimension $n$
produces a diagonal matrix of dimension $n$ which has the same
diagonal elements as $\matX$. We have
\begin{equation}
  \Diag{i=1}{n} \{\matX_{ii}\}
  =
  \Diag{i=1}{n} \{\vece_i^T\matX\vece_i\}
  =
  \dg \{\matX\}
\end{equation}
and
\begin{equation}
  (\dg \{\matX\})_{ij}
  =
  \matX_{ij} \delta_{ij}
  =
  \matX_{ii} \delta_{ij}.
\end{equation}
%


\section{Objective Functions for PCA and PSA}

\subsection{Traditional Objective Function}\label{sec_tradof}

Learning rules for principal component analysis, PCA, or principal
subspace analysis, PSA, are often derived by a constrained
minimization of the (weighted) mean-square reconstruction error or,
equivalently, by the constrained maximization of the (weighted)
variance of the projection
\begin{equation}\label{eq_objfct}
  J(\matW)
  = \half \summe{j=1}{m} \theta_j E\{(\vecw_j^T \vecx)^2\}
  = \half \summe{j=1}{m} \theta_j \vecw_j^T \matC \vecw_j
  = \half \tr\{\matW^T \matC \matW \matTheta\}
\end{equation}
(note that \lemmaref{\ref{lemma_tr_AD}} was used in the transition
$\sum_j \vecw_j^T \matC \vecw_j \theta_j = \sum_j (\matW^T \matC
\matW)_{jj} \theta_j = \tr\{\matW^T \matC \matW \matTheta\}$) under
the constraint
\begin{equation}\label{eq_objfct_constraint}
  \vecw_i^T \vecw_j = \varOmega_i \delta_{ij}
  \;\;\mbox{or}\;\;
  \matW^T\matW = \matOmega
\end{equation}
\cite[see e.g.][chapter 3]{nn_Diamantaras96}. In the cases we
consider, we have $\matTheta = \matI_m$ or $\matOmega = \matI_m$
(where ``or'' is non-exclusive).

Learning rules like Oja's Subspace Rule \cite[]{nn_Oja89} (PSA), Oja's
Weighted Algorithm \cite[]{nn_Oja92,nn_Oja92b}, Xu's LMSER Rule
\cite[]{nn_Xu93} (PSA) and its weighted version\footnote{To be
  precise: we managed to derive rule (15a) and a rule {\em similar} to
  rule (15b) described by \cite{nn_Xu93}.}  (PCA), can be derived from
a Lagrange-multiplier framework applied to this objective function
(see section \ref{sec_derivation}). Note that for $m=1$, all rules are
PCA rules.

In the analysis and derivation of these rules, we need the derivative
of the objective function,
\begin{eqnarray}
  \ddf{J}{\vecw_l}
  &=&
  \half \summe{j=1}{m} \df{\vecw_l} (\theta_j \vecw_j^T \matC \vecw_j)\\
  &=&
  \summe{j=1}{m} \theta_j \matC \vecw_j \delta_{jl}\\
  &=&
  \theta_l \matC \vecw_l.
\end{eqnarray}
In the derivations below, we use the abbreviations
\begin{eqnarray}
  \label{eq_tradof_vecm}
  \vecm_l
  &=& \ddf{J}{\vecw_l}
  = \theta_l \matC \vecw_l\\
  \label{eq_tradof_vecmnull}
  \vecmnull_l
  &=& \left.\ddf{J}{\vecw_l}\right|_{\matWnull}
  =  \theta_l \matC \vecwnull_l.\\
  \label{eq_tradof_matM}
  \matM
  &=& \ddf{J}{\matW}
  = \matC \matW \matTheta\\
  \label{eq_tradof_matMnull}
  \matMnull
  &=& \left.\ddf{J}{\matW}\right|_{\matWnull}
  = \matC \matWnull \matTheta.
\end{eqnarray}
Furthermore, we also need the second derivative
\begin{equation}
  \ddf{\vecm_j}{\vecw_l}
  =
  \underbrace{\theta_j \matC}_{\matH_j} \delta_{jl}.
\end{equation}

\subsection{Novel Objective Function}\label{sec_newof}

Learning rules derived from the traditional objective function
(\ref{eq_objfct}) are PSA (not PCA) rules unless diagonal matrices
with pairwise different elements are introduced either into their
objective function ($\matTheta$) or into their weight vector
constraints ($\matOmega$); the only exception is the case $m=1$ which
leads to PCA rules. The novel objective function introduced below can
produce true PCA rules without the need of those diagonal
matrices. Actually, as we will see below, diagonal matrices appear
``naturally'' in the learning rules derived from this novel objective
function.

The novel objective function is defined as
\begin{equation}\label{eq_objfct_new}
  J(\matW)
  = \quarter \summe{j=1}{m} (\vecw_j^T \matC \vecw_j)^2
\end{equation}
under the constraint
\begin{equation}\label{eq_objfct_new_constraint}
  \vecw_i^T \vecw_j = \delta_{ij}
  \;\;\mbox{or}\;\;
  \matW^T\matW = \matOmega = \matI_m.
\end{equation}
With respect to the traditional objective function (\ref{eq_objfct}),
we can describe the modification by replacing $\theta_j \coloneqq
\vecw_j^T \matC \vecw_j$.

In the analysis and derivation of learning rules, we need the
derivative of the objective function,
\begin{eqnarray}
  \ddf{J}{\vecw_l}
  &=&
  \half \summe{j=1}{m}
  \left[
    (\vecw_j^T \matC \vecw_j)
    \df{\vecw_l} (\vecw_j^T \matC \vecw_j)
    \right]\\
  &=&
  \summe{j=1}{m}
  \left[
    (\vecw_j^T \matC \vecw_j)
    (\matC \vecw_j \delta_{jl})
    \right]\\
  &=&
  (\vecw_l^T \matC \vecw_l) (\matC \vecw_l).
\end{eqnarray}
We use the following abbreviations:
\begin{eqnarray}
  \label{eq_newof_m}
  \vecm_l
  &=& \ddf{J}{\vecw_l}
  = (\vecw_l^T \matC \vecw_l) \matC \vecw_l\\
  \label{eq_newof_mnull}
  \vecmnull_l
  &=& \left.\ddf{J}{\vecw_l}\right|_{\matWnull}
  = (\vecwnull_l^T \matC \vecwnull_l) \matC \vecwnull_l\\
  \label{eq_newof_matM}
  \matM
  &=& \ddf{J}{\matW}
  = \matC \matW \Diag{j=1}{m}\{\vecw_j^T \matC \vecw_j\}\\
  \label{eq_newof_matMnull}
  \matMnull
  &=& \left.\ddf{J}{\matW}\right|_{\matWnull}
  = \matC \matWnull \Diag{j=1}{m}\{\vecwnull_j^T \matC \vecwnull_j\}.
\end{eqnarray}
We also need the second derivative
\begin{eqnarray}
  \ddf{\vecm_j}{\vecw_l}
  &=&
  \underbrace{[\matC\vecw_j\vecw_j^T\matC
      + (\vecw_j^T\matC\vecw_j)\matC]}_{\matH_j} \delta_{jl}
\end{eqnarray}
which was obtained by applying \lemmaref{\ref{lemma_scalar_vector_deriv}}.

\subsection{Special Cases}\label{sec_special_cases}

In the sections below, we analyze different special cases of $\matM$
from equations (\ref{eq_tradof_matM}) and (\ref{eq_newof_matM}) and of
the diagonal matrices with pairwise different entries $\matTheta$
(influencing the objective function) and $\matOmega$ (influencing the
constraint). In our nomenclature, the specifier indicates the
objective function by ``T'' for traditional (\ref{eq_objfct}) and by
``N'' for novel (\ref{eq_objfct_new}), a weighted objective function
by ``wJ'', and a weighted constraint by ``wC'':
\begin{eqnarray}
  \mbox{T:} &&
  \matTheta = \matI_m,
  \matW^T\matW = \matOmega = \matI_m,
  \matM = \matC \matW\\
  \mbox{TwJ:} &&
  \matTheta = \Diag{j=1}{m}\{\theta_j\},
  \matW^T \matW = \matOmega = \matI_m,
  \matM = \matC \matW \matTheta\\
  \mbox{TwC:} &&
  \matTheta = \matI_m,
  \matW^T \matW = \matOmega = \Diag{j=1}{m}\{\varOmega_j\},
  \matM = \matC \matW\\
  \mbox{N:} &&
  \matW^T\matW = \matOmega = \matI_m,
  \matM = \matC \matW \Diag{j=1}{m}\{\vecw_j^T \matC \vecw_j\}
\end{eqnarray}
%


\section{Fixed-Point Equations from Constrained Optimization}
\label{sec_fp_constrained_opt}

Two different objective functions $J(\matW)$ were introduced in
section \ref{sec_tradof} and \ref{sec_newof} together with their
constraints. Constrained optimization by the Lagrange-multiplier
method starts by defining the modified objective function $J^*(\matW)
= J(\matW) + C(\matB,\matW)$ where $C$ is the constraint term which
includes the Lagrange multipliers $\matB$. The choice of the
constraint term directly influences which type of learning rule is
derived. \citet[their eqn.~(8), modified here] {nn_Chatterjee00} use a
constraint term with a triangular matrix of Lagrange multipliers to
ensure orthonormality of the weight vectors
\begin{equation}\label{eq_constraint_nonsymm}
  C(\matBeta,\matW) =
  \half \summe{j=1}{m} \summe{k=1}{i}
  \beta_{jk} (\vecw_j^T \vecw_k - \delta_{ij}).
\end{equation}
This leads to a non-symmetrical learning rule: In a chain of neurons,
each neuron chooses a weight vector which is orthogonal to the weight
vectors of all previous neurons in the chain. Even for special case T
(see section \ref{sec_special_cases}), this would result in a PCA
rule. Our goal is to derive symmetric learning rules where all neurons
behave in the same way. One possible constraint would be
\begin{equation}\label{eq_constraint_symm_beta}
  C(\matBeta,\matW) =
  \half \summe{j=1}{m} \summe{k=1}{m}
  \beta_{jk} (\vecw_j^T \vecw_k - \varOmega_j \delta_{ij})
\end{equation}
(note the different upper index in the second sum; we also introduced
$\varOmega_j$). This constraint term is sufficient to derive the fixed
points and the ``short form'' of the learning rules (see section
\ref{sec_derivation_short}), but leads to problems with the derivation
of the the ``long form'' (see section \ref{sec_derivation_long}) since
there we need to insert a term for the Lagrange multipliers into the
modified objective function. Moreover, one would expect a symmetry
constraint on $\matBeta$ since the second factor is identical if $j$
and $k$ are exchanged. We therefore introduce the following constraint
term
\begin{equation}\label{eq_constraint_symm_betastar}
  C(\matBeta,\matW) =
  \half \summe{j=1}{m} \summe{k=1}{m}
  \half (\beta_{jk} + \beta_{kj}) (\vecw_j^T \vecw_k - \varOmega_j \delta_{jk})
\end{equation}
where the symmetry is now ensured by the Lagrange multipliers $\half
(\beta_{jk} + \beta_{kj})$. We also managed to derive the ``long
form'' of the learning rules from this constraint term.

Therefore our modified objective function\footnote{I'm grateful to
  Axel Könies for his useful advice on an earlier version of this
  section.} becomes
\begin{equation}
  J^*
  =
  J
  + \half \summe{j=1}{m} \summe{k=1}{m}
  \half (\beta_{jk} + \beta_{kj}) (\vecw_j^T \vecw_k - \varOmega_j \delta_{jk})
  \label{eq_Jstar_gen}
\end{equation}
where $\beta^*_{jk} = \half (\beta_{jk} + \beta_{kj})$ are the
Lagrange multipliers. Note that we do {\em not} assume symmetry
($\beta_{jk} = \beta_{kj}$) of the matrix $\matB$. We obtain the
derivatives with respect to the weight vectors $\vecw_l$ ($l =
1,\ldots,m$)
\begin{eqnarray}
  \ddf{J^*}{\vecw_l}
  &=&
  \ddf{J}{\vecw_l}
  + \half \df{\vecw_l} \left[\summe{j=1}{m} \summe{k=1}{m}
    \half (\beta_{jk} + \beta_{kj})
    (\vecw_j^T \vecw_k - \varOmega_j \delta_{jk})\right]\\
  &=&
  \ddf{J}{\vecw_l}
  + \half \summe{j=1}{m} \summe{k=1}{m}
  \half (\beta_{jk} + \beta_{kj})
  (\delta_{jl} \vecw_k + \delta_{kl} \vecw_j)\\
  &=&
  \ddf{J}{\vecw_l}
  + \half\left(
  \summe{k=1}{m} \half (\beta_{lk} + \beta_{kl}) \vecw_k +
  \summe{j=1}{m} \half (\beta_{jl} + \beta_{lj}) \vecw_j
  \right)\\
  &=&
  \ddf{J}{\vecw_l}
  +
  \summe{j=1}{m} \half (\beta_{jl} + \beta_{lj}) \vecw_j
\end{eqnarray}
which can be written as
\begin{equation}
  \ddf{J^*}{\vecw_l} = \vecm_l + \half
  \summe{j=1}{m} (\beta_{jl} + \beta_{lj}) \vecw_j.
\end{equation}
Note that we would arrive at the same equation by using constraint
term (\ref{eq_constraint_symm_beta}). In matrix form, this equation
becomes
\begin{equation}
  \ddf{J^*}{\matW} = \matM + \half \matW (\matB + \matB^T).
\end{equation}
In the fixed point we have (using $\vecmnull_l$ and $\matMnull$ from
section \ref{sec_tradof} and \ref{sec_newof})
\begin{eqnarray}
  \label{eq_fp_pca_vec}
  \vecmnull_l + \half \summe{j=1}{m} (\beta_{jl} + \beta_{lj}) \vecwnull_j
  &=&
  \vecnull\\
  \label{eq_fp_pca_vec_constraint}
  \vecwnull_i^T \vecwnull_j^T &=& \varOmega_j \delta_{ij}
\end{eqnarray}
or, in matrix form
\begin{eqnarray}
  \label{eq_fp_pca_mat}
  \matMnull + \half \matWnull (\matBnull + \matBnull^T) &=& \matNull\\
  \label{eq_fp_pca_mat_constraint}
  \matWnull^T \matWnull &=& \matOmega.
\end{eqnarray}
Interestingly, at this point we have two ways to proceed which
ultimately lead to different sets of fixed points and different
learning rules. In the first variant, we do {\em not} exploit the
symmetry of $\matB + \matB^T$. We will show in section \ref{sec_fp}
that the fixed points are weight vectors which span the same subspace
as $m$ of the eigenvectors (leading to PSA rules). In the second
variant, we exploit the symmetry and show in section \ref{sec_fp} that
the fixed points coincide with $m$ of the eigenvectors (leading to PCA
rules).

\subsection{First variant}\label{sec_first_variant}

The following analysis resorts to a non-equivalent transformation
which may introduce spurious fixed points in addition to the correct
fixed points. This step is required to isolate (and ultimately
eliminate) the Lagrange multipliers $\matBnull$. We use the same
transformation as \cite{nn_Chatterjee00}: We left-multiply
(\ref{eq_fp_pca_mat}) by $\matWnull^T$, apply the constraint
(\ref{eq_fp_pca_mat_constraint}), and obtain
\begin{eqnarray}
  \matNull &=& \matWnull^T \matMnull
  + \half \matWnull^T \matWnull (\matBnull + \matBnull^T)\\
  \label{eq_multiplier_first}
  \matNull &=& \matWnull^T \matMnull
  + \half \matOmega (\matBnull + \matBnull^T)\\
  \label{eq_half_B_BT_first}
  \half (\matBnull + \matBnull^T) &=& -\matOmega^{-1} \matWnull^T \matMnull.
\end{eqnarray}
If we insert this into (\ref{eq_fp_pca_mat}), we get the
fixed-point equation
\begin{equation}\label{eq_fp_pca_first_solution}
  \matMnull - \matWnull \matOmega^{-1} \matWnull^T \matMnull = \matNull.
\end{equation}
We already see at this point that we can factor out $\matMnull$,
leading to $(\matI_n - \matWnull \matOmega^{-1} \matWnull^T) \matMnull
= \matNull$. If we, for example, insert $\matMnull = \matC \matWnull
\matTheta$ for special case TwJ (section \ref{sec_special_cases}), we
can eliminate $\matTheta$ from the fixed-point equation. However,
$\matTheta$ was introduced by \cite{nn_Xu93} to break the symmetry of
subspace rules and thus produce PCA rules. So we expect that fixed
points derived from (\ref{eq_fp_pca_first_solution}) are weight
vectors spanning the same subspace as $m$ of the eigenvectors.

From the generic fixed-point equation (\ref{eq_fp_pca_first_solution}),
we derive specific equations for different special cases from section
\ref{sec_special_cases} (adding ``1'' to the specifier to indicate the
first variant):
\begin{eqnarray}
  \mbox{T1: } &&
  \matC \matWnull - \matWnull \matWnull^T \matC \matWnull
  = \matNull\\
  \mbox{TwJ1: } &&
  \matC \matWnull \matTheta - \matWnull \matWnull^T \matC \matWnull \matTheta
  = \matNull\\
  \mbox{TwC1: } &&
  \matC \matWnull - \matWnull \matOmega^{-1} \matWnull^T \matC \matWnull
  = \matNull\\
  \mbox{N1: } &&
  \matC \matWnull \matDnull - \matWnull \matWnull^T \matC \matWnull \matDnull
  = \matNull
  \;\;\mbox{with}\;\;
  \matDnull = \Diag{j=1}{m}\{\vecwnull_j^T \matC \vecwnull_j\}.
\end{eqnarray}

\subsection{Second variant}\label{sec_second_variant}

In the second variant we exploit the obvious symmetry of $\matB +
\matB^T$ which turns (\ref{eq_half_B_BT_first}) into
\begin{equation}\label{eq_half_B_BT_second}
  \half (\matBnull + \matBnull^T) = -\matMnull^T \matWnull \matOmega^{-1}.
\end{equation}
If we insert this into (\ref{eq_fp_pca_mat}), we get the fixed-point
equation
\begin{equation}\label{eq_fp_pca_second_solution}
  \matMnull - \matWnull \matMnull^T \matWnull \matOmega^{-1} = \matNull.
\end{equation}
This equation seems to be more ``interesting'' than
(\ref{eq_fp_pca_first_solution}), since $\matMnull$ appears transposed
and surrounded by other terms. Note that also the second variant is
based on the non-equivalent transformation.

This leads to the following special cases from section
\ref{sec_special_cases} (adding ``2'' to the specifier to indicate the
second variant):
\begin{eqnarray}
  \mbox{T2: } &&
  \matC \matWnull - \matWnull \matWnull^T \matC \matWnull
  = \matNull\\
  \mbox{TwJ2: } &&
  \matC \matWnull \matTheta - \matWnull \matTheta \matWnull^T \matC \matWnull
  = \matNull\\
  \mbox{TwC2: } &&
  \matC \matWnull - \matWnull \matWnull^T \matC \matWnull \matOmega^{-1}
  = \matNull\\
  \mbox{N2: } &&
  \matC \matWnull \matDnull - \matWnull \matDnull \matWnull^T \matC \matWnull
  = \matNull
  \;\;\mbox{with}\;\;
  \matDnull = \Diag{j=1}{m}\{\vecwnull_j^T \matC \vecwnull_j\}.
\end{eqnarray}
We see that the equations of case T1 and T2 coincide. In the
following, we refer to this equation as T.

Note that the equations used to determine the Lagrange multipliers
(\ref{eq_half_B_BT_first}, \ref{eq_half_B_BT_second}) can now be
re-inserted into the modified objective function (\ref{eq_Jstar_gen})
which allows us to derive the ``long form'' of the learning rules (see
section \ref{sec_derivation_long}).

\subsection{Discussion}\label{sec_variants_discussion}

Currently we cannot explain why we obtain two different sets of
fixed-point equations, i.e. the first and second variant above, and
why the first variant leads to ``uninteresting'' solutions (which is
confirmed below in section \ref{sec_fp}). It is also unclear whether
this results from the non-equivalent transformation or from some other
property. It seems that the symmetry property of the Lagrange
multipliers has to be explicitly utilized --- which is the case for
the second variant but not the first --- to arrive at ``interesting''
solutions.


\section{Overall Fixed-Point Analysis}\label{sec_overall_fp}

The following overall fixed-point analysis is taken in modified form
from \citet[p.374]{nn_Oja92}. It reveals whether the fixed points are
true eigenvectors of the covariance matrix. Starting point is equation
(\ref{eq_multiplier_first}) where we replace $\matBnull^* \coloneqq
\half (\matBnull + \matBnull^T)$:
\begin{equation}\label{eq_overall_first}
  \matWnull^T \matMnull = -\matOmega \matBnull^*
\end{equation}
Note that this analysis is affected by the non-equivalent
transformation (see sections \ref{sec_first_variant} and
\ref{sec_second_variant}). It is presently unclear how this affects
the statements derived below.

Please also note that the overall fixed-point analysis explicitly
makes use of constraint (\ref{eq_fp_pca_mat_constraint}). However, the
learning rules exhibit additional fixed points which violate this
constraint (e.g. $\matWnull = \matNull$). The implications of this
difference are presently unclear.

\subsection{Fixed-Point Analysis of Special Case T}

For special case T1, we have $\matWnull^T \matWnull = \matOmega =
\matI_m$ and $\matMnull = \matC \matWnull$ which turns
(\ref{eq_overall_first}) into
\begin{equation}
  \matWnull^T \matC \matWnull = -\matBnull^*.
\end{equation}
Transposing this equation leads to the same left-hand side and
$-\matBnull^{*T}$ on the right-hand side. Except of $\matBnull^* =
\matBnull^{*T}$ (which is obvious), we cannot derive further
statements on $\matBnull^*$, particularly not that it is diagonal (as
in the other cases which lead to PCA rules).

\subsection{Fixed-Point Analysis of Special Case TwJ}

For special case TwJ, we have
\begin{equation}
  \matWnull^T \matWnull = \matOmega = \matI_m,\quad
  \matMnull = \matC \matWnull \matTheta
\end{equation}
which turns (\ref{eq_overall_first}) into
\begin{equation}
  \matWnull^T  \matC \matWnull = -\matBnull^* \matTheta^{-1}.
\end{equation}
Transposing this equation leads to the same left-hand side, so we can
conclude for the right-hand sides that $\matBnull^* \matTheta^{-1} =
\matTheta^{-1} \matBnull^{*T}$. Since $\matBnull^* = \matBnull^{*T}$,
we get $\matBnull^* \matTheta^{-1} = \matTheta^{-1}
\matBnull^*$. According to \lemmaref{\ref{lemma_commute_diag}},
$\matBnull^*$ is a diagonal matrix since $\matTheta^{-1}$ is diagonal
and has pairwise different entries.

For special case TwJ, equation (\ref{eq_fp_pca_mat}) becomes
\begin{equation}
\matC \matWnull = -\matWnull \matBnull^* \matTheta^{-1}.
\end{equation}
Since we now know that $\matBnull^* \matTheta^{-1}$ is diagonal, this
equation has solutions in the eigenvectors and eigenvalues of $\matC$:
The matrix $\matWnull$ contains $m$ distinct eigenvectors from
$\matV$, and $-\matBnull^* \matTheta^{-1}$ contains the corresponding
eigenvalues from $\matLambda$ on the diagonal. Therefore special case
TwJ should be related to true PCA learning rules.

\subsection{Fixed-Point Analysis of Special Case TwC}

For special case TwC, we have
\begin{equation}
  \matWnull^T \matWnull = \matOmega = \Diag{j=1}{m}\{\varOmega_j\},\quad
  \matMnull = \matC \matWnull
\end{equation}
which turns (\ref{eq_overall_first}) into
\begin{equation}
  \matWnull^T \matC \matWnull = -\matOmega \matBnull^*.
\end{equation}
As for case TwJ, we can conclude that also special case TwC should be
related to true PCA learning rules.

\subsection{Fixed-Point Analysis of Special Case N}

For special case N, we have
\begin{equation}
  \matWnull^T \matWnull = \matOmega = \matI_m,\quad
  \matMnull = \matC \matWnull \matDnull,\quad
  \matDnull = \Diag{j=1}{m}\{\vecwnull_j^T \matC \vecwnull_j\}
\end{equation}
which turns (\ref{eq_overall_first}) into
\begin{equation}
  \matWnull^T \matC \matWnull \matDnull = -\matBnull^*.
\end{equation}
We can invert $\matDnull$ since according to the Rayleigh-Ritz Theorem
\cite[][sec.~4.2.2]{nn_Horn99}, assumption (\ref{eq_order_lambda}),
and the weight length constraint above, all diagonal entries must be
strictly positive. We get
\begin{equation}
  \matWnull^T \matC \matWnull = -\matBnull^* \matDnull^{-1}.
\end{equation}
In a similar way as for special case TwJ, transposition leads to
$\matBnull^* \matDnull^{-1} = \matDnull^{-1} \matBnull^*$. If
$\matDnull$ contains pairwise different entries, we can draw the same
conclusion as for special case TwJ: We expect true PCA learning
rules. However, $\matDnull$ does not necessarily contain pairwise
different entries, and in this case we cannot conclude that
$\matBnull^*$ is diagonal. The treatment of this case in \ref{sec_N2}
shows that additional fixed points actually exist, and only the
analysis of the behavior of the constrained objective function in
section \ref{sec_N} reveals that they are not maxima. So probably the
overall fixed-point analysis in this section cannot produce further
insights for special case N.


\section{Fixed Points of Objective Functions}\label{sec_fp}

\subsection{Introduction}

In this part, we determine the fixed points of special cases T, TwJ1,
TwC1, N1, TwJ2, TwC2, and N2. We analyze whether the fixed points
could contain spurious solutions which are possibly introduced by the
non-equivalent transformation described in section
\ref{sec_first_variant}.\footnote{For this analysis we have to
  distinguish between the exploration of the space of solutions and
  the stability of the fixed points. While we may see a maximum in the
  space of solutions, the same point may be a saddle or a minimum when
  we leave the space of solutions and consider the entire Stiefel
  manifold.}

\subsection{Special Case T}

\subsubsection{Solution of Special Case T}

The fixed-point equation of special case T
\begin{equation}\label{eq_T}
\matC \matWnull - \matWnull \matWnull^T \matC \matWnull = \matNull
\end{equation}
can be transformed into the space of the eigenvectors by applying
(\ref{eq_WVA}) and expressing $\matC$ by its eigenvalues and
eigenvectors using (\ref{eq_C_spectral}):
\begin{eqnarray}
  \matNull
  &=&
  \matC \matWnull
  - \matWnull \matWnull^T \matC \matWnull\\
  \matNull
  &=&
  \matC \matV \matAnull
  - \matV \matAnull \matAnull^T \matV^T \matC \matV \matAnull\\
  \matNull
  &=&
  \matV\matLambda \matAnull
  - \matV \matAnull \matAnull^T \matLambda \matAnull\\
  \matNull
  &=&
  \matLambda \matAnull
  - \matAnull \matAnull^T \matLambda \matAnull.
  \label{eq_FP_A}
\end{eqnarray}
The $n \times m$ matrix $\matAnull$ can be expressed by a singular
value decomposition (see e.g. \citet[sec.~2.5.2]{nn_Golub96},
\citet[sec.~3.3]{nn_Diamantaras96})
\begin{equation}\label{eq_svd}
\matAnull = \matQ \pmat{\matDelta_m\\\matNull_{n-m,m}}\matR_m^T
\end{equation}
where $\matQ$ is an orthogonal $n \times n$ matrix, $\matDelta_m$ is a
diagonal matrix with elements $\Delta_1 \geq \ldots \geq \Delta_\mu >
0$ and $\Delta_{\mu+1} = \ldots = \Delta_m = 0$, and $\matR_m$ is an
orthogonal $m \times m$ matrix.

Before we proceed, we look at a special case of (\ref{eq_svd}). For
$\matDelta_m = \matI_m$, the matrix $\matAnull$ (and thus also
$\matWnull$) is semi-orthogonal:
\begin{eqnarray}
  \matAnull^T \matAnull
  &=&
  \matR_m \pmat{\matI_m & \matNull_{m,n-m}}
  \underbrace{\matQ^T \matQ}_{\matI_m}
  \pmat{\matI_m\\\matNull_{n-m,m}}\matR_m^T\\
  &=&
  \matR_m
  \underbrace{\pmat{\matI_m & \matNull_{m,n-m}} \pmat{\matI_m\\\matNull_{n-m,m}}}_{\matI_m}
  \matR_m^T\\
  &=&
  \matR_m^T \matR_m = \matI_m.
\end{eqnarray}
In this case, $\matAnull$ can be expressed by 
\begin{equation}
  \label{eq_semiortho}
  \matAnull = \matQ' \pmat{\matI_m\\\matNull_{n-m,m}}
\end{equation}
where $\matQ'$ is an orthogonal $n \times n$ matrix. This is obvious,
but can also be derived as follows:
\begin{eqnarray}
  \matAnull
  &=& \matQ \pmat{\matDelta_m\\\matNull_{n-m,m}}\matR_m^T
  = \matQ \pmat{\matI_m\\\matNull_{n-m,m}} \matR_m^T\\
  &=& \pmat{\matQ_L & \matQ_R} \pmat{\matI_m\\\matNull_{n-m,m}} \matR_m^T
  = \matQ_L \matR_m^T\\
  &=& \matQ'_L
  = \pmat{\matQ'_L & \matQ'_R} \pmat{\matI_m\\\matNull_{n-m,m}}
  = \matQ' \pmat{\matI_m\\\matNull_{n-m,m}}.
\end{eqnarray}
Note that $\matQ_L$, $\matQ_R$, $\matQ'_L$, and $\matQ'_R$ are
semi-orthogonal. For the special cases other than T, we will use
(\ref{eq_semiortho}) for a simplified analysis where we assume that
the fixed points are semi-orthogonal matrices lying on the constraint
$\matWnull^T\matW = \matI$ (with the exception of
$\matWnull^T\matWnull = \matOmega$ for special case TwC). Note,
however, that all fixed-point equations have at least the solution
$\matWnull = \matNull$ which is not semi-orthogonal. So far we didn't
succeed in finding other solutions which are not semi-orthogonal for
these special cases. This treatment is presently only available for
the simplest special case T (see below).

We return to special case T. If we insert the expression
(\ref{eq_svd}) of $\matAnull$ into the fixed-point equation
(\ref{eq_FP_A}), we get
\begin{eqnarray}
  \matNull
  &=&
  \matLambda \matAnull
  - \matAnull \matAnull^T \matLambda \matAnull\\
  \matNull
  &=&
  \matLambda \matQ \pmat{\matDelta_m\\ \matNull_{n-m,m}} \matR_m^T\\
  &-&
  \matQ \pmat{\matDelta_m\\ \matNull_{n-m,m}}
  \matR_m^T \matR_m
  \pmat{\matDelta_m & \matNull_{m,n-m}} \matQ^T
  \matLambda
  \matQ \pmat{\matDelta_m\\ \matNull_{n-m,m}} \matR_m^T\\
  \matNull
  &=&
  \matQ^T \matLambda \matQ \pmat{\matDelta_m\\ \matNull_{n-m,m}}
  - \pmat{\matDelta^2_m & \matNull_{m,n-m}\\ \matNull_{n-m,m} & \matNull_{n-m,n-m}}
  \matQ^T \matLambda \matQ \pmat{\matDelta_m\\ \matNull_{n-m,m}}
  \label{eq_FP_Q}
\end{eqnarray}
where we right-multiplied by $\matR_m$ and left-multiplied by $\matQ^T$
in the last step. If we write
\begin{equation}\label{eq_M_def}
\matM := \matQ^T \matLambda \matQ = \pmat{\matS & \matT^T\\ \matT & \matU}
\end{equation}
where $\matS$ is an $m \times m$ matrix (and the sizes of $\matT$ and
$\matU$ are chosen accordingly), equation (\ref{eq_FP_Q}) turns into
\begin{eqnarray}
  \pmat{\matS\matDelta_m\\ \matT\matDelta_m}
  - \pmat{\matDelta^2_m\matS\matDelta_m\\ \matNull}
  = \pmat{\matNull\\ \matNull}.
\end{eqnarray}
We analyze the upper matrix equation. For column $i$ and with
$\Delta_i = (\matDelta_m)_{ii}$, we obtain
\begin{equation}
\vecs_i \Delta_i - \matDelta^2_m \vecs_i \Delta_i = \vecnull.
\end{equation}
If $\Delta_i = 0$ (last $m-\mu$ elements, see equation
(\ref{eq_svd})), column $\vecs_i$ can be chosen freely. For $\Delta_i
\neq 0$, we look at element $(i, i)$ of $\matS$. After dividing by
$\Delta_i$, we get
\begin{equation}
(1 - \Delta^2_i) s_{ii} = 0.
\end{equation}
If $\Delta_i \neq 1$ (first $\mu$ elements, see equation
(\ref{eq_svd}); moreover $\Delta_i > 0$ assumed by the SVD), we would
get $s_{ii} = 0$. This is a contradiction since we know from
\lemmaref{\ref{lemma_RTDR_diag}} and assumption
(\ref{eq_order_lambda}) that $\lambda_1 \geq
(\matQ^T\matLambda\matQ)_{ii} \geq \lambda_n > 0$. We conclude that in
this case $\Delta_i = 1$. Therefore the first $\mu \leq m$ diagonal
elements of $\matDelta_m$ are $1$, all other diagonal elements are
zero, which leads to
\begin{eqnarray}
  \matDelta_m
  &=& \pmat{
      \matI_\mu          & \matNull_{\mu,m-\mu}\\
      \matNull_{m-\mu,\mu} & \matNull_{m-\mu,m-\mu}}\\
  \pmat{\matDelta_m\\ \matNull_{n-m,m}}
  &=& \pmat{
      \matI_\mu          & \matNull_{\mu,m-\mu}\\
      \matNull_{n-\mu,\mu} & \matNull_{n-\mu,m-\mu}}\\
  \pmat{
    \matDelta^2_m   & \matNull_{m,n-m}\\
    \matNull_{n-m,m} & \matNull_{n-m,n-m}}
  &=&
  \pmat{
    \matI_\mu          & \matNull_{\mu,n-\mu}\\
    \matNull_{n-\mu,\mu} & \matNull_{n-\mu,n-\mu}}.
\end{eqnarray}
We continue with (\ref{eq_FP_Q}). We omit the last $m-\mu$ zero
columns and obtain
\begin{equation}
  \matNull
  =
  \matQ^T \matLambda \matQ \pmat{\matI_\mu\\ \matNull_{n-\mu,\mu}}
  - \pmat{
    \matI_\mu          & \matNull_{\mu,n-\mu}\\
    \matNull_{n-\mu,\mu} & \matNull_{n-\mu,n-\mu}}
  \matQ^T \matLambda \matQ \pmat{\matI_\mu\\ \matNull_{n-\mu,\mu}}.
  \label{eq_FP_Q_mod}
\end{equation}
If we write
\begin{equation}\label{eq_M_def_mod}
\matM := \matQ^T \matLambda \matQ = \pmat{\matS & \matT^T\\ \matT & \matU}
\end{equation}
where $\matS$ is now a $\mu \times \mu$ matrix (and the sizes of
$\matT$ and $\matU$ are chosen accordingly), equation
(\ref{eq_FP_Q_mod}) turns into
\begin{eqnarray}
  \pmat{\matS\\ \matT}
  - \pmat{\matS\\ \matNull}
  = \pmat{\matNull\\ \matNull}.
\end{eqnarray}
which leads to $\matT = \matNull$, thus $\matM$ has a block-diagonal
shape. Equation (\ref{eq_M_def_mod}) is a similarity transformation of
$\matLambda$ into $\matM$, so $\matM$ also has the (pairwise
different) eigenvalues $\lambda_1,\ldots,\lambda_n$, albeit in
different order (expressed by their order in the matrix $\matLambda^*$
used in the spectral decomposition below). Applying
\lemmaref{\ref{lemma_blockdiag_evec}} in reverse direction, we can
{\em construct} any block-diagonal matrix $\matM$ from
\begin{equation}\label{eq_matM_decomp}
  \matM = \matE \matLambda^* \matE^T\;\mbox{with}\;
  \matE = \pmat{\matX & \matNull\\ \matNull & \matY}
\end{equation}
where $\matX$ (dimension $\mu$) and $\matY$ (dimension $n - \mu$) are
orthogonal matrices forming the matrix of eigenvectors $\matE$ of
$\matM$. In the construction of $\matM$, the first $\mu$ eigenvalues
on the diagonal of $\matLambda^*$ are assigned to $\matS$, the
remaining $n - \mu$ eigenvalues to $\matU$. In our case,
$\matLambda^*$ can be any permutation of $\matLambda$, thus
$\matLambda^* = \matP^T \matLambda \matP$ where $\matP$ is a
permutation matrix. We obtain $\matM = \matE \matP^T \matLambda \matP
\matE^T$, and with $\matM = \matQ^T \matLambda \matQ$ from equation
(\ref{eq_M_def}) we can conclude that
\begin{equation}\label{eq_matQ_T}
  \matQ = \matXi \matP \matE^T
\end{equation}
according to \lemmaref{\ref{lemma_ortho_diag_ortho}}.

We summarize
\begin{eqnarray}\label{eq_matAnull_T}
  \matAnull
  &=&
  \matQ
  \pmat{
    \matI_\mu          & \matNull_{\mu,m-\mu}\\
    \matNull_{n-\mu,\mu} & \matNull_{n-\mu,m-\mu}}
  \matR_m^T\\
  &=&
  \matXi \matP \matE^T
  \pmat{
    \matI_\mu          & \matNull_{\mu,m-\mu}\\
    \matNull_{n-\mu,\mu} & \matNull_{n-\mu,m-\mu}}
  \matR_m^T\\
  &=&
  \matXi \matP
  \pmat{
    \matX^T & \matNull\\
    \matNull & \matY^T}
  \pmat{
    \matI_\mu          & \matNull_{\mu,m-\mu}\\
    \matNull_{n-\mu,\mu} & \matNull_{n-\mu,m-\mu}}
  \matR_m^T\\
  &=&
  \label{eq_matAnull_T_res}
  \matXi \matP
  \pmat{
    \matR_\mu^T        & \matNull_{\mu,m-\mu}\\
    \matNull_{n-\mu,\mu} & \matNull_{n-\mu,m-\mu}}
  \matR_m^T
\end{eqnarray}
where $\matR_\mu^T = \matX^T$ is an orthogonal matrix.

With (\ref{eq_WVA}) we obtain
\begin{equation}
  \matWnull
  =
  \matV \matXi \matP
  \pmat{
    \matR_\mu^T        & \matNull_{\mu,m-\mu}\\
    \matNull_{n-\mu,\mu} & \matNull_{n-\mu,m-\mu}}
  \matR_m^T  
\end{equation}
If we denote by $\matV' = \matV \matXi$ the matrix of eigenvectors
with the signs of the eigenvectors in its columns arbitrarily chosen
(\lemmaref{\ref{lemma_ev_xi}}), we obtain
\begin{equation}\label{eq_fp_T}
  \matWnull
  =
  \matV' \matP 
  \pmat{
    \matR_\mu^T        & \matNull_{\mu,m-\mu}\\
    \matNull_{n-\mu,\mu} & \matNull_{n-\mu,m-\mu}}
  \matR_m^T.
\end{equation}
We get the result that the fixed points of Oja's subspace rule
\cite[]{nn_Oja89}
\begin{equation}\label{eq_oja_subspace}
\tau \matWdot = \matC \matW - \matW \matW^T \matC \matW
\end{equation}
are the following:
\begin{itemize}
\item For $\mu = 0$, we get $\matWnull = \matNull$ which is also an
  obvious fixed point of (\ref{eq_oja_subspace}).
\item For $0 < \mu < m$, we get a solution where, in an intermediate
  matrix, $\mu$ columns are arbitrary selections of $\mu$ eigenvectors
  which can then be arbitrarily rotated within the subspace they span,
  and the remaining $m-\mu$ columns are zero vectors. This
  intermediate matrix can again be arbitrarily rotated in the subspace
  spanned by it, finally giving $\matWnull$.
\item For $\mu = m$, we get the solution
  \begin{equation}
    \label{eq_fp_T_full}
  \matWnull
  =
  \matV' \matP 
  \pmat{
    \matR'^T_m\\
    \matNull_{n-m,m}}
  \matR_m^T  
  =
  \matV' \matP
    \pmat{
    \matR''^T_m\\
    \matNull_{n-m,m}},
  \end{equation}
  i.e. arbitrary selections of $m$ eigenvectors which can then be
  arbitrarily rotated within the subspace they span.
\end{itemize}
We also see that the solution (\ref{eq_fp_T_full}) (but not the
other cases) fulfills the constraint of special case T. Since
$\matR''_m$, $\matV'$, and $\matP$ are orthogonal we get
\begin{equation}
  \matWnull^T \matWnull
  =
  \pmat{\matR''_m & \matNull_{m,n-m}} \matP^T \matV'^T
  \matV' \matP \pmat{\matR''^T_m\\ \matNull_{n-m,m}}
  =
  \matI_m.
\end{equation}
It is interesting to observe that for $\mu < m$, the constraint
$\matWnull^T\matWnull = \matI_m$ is not fulfilled since the matrix
$\matWnull$ does not have the maximal rank. This violation may have
been introduced by the non-equivalent transformation used to eliminate
the Lagrange multipliers.

\subsubsection{Analysis of Special Case T}

We only analyze the case $\mu = m$. According to equation
(\ref{eq_fp_T_full}), the putative fixed points of equation
(\ref{eq_T}) are given by
\begin{equation}
  \matWnull = \matV \matP \pmat{\matR\\ \matNull}
  \label{eq_fp_oja_mod}
\end{equation}
where we replaced $\matV \coloneqq \matV'$, $\matR \coloneqq
\matR''^T_m$, $\matNull \coloneqq \matNull_{n-m,m}$, and below also
$\matNull^T \coloneqq \matNull_{m,n-m}$. Since the non-equivalent
transformation (section \ref{sec_first_variant} and
\ref{sec_second_variant}) may introduce spurious solutions, we have to
check whether all solutions of (\ref{eq_fp_oja_mod}) are actually
valid. Validity can be checked by inserting the solution into the
objective function:
\begin{eqnarray}
  J(\matWnull)
  &=&
  \half \tr\left\{\matWnull^T\matC\matWnull\right\}\\
  &=&
  \half \tr\left\{\matWnull^T\matV\matLambda\matV^T\matWnull\right\}\\
  &=& \half \tr\left\{
  \pmat{\matR^T & \matNull^T} \matP^T \matV^T \matV \matLambda
  \matV^T \matV \matP \pmat{\matR\\\matNull}
  \right\}\\
  &=& \half \tr\left\{
  \pmat{\matR^T & \matNull^T} \matP^T \matLambda \matP \pmat{\matR\\\matNull}
  \right\}\\
  &=& \half \tr\left\{
  \pmat{\matR^T & \matNull^T} \matLambda^* \pmat{\matR\\\matNull}
  \right\}\\
  &=& \half \tr\left\{
  \pmat{\matR^T & \matNull^T}
  \pmat{\hat{\matLambda}^* & \matNull\\ \matNull & \check{\matLambda}^*}
  \pmat{\matR\\\matNull}
  \right\}\\
  &=& \half \tr\left\{
  \matR^T
  \hat{\matLambda}^*
  \matR
  \right\}\\
  &=& \half \tr\left\{
  \matR
  \matR^T
  \hat{\matLambda}^*
  \right\}\\
  &=& \half \tr\{
  \hat{\matLambda}^*
  \}
\end{eqnarray}
We see that $J(\matWnull)$ only depends on $\matP$ (implicitly
contained in $\hat{\matLambda}^*$), but not on $\matR$. Different
choices of $\matP$ produce different isolated solutions
$\matWnull$. In each of these solutions, changes of $\matR$ have no
effect on $J(\matWnull)$, confirming that any choice of $\matR$ leads
to a valid solution. Therefore the solution (\ref{eq_fp_T_full})
does not include spurious solutions.

\subsection{Special Case TwJ1}

\subsubsection{Solution of Special Case TwJ1}

The fixed-point equation of special case TwJ1
\begin{equation}\label{eq_TwJ1}
\matC \matWnull \matTheta - \matWnull \matWnull^T \matC \matWnull
\matTheta = \matNull
\end{equation}
can be transformed into the fixed-point equation of special case T
(\ref{eq_T}) by right-multiplying by $\matTheta^{-1}$ (assuming all
$\theta_j$ are non-zero):
\begin{equation}
\matC \matWnull - \matWnull \matWnull^T \matC \matWnull = \matNull.
\end{equation}
Therefore, equation (\ref{eq_TwJ1}) of special case TwJ1 has the same
solution (\ref{eq_fp_T_full}) for case $\mu = m$ as equation
(\ref{eq_T}) of special case T:
\begin{equation}\label{eq_fp_TwJ1}
\matWnull = \matV \matP \pmat{\matR\\ \matNull_{n-m,m}}.
\end{equation}
The constraint of special case TwJ1, $\matWnull^T \matWnull =
\matI_m$, is fulfilled which an be shown as for special case T (case
$\mu = m$). This not surprising, since for $\mu = m$, the ansatz for
$\matAnull$ is a semi-orthogonal matrix, see equation
(\ref{eq_semiortho}).

\subsubsection{Analysis of Special Case TwJ1}

Similar to special case T, we insert the putative solution
(\ref{eq_fp_TwJ1}) of special case TwJ1 into the corresponding, now
weighted objective function:
\begin{eqnarray}
  J(\matWnull)
  &=&
  \half \tr\left\{\matWnull^T\matC\matWnull\matTheta\right\}\\
  &=&
  \half \tr\left\{\matWnull^T\matV\matLambda\matV^T\matWnull\matTheta\right\}\\
  &=& \half \tr\left\{
  \pmat{\matR^T & \matNull^T} \matP^T \matV^T \matV \matLambda
  \matV^T \matV \matP \pmat{\matR\\\matNull}
  \matTheta\right\}\\
  &=& \half \tr\left\{
  \pmat{\matR^T & \matNull^T} \matP^T \matLambda \matP \pmat{\matR\\\matNull}
  \matTheta\right\}\\
  &=& \half \tr\left\{
  \pmat{\matR^T & \matNull^T} \matLambda^* \pmat{\matR\\\matNull}
  \matTheta\right\}\\
  &=& \half \tr\left\{
  \pmat{\matR^T & \matNull^T}
  \pmat{\hat{\matLambda}^* & \matNull\\ \matNull & \check{\matLambda}^*}
  \pmat{\matR\\\matNull}
  \matTheta\right\}\\
  &=& \half \tr\left\{
  \matR^T
  \hat{\matLambda}^*
  \matR
  \matTheta\right\}\\
  &=& \half
  \summe{i=1}{m}
  (\matR^T\hat{\matLambda}^*\matR)_{ii} \theta_i
\end{eqnarray}
where we applied \lemmaref{\ref{lemma_tr_AD}} to switch from trace to
sums. According to \lemmaref{\ref{lemma_RTDR_ii_b_i}}, the last
expression is maximal for $\matR = \matXi$. We see that, in this case,
it is not possible to chose an arbitrary $\matR$ in each of the
discrete solutions determined by $\matP$. The objective function is
maximized for a specific choice of $\matR$, namely $\matR =
\matXi$. Other solutions of $\matR$ are spurious solutions since
$\matR$ can be modified such that the
objective function becomes larger.

\subsection{Special Case TwC1}

\subsubsection{Solution of Special Case TwC1}

The fixed-point equation of special case TwC1
\begin{equation}\label{eq_TwC1}
  \matC \matWnull - \matWnull \matOmega^{-1} \matWnull^T \matC \matWnull
  = \matNull
\end{equation}
can be treated in a similar way as for special case T. However,
instead of expressing $\matAnull$ by equation (\ref{eq_semiortho}),
the weighted constraint leads to
\begin{equation}\label{eq_matAnull_TwC1}
  \matAnull
  = \matQ \pmat{\matOmega^\half\\\matNull_{n-m,m}}
  = \matQ \pmat{\matI_m\\\matNull_{n-m,m}} \matOmega^\half.
\end{equation}
That the weighted constrained is fulfilled can be shown by
\begin{equation}
  \matA^T\matA =
  \matOmega^\half \pmat{\matI_m & \matNull} \matQ
  \matQ^T \pmat{\matI_m\\ \matNull} \matOmega^\half
  = \matOmega.
\end{equation}
We proceed with
\begin{eqnarray}
  \matNull
  &=&
  \matC \matWnull
  - \matWnull \matOmega^{-1} \matWnull^T \matC \matWnull\\
  \matNull
  &=&
  \matC \matV \matAnull
  - \matV \matAnull \matOmega^{-1} \matAnull^T \matV^T \matC \matV \matAnull\\
  \matNull
  &=&
  \matLambda \matAnull
  - \matAnull \matOmega^{-1} \matAnull^T \matLambda \matAnull\\
  \matNull
  &=&
  \matLambda \matQ \pmat{\matI_m\\ \matNull} \matOmega^\half
  - \matQ \pmat{\matI_m\\ \matNull}
  \matOmega^\half \matOmega^{-1} \matOmega^\half
  \pmat{\matI_m & \matNull^T}
  \matQ^T \matLambda \matQ \pmat{\matI_m\\ \matNull} \matOmega^\half\\
  \matNull
  &=&
  \label{eq_TwC1_tmp1}
  \matQ^T \matLambda \matQ \pmat{\matI_m\\ \matNull} \matOmega^\half
  - \pmat{\matI_m\\ \matNull} \pmat{\matI_m & \matNull^T}
  \matQ^T \matLambda \matQ \pmat{\matI_m\\ \matNull} \matOmega^\half\\
  \matNull
  &=&
  \pmat{\matS & \matT^T\\\matT & \matU} \pmat{\matI_m\\ \matNull}
  - \pmat{\matI_m\\ \matNull} \pmat{\matI_m & \matNull^T}
  \pmat{\matS & \matT^T\\\matT & \matU} \pmat{\matI_m\\ \matNull}\\
  \pmat{\matNull\\ \matNull}
  &=&
  \pmat{\matS\\ \matT}
  - \pmat{\matS\\ \matNull}
\end{eqnarray}
where we right-multiplied (\ref{eq_TwC1_tmp1}) by
$\matOmega^{-\half}$. We can now continue as in the special case T and
get
\begin{equation}\label{eq_fp_TwC1}
\matWnull = \matV \matP \pmat{\matR\\ \matNull_{n-m,m}} \matOmega^\half.
\end{equation}
That the constraint is fulfilled can be verified by
\begin{equation}
  \matWnull^T \matWnull
  =
  \matOmega^\half \pmat{\matR^T & \matNull_{m,n-m}} \matP^T \matV'^T
  \matV' \matP \pmat{\matR\\ \matNull_{n-m,m}} \matOmega^\half
  =
  \matOmega
\end{equation}
which is not surprising since the ansatz for $\matAnull$ was chosen
this way.

\subsubsection{Analysis of Special Case TwC1}

Similar to special case TwJ1, we insert the putative solution
(\ref{eq_fp_TwC1}) into the corresponding objective function
\begin{eqnarray}
  J(\matWnull)
  &=&
  \half \tr\left\{\matWnull^T\matC\matWnull\right\}\\
  &=&
  \half \tr\left\{\matWnull^T\matV\matLambda\matV^T\matWnull\right\}\\
  &=& \half \tr\left\{
  \matOmega^\half
  \pmat{\matR^T & \matNull^T} \matP^T \matV^T \matV \matLambda
  \matV^T \matV \matP \pmat{\matR\\\matNull}
  \matOmega^\half\right\}\\
  &=& \half \tr\left\{
  \pmat{\matR^T & \matNull^T} \matP^T \matLambda \matP \pmat{\matR\\\matNull}
  \matOmega\right\}\\
  &=& \half \tr\left\{
  \pmat{\matR^T & \matNull^T} \matLambda^* \pmat{\matR\\\matNull}
  \matOmega\right\}\\
  &=& \half \tr\left\{
  \pmat{\matR^T & \matNull^T}
  \pmat{\hat{\matLambda}^* & \matNull\\ \matNull & \check{\matLambda}^*}
  \pmat{\matR\\\matNull}
  \matOmega\right\}\\
  &=& \half \tr\left\{
  \matR^T
  \hat{\matLambda}^*
  \matR
  \matOmega\right\}\\
  &=& \half
  \summe{i=1}{m}
  (\matR^T\hat{\matLambda}^*\matR)_{ii} \varOmega_i
\end{eqnarray}
where we applied \lemmaref{\ref{lemma_tr_AD}} to switch from trace to
sums. As in the case TwJ1, we can apply
\lemmaref{\ref{lemma_RTDR_ii_b_i}} to the last expression which leads
to the conclusion that (\ref{eq_fp_TwC1}) contains spurious solutions.

\subsection{Special Case N1}

\subsubsection{Solution of Special Case N1}

The fixed-point equation of special case N1
\begin{equation}\label{eq_N1}
\matC \matWnull \matDnull - \matWnull \matWnull^T \matC \matWnull \matDnull
= \matNull
\;\;\mbox{with}\;\;
\matDnull = \Diag{j=1}{m}\{\vecwnull_j^T \matC \vecwnull_j\}
\end{equation}
can be transformed into the fixed-point equation of special case T
(\ref{eq_T}) by right-multiplying by $\matDnull^{-1}$ (similar to the
treatment of case TwJ1):
\begin{equation}
\matC \matWnull - \matWnull \matWnull^T \matC \matWnull = \matNull.
\end{equation}
Note that $\matDnull$ can be inverted since all diagonal elements are
strictly positive and thus non-zero. This in turn can be shown by the
Rayleigh-Ritz Theorem \cite[][sec.~4.2.2]{nn_Horn99}: Assuming the
order of eigenvalues (\ref{eq_order_lambda}), it is guaranteed
that
\begin{equation}\label{eq_rayleigh_ritz}
  \lambda_1 \vecwnull_j^T \vecwnull_j
  \geq
  \vecwnull_j^T \matC \vecwnull_j
  \geq
  \lambda_n \vecwnull_j^T \vecwnull_j.
\end{equation}
Note that we assume $\matA$ to be semi-orthogonal according to
equation (\ref{eq_semiortho}), thus $\matWnull^T\matWnull =
\matA^T\matV^T\matV\matA = \matI$ and therefore
$\vecwnull_j^T\vecwnull_j = 1$ for all $j$, which leads to
\begin{equation}\label{eq_rayleigh_ritz_semiortho}
  \lambda_1
  \geq
  \vecwnull_j^T \matC \vecwnull_j
  \geq
  \lambda_n
  > 0
\end{equation}
where (\ref{eq_order_lambda}) was considered. Therefore, equation
(\ref{eq_TwJ1}) of special case N1 has the same solution
(\ref{eq_fp_T_full}) for case $\mu = m$ as equation (\ref{eq_T}) of
special case T:
\begin{equation}\label{eq_fp_N1}
\matWnull = \matV \matP \pmat{\matR\\ \matNull_{n-m,m}}.
\end{equation}
The constraint of special case N1, $\matWnull^T \matWnull = \matI_m$
is fulfilled which can be shown as for special case T (case $\mu = m$,
semi-orthogonal ansatz for $\matAnull$).

\subsubsection{Analysis of Special Case N1}
\label{sec_analysis_N1}

We insert the putative solution (\ref{eq_TwJ1}) into the novel
objective function (\ref{eq_objfct_new}) to test for spurious
solutions. Here we need to express the solution for each individual
vector $\vecwnull_j$:
\begin{equation}\label{eq_fp_N1_vec}
\vecwnull_j = \matV \matP \pmat{\vecr_j\\ \vecnull_{n-m}}
\end{equation}
where $\vecr_j$ is column $j$ of $\matR$. (In the following, for
brevity we write $\vecnull_{n-m} = \vecnull$.)
\begin{eqnarray}
  J(\matWnull)
  &=&
  \quarter \summe{j=1}{m}
  (\vecwnull_j^T \matC \vecwnull_j)^2\\
  &=&
  \quarter \summe{j=1}{m}
  (\vecwnull_j^T \matV \matLambda \matV^T \vecwnull_j)^2\\
  &=&
  \quarter \summe{j=1}{m}
  \left[
    \pmat{\vecr_j^T & \vecnull^T} \matP^T \matV^T \matV \matLambda
    \matV^T \matV \matP \pmat{\vecr_j\\\vecnull}
  \right]^2\\
  &=&
  \quarter \summe{j=1}{m}
  \left[
    \pmat{\vecr_j^T & \vecnull^T} \matP^T \matLambda \matP
    \pmat{\vecr_j\\\vecnull}
  \right]^2\\
  &=&
  \quarter \summe{j=1}{m}
  \left[
    \pmat{\vecr_j^T & \vecnull^T}
    \matLambda^*
    \pmat{\vecr_j\\\vecnull}
  \right]^2\\
  &=&
  \quarter \summe{j=1}{m}
  \left[
    \pmat{\vecr_j^T & \vecnull^T} 
    \pmat{\hat{\matLambda}^* & \matNull\\ \matNull & \check{\matLambda}^*}
    \pmat{\vecr_j\\\vecnull}
  \right]^2\\
  &=&
  \quarter \summe{j=1}{m}
  \left(
    \vecr_j^T \hat{\matLambda}^* \vecr_j
  \right)^2\\
  &=&
  \quarter \summe{j=1}{m} \left(\matR^T \hat{\matLambda}^* \matR\right)_{jj}^2
\end{eqnarray}
where we applied \lemmaref{\ref{lemma_RTDR_ii}} in the last
step. According to \lemmaref{\ref{lemma_RTDR_ii_sqr}},
\begin{equation}
\summe{j=1}{m} \left(\matR^T \hat{\matLambda}^* \matR\right)_{jj}^2
\end{equation}
is maximal only if $\matR^T \hat{\matLambda}^* \matR$ is diagonal,
which holds only if $\matR = \matXi \matP'$ where $\matP'$ is a
permutation matrix.

We see that, in this case, it is not possible to chose an arbitrary
$\matR$ in each of the discrete solutions determined by $\matP$. The
objective function is maximized for a specific choice of $\matR$,
namely $\matR = \matXi \matP'$. Other solutions of $\matR$ are
spurious solutions since $\matR$ can be modified such that the
objective function becomes larger.

If we insert the solution $\matR = \matXi \matP'$ into
(\ref{eq_fp_N1}), we get (with matrix sizes indicated)
\begin{eqnarray}
  \matWnull
  &=&
  \matV \matP_n \pmat{\matR_m\\ \matNull}\\
  &=&
  \matV \matP_n \pmat{\matXi_m \matP'_m\\ \matNull}\\
  &=&
  \matV \matP_n \pmat{\matXi_m\\ \matNull} \matP'_m.  
\end{eqnarray}
We form a larger diagonal sign matrix $\hat{\matXi}_n$ which contains
$\matXi_m$
\begin{equation}
\hat{\matXi}_n = \pmat{\matXi_m & \matNull\\ \matNull & \check{\matXi}_{n-m}}
\end{equation}
and continue
\begin{eqnarray}
  \matWnull
  &=&
  \matV \matP_n \hat{\matXi}_n \pmat{\matI_m\\ \matNull} \matP'_m \\
  &=&
  \matV \hat{\matXi}^*_n \matP_n \pmat{\matI_m\\ \matNull} \matP'_m\\
  &=&
  \matV' \matP_n \pmat{\matP'_m\\ \matNull}
\end{eqnarray}
where we applied \lemmaref{\ref{lemma_DP}} to swap permutation and
sign matrices, and integrated the diagonal sign matrix into
$\matV'$. We form a larger permutation matrix $\hat{\matP}'_n$ which
contains $\matP'_m$
\begin{equation}
\hat{\matP}'_n = \pmat{\matP'_m & \matNull\\ \matNull & \check{\matP}'_{n-m}}
\end{equation}
and continue
\begin{eqnarray}
  \matWnull
  &=&
  \matV' \matP_n \hat{\matP}'_n \pmat{\matI_m\\ \matNull}\\
  \label{eq_N1_solution_max}
  &=&
  \matV' \matP''_n \pmat{\matI_m\\ \matNull}.
\end{eqnarray}
We see that this solution only comprises an arbitrary selection of
eigenvectors (with arbitrary sign) but no rotation, so the solution
(\ref{eq_fp_N1}) of the fixed-point equation contains spurious
solutions.

\subsection{Special Case TwJ2}

\subsubsection{Solution of Special Case TwJ2}\label{sec_TwJ2_solution}

The fixed-point equation of special case TwJ2
\begin{equation}\label{eq_TwJ2}
  \matC \matWnull \matTheta - \matWnull \matTheta \matWnull^T \matC \matWnull
  = \matNull
\end{equation}
can be treated in a similar way as for special case T, expressing
$\matAnull$ by equation (\ref{eq_semiortho}):
\begin{eqnarray}
  \matNull
  &=&
  \matC \matWnull \matTheta
  - \matWnull \matTheta \matWnull^T \matC \matWnull\\
  \matNull
  &=&
  \matC \matV \matAnull \matTheta
  - \matV \matAnull \matTheta \matAnull^T \matV^T \matC \matV \matAnull\\
  \matNull
  &=&
  \matV^T \matC \matV \matAnull \matTheta
  - \matV^T \matV \matAnull \matTheta \matAnull^T \matV^T \matC\matV \matAnull\\
  \matNull
  &=&
  \label{eq_TwJ2_intermediate}
  \matLambda \matAnull \matTheta
  - \matAnull \matTheta \matAnull^T \matLambda \matAnull\\
  \matNull
  &=&
  \matLambda \matQ \pmat{\matI_m\\ \matNull} \matTheta
  - \matQ \pmat{\matI_m\\ \matNull}
  \matTheta
  \pmat{\matI_m & \matNull^T}
  \matQ^T \matLambda \matQ \pmat{\matI_m\\ \matNull}\\
  \matNull
  &=&
  \matQ^T \matLambda \matQ \pmat{\matI_m\\ \matNull} \matTheta
  - \pmat{\matI_m\\ \matNull} \matTheta \pmat{\matI_m & \matNull^T}
  \matQ^T \matLambda \matQ \pmat{\matI_m\\ \matNull}\\
  \matNull
  &=&
  \pmat{\matS & \matT^T\\\matT & \matU} \pmat{\matI_m\\ \matNull} \matTheta
  - \pmat{\matI_m\\ \matNull} \matTheta \pmat{\matI_m & \matNull^T}
  \pmat{\matS & \matT^T\\\matT & \matU} \pmat{\matI_m\\ \matNull}\\
  \pmat{\matNull\\ \matNull}
  &=&
  \pmat{\matS \matTheta \\ \matT \matTheta}
  - \pmat{\matTheta \matS\\ \matNull}
\end{eqnarray}
We get $\matT\matTheta = \matNull$ and therefore, since $\matTheta$ is
invertible, $\matT = \matNull$, thus $\matM = \matQ^T \matLambda
\matQ$ is block-diagonal, and
\begin{equation}\label{eq_TwJ2_ThetaS_STheta}
\matTheta \matS = \matS \matTheta.
\end{equation}
According to \lemmaref{\ref{lemma_commute_diag}}, $\matS$ is a
diagonal matrix.

We can express the block-diagonal matrix $\matM$ by its spectral
decomposition (\ref{eq_matM_decomp})
\begin{eqnarray}
  \matM
  &=&
  \matE \matLambda^* \matE^T\\
  &=&
  \pmat{\matX & \matNull\\ \matNull & \matY}
  \pmat{\hat{\matLambda}^* & \matNull\\ \matNull & \check{\matLambda}^*}
  \pmat{\matX^T & \matNull\\ \matNull & \matY^T}\\
  &=&
  \pmat{\matX\hat{\matLambda}^*\matX^T & \matNull\\
    \matNull & \matY\check{\matLambda}^*\matY^T}\\
  &=&
  \pmat{\matS & \matNull\\ \matNull & \matU}
\end{eqnarray}
which leads to
\begin{equation}\label{eq_TwJ2_S_XLXT}
  \matS = \matX\hat{\matLambda}^*\matX^T.
\end{equation}
Since this is an orthogonal transformation, and since $\matS$ is
diagonal, $\matS$ shares the eigenvalues of $\hat{\matLambda}^*$,
i.e. the diagonal elements in $\hat{\matLambda}^*$ also appear as the
diagonal elements of $\matS$, albeit in permuted order:
\begin{equation}
\matS = \matP'^T \hat{\matLambda}^* \matP' = \matX\hat{\matLambda}^*\matX^T.
\end{equation}
According to \lemmaref{\ref{lemma_ortho_diag_ortho}}, we get $\matX =
\matXi' \matP'^T$. Note that the solution for $\matQ$ is the same as
(\ref{eq_matQ_T}) for special case T. We insert this into the solution
for $\matAnull$; see equation (\ref{eq_matAnull_T}):
\begin{eqnarray}
  \matAnull
  &=&
  \matQ \pmat{\matI_m\\ \matNull}\\
  &=&
  \matXi \matP \pmat{\matX^T & \matNull\\ \matNull & \matY^T}
  \pmat{\matI_m\\ \matNull}\\
  &=&\label{eq_T_TwJ2_matAnull}
  \matXi \matP \pmat{\matX^T\\ \matNull}\\
  &=&
  \matXi \matP \pmat{\matP'\matXi'\\ \matNull}.
\end{eqnarray}
We form larger matrices which contain $\matP'$ and $\matXi'$
(temporarily indicating matrix sizes)
\begin{eqnarray}
  \hat{\matXi}'_n
  &=&
  \pmat{\matXi'_m & \matNull\\ \matNull & \check{\matXi}'_{n-m}}\\
  \hat{\matP}'_n
  &=&
  \pmat{\matP'_m & \matNull\\ \matNull & \check{\matP}'_{n-m}}
\end{eqnarray}
and continue with
\begin{eqnarray}
  \matAnull
  &=&
  \matXi \matP \hat{\matP}'_n \hat{\matXi}'_n \pmat{\matI_m\\ \matNull}\\
  &=&
  \matXi \matP'' \hat{\matXi}'_n \pmat{\matI_m\\ \matNull}\\
  &=&
  \matXi \hat{\matXi}'^*_n \matP'' \pmat{\matI_m\\ \matNull}\\
  &=&
  \label{eq_TwJ2_solution_matA}
  \matXi'' \matP'' \pmat{\matI_m\\ \matNull}
\end{eqnarray}
where we fused the permutation matrices, applied \lemmaref{\ref{lemma_DP}}
to swap permutation and sign matrices, and fused the sign matrices. We get
\begin{eqnarray}
  \matWnull
  &=&
  \matV \matXi'' \matP'' \pmat{\matI_m\\ \matNull_{n-m,m}}\\
  \label{eq_fp_TwJ2}
  &=&
  \matV' \matP'' \pmat{\matI_m\\ \matNull_{n-m,m}}.
\end{eqnarray}
This confirms the known result \cite[]{nn_Xu93} that the fixed points
of Xu's rule (15a)
\begin{equation}
\tau \matWdot = \matC \matW \matTheta - \matW \matTheta \matW^T \matC \matW
\end{equation}
are arbitrary selections of the eigenvectors of $\matC$ (with
arbitrary signs).

We also see that the solution (\ref{eq_fp_TwJ2}) fulfills the
constraint of special case TwJ2 (not surprising, since the ansatz for
$\matAnull$ was a semi-orthogonal matrix). Since $\matV'$ and
$\matP''$ are orthogonal we get
\begin{equation}
  \matWnull^T \matWnull
  =
  \pmat{\matI_m & \matNull_{m,n-m}} \matP''^T \matV'^T
  \matV' \matP'' \pmat{\matI_m\\ \matNull_{n-m,m}}
  =
  \matI_m.
\end{equation}

\subsubsection{Test of Solution of Special Case TwJ2}

To test our solution, we insert the solution for $\matA$
(\ref{eq_TwJ2_solution_matA}) (omitting primes) into
(\ref{eq_TwJ2_intermediate}). We left-multiply by orthogonal matrices
several times:
\begin{eqnarray}
  \matNull
  &=&
  \matLambda \matAnull \matTheta
  - \matAnull \matTheta \matAnull^T \matLambda \matAnull\\
   \matNull
  &=&
   \matLambda
   \matXi \matP \pmat{\matI_m\\\matNull}
   \matTheta
   -
   \matXi \matP \pmat{\matI_m\\\matNull}
   \matTheta
   \pmat{\matI_m&\matNull} \matP^T \matXi^T
   \matLambda
   \matXi \matP \pmat{\matI_m\\\matNull}\\
   \matNull
  &=&
   \matLambda
   \matXi \matP \pmat{\matI_m\\\matNull}
   \matTheta
   -
   \matXi \matP \pmat{\matI_m\\\matNull}
   \matTheta
   \pmat{\matI_m&\matNull} \matP^T
   \matLambda
   \matP \pmat{\matI_m\\\matNull}\\   
   \matNull
  &=&
   \matXi^T \matLambda
   \matXi \matP \pmat{\matI_m\\\matNull}
   \matTheta
   -
   \matXi^T \matXi \matP \pmat{\matI_m\\\matNull}
   \matTheta
   \pmat{\matI_m&\matNull}
   \matLambda^*
   \pmat{\matI_m\\\matNull}\\   
   \matNull
   &=&
   \matP^T \matLambda
   \matP \pmat{\matI_m\\\matNull}
   \matTheta
   -
   \matP^T \matP \pmat{\matI_m\\\matNull}
   \matTheta
   \hat{\matLambda}^*\\   
   \matNull
   &=&
   \matLambda^*
   \pmat{\matI_m\\\matNull}
   \matTheta
   -
   \pmat{\matI_m\\\matNull}
   \matTheta
   \hat{\matLambda}^*\\
   \pmat{\matNull\\\matNull}
   &=&
   \pmat{\hat{\matLambda}^*\matTheta\\\matNull}
   -
   \pmat{\matTheta\hat{\matLambda}^*\\\matNull}
\end{eqnarray}
where we applied \lemmaref{\ref{lemma_mult_INull}} in the last
step. Since diagonal matrices are exchangeable within a product, this
confirms our solution.

\subsubsection{Analysis of Special Case TwJ2}

Since (\ref{eq_fp_TwJ2}) does not contain any parameters which can
change continuously, it is clear that inserting this solution into the
objective function (\ref{eq_objfct}) will confirm that there are no
spurious solutions. For the sake of completeness, we nevertheless
include this step (note that we replaced $\matV \coloneqq \matV'$ and
$\matP \coloneqq \matP''_n$):
\begin{eqnarray}
  J
  &=& \half\tr\{\matWnull^T\matC\matWnull\matTheta\}\\
  &=& \half\tr\left\{
  \pmat{\matI_m & \matNull} \matP^T \matV^T \matC \matV \matP
  \pmat{\matI_m\\ \matNull} \matTheta
  \right\}\\
  &=& \half\tr\left\{
  \pmat{\matI_m & \matNull} \matP^T \matLambda \matP
  \pmat{\matI_m\\ \matNull} \matTheta
  \right\}\\
  &=& \half\tr\left\{
  \pmat{\matI_m & \matNull} \matLambda^*
  \pmat{\matI_m\\ \matNull} \matTheta
  \right\}\\
  &=& \half\tr\left\{
  \pmat{\matI_m & \matNull}
  \pmat{\hat{\matLambda}^* & \matNull\\ \matNull & \check{\matLambda}^*}  
  \pmat{\matI_m\\ \matNull} \matTheta
  \right\}\\
  &=& \half\tr\{\hat{\matLambda}^*\matTheta\}.
\end{eqnarray}
Matrix $\hat{\matLambda}^*$ depends on the permutation $\matP$ which
defines different isolated fixed points. According to
\lemmaref{\ref{lemma_sorted_permute}} and considering the sorting
condition (\ref{eq_order_theta}), the {\em global} maximum would be
achieved if $\hat{\matLambda}^*$ contains the $m$ largest eigenvalues
sorted as in $\matLambda$. Which fixed points are {\em local} maxima
needs to be analyzed separately.

\subsection{Special Case TwC2}

\subsubsection{Solution of Special Case TwC2}

The ansatz for $\matAnull$ is the same as for case TwC1, see
(\ref{eq_matAnull_TwC1}), here for brevity with $\matNull \coloneqq
\matNull_{n-m,m}$:
\begin{equation}\label{eq_matAnull_TwC2}
  \matAnull
  = \matQ \pmat{\matI_m\\\matNull} \matOmega^\half.
\end{equation}
We insert this into the fixed-point equation:
\begin{eqnarray}
  \matNull
  &=&
  \matC \matWnull
  -
  \matWnull \matWnull^T \matC \matWnull \matOmega^{-1}\\
  \matNull
  &=&
  \matC \matV \matAnull
  -
  \matV \matAnull \matAnull^T \matV^T \matC \matV \matAnull \matOmega^{-1}\\
  \matNull
  &=&
  \matV \matLambda \matAnull
  -
  \matV \matAnull \matAnull^T \matLambda \matAnull \matOmega^{-1}\\
  \matNull
  &=&
  \matLambda \matAnull
  -
  \matAnull \matAnull^T \matLambda \matAnull \matOmega^{-1}\\
  \matNull
  &=&
  \matLambda \matQ \pmat{\matI_m\\\matNull} \matOmega^\half
  -
  \matQ \pmat{\matI_m\\\matNull} \matOmega^\half
  \matOmega^\half \pmat{\matI_m & \matNull^T} \matQ^T
  \matLambda \matQ \pmat{\matI_m\\\matNull} \matOmega^\half
  \matOmega^{-1}\\
  \matNull
  &=&
  \matLambda \matQ \pmat{\matI_m\\\matNull} \matOmega
  -
  \matQ \pmat{\matI_m\\\matNull} \matOmega \pmat{\matI_m & \matNull^T} \matQ^T
  \matLambda \matQ \pmat{\matI_m\\\matNull}\\
  \matNull
  &=&
  \matQ^T \matLambda \matQ \pmat{\matI_m\\\matNull} \matOmega
  -
  \pmat{\matI_m\\\matNull} \matOmega \pmat{\matI_m & \matNull^T} \matQ^T
  \matLambda \matQ \pmat{\matI_m\\\matNull}.
\end{eqnarray}
From this point on we can proceed as for special case TwJ2. We obtain
the fixed-point solution
\begin{eqnarray}
  \matAnull &=& \matXi \matP \pmat{\matI_m\\ \matNull} \matOmega^\half\\
  \label{eq_fp_TwC2}
  \matWnull &=& \matV \matP \pmat{\matI_m\\ \matNull} \matOmega^\half.
\end{eqnarray}
This confirms the result that Oja's weighted subspace rule
\cite[]{nn_Oja92a,nn_Oja92,nn_Oja92b}
\begin{equation}
\tau \matWdot = \matC \matW - \matW \matW^T \matC \matW \matOmega^{-1} 
\end{equation}
has fixed points at permutations of scaled eigenvectors $\vecv_j$,
with the vector length determined by the coefficients $\varOmega_j^\half$.
The constraint is fulfilled:
\begin{equation}
  \matWnull^T \matWnull
  =
  \matOmega^\half \pmat{\matI_m & \matNull} \matP^T \matV^T
  \matV \matP \pmat{\matI_m\\ \matNull} \matOmega^\half
  =
  \matOmega.
\end{equation}

\subsubsection{Analysis of Special Case TwC2}

The analysis is analogous to special case TwJ2. We insert
(\ref{eq_fp_TwC2}) into (\ref{eq_objfct}). We skip some steps which
are the same as for TwJ2 (except with $\matOmega$ instead of
$\matTheta$):
\begin{eqnarray}
  J
  &=& \half\tr\{\matWnull^T\matC\matWnull\}\\
  &=& \half\tr\left\{
  \matOmega^\half\pmat{\matI_m & \matNull} \matP^T \matV^T \matC \matV \matP
  \pmat{\matI_m\\ \matNull} \matOmega^\half
  \right\}\\
  &=& \half\tr\left\{
  \pmat{\matI_m & \matNull} \matP^T \matLambda \matP
  \pmat{\matI_m\\ \matNull} \matOmega
  \right\}\\
  &=& \half\tr\{\hat{\matLambda}^*\matOmega\}.
\end{eqnarray}
The insights are the same as for special case TwJ2.

\subsection{Special Case N2}\label{sec_N2}

\subsubsection{Solution of Special Case N2}\label{sec_N2_solution}

The fixed-point equation of special case N2
\begin{equation}\label{eq_N2}
  \matC \matWnull \matDnull - \matWnull \matDnull \matWnull^T \matC \matWnull
  = \matNull
  \;\;\mbox{with}\;\;
  \matDnull = \Diag{j=1}{m}\{\vecwnull_j^T \matC \vecwnull_j\}.
\end{equation}
can be treated in a similar way as for special case TwJ2, expressing
$\matAnull$ by equation (\ref{eq_semiortho}):
\begin{eqnarray}
  \matNull
  &=&
  \matC \matWnull \matDnull
  - \matWnull \matDnull \matWnull^T \matC \matWnull\\
  \matNull
  &=&
  \matC \matV \matAnull \matDnull
  - \matV \matAnull \matDnull \matAnull^T \matV^T \matC \matV \matAnull\\
   \matNull
  &=&
  \matV^T \matC \matV \matAnull \matDnull
  - \matV^T\matV \matAnull \matDnull \matAnull^T \matV^T \matC \matV \matAnull\\
  \label{eq_N2_intermediate}
  \matNull
  &=&
  \matLambda \matAnull \matDnull
  - \matAnull \matDnull \matAnull^T \matLambda \matAnull\\
  \matNull
  &=&
  \matLambda \matQ \pmat{\matI_m\\ \matNull} \matDnull
  - \matQ \pmat{\matI_m\\ \matNull}
  \matDnull
  \pmat{\matI_m & \matNull^T}
  \matQ^T \matLambda \matQ \pmat{\matI_m\\ \matNull}\\
  \matNull
  &=&
  \matQ^T \matLambda \matQ \pmat{\matI_m\\ \matNull} \matDnull
  - \pmat{\matI_m\\ \matNull} \matDnull \pmat{\matI_m & \matNull^T}
  \matQ^T \matLambda \matQ \pmat{\matI_m\\ \matNull}\\
  \matNull
  &=&
  \pmat{\matS & \matT^T\\\matT & \matU} \pmat{\matI_m\\ \matNull} \matDnull
  - \pmat{\matI_m\\ \matNull} \matDnull \pmat{\matI_m & \matNull^T}
  \pmat{\matS & \matT^T\\\matT & \matU} \pmat{\matI_m\\ \matNull}\\
  \label{eq_constraint_N2}
  \pmat{\matNull\\ \matNull}
  &=&
  \pmat{\matS \matDnull \\ \matT \matDnull}
  - \pmat{\matDnull \matS\\ \matNull}.
\end{eqnarray}
We obtain
\begin{equation}\label{eq_N2_SD_DS}
\matS\matDnull = \matDnull\matS.
\end{equation}
If all entries of $\matDnull$ are pairwise different, we can take the
same path as for special case TwJ2 (where this property is assumed for
$\matTheta$) starting from (\ref{eq_TwJ2_ThetaS_STheta}). In this case
we can conclude from (\ref{eq_N2_SD_DS}) that $\matS$ is
diagonal. This leads to solution (\ref{eq_fp_TwJ2}).

However, if entries of $\matDnull$ coincide, we have to take a
different path. First, we show that $\matM$ from definition
(\ref{eq_M_def}) is block-diagonal: From equation
(\ref{eq_constraint_N2}) we get $\matT\matDnull = \matNull$. For our
choice of $\matAnull$ as a semi-orthogonal matrix, the matrix
$\matWnull$ contains unit vectors $\vecwnull_j$ in its
columns. According to the Rayleigh-Ritz Theorem
(\ref{eq_rayleigh_ritz}) and under assumption
(\ref{eq_order_lambda}), we can conclude that $\matDnull$ contains
strictly positive and therefore non-zero elements, thus $\matDnull$ is
invertible. Multiplication of $\matT\matDnull = \matNull$ by
$\matDnull^{-1}$ then leads to $\matT = \matNull$, thus $\matM$ is
block-diagonal. Therefore $\matS$ can be expressed by
(\ref{eq_TwJ2_S_XLXT}) as in case TwJ2:
\begin{equation}\label{eq_N2_S_XLXT}
  \matS = \matX\hat{\matLambda}^*\matX^T.
\end{equation}
Note that according to \lemmaref{\ref{lemma_blockdiag_evec}}, $\matX$
is an orthogonal matrix. Since the orthogonal similarity
transformation preserves eigenvalues, $\matS$ has pairwise different
eigenvalues as does $\hat{\matLambda}^*$. $\matS$ is also symmetric.

We continue from (\ref{eq_N2_SD_DS}). To fulfill the prerequisites of
\lemmaref{\ref{lemma_commute_blockdiag}}, we interpret $\matDnull$ as
the permuted version of a diagonal matrix $\matDnull^*$ where
identical diagonal elements are contiguous, i.e.
\begin{equation}\label{eq_N2_matDnull_star}
\matDnull = \matP^{*T} \matDnull^* \matP^*
\end{equation}
where
\begin{equation}
  \matDnull^*
  =
  \pmat{
    \overline{d}^{*'}_1\matI_1 & \matNull & \ldots & \matNull\\
    \matNull & \overline{d}^{*'}_2\matI_2 & \ldots & \matNull\\
    \vdots   & \vdots   & \ddots & \vdots\\
    \matNull & \matNull & \ldots & \overline{d}^{*'}_k \matI_k}.
\end{equation}
This gives
\begin{eqnarray}
  \matS \matDnull &=& \matDnull \matS\\
  \matS \matP^{*T} \matDnull^* \matP^* &=& \matP^{*T} \matDnull^* \matP^* \matS\\
  \matP^* \matS \matP^{*T} \matDnull^* &=& \matDnull^* \matP^* \matS \matP^{*T}\\
  \label{eq_N2_SsDs_DsSs}
  \matS^* \matDnull^* &=& \matDnull^* \matS^*
\end{eqnarray}
where $\matS^* = \matP^* \matS \matP^{*T}$. By applying
\lemmaref{\ref{lemma_commute_blockdiag}} to (\ref{eq_N2_SsDs_DsSs}) we
know that $\matS^*$ is block-diagonal (the contiguous order of
diagonal elements in $\matDnull^*$ as presumed by the lemma is given),
and according to \lemmaref{\ref{lemma_diagonalization_blockdiag}} we
can express $\matS^*$ as
\begin{equation}
\matS^* = \matU^* \matDelta \matU^{*T}
\end{equation}
where $\matU^*$ is a block-diagonal orthogonal matrix and $\matDelta$ is
diagonal. Therefore
\begin{eqnarray}
\matP^* \matS \matP^{*T} &=& \matU^* \matDelta \matU^{*T}\\
\matS &=& \matP^{*T} \matU^* \matDelta \matU^{*T} \matP^*.
\end{eqnarray}
Combined with (\ref{eq_N2_S_XLXT}), this leads to
\begin{eqnarray}
  \matP^{*T} \matU^* \matDelta \matU^{*T} \matP^*
  &=&
  \matX \hat{\matLambda}^* \matX^T\\
  \matDelta
  &=&
  \matU^{*T} \matP^* \matX \hat{\matLambda}^* \matX^T \matP^{*T} \matU^*.
\end{eqnarray}
Both $\matDelta$ and $\hat{\matLambda}^*$ are diagonal, thus from
\lemmaref{\ref{lemma_RTDR_Ds}} we can conclude that
\begin{eqnarray}
  \matX^T \matP^{*T} \matU^* &=& \matXi' \matP'\\
  \matX^T &=& \matXi' \matP' \matU^{*T} \matP^*
\end{eqnarray}
where $\matP'$ is a permutation matrix and $\matXi'$ a diagonal sign
matrix. We insert this into equation (\ref{eq_T_TwJ2_matAnull}) which
holds for special case T, TwJ2, and N2:
\begin{eqnarray}
  \matAnull
  &=& \matXi \matP \pmat{\matX^T\\ \matNull}\\
  &=& \matXi \matP \pmat{\matXi' \matP' \matU^{*T} \matP^*\\ \matNull}.
\end{eqnarray}
We proceed in a similar way as for the analysis of special case N1
(section \ref{sec_analysis_N1}), indicating matrix sizes from now on:
\begin{eqnarray}
  \matAnull
  &=&
  \matXi_n \matP_n \pmat{\matXi'_m \matP'_m \matU^{*T}_m \matP^*\\ \matNull}\\
  &=&
  \matXi_n \matP_n
  \underbrace{
    \pmat{\matXi'_m & \matNull\\ \matNull & \check{\matXi}'_{n-m}}}_{\hat{\matXi}'_n}
  \underbrace{
    \pmat{\matP'_m & \matNull\\ \matNull & \check{\matP}'_{n-m}}}_{\hat{\matP}'_n}
  \pmat{\matU^{*T}_m \matP^*\\ \matNull}\\
  &=&
  \matXi_n \matP_n \hat{\matXi}'_n \hat{\matP}'_n
  \pmat{\matU^{*T}_m \matP^*\\ \matNull}\\
  &=&
  \matXi_n \hat{\matXi}''_n \matP_n \hat{\matP}'_n
  \pmat{\matU^{*T}_m \matP^*\\ \matNull}\\
  \label{eq_N2_solution_matA}
  &=&
  \matXi'''_n \matP''_n \pmat{\matU^{*T}_m \matP^*\\ \matNull}
\end{eqnarray}
where we applied \lemmaref{\ref{lemma_DP}} and fused sign matrices and
permutation matrices, respectively.

We obtain (on the way omitting primes, leaving out some matrix sizes,
replacing $\matU^* \coloneqq \matU_m^*$, and integrating signs into
eigenvectors)
\begin{equation}\label{eq_fp_N2}
  \matWnull
  = \matV \matP \pmat{\matU^{*T}\matP^*\\ \matNull}
  = \matV \matP \pmat{\matU\\ \matNull}.
\end{equation}
Note that this solution resembles (\ref{eq_fp_oja_mod}) for special
case T, but here $\matU^*$ is not arbitrary but is a block-diagonal
orthogonal matrix with a shape depending on the multiplicity of
entries in $\matDnull$. Post-multiplication by $\matP^*$ permutes the
rows of $\matU^{*T}$ and thus the columns of $\matU^*$.

For the special case of pairwise different elements in $\matDnull$,
the orthogonal blocks in $\matU^*$ are of size $1 \times 1$ and can
only be $\pm 1$, thus $\matU^* = \matXi^*$. The diagonal sign matrix
$\matXi^*$ and the permutation matrix $\matP^*$ can then be integrated
into $\matP$ and $\matV$ as described above such that we obtain
\begin{equation}\label{eq_fp_N2_special}
\matWnull = \matV \matP \pmat{\matI_m\\ \matNull}
\end{equation}
which relates to (\ref{eq_fp_N2}) by $\matU = \matI_m$.

The constraint $\matWnull^T\matWnull = \matI_m$ is fulfilled which can
be shown as for special case T.

\subsubsection{Constraint in Special Case N2}\label{sec_N2_constraint}

Besides $\matU^*$ being block-diagonal (depending on the multiplicity
of the diagonal elements of $\matDnull$), there is another constraint
on $\matU^*$. We look at
\begin{eqnarray}
  \matDnull
  &=&
  \Diag{j=1}{m}\{\vecwnull_j^T \matC \vecwnull_j\}\\
  &=&
  \Diag{j=1}{m}\{\vece_j^T \matWnull^T \matC \matWnull \vece_j\}\\
  &=&
  \dg\{\matWnull^T \matC \matWnull\}.
\end{eqnarray}
On the one hand, we have with (\ref{eq_fp_N2})
\begingroup
\setlength\arraycolsep{3pt}
\begin{align}
  &\matH\\
  &\coloneqq
  \matWnull^T \matC \matWnull\\
  &=
  \pmat{\matP^{*T}\matU^* & \matNull} \matP^T \matV^T
  \matV \matLambda \matV^T
  \matV \matP \pmat{\matU^{*T} \matP^*\\ \matNull}\\
  &=
  \pmat{\matP^{*T} \matU^* & \matNull}
  \matLambda^*
  \pmat{\matU^{*T} \matP^*\\ \matNull}\\
  &=
  \pmat{\matP^{*T} \matU^* & \matNull}
  \pmat{\hat{\matLambda}^* & \matNull\\ \matNull & \check{\matLambda}^*}
  \pmat{\matU^{*T} \matP^*\\ \matNull}\\
  &=
  \matP^{*T} \matU^* \hat{\matLambda}^* \matU^{*T} \matP^*\\
  &=
  \matP^{*T}
  \pmat{
    \matU^{*'}_1  & \matNull   & \ldots & \matNull\\
    \matNull   & \matU^{*'}_2  & \ldots & \matNull\\
    \vdots     & \vdots     & \ddots & \vdots\\
    \matNull   & \matNull   & \ldots & \matU^{*'}_k}
  \pmat{
    \hat{\matLambda}^*_1  & \matNull  & \ldots & \matNull\\
    \matNull  & \hat{\matLambda}^*_2  & \ldots & \matNull\\
    \vdots    & \vdots    & \ddots & \vdots\\
    \matNull  & \matNull  & \ldots & \hat{\matLambda}^*_k}  
  \pmat{
    \matU^{*'T}_1  & \matNull  & \ldots & \matNull\\
    \matNull  & \matU^{*'T}_2  & \ldots & \matNull\\
    \vdots    & \vdots    & \ddots & \vdots\\
    \matNull  & \matNull  & \ldots & \matU^{*'T}_k}
  \matP^*\\
  &=
  \matP^{*T}
  \pmat{
    \matU^{*'}_1 \hat{\matLambda}^*_1 \matU^{*'T}_1 & \matNull &\ldots &\matNull\\
    \matNull & \matU^{*'}_2 \hat{\matLambda}^*_2 \matU^{*'T}_2 &\ldots &\matNull\\
    \vdots    & \vdots   & \ddots & \vdots\\
    \matNull  & \matNull &\ldots &\matU^{*'}_k\hat{\matLambda}^*_k \matU^{*'T}_k}
  \matP^*\\
  &=
  \matP^{*T}
  \underbrace{\pmat{
    \matH^{*'}_1  & \matNull  & \ldots & \matNull\\
    \matNull  & \matH^{*'}_2  & \ldots & \matNull\\
    \vdots    & \vdots    & \ddots & \vdots\\
    \matNull  & \matNull  & \ldots & \matH^{*'}_k}}_{\matH^*}
  \matP^*\\
  \label{eq_H_Hs}
  &=
  \matP^{*T} \matH^* \matP^*
\end{align}
\endgroup
where
\begin{equation}
  \matH^{*'}_l = \matU^{*'}_l \hat{\matLambda}^*_l \matU^{*'T}_l,
  \qquad l = 1,\ldots,k.
\end{equation}
If we only look at the diagonal elements, we get with
\lemmaref{\ref{lemma_perm_dg}}
\begin{align}
  \matDnull
  &=
  \dg\{\matWnull^T\matC\matWnull\}\\
  &=
  \dg\{\matP^{*T}\matH^*\matP^*\}\\
  &=
  \matP^{*T} \dg\{\matH^*\} \matP^*.
 \end{align}
On the other hand, we assumed
\begin{eqnarray}
  \matDnull
  &=&
  \matP^{*T} \matD^* \matP^*\\
  &=&
  \matP^{*T}
  \pmat{
    \overline{d}^{*'}_1\matI_1 & \matNull & \ldots & \matNull\\
    \matNull & \overline{d}^{*'}_2\matI_2 & \ldots & \matNull\\
    \vdots   & \vdots   & \ddots & \vdots\\
    \matNull & \matNull & \ldots & \overline{d}^{*'}_k \matI_k}
  \matP^*
\end{eqnarray}
where we express the blocks of $\matD^*$ as
\begin{equation}
\matD^{*'}_l = \overline{d}^{*'}_l\matI_l \qquad l = 1,\ldots,k.
\end{equation}
This leads to the constraints
\begin{eqnarray}
  \matP^{*T} \dg\{\matH^*\} \matP^*
  &=&
  \matP^{*T} \matD^* \matP^*\\
  \dg\{\matH^*\}
  &=&
  \matD^*\\
  \dg\{\matH^{*'}_l\}
  &=&
  \matD^{*'}_l,\quad l=1,\ldots,k\\
  \label{eq_N2_constraint_H}
  \dg\{\matH^{*'}_l\}
  &=&
  \overline{d}^{*'}_l \matI_l,\quad l=1,\ldots,k\\
  \label{eq_N2_constraint_U}
  \dg\{\matU^{*'}_l \hat{\matLambda}^*_l \matU^{*'T}_l\}
  &=&
  \overline{d}^{*'}_l \matI_l,\quad l=1,\ldots,k
\end{eqnarray}
i.e. the diagonal elements of $\matH^{*'}_l = \matU^{*'}_l
\hat{\matLambda}^*_l \matU^{*'T}_l$ must be $\overline{d}^{*'}_l$, or,
put differently, $\matH^{*'}_l$ has the identical diagonal elements
$\overline{d}^{*'}_l$ (which are strictly positive). Note that
$\matH^{*'}_l$ can't be diagonal, since an orthogonal similarity
transformation between two diagonal matrices can only permute the
diagonal elements (\lemmaref{\ref{lemma_RTDR_Ds}}), but not change
their value (such that they would become identical to each other).

At the moment we can only say that it is possible to find matrices
$\matU^{*'}_l$ which fulfill this constraint. Hadamard matrices of
size $s_l$ (size of block $l$) with a factor of $1/\sqrt{s_l}$ are one
choice.\footnote{At \url{https://math.stackexchange.com/3590184},
  'user8675309' kindly provided an answer for the complex domain.}
However, Hadamard matrices are not available for all sizes and it is
apparently not even clear for which sizes they exist.\footnote{See
  \url{https://en.wikipedia.org/wiki/Hadamard_matrix}.} Further
insights on the set of solutions $\matU^{*'}_l$ are presently not
available.

As a side remark, the orthogonal similarity transformation preserves
the trace, so
\begin{equation}
  \tr\{\matU^{*'}_l \hat{\matLambda}^*_l \matU^{*'T}_l\}
  =
  \tr\{\hat{\matLambda}^*_l\}
  =
  \summe{j=1}{s_l} \hat{\lambda}^*_{l_j}
  =
  \tr\{\matH^{*'}_l\}
  =
  s_l \overline{d}^{*'}_l,\quad l=1,\ldots,k,
\end{equation}
where $s_l$ is the size of block $l$, and therefore
\begin{equation}
\overline{d}^{*'}_l = \frac{1}{s_l} \summe{j=1}{s_l} \hat{\lambda}^*_{l_j},
\quad l=1,\ldots,k.
\end{equation}

\subsubsection{Test of Solution of Special Case N2}\label{sec_N2_test}

To test our solution, we insert the solution for $\matA$
(\ref{eq_N2_solution_matA}) (omitting primes) into
(\ref{eq_N2_intermediate}). We left- and right-multiply by orthogonal
matrices several times:
\begin{eqnarray}
  \matNull
  &=&
  \matLambda \matAnull \matDnull
  - \matAnull \matDnull \matAnull^T \matLambda \matAnull\\
  \matNull
  &=&
  \matLambda
  \matXi \matP \pmat{\matU^{*T}_m\matP^*\\\matNull}
  \matDnull\\
  &-&
  \matXi \matP \pmat{\matU^{*T}_m\matP^*\\\matNull}
  \matDnull
  \pmat{\matP^{*T}\matU^*_m&\matNull} \matP^T \matXi^T
  \matLambda
  \matXi \matP \pmat{\matU^{*T}_m\matP^*\\\matNull}\\
  \matNull
  &=&
  \matXi^T
  \matLambda
  \matXi \matP \pmat{\matU^{*T}_m\matP^*\\\matNull}
  \matDnull\\
  &-&
  \matXi^T \matXi \matP \pmat{\matU^{*T}_m\matP^*\\\matNull}
  \matDnull
  \pmat{\matP^{*T}\matU^*_m&\matNull} \matP^T
  \matLambda
  \matP \pmat{\matU^{*T}_m\matP^*\\\matNull}\\
  \matNull
  &=&
  \matLambda
  \matP \pmat{\matU^{*T}_m\matP^*\\\matNull}
  \matDnull\\
  &-&
  \matP \pmat{\matU^{*T}_m\matP^*\\\matNull}
  \matDnull
  \pmat{\matP^{*T}\matU^*_m&\matNull}
  \matLambda^*
  \pmat{\matU^{*T}_m\matP^*\\\matNull}\\
  \matNull
  &=&
  \matP^T \matLambda \matP
  \pmat{\matU^{*T}_m\matP^*\\\matNull}
  \matDnull\\
  &-&
  \matP^T \matP \pmat{\matU^{*T}_m\matP^*\\\matNull}
  \matDnull
  \pmat{\matP^{*T}\matU^*_m&\matNull}
  \matLambda^*
  \pmat{\matU^{*T}_m\matP^*\\\matNull}\\
  \matNull
  &=&
  \matLambda^*
  \pmat{\matU^{*T}_m\matP^*\\\matNull}
  \matDnull
  -
  \pmat{\matU^{*T}_m\matP^*\\\matNull}
  \matDnull
  \matP^{*T}\matU^*_m \hat{\matLambda}^* \matU^{*T}_m\matP^*\\
  \matNull
  &=&
  \hat{\matLambda}^*
  \matU^{*T}_m\matP^*
  \matDnull
  -
  \matU^{*T}_m\matP^*
  \matDnull
  \matP^{*T}\matU^*_m \hat{\matLambda}^* \matU^{*T}_m\matP^*\\
  \matNull
  &=&
  \hat{\matLambda}^*
  \matU^{*T}_m
  \underbrace{
  \matP^*
  \matDnull
  \matP^{*T}}_{\matDnull^*}
  \matU^*_m
  -
  \matU^{*T}_m
  \underbrace{
  \matP^*
  \matDnull
  \matP^{*T}
  }_{\matDnull^*}
  \matU^*_m \hat{\matLambda}^*\\
  \matNull
  &=&
  \hat{\matLambda}^*
  \matDnull^*
  -
  \matDnull^*
  \hat{\matLambda}^*
\end{eqnarray}
where we used (\ref{eq_N2_matDnull_star}) in the next-to-last step and
applied \lemmaref{\ref{lemma_blockdiag_diag}} in the last step; the
latter is possible since $\matDnull^*$ has the shape specified in
\lemmaref{\ref{lemma_commute_blockdiag}}. Since diagonal matrices are
exchangeable within a product, this confirms our solution.

Surprisingly, while the solution is only fulfilled if the
block-diagonal shape of $\matU^*_m$ is taken into account, the
additional constraint (\ref{eq_N2_constraint_U}) on $\matU^*_m$ from
section \ref{sec_N2_constraint} is not required.

\subsubsection{Analysis of Special Case N2}

The analysis of special case N2 is similar to that of special case N1
(see section \ref{sec_analysis_N1}; we take a slightly different
path). From (\ref{eq_fp_N2}), we extract column
\begin{equation}\label{eq_fp_N2_vec}
  \vecwnull_j = \matV \matP \pmat{\matU\vece_j\\ \vecnull}
\end{equation}
where $\vece_j$ is the unit vector with element $1$ at position
$j$. We insert this into the objective function (\ref{eq_objfct_new}):
\begin{eqnarray}
  J(\matWnull)
  &=&
  \quarter \summe{j=1}{m}
  (\vecwnull_j^T \matC \vecwnull_j)^2\\
  &=&
  \quarter \summe{j=1}{m}
  (\vecwnull_j^T \matV \matLambda \matV^T \vecwnull_j)^2\\
  &=&
  \quarter \summe{j=1}{m}
  \left[
    \pmat{\vece_j^T \matU^T & \vecnull^T}
    \matP^T \matV^T \matV \matLambda
    \matV^T \matV \matP \pmat{\matU \vece_j\\\vecnull}
  \right]^2\\
  &=&
  \quarter \summe{j=1}{m}
  \left[
    \pmat{\vece_j^T \matU^T & \vecnull^T}
    \matP^T \matLambda \matP
    \pmat{\matU \vece_j\\\vecnull}
  \right]^2\\
  &=&
  \quarter \summe{j=1}{m}
  \left[
    \pmat{\vece_j^T \matU^T & \vecnull^T}
    \matLambda^*
    \pmat{\matU \vece_j\\\vecnull}
  \right]^2\\
  &=&
  \quarter \summe{j=1}{m}
  \left[
    \pmat{\vece_j^T \matU^T & \vecnull^T}
    \pmat{\hat{\matLambda}^* & \matNull\\ \matNull & \check{\matLambda}^*}  
    \pmat{\matU \vece_j\\\vecnull}
  \right]^2\\
  &=&
  \quarter \summe{j=1}{m}
  \left[
    \vece_j^T \matU^T \hat{\matLambda}^* \matU \vece_j
  \right]^2\\
  &=&
  \quarter \summe{j=1}{m} \left[(\matU^T \hat{\matLambda}^* \matU)_{jj}\right]^2.
\end{eqnarray}
According to \lemmaref{\ref{lemma_RTDR_ii_sqr}}, the maximum is
achieved if $\matU$ is a signed permutation matrix: $\matU = \matXi
\matP'$. In this case, $\matU^T \hat{\matLambda}^* \matU$ is
diagonal. We would end with the same equation as
(\ref{eq_N1_solution_max}) in case N1. It is presently not clear what
conclusions can be drawn from this analysis. First, in contrast to
case N1, matrix $\matU$ is constrained. It is not even clear whether
the maximum can be reached under this constraint. It is doubtful
whether this analysis indicates spurious solutions (as in case N1),
also since spurious solutions do not appear in TwJ2 and TwC2.

The analysis above shows that the {\em global} maximum of $J$ for
solution (\ref{eq_fp_N2_vec}) is achieved if $\hat{\matLambda}^*$
contains the largest eigenvalues from $\matLambda$, regardless of
their order (in contrast to special case TwJ2), thus there exist
several fixed points which reach the global maximum. Other fixed
points lead to lower values of $J$. Stability of the fixed points has
to be checked in a separate step.


\section{Behavior of Constrained Objective Functions at the Critical Points}
\label{sec_behavior}

\subsection{Introduction}\label{sec_behavior_intro}

In the following sections we perform a local analysis of the behavior
of the objective functions on the Stiefel manifold at its critical
points. We only look at special cases TwJ and N, since special case T
has no isolated critical points, and special case TwC uses a weighted
constraint instead of the Stiefel constraint.

This analysis is based on the following approach:
\begin{enumerate}
\item Let the critical point be denoted by $\matWnull$. We describe a
  step away from the critical point on a tangent direction $\matDelta$
  on the Stiefel manifold. This leads to a point $\matW' = \matWnull +
  \matDelta$.
\item The step on a tangent direction is completely parametrized by a
  skew-symmetric matrix $\matA$ ($m \times m$) and a matrix $\matB$
  ($(n-m) \times m$). With a matrix $\matWnull_\perp$ ($n \times
  (n-m)$) complementing $\matWnull$ to size $n \times n$, we can
  express the step as $\matDelta = \matWnull \matA + \matWnull_\perp
  \matB$ (\lemmaref{\ref{lemma_stiefel_tangent_para}}).
\item The point $\matW'$ reached by a step on the tangent space at
  $\matWnull$ can be exactly projected back onto the Stiefel manifold
  by $\matW = \matW' (\matW'^T \matW')^{-\half}$
  (\lemmaref{\ref{lemma_stiefel_svd}}).
\item For the local analysis, we approximate this for small steps
  $\matDelta$ by $\matW \approx \matW' - \half \matWnull \matDelta^T
  \matDelta$ (\lemmaref{\ref{lemma_stiefel_proj_appr}}).
\item If we insert the parametrized $\matDelta$ into the last
  equation, we obtain $ \matW \approx \matWnull (\matI + \matA -\half
  [\matA^T\matA + \matB^T\matB]) + \matW_\perp \matB$
  (\lemmaref{\ref{lemma_stiefel_proj_appr_tangent}}).
\item We analyze the change in the objective function $\Delta J =
  J(\matW) - J(\matWnull)$ on the manifold. We omit terms above second
  order in $\matA$ and $\matB$, since these are dominated by the
  lower-order terms for small steps.
\item We analyze which critical points are local maxima. At these
  critical points, we have $\Delta J < 0$ for {\em arbitrary} choices
  of $\matA$ and $\matB$ (if not both are zero). In one case we show
  that $\Delta J > 0$ for {\em specific} choices of $\matA$ and
  $\matB$ which proves the existence of a local minimum or a saddle
  point.
\end{enumerate}
What is presently unknown is the relation between the statements
derived from this analysis and the actual convergence behavior of the
different learning rules. We can only assume that the statements also
affect the behavior of the learning rules since these are derived from
gradients of the objective functions under consideration of the
Stiefel constraint. However, one can probably show that the weight
changes of all learning rules lie in the tangent space, so even if we
start on the Stiefel manifold, a learning step will lead to a point
away from the manifold. Some learning rules may have the ability to
move back towards the Stiefel manifold under some circumstances
(particularly small deviations from the manifold), other learning
rules may require explicit steps for the back-projection onto the
manifold (either exact or approximated, see above). While the relation
may be obvious for the ``short forms'' of the learning rules (section
\ref{sec_derivation_short}) due to their direct relationship to the
fixed-point equations, it may be more complex for the ``long forms''
(section \ref{sec_derivation_long}) which are derived by computing
another derivative.

\subsection{Preparatory Computations}

The following preparatory computations concern both cases analyzed
below. We explore the behavior of the objective functions in the
vicinity of the critical points
\begin{equation}\label{eq_putative_fp}
\matWnull = \matV \matP \pmat{\matU_m\\ \matNull}.
\end{equation}
(where $\matV$ contains all eigenvectors associated with eigenvalues
in $\matLambda$ in descending order, $\matU_m$ is an orthogonal
matrix, and $\matP$ a permutation matrix) where the objective function
could have a minimum, maximum, or a saddle point on the
manifold. These critical points are obtained for special case TwJ2
(\ref{eq_fp_TwJ2}) where $\matU_m = \matI_m$ and for special case N2
(\ref{eq_fp_N2}) where $\matU_m = \matU^{*T}\matP^*$ with
block-diagonal $\matU^*$ and the constraint from section
\ref{sec_N2_constraint}. Note that the fixed-point equation
(\ref{eq_fp_TwJ2}) for case TwJ2 coincides with the fixed-point
equation of the gradient of the traditional objective function
(\ref{eq_objfct}) on the Stiefel manifold in the canonical metric
(\ref{eq_ode_TSC}), and the fixed point equation for case N2 coincides
with the fixed-point equation of the gradient of the novel objective
function (\ref{eq_objfct_new}) on the Stiefel manifold in the
canonical metric (\ref{eq_ode_NSC}).

For the derivation we have to chose a matrix $\matWnull_\perp$ which
complements $\matWnull$ to an orthogonal matrix:
\begin{equation}
\matWnull_\perp = \matV \matP \pmat{\matNull\\ \matI_{n-m}}.
\end{equation}
Using equation (\ref{eq_stiefel_proj_appr_tangent}) from
\lemmaref{\ref{lemma_stiefel_proj_appr_tangent}} we compute a weight
matrix $\matW$ which lies approximately on the Stiefel manifold in the
vicinity of $\matWnull$, where $\matA$ (skew-symmetric of size $m
\times m$) and $\matB$ (of size $(n-m) \times m$) parametrize a small
deviation along a tangent direction:
\begin{eqnarray}
  \matW
  &\approx&
  \matWnull
  \underbrace{\left(\matI + \matA
    - \half [\matA^T \matA + \matB^T \matB]\right)}_{\matF}
  + \matWnull_\perp \matB\\
  &=&
  \matV \matP \pmat{\matU_m\\ \matNull} \matF
  + \matV \matP \pmat{\matNull\\ \matI_{n-m}} \matB\\
  &=&
  \label{eq_approxW_prep}
  \matV \matP
  \left\{
  \pmat{\matU_m\\ \matNull} \matF
  + \pmat{\matNull\\ \matI_{n-m}} \matB
  \right\}
\end{eqnarray}
where $\matF$ (of size $m \times m$) is
\begin{equation}\label{eq_matF}
  \matF = \matI + \matA - \half [\matA^T \matA + \matB^T \matB].
\end{equation}
The following expression appears for the value of the objective
functions at $\matWnull$ from (\ref{eq_putative_fp}):
\begin{eqnarray}
  \matH \coloneqq \matWnull^T \matC \matWnull
  &=&
  \pmat{\matU_m^T & \matNull}
  \matP^T \matV^T \matC \matV \matP
  \pmat{\matU_m\\ \matNull}\\
  &=&
  \pmat{\matU_m^T & \matNull}
  \matP^T \matLambda \matP
  \pmat{\matU_m\\ \matNull}\\
  &=&
  \pmat{\matU_m^T & \matNull}
  \matLambda^*
  \pmat{\matU_m\\ \matNull}\\
  &=&
   \pmat{\matU_m^T & \matNull}
  \pmat{\hat{\matLambda}^* & \matNull\\ \matNull & \check{\matLambda}^*}
  \pmat{\matU_m\\ \matNull}\\
  &=&
  \label{eq_second_term_prep}
  \matU_m^T \hat{\matLambda}^* \matU_m
\end{eqnarray}
where $\matLambda^*$ is a permuted version of $\matLambda$,
$\hat{\matLambda}^*$ is the upper left $m \times m$ portion of
$\matLambda^*$, and $\matH$ represents the transformed version of
$\hat{\matLambda}^*$. Note that $\matH$ is symmetric.

The following expression appears for the value of the objective
functions at the approximated $\matW$ from (\ref{eq_approxW_prep}):
\begin{align}
  \nonumber
  &\matW^T \matC \matW\\
  &=
  \left\{
  \pmat{\matU_m\\ \matNull} \matF + \pmat{\matNull\\ \matI_{n-m}} \matB
  \right\}^T
  \matP^T \matV^T \matC \matV \matP
  \left\{
  \pmat{\matU_m\\ \matNull} \matF
  + \pmat{\matNull\\ \matI_{n-m}} \matB
  \right\}\\
  &=
  \left\{
  \pmat{\matU_m\\ \matNull} \matF
  + \pmat{\matNull\\ \matI_{n-m}} \matB
  \right\}^T
  \matLambda^*
  \left\{
  \pmat{\matU_m\\ \matNull} \matF
  + \pmat{\matNull\\ \matI_{n-m}} \matB
  \right\}\\
  &=
  \matF^T \pmat{\matU_m^T & \matNull}
  \matLambda^*
  \pmat{\matU_m\\ \matNull} \matF
  +
  \matB^T \pmat{\matNull & \matI_{n-m}}
  \matLambda^*
  \pmat{\matNull\\ \matI_{n-m}} \matB\\
  &=
  \matF^T \matU_m^T \hat{\matLambda}^* \matU_m \matF
  +
  \matB^T \check{\matLambda}^* \matB\\
  \label{eq_first_term_prep}
  &=
  \matF^T \matH \matF
  +
  \matB^T \check{\matLambda}^* \matB  
\end{align}
where $\check{\matLambda}^*$ denotes the lower right $(n-m) \times
(n-m)$ portion of $\matLambda^*$.

For the first term we insert (\ref{eq_matF}):
\begin{align}
  \nonumber
  &\matF^T \matH \matF\\[5mm]
  &=
  \left(\matI + \matA^T - \half [\matA^T \matA + \matB^T \matB]\right)
  \matH
  \left(\matI + \matA - \half [\matA^T \matA + \matB^T \matB]\right)\\[5mm]
  \nonumber
  &=
  \matH
  + \matA^T \matH
  - \half (\matA^T \matA + \matB^T \matB) \matH\\
  \nonumber
  &+
  \matH \matA
  + \matA^T \matH \matA
  - \half (\matA^T \matA + \matB^T \matB) \matH \matA\\
  \nonumber
  &-
  \half \matH (\matA^T \matA + \matB^T \matB)
  - \half \matA^T \matH (\matA^T \matA + \matB^T \matB)\\
  &+
  \quarter (\matA^T \matA + \matB^T \matB)
  \matH
  (\matA^T \matA + \matB^T \matB)\\[5mm]
  \nonumber
  &\approx
  \matH
  + \matA^T \matH
  - \half (\matA^T \matA + \matB^T \matB) \matH\\
  \label{eq_FTLF_prep}
  &+
  \matH \matA
  + \matA^T \matH \matA
  - \half \matH (\matA^T \matA + \matB^T \matB)
\end{align}
where we omitted terms above second order (in $\matA$ and $\matB$) in
the last step.

\subsection{Special Case TwJ}

We analyze the behavior of objective function (\ref{eq_objfct})
\begin{equation}
  J(\matW) = \half \tr\{\matW^T \matC \matW \matTheta\}
\end{equation}
in the vicinity of the critical points (\ref{eq_putative_fp}). For
special case TwJ, we have only critical points where $\matU_m =
\matI_m$ and therefore $\matH = \hat{\matLambda}^*$; see section
\ref{sec_TwJ2_solution}. For that we determine the change of the
objective function from $\matWnull$ to $\matW$:
\begin{eqnarray}
  \Delta J
  &=&
  J(\matW) - J(\matWnull)\\
  &=&
  \half \tr\{\matW^T \matC \matW \matTheta\}
  -
  \half \tr\{\matWnull^T \matC \matWnull \matTheta\}.
\end{eqnarray}
For the second term we obtain with (\ref{eq_second_term_prep}):
\begin{equation}
  \tr\{\matWnull^T \matC \matWnull \matTheta\}
  =
  \tr\{\hat{\matLambda}^* \matTheta\}.
\end{equation}
For the first term we get with (\ref{eq_first_term_prep}):
\begin{equation}
  \tr\{\matW^T \matC \matW \matTheta\}
  =
  \tr\{\matF^T \hat{\matLambda}^* \matF \matTheta\}
  +
  \tr\{\matB^T \check{\matLambda}^* \matB \matTheta\}.
\end{equation}
We use (\ref{eq_FTLF_prep}) to further process
\begin{align}
  \nonumber
  \tr&\left\{\matF^T \hat{\matLambda}^* \matF \matTheta\right\}\\
  \nonumber
  \approx
  \tr&\left\{
  \left[\matH + \matA^T \matH + \matH \matA
    - \half (\matA^T \matA + \matB^T \matB) \matH
    - \half \matH (\matA^T \matA + \matB^T \matB)
  + \matA^T \matH \matA\right] \matTheta\right\}\\
  \nonumber
  =
  \tr&\left\{
  \left[\hat{\matLambda}^*
  - (\matA^T \matA + \matB^T \matB) \hat{\matLambda}^*
  + \matA^T \hat{\matLambda}^* \matA\right] \matTheta\right\}\\
  =
  \tr&\left\{
  \hat{\matLambda}^* \matTheta
  - (\matA^T \matA + \matB^T \matB) \hat{\matLambda}^* \matTheta
  + \matA^T \hat{\matLambda}^* \matA \matTheta\right\}
\end{align}
where we replaced $\matH = \hat{\matLambda}^*$, applied the invariance
of the trace to cyclic rotation and transposition and exchanged the
order of diagonal matrices. The linear terms disappear since $\matA$
is skew-symmetric and $\matH$ diagonal
(\lemmaref{\ref{lemma_tr_AskewD}}), also confirming that we actually
are at a critical point.

We can now continue with
\begin{align}
  \Delta J
  &=
  \half \tr\{\matF^T\hat{\matLambda}^*\matF\matTheta
  + \matB^T\check{\matLambda}^*\matB\matTheta\}
  -\half \tr\{\hat{\matLambda}^*\matTheta\}\\
  &\approx
  \half \tr\left\{
  (\matA^T \hat{\matLambda}^* \matA \matTheta
  - \matA^T \matA \hat{\matLambda}^* \matTheta)
  +
  (\matB^T\check{\matLambda}^*\matB\matTheta
  - \matB^T \matB \hat{\matLambda}^* \matTheta)
  \right\}.
\end{align}
Since $\matA$ and $\matB$ are independent perturbations, we can
analyze the terms separately. Note, however, that the individual
elements of $\matA$ are not independent since $\matA$ is
skew-symmetric; this motivates the splitting of the terms below.

For the terms containing $\matA$ (of size $m \times m$) we get with
\lemmaref{\ref{lemma_tr_ATDAOmega}} and \lemmaref{\ref{lemma_tr_ATAD}}
\begin{align}
  \nonumber
  &\tr\left\{\matA^T \hat{\matLambda}^* \matA \matTheta
  - \matA^T \matA \hat{\matLambda}^* \matTheta\right\}\\
  &=
  \summe{i=1}{m}\summe{k=1}{m} A^2_{ki} \hat{\lambda}^*_k \theta_i
  - \summe{i=1}{m}\summe{k=1}{m} A^2_{ki} \hat{\lambda}^*_i \theta_i\\
  &=
  \summe{i=1}{m}\summe{k=1}{m}
  A^2_{ki} (\hat{\lambda}^*_k - \hat{\lambda}^*_i) \theta_i\\
  &=
  \underbrace{\summe{i=1}{m}\summe{k=i+1}{m}
    A^2_{ki} (\hat{\lambda}^*_k - \hat{\lambda}^*_i) \theta_i}_{i < k}
  +
  \underbrace{\summe{k=1}{m}\summe{i=k+1}{m}
    A^2_{ki} (\hat{\lambda}^*_k - \hat{\lambda}^*_i) \theta_i}_{i > k}\\
  &=
  \summe{i=1}{m}\summe{k=i+1}{m}
  A^2_{ki} (\hat{\lambda}^*_k - \hat{\lambda}^*_i) \theta_i
  +
  \summe{i=1}{m}\summe{k=i+1}{m}
    A^2_{ik} (\hat{\lambda}^*_i - \hat{\lambda}^*_k) \theta_k\\
  &=
  \summe{i=1}{m}\summe{k=i+1}{m}
  A^2_{ki} (\hat{\lambda}^*_k - \hat{\lambda}^*_i) \theta_i
  -
  \summe{i=1}{m}\summe{k=i+1}{m}
  A^2_{ki} (\hat{\lambda}^*_k - \hat{\lambda}^*_i) \theta_k\\
  &=
  \summe{i=1}{m}\summe{k=i+1}{m}
  A^2_{ki} (\hat{\lambda}^*_k - \hat{\lambda}^*_i) (\theta_i - \theta_k)
\end{align}
where we renamed indices and considered that $\matA$ is skew-symmetric
and therefore $A^2_{ki} = A^2_{ik}$ and $A_{ii} = 0$. Note that to
confirm a maximum at $\matWnull$, we have to show that $\Delta J < 0$
for arbitrary non-zero perturbations. Since this includes perturbations
where only a single element of $\matA$ is non-zero, every term of the
sum has to be negative. We see that each term is negative iff either
($\hat{\lambda}^*_i < \hat{\lambda}^*_k$ and $\theta_i < \theta_k$) or
($\hat{\lambda}^*_i > \hat{\lambda}^*_k$ and $\theta_i >
\theta_k$). Since we assumed in equation (\ref{eq_order_theta})
that $\theta_1 > \ldots > \theta_m$, we can conclude that each term is
negative iff $\hat{\lambda}^*_1 > \ldots > \hat{\lambda}^*_m$, thus
the permutation described by $\matP$ sorts the first $m$ eigenvectors
in descending order of their associated eigenvalues.

For the terms containing $\matB$ (of size $(n-m)\times m$) we get with
\lemmaref{\ref{lemma_tr_ATDAOmega}} and \lemmaref{\ref{lemma_tr_ATAD}}
\begin{align}\
  \nonumber
  &\tr\left\{
  \matB^T\check{\matLambda}^*\matB\matTheta
  -
  \matB^T \matB \hat{\matLambda}^* \matTheta\right\}\\
  &=
  \summe{i=1}{m} \summe{k=1}{n-m}
  B^2_{ki} \check{\lambda}^*_k \theta_i
  -
  \summe{i=1}{m} \summe{k=1}{n-m}
  B^2_{ki} \hat{\lambda}^*_i \theta_i\\
  &=
  \summe{i=1}{m} \summe{k=1}{n-m}
  B^2_{ki} (\check{\lambda}^*_k - \hat{\lambda}^*_i) \theta_i.
\end{align}
We see that each term is negative iff $\check{\lambda}^*_k <
\hat{\lambda}^*_i$ for all $k \in [1,n-m]$ and $i \in [1,m]$, thus the
permutation described by $\matP$ sorts the eigenvectors such that the
associated eigenvalues of the first $m$ eigenvectors are larger than
those of the last $n-m$ eigenvectors.

In summary, a local maximum is present if the two conditions above are
fulfilled. In all other cases, we have a saddle point or a local
minimum since we can find directions (choices of $\matA$ and $\matB$)
where $\Delta J > 0$.

\subsection{Special Case N}\label{sec_N}

We analyze the behavior of the novel objective function
(\ref{eq_objfct_new})
\begin{equation}
J(\matW) = \quarter \summe{j=1}{m} (\vecw_j^T \matC \vecw_j)^2
\end{equation}
in the vicinity of the critical points (\ref{eq_putative_fp}). For
that we determine the change of the objective function from
$\matWnull$ to $\matW$:
\begin{eqnarray}
  \Delta J
  &=&
  J(\matW) - J(\matWnull)\\
  \label{eq_N_deltaJ}
  &=&
  \quarter \summe{j=1}{m} (\vecw_j^T \matC \vecw_j)^2
  -
  \quarter \summe{j=1}{m} (\vecwnull_j^T \matC \vecwnull_j)^2.
\end{eqnarray}
We use the following expressions:
\begin{align}
  \vecwnull_j &= \matWnull \vece_j\\
  \vecw_j &= \matW \vece_j
\end{align}
where $\vece_j$ is the $m$-element unit vector with a $1$-element at
position $j$.

At this point we separately analyze
\begin{enumerate}
\item the special case where $\matDnull$ has pairwise different
  elements and thus $\matU_m = \matI_m$ and $\matH =
  \hat{\matLambda}^*$, treated in section \ref{sec_N_special_case}, and
\item the general case where some elements in $\matDnull$ may coincide
  and thus $\matH = \matU_m^T \hat{\matLambda}^* \matU_m$, treated in
  section \ref{sec_N_general_case},
\end{enumerate}
see section \ref{sec_N2_solution}.

\subsubsection{Special Case: Pairwise Different Elements}
\label{sec_N_special_case}

We first look at the second term of (\ref{eq_N_deltaJ}) where we apply
(\ref{eq_second_term_prep}):
\begin{align}
  \nonumber
  &\vecwnull_j^T \matC \vecwnull_j\\
  &= \vece_j^T \matWnull^T \matC \matWnull \vece_j\\
  &= \vece_j^T \hat{\matLambda}^* \vece_j \\
  &= \hat{\lambda}^*_j.
\end{align}
For the squared expression we obtain:
\begin{equation}
  (\vecwnull_j^T \matC \vecwnull_j)^2 = \hat{\lambda}^{*^2}.
\end{equation}
We now look at the first term of (\ref{eq_N_deltaJ}) where we apply
(\ref{eq_first_term_prep}):
\begin{align}
  &\vecw_j^T \matC \vecw_j\\
  &= \vece_j^T \matW^T \matC \matW \vece_j\\
  &= \vece_j^T \matF^T \hat{\matLambda}^* \matF \vece_j
  +
  \vece_j^T \matB^T \check{\matLambda}^* \matB \vece_j.
\end{align}
For the squared expression we obtain:
\begin{align}
  \nonumber
  &(\vecw_j^T \matC \vecw_j)^2\\
  &=
  (\vece_j^T \matF^T \hat{\matLambda}^* \matF \vece_j
  +
  \vece_j^T \matB^T \check{\matLambda}^* \matB \vece_j)^2\\
  \label{eq_N_squared_terms1}
  &=
  (\vece_j^T \matF^T \hat{\matLambda}^* \matF \vece_j)^2
  +
  2 (\vece_j^T \matF^T \hat{\matLambda}^* \matF \vece_j)
  (\vece_j^T \matB^T \check{\matLambda}^* \matB \vece_j)
  +
  (\vece_j^T \matB^T \check{\matLambda}^* \matB \vece_j)^2.
\end{align}
The first expression of (\ref{eq_N_squared_terms1}) is further
processed using (\ref{eq_FTLF_prep}) by:
\begin{align}
  \nonumber
  & \vece_j^T \matF^T \hat{\matLambda}^* \matF \vece_j\\[5mm]
  \nonumber
  &\approx
  \vece_j^T \hat{\matLambda}^* \vece_j
  + \vece_j^T \matA^T \hat{\matLambda}^* \vece_j
  - \half \vece_j^T (\matA^T \matA + \matB^T \matB) \hat{\matLambda}^* \vece_j\\
  &+
  \vece_j^T\hat{\matLambda}^* \matA \vece_j
  + \vece_j^T \matA^T \hat{\matLambda}^* \matA \vece_j
  - \half \vece_j^T \hat{\matLambda}^*
  (\matA^T \matA + \matB^T \matB) \vece_j\\[5mm]
  &=
  \hat{\lambda}^*_j
  + 2 (\matA^T \hat{\matLambda}^*)_{jj}
  + (\matA^T \hat{\matLambda}^* \matA)_{jj}
  - [(\matA^T \matA + \matB^T \matB) \hat{\matLambda}^*]_{jj}\\[5mm]
  \label{eq_N_first_term1}
  &=
  \hat{\lambda}^*_j
  + (\matA^T \hat{\matLambda}^* \matA)_{jj}
  - (\matA^T \matA + \matB^T \matB)_{jj} \hat{\lambda}^*_j
\end{align}
where, in the last step, the second, linear term disappeared due to
\lemmaref{\ref{lemma_AskewD_ii}} (confirming a critical point); we
also applied \lemmaref{\ref{lemma_AD_ii}} and
\lemmaref{\ref{lemma_ATD_DA_ii}}. We continue by squaring this
expression, again only including terms up to second order:
\begin{align}
  \nonumber
  & (\vece_j^T \matF^T \hat{\matLambda}^* \matF \vece_j)^2\\[5mm]
  & \approx
  \hat{\lambda}^{*^2}_j
  + 2 \hat{\lambda}^*_j (\matA^T \hat{\matLambda}^* \matA)_{jj}
  - 2 \hat{\lambda}^{*^2}_j (\matA^T \matA + \matB^T \matB)_{jj}.
\end{align}

For the second expression of (\ref{eq_N_squared_terms1}) we get with
(\ref{eq_N_first_term1}):
\begin{align}
  \nonumber
  &2 (\vece_j^T \matF^T \hat{\matLambda}^* \matF \vece_j)
  (\vece_j^T \matB^T \check{\matLambda}^* \matB \vece_j)\\[5mm]
  &\approx
  2[\hat{\lambda}^*_j
  + (\matA^T \hat{\matLambda}^* \matA)_{jj}
  - (\matA^T \matA + \matB^T \matB)_{jj} \hat{\lambda}^*_j]
  \cdot
  (\matB^T \check{\matLambda}^* \matB)_{jj}\\[5mm]
  &\approx
  2\hat{\lambda}^*_j (\matB^T \check{\matLambda}^* \matB)_{jj}
\end{align}
where we omitted all terms above second order in the last step.

The third expression of (\ref{eq_N_squared_terms1}) disappears since it
only includes fourth-order terms.

We insert all expressions and obtain (with $\matA$ of size $m\times m$
and $\matB$ of size $(n - m)\times m$, and applying (\ref{eq_ATDA_ij})
and (\ref{eq_ATA_ij}))
\begin{align}
  \label{eq_N_special_intermediate}
  \Delta J
  &\approx
  \quarter\summe{j=1}{m}\left[
  2 \hat{\lambda}^*_j (\matA^T \hat{\matLambda}^* \matA)_{jj}
  + 2 \hat{\lambda}^*_j (\matB^T \check{\matLambda}^* \matB)_{jj} 
  - 2 \hat{\lambda}^{*^2}_j (\matA^T \matA + \matB^T \matB)_{jj}\right]\\[5mm]
  \nonumber
  &=
  \half\summe{j=1}{m}\left[
    \hat{\lambda}^*_j  (\matA^T \hat{\matLambda}^* \matA)_{jj}
    - \hat{\lambda}^{*^2}_j (\matA^T \matA)_{jj}\right]\\
  &+
  \half\summe{j=1}{m}\left[
    \hat{\lambda}^*_j (\matB^T \check{\matLambda}^* \matB)_{jj}
    - \hat{\lambda}^{*^2}_j (\matB^T \matB)_{jj}\right]\\[5mm]
  \nonumber
  &=
  \half\summe{j=1}{m} \hat{\lambda}^*_j \left[
    (\matA^T \hat{\matLambda}^* \matA)_{jj}
    - \hat{\lambda}^*_j (\matA^T \matA)_{jj}\right]\\
  &+
  \half\summe{j=1}{m} \hat{\lambda}^*_j \left[
    (\matB^T \check{\matLambda}^* \matB)_{jj}
    - \hat{\lambda}^*_j (\matB^T \matB)_{jj}\right]\\[5mm]
  \nonumber
  &=
  \half\summe{j=1}{m} \hat{\lambda}^*_j \left[
    \summe{k=1}{m} A_{kj}^2 \hat{\lambda}^*_k
    - \hat{\lambda}^*_j \summe{k=1}{m} A_{kj}^2\right]\\
  &+
  \half\summe{j=1}{m} \hat{\lambda}^*_j \left[
    \summe{k=1}{n-m} B_{kj}^2 \check{\lambda}^*_j
    - \hat{\lambda}^*_j \summe{k=1}{n-m} B_{kj}^2\right]\\[5mm]
  &=
  \half\summe{j=1}{m} \hat{\lambda}^*_j 
  \summe{k=1}{m} A_{kj}^2 (\hat{\lambda}^*_k - \hat{\lambda}^*_j)
  +
  \half\summe{j=1}{m} \hat{\lambda}^*_j
  \summe{k=1}{n-m} B_{kj}^2 (\check{\lambda}^*_k - \hat{\lambda}^*_j)
\end{align}
Since $\matA$ and $\matB$ are independent perturbations, we can
analyze the terms separately. To confirm a maximum, we have to show
that $\Delta J < 0$ for arbitrary non-zero perturbations.

For the terms containing $\matB$, this is achieved iff
$\check{\lambda}^*_k < \hat{\lambda}^*_j$ for all $k \in [1,n-m]$ and
$j \in [1,m]$, thus the permutation described by $\matP$ sorts the
eigenvectors such that the associated eigenvalues of the first $m$
eigenvectors are larger than those of the last $n-m$ eigenvectors.

For the terms containing $\matA$, we split the sum into two halves,
consider the skew-symmetry through $A_{jk}^2 = A_{kj}^2$ and $A_{jj} =
0$ (this is necessary since the elements of $\matA$ are not
independent), exchange indices in the second sum, and fuse the two
sums:
\begin{align}
  \nonumber
  &
  \summe{j=1}{m} \summe{k=1}{m}
  A_{kj}^2 (\hat{\lambda}^*_k - \hat{\lambda}^*_j) \hat{\lambda}^*_j\\
  &=
  \underbrace{\summe{j=1}{m} \summe{k=j+1}{m}
    A_{kj}^2 (\hat{\lambda}^*_k - \hat{\lambda}^*_j) \hat{\lambda}^*_j}_{j < k}
  +
  \underbrace{\summe{k=1}{m} \summe{j=k+1}{m}
    A_{kj}^2 (\hat{\lambda}^*_k - \hat{\lambda}^*_j) \hat{\lambda}^*_j}_{j > k}\\
  &=
  \summe{j=1}{m} \summe{k=j+1}{m}
  A_{kj}^2 (\hat{\lambda}^*_k - \hat{\lambda}^*_j) \hat{\lambda}^*_j
  +
  \summe{k=1}{m} \summe{j=k+1}{m}
  A_{jk}^2 (\hat{\lambda}^*_k - \hat{\lambda}^*_j) \hat{\lambda}^*_j\\
  &=
  \summe{j=1}{m} \summe{k=j+1}{m}
  A_{kj}^2 (\hat{\lambda}^*_k - \hat{\lambda}^*_j) \hat{\lambda}^*_j
  +
  \summe{j=1}{m} \summe{k=j+1}{m}
  A_{kj}^2 (\hat{\lambda}^*_j - \hat{\lambda}^*_k) \hat{\lambda}^*_k\\
  &=
  \summe{j=1}{m} \summe{k=j+1}{m}
  A_{kj}^2 (\hat{\lambda}^*_k - \hat{\lambda}^*_j) \hat{\lambda}^*_j
  -
  \summe{j=1}{m} \summe{k=j+1}{m}
  A_{kj}^2 (\hat{\lambda}^*_k - \hat{\lambda}^*_j) \hat{\lambda}^*_k\\
  &=
  - \summe{j=1}{m} \summe{k=j+1}{m}
  A_{kj}^2 (\hat{\lambda}^*_k - \hat{\lambda}^*_j)^2.
\end{align}
We see that this part is always negative for non-zero perturbations
$\matA$, since we assumed $\hat{\lambda}^*_k \neq \hat{\lambda}^*_j$
for $j \neq k$ (note that the double sum does not include terms with
$j = k$). In contrast to case TwJ, there is no special order imposed
on the principal eigenvectors.

In summary, we have a maximum if the above condition (derived from the
$\matB$ terms) is fulfilled, otherwise we have a saddle point or a
minimum since we can find directions (choices of $\matA$ and $\matB$)
where $\Delta J > 0$.

\subsubsection{General Case: Elements May Not Be Pairwise Different}
\label{sec_N_general_case}

We first look at the second term of (\ref{eq_N_deltaJ}) where we apply
(\ref{eq_second_term_prep}):
\begin{align}
  \nonumber
  &\vecwnull_j^T \matC \vecwnull_j\\
  &= \vece_j^T \matWnull^T \matC \matWnull \vece_j\\
  &= \vece_j^T \matH \vece_j \\
  &= \matH_{jj} \eqqcolon h_j.
\end{align}
For the squared expression we obtain:
\begin{equation}
  \label{eq_N_general_fixed}
  (\vecwnull_j^T \matC \vecwnull_j)^2 = h_j^2.
\end{equation}
We now look at the first term of (\ref{eq_N_deltaJ}) where we apply
(\ref{eq_first_term_prep}):
\begin{align}
  &\vecw_j^T \matC \vecw_j\\
  &= \vece_j^T \matW^T \matC \matW \vece_j\\
  &= \vece_j^T \matF^T \matH \matF \vece_j
  +
  \vece_j^T \matB^T \check{\matLambda}^* \matB \vece_j.
\end{align}
For the squared expression we obtain:
\begin{align}
  \nonumber
  &(\vecw_j^T \matC \vecw_j)^2\\
  &=
  (\vece_j^T \matF^T \matH \matF \vece_j
  +
  \vece_j^T \matB^T \check{\matLambda}^* \matB \vece_j)^2\\
  \label{eq_N_squared_terms2}
  &=
  (\vece_j^T \matF^T \matH \matF \vece_j)^2
  +
  2 (\vece_j^T \matF^T \matH \matF \vece_j)
  (\vece_j^T \matB^T \check{\matLambda}^* \matB \vece_j)
  +
  (\vece_j^T \matB^T \check{\matLambda}^* \matB \vece_j)^2.
\end{align}
The first expression of (\ref{eq_N_squared_terms2}) is further
processed using (\ref{eq_FTLF_prep}) by:
\begin{align}
  \nonumber
  & \vece_j^T \matF^T \matH \matF \vece_j\\[5mm]
  \nonumber
  &\approx
  \vece_j^T \matH \vece_j
  + \vece_j^T \matA^T \matH \vece_j
  - \half \vece_j^T (\matA^T \matA + \matB^T \matB) \matH \vece_j\\
  &+
  \vece_j^T\matH \matA \vece_j
  + \vece_j^T \matA^T \matH \matA \vece_j
  - \half \vece_j^T \matH
  (\matA^T \matA + \matB^T \matB) \vece_j\\[5mm]
  \label{eq_N_first_term2}
  &=
  h_j
  + 2 (\matA^T \matH)_{jj}
  + (\matA^T \matH \matA)_{jj}
  - [(\matA^T \matA + \matB^T \matB) \matH]_{jj}
\end{align}
where we applied \lemmaref{\ref{lemma_square_square_ii}} to fuse pairs
of terms. Note that in contrast to section \ref{sec_N_special_case},
we can't eliminate the linear terms at this stage. We proceed by
squaring this term, including only terms up to second order:
\begin{align}
  \nonumber
  & (\vece_j^T \matF^T \matH \matF \vece_j)^2\\
  \nonumber
  & \approx
  h_j^2\\
  \nonumber
  &+ 4 h_j (\matA^T \matH)_{jj}
  + 2 h_j (\matA^T \matH \matA)_{jj}
  - 2 h_j [(\matA^T \matA + \matB^T \matB) \matH]_{jj}\\
  &+ 4 [(\matA^T \matH)_{jj}]^2.
\end{align}
Including only terms up to second order, the second expression of (\ref{eq_N_squared_terms2}) yields
\begin{equation}
  2 (\vece_j^T \matF^T \matH \matF \vece_j)
  (\vece_j^T \matB^T \check{\matLambda}^* \matB \vece_j)
  \approx
  2 h_j (\matB^T \check{\matLambda}^* \matB)_{jj}.
\end{equation}
The third expression of (\ref{eq_N_squared_terms2}) only contains
forth-order terms and can therefore be omitted. We obtain
\begin{align}
  \nonumber
  &(\vecw_j^T \matC \vecw_j)^2\\
  \nonumber
  &\approx
  h_j^2\\
  \nonumber
  &+ 4 h_j (\matA^T \matH)_{jj}
  + 2 h_j (\matA^T \matH \matA)_{jj}
  - 2 h_j [(\matA^T \matA + \matB^T \matB) \matH]_{jj}\\
  \nonumber
  &+ 4 [(\matA^T \matH)_{jj}]^2\\
  \label{eq_N_general_variable}
  &+ 2 h_j (\matB^T \check{\matLambda}^* \matB)_{jj}.
\end{align}
We insert (\ref{eq_N_general_variable}) and (\ref{eq_N_general_fixed})
into (\ref{eq_N_deltaJ}):
\begin{align}
  \nonumber
  \Delta J
  & \approx
  \summe{j=1}{m} h_j (\matA^T \matH)_{jj}
  + \half \summe{j=1}{m} h_j (\matA^T \matH \matA)_{jj}
  - \half \summe{j=1}{m} h_j [(\matA^T \matA + \matB^T \matB) \matH]_{jj}\\
  &+ \summe{j=1}{m} [(\matA^T \matH)_{jj}]^2
  + \half \summe{j=1}{m} h_j (\matB^T \check{\matLambda}^* \matB)_{jj}.
\end{align}
The linear term
\begin{equation}
 \summe{j=1}{m} h_j (\matA^T \matH)_{jj}
\end{equation}
should disappear at the critical points. We first consider that $\matH
= \matP^{*T} \matH^* \matP^*$ where $\matH^*$ is block-diagonal, see
equation (\ref{eq_H_Hs}). We express $\matP^*_{ij} =
\delta_{i,\pi*(j)}$ by the permutation $\pi^*(i)$ and apply
(\ref{eq_PTAP_ii}) from the proof \lemmaref{\ref{lemma_perm_dg}}. We
can then write the linear term as
\begin{align}
   \summe{j=1}{m} h_j (\matA^T \matH)_{jj}
   &=
   \summe{j=1}{m} \matH_{jj} (\matA^T \matH)_{jj}\\
   &=
   \summe{j=1}{m}
   (\matP^{*T} \matH^* \matP^*)_{jj}
   (\matA^T \matP^{*T} \matH^* \matP^*)_{jj}\\
   &=
   \summe{j=1}{m}
   (\matP^{*T} \matH^* \matP^*)_{jj}
   (\underbrace{\matP^{*T}\matP^*}_{\matI} \matA^T \matP^{*T} \matH^* \matP^*)_{jj}\\
   &=
   \summe{j=1}{m}
   \matH^*_{\pi^*(j),\pi^*(j)}
   (\underbrace{\matP^* \matA^T \matP^{*T}}_{\matA^*} \matH^*)_{\pi^*(j),\pi^*(j)}\\   
   &=
   \summe{j=1}{m}
   \matH^*_{jj}
   (\matA^* \matH^*)_{jj} 
\end{align}
where we could omit the permutation since it just affects the order of
the elements in the sum, not the sum itself. $\matA^*$ is another
arbitrary skew-symmetric matrix and therefore just a different
parametrization of the tangent direction.

To show that the linear term disappears, we write $\matA^*$ and $\matH^*$
as $k$ blocks (see section \ref{sec_N2_constraint}):
\begin{align}
  \nonumber
  &\matA^* \matH^*\\
  &=
  \pmat{
    \matA^{*'}_{11} & \matA^{*'}_{12} & \ldots & \matA^{*'}_{1k}\\
    \matA^{*'}_{21} & \matA^{*'}_{22} & \ldots & \matA^{*'}_{2k}\\
    \vdots        & \vdots        & \ddots & \vdots\\
    \matA^{*'}_{k1} & \matA^{*'}_{k2} & \ldots & \matA^{*'}_{kk}
  }
  \pmat{
    \matH^{*'}_1  & \matNull    & \ldots & \matNull\\
    \matNull     & \matH^{*'}_2 & \ldots & \matNull\\
    \vdots       & \vdots      & \ddots & \vdots\\
    \matNull     & \matNull    & \ldots & \matH^{*'}_k
  }\\
  \label{eq_N_matA_matH}
  &=
  \pmat{
    \matA^{*'}_{11} \matH^{*'}_1 & * & \ldots & *\\
    * & \matA^{*'}_{22} \matH^{*'}_2 & \ldots & *\\
    \vdots    & \vdots    & \ddots & \vdots\\
    * & * & \ldots & \matA^{*'}_{kk} \matH^{*'}_k
  }
\end{align}
where elements marked by the symbol `$*$' are not of interest. We can
continue block-wise (where $s_l$ is the size of block $l$) and by
considering constraint (\ref{eq_N2_constraint_H}):
\begin{align}
  \nonumber
  &
  \summe{j=1}{m} \matH^*_{jj} (\matA^* \matH^*)_{jj}\\
  &=
  \summe{l=1}{k} \summe{i=1}{s_l}
  \left(\matH^{*'}_l\right)_{ii} \left(\matA^{*'}_{ll} \matH^{*'}_{ll}\right)_{ii}\\
  &=
  \summe{l=1}{k} \summe{i=1}{s_l}
  \overline{d}^{*'}_l \left(\matA^{*'}_{ll} \matH^{*'}_{ll}\right)_{ii}\\
  &=
  \summe{l=1}{k} \overline{d}^{*'}_l
  \summe{i=1}{s_l} \left(\matA^{*'}_{ll} \matH^{*'}_{ll}\right)_{ii}\\
  &=
  \summe{l=1}{k} \overline{d}^{*'}_l
  \tr\{\matA^{*'}_{ll} \matH^{*'}_{ll}\}\\
  &=
  0.
\end{align}
The last step is motivated as follows: We know that $\matA^{*'}_{ll}$
is skew-symmetric and $\matH^{*'}_{ll}$ is symmetric (since $\matA^*$
and $\matH^*$ are skew-symmetric and symmetric, respectively). From
\lemmaref{\ref{lemma_tr_Askew_Bsymm}} we know that
$\tr\{\matA^{*'}_{ll} \matH^{*'}_{ll}\} = 0$; therefore the entire
linear term disappears. This also confirms that we actually are at a
critical point.

We are left with
\begin{align}
  \nonumber
  \Delta J
  &\approx
  \half \summe{j=1}{m} h_j \left\{
  (\matA^T \matH \matA)_{jj}
  - [(\matA^T \matA + \matB^T \matB) \matH]_{jj}
  + (\matB^T \check{\matLambda}^* \matB)_{jj}
  \right\}\\
  \label{eq_N_general_intermediate}
  &+ \summe{j=1}{m} [(\matA^T \matH)_{jj}]^2
\end{align}
Before we proceed, we check whether this intermediate solution
coincides with the intermediate solution
(\ref{eq_N_special_intermediate}) for special case from section
\ref{sec_N_special_case} where we have $\matU_m = \matI_m$, $\matH =
\hat{\matLambda}^*$, and $h_j = \hat{\lambda}^*_j$. We obtain
\begin{align}
  \nonumber
  \Delta J
  &\approx
  \half \summe{j=1}{m} \hat{\lambda}^*_j \left\{
  (\matA^T \hat{\matLambda}^* \matA)_{jj}
  - [(\matA^T \matA + \matB^T \matB) \hat{\matLambda}^*]_{jj}
  + (\matB^T \check{\matLambda}^* \matB)_{jj}
  \right\}\\
  &+ \summe{j=1}{m} [(\matA^T \hat{\matLambda}^*)_{jj}]^2.
\end{align}
We apply \lemmaref{\ref{lemma_AD_ii}} to the second term of the first
sum and \lemmaref{\ref{lemma_AskewD_ii}} to the second sum (which
disappears). This gives
\begin{align}
  \nonumber
  \Delta J
  &\approx
  \half \summe{j=1}{m} \hat{\lambda}^*_j \left\{
  (\matA^T \hat{\matLambda}^* \matA)_{jj}
  - (\matA^T \matA + \matB^T \matB)_{jj} \hat{\lambda}^*_j
  + (\matB^T \check{\matLambda}^* \matB)_{jj}
  \right\}
\end{align}
which coincides with (\ref{eq_N_special_intermediate}).

We return to the general case (\ref{eq_N_general_intermediate}). To
demonstrate that the critical points is not a maximum (and therefore a
saddle point or a minimum), we have to show that $\Delta J > 0$ for
some direction parameters $\matA$ and $\matB$. We try $\matB =
\matNull$ and get
\begin{align}
  \Delta J
  &\approx
  \half \summe{j=1}{m} h_j \left\{
  (\matA^T \matH \matA)_{jj}
  - (\matA^T \matA \matH)_{jj}
  \right\}\\
  &+
  \summe{j=1}{m} [(\matA^T \matH)_{jj}]^2\\[5mm]
  &=
  \half \summe{j=1}{m} \matH_{jj}
  \left\{
  (\matA^T \matH \matA)_{jj}
  - (\matA^T \matA \matH)_{jj}
  \right\}\\
  &+
  \summe{j=1}{m} [(\matA^T \matH)_{jj}]^2\\[5mm]
  &=
  \half \summe{j=1}{m} (\matP^{*T}\matH^*\matP^*)_{jj}
  \left\{
  (\matA^T \matP^{*T} \matH^* \matP^* \matA)_{jj}
  - (\matA^T \matA \matP^{*T} \matH \matP^*)_{jj}
  \right\}\\
  &+
  \summe{j=1}{m} [(\matA^T \matP^{*T} \matH \matP^*)_{jj}]^2\\[5mm]
  &=
  \half \summe{j=1}{m} (\matP^{*T}\matH^*\matP^*)_{jj}
  \left\{
  (\underbrace{\matP^{*T}\matP^*} \matA^T \matP^{*T} \matH^* \matP^* \matA
  \underbrace{\matP^{*T}\matP^*})_{jj}
  \right\}\\
  &-
  \half \summe{j=1}{m} (\matP^{*T}\matH^*\matP^*)_{jj}
  \left\{
  (\underbrace{\matP^{*T}\matP^*} \matA^T
  \underbrace{\matP^{*T}\matP^*} \matA \matP^{*T} \matH^* \matP^*)_{jj}
  \right\}\\
  &+
  \summe{j=1}{m} [(\underbrace{\matP^{*T}\matP^*}
    \matA^T\matP^{*T} \matH \matP^*)_{jj}]^2\\[5mm]
  &=
  \half \summe{j=1}{m} \matH^*_{\pi*(j),\pi^*(j)}
  \left\{
  (\matP^* \matA^T \matP^{*T} \matH^* \matP^* \matA\matP^{*T})_{\pi*(j),\pi^*(j)}
  \right\}\\
  &-
  \half \summe{j=1}{m} \matH^*_{\pi*(j),\pi^*(j)}
  \left\{
  (\matP^* \matA^T \matP^{*T}\matP^* \matA \matP^{*T} \matH^*)_{\pi*(j),\pi^*(j)}
  \right\}\\
  &+
  \summe{j=1}{m} [(\matP^*\matA^T\matP^{*T}\matH^*)_{\pi*(j),\pi^*(j)}]^2\\[5mm]
  &=
  \half \summe{j=1}{m} \matH^*_{jj}
  \left\{
  (\underbrace{\matP^* \matA^T \matP^{*T}} \matH^*
  \underbrace{\matP^* \matA \matP^{*T}})_{jj}
  \right\}\\
  &-
  \half \summe{j=1}{m} \matH^*_{jj}
  \left\{
  (\underbrace{\matP^* \matA^T \matP^{*T}}
  \underbrace{\matP^* \matA \matP^{*T}} \matH^*)_{jj}
  \right\}\\
  &+
  \summe{j=1}{m} [(\underbrace{\matP^*\matA^T\matP^{*T}}\matH^*)_{jj}]^2\\[5mm]
  \label{eq_N_saddle_sum1}
  &=
  \half \summe{j=1}{m} \matH^*_{jj}
  \left\{
  (\matA^{*T} \matH^* \matA^*)_{jj}
  \right\}\\
  \label{eq_N_saddle_sum2}
  &-
  \half \summe{j=1}{m} \matH^*_{jj}
  \left\{
  (\matA^{*T} \matA^* \matH^*)_{jj}
  \right\}\\
  \label{eq_N_saddle_sum3}
  &+
  \summe{j=1}{m} [(\matA^{*T}\matH^*)_{jj}]^2
\end{align}
where we inserted and expanded identity matrices (marked by braces),
considered that the permuted indices do not affect the value of the
sum, and introduced a different, but also arbitrary skew-symmetric
parameter $\matA^*$ (also marked by braces). We now try perturbations
where $\matA^*$ is block-diagonal with the same shape of blocks as in
$\matH^*$. For the term (\ref{eq_N_saddle_sum1}) we obtain,
considering the skew-symmetry of $\matA^*$:
\begingroup
\setlength\arraycolsep{3pt}
\begin{align}
  \nonumber
  &\matA^{*T} \matH^* \matA^*\\
  &=
  -\matA^* \matH^* \matA^*\\
  &=
  -\pmat{
    \matA^{*'}_{11} & \matNull   & \ldots & \matNull\\
    \matNull   & \matA^{*'}_{22} & \ldots & \matNull\\
    \vdots     & \vdots     & \ddots & \matNull\\
    \matNull   & \matNull   & \ldots & \matA^{*'}_{kk}
  }
  \pmat{
    \matH^{*'}_{11} & \matNull   & \ldots & \matNull\\
    \matNull   & \matH^{*'}_{22} & \ldots & \matNull\\
    \vdots     & \vdots     & \ddots & \matNull\\
    \matNull   & \matNull   & \ldots & \matH^{*'}_{kk}
  }
  \pmat{
    \matA^{*'}_{11} & \matNull   & \ldots & \matNull\\
    \matNull   & \matA^{*'}_{22} & \ldots & \matNull\\
    \vdots     & \vdots     & \ddots & \matNull\\
    \matNull   & \matNull   & \ldots & \matA^{*'}_{kk}
  }\\
  &=
  -\pmat{
    \matA^{*'}_{11}\matH^{*'}_{11}\matA^{*'}_{11} & \matNull   & \ldots & \matNull\\
    \matNull   & \matA^{*'}_{22}\matH^{*'}_{22}\matA^{*'}_{22} & \ldots & \matNull\\
    \vdots     & \vdots     & \ddots & \matNull\\
    \matNull   & \matNull   & \ldots & \matA^{*'}_{kk}\matH^{*'}_{kk}\matA^{*'}_{kk}
  }.
\end{align}
\endgroup
The term (\ref{eq_N_saddle_sum2}) is treated in the same way:
\begingroup
\setlength\arraycolsep{3pt}
\begin{align}
  \nonumber
  &\matA^{*T} \matA^* \matH^*\\
  &=
  -\matA^* \matA^* \matH^*\\
  &=
  -\pmat{
    \matA^{*'}_{11} & \matNull   & \ldots & \matNull\\
    \matNull   & \matA^{*'}_{22} & \ldots & \matNull\\
    \vdots     & \vdots     & \ddots & \matNull\\
    \matNull   & \matNull   & \ldots & \matA^{*'}_{kk}
  }
  \pmat{
    \matA^{*'}_{11} & \matNull   & \ldots & \matNull\\
    \matNull   & \matA^{*'}_{22} & \ldots & \matNull\\
    \vdots     & \vdots     & \ddots & \matNull\\
    \matNull   & \matNull   & \ldots & \matA^{*'}_{kk}
  }
  \pmat{
    \matH^{*'}_{11} & \matNull   & \ldots & \matNull\\
    \matNull   & \matH^{*'}_{22} & \ldots & \matNull\\
    \vdots     & \vdots     & \ddots & \matNull\\
    \matNull   & \matNull   & \ldots & \matH^{*'}_{kk}
  }\\
  &=
  -\pmat{
    \matA^{*'}_{11}\matA^{*'}_{11}\matH^{*'}_{11} & \matNull   & \ldots & \matNull\\
    \matNull   & \matA^{*'}_{22}\matA^{*'}_{22}\matH^{*'}_{22} & \ldots & \matNull\\
    \vdots     & \vdots     & \ddots & \matNull\\
    \matNull   & \matNull   & \ldots & \matA^{*'}_{kk}\matA^{*'}_{kk}\matH^{*'}_{kk}
  }.
\end{align}
\endgroup
Again, we continue block-wise (where $s_l$ is the size of block $l$)
and consider constraint (\ref{eq_N2_constraint_H}). For the term
(\ref{eq_N_saddle_sum1}) we get
\begin{align}
  \nonumber
  &
  \summe{j=1}{m} \matH^*_{jj} (\matA^{*T} \matH^* \matA^*)_{jj}\\
  &=
  -\summe{j=1}{m} \matH^*_{jj} (\matA^* \matH^* \matA^*)_{jj}\\
  &=
  -\summe{l=1}{k}
  \summe{i=1}{s_l}
  \left(\matH^{*'}_l\right)_{ii}
  \left(\matA^{*'}_{ll} \matH^{*'}_{ll} \matA^{*'}_{ll}\right)_{ii}\\
  &=
  -\summe{l=1}{k}
  \overline{d}^{*'}_l
  \summe{i=1}{s_l}
  \left(\matA^{*'}_{ll} \matH^{*'}_{ll} \matA^{*'}_{ll}\right)_{ii}\\
  &=
  -\summe{l=1}{k}
  \overline{d}^{*'}_l
  \tr\{\matA^{*'}_{ll} \matH^{*'}_{ll} \matA^{*'}_{ll}\}.
\end{align}
For the term (\ref{eq_N_saddle_sum2}) we get
\begin{align}
  \nonumber
  &
  \summe{j=1}{m} \matH^*_{jj} (\matA^{*T} \matA^* \matH^*)_{jj}\\
  &=
  -\summe{j=1}{m} \matH^*_{jj} (\matA^* \matA^* \matH^*)_{jj}\\
  &=
  -\summe{l=1}{k}
  \summe{i=1}{s_l}
  \left(\matH^{*'}_l\right)_{ii}
  \left(\matA^{*'}_{ll} \matA^{*'}_{ll} \matH^{*'}_{ll}\right)_{ii}\\
  &=
  -\summe{l=1}{k}
  \overline{d}^{*'}_l
  \summe{i=1}{s_l}
  \left(\matA^{*'}_{ll} \matA^{*'}_{ll} \matH^{*'}_{ll}\right)_{ii}\\
  &=
  -\summe{l=1}{k}
  \overline{d}^{*'}_l
  \tr\{\matA^{*'}_{ll} \matA^{*'}_{ll} \matH^{*'}_{ll}\}.
\end{align}
Since the trace is invariant under cyclic permutations we see that
\begin{equation}
  \tr\{\matA^{*'}_{ll} \matH^{*'}_{ll} \matA^{*'}_{ll}\}
  =
  \tr\{\matA^{*'}_{ll} \matA^{*'}_{ll} \matH^{*'}_{ll}\},
\end{equation}
therefore the sums (\ref{eq_N_saddle_sum1}) and
(\ref{eq_N_saddle_sum2}) cancel each other out and we are left with
the sum (\ref{eq_N_saddle_sum3})
\begin{align}
  \label{eq_N_saddle_sum3a}
  \Delta J
  \approx
  \summe{j=1}{m} [(\matA^{*T} \matH^*)_{jj}]^2.
\end{align}
In the following we only look at all cases which are not already
covered by the special case treated in section
\ref{sec_N_special_case}. For these cases we know that $\matH^*$ has
non-zero off-diagonal elements (see section
\ref{sec_N2_constraint}). $\matH^*$ is also symmetric. We can
therefore apply \lemmaref{\ref{lemma_skew_symm_diag_nonzero}},
according to which we can find a skew-symmetric $\matA^*$ such that
there is at least one diagonal element in $\matA^{*T} \matH^*$ which
is non-zero. Since the term is squared in (\ref{eq_N_saddle_sum3a}),
we can conclude $\Delta J > 0$. $\matH^*$ implicitly describes the
different critical points. The analysis above shows we can find small
steps on the Stiefel manifold (parametrized by $\matA^*$) away from all
these critical points under which the objective function
increases. The critical points can therefore be only saddle points or
minima, but not maxima.

\subsubsection{Summary}

Taken together, the special case in section \ref{sec_N_special_case}
and the general case in section \ref{sec_N_general_case} (without the
special cases) show that the novel objective function
(\ref{eq_objfct_new}) on the Stiefel manifold has only maxima at the
principal eigenvectors. All other critical points are either saddle
points or minima since we can find directions (choices of $\matA$ and
$\matB$) where $\Delta J > 0$.


\section{Derivation of Symmetric PCA Learning Rules}\label{sec_derivation}

In the following we focus only on special cases TwJ and N which in the
second variant (TwJ2, N2) promise to lead to true PCA (rather then
PSA) learning rules. Special case TwC has weight-vector lengths
different from unity in the fixed points, but otherwise behaves
similarly to TwJ; so we do not repeat the derivation for this case.

We suggest two ways for the derivation of learning rules: On the first
way (``short form''), we informally replace fixed-point constants
($\matWnull$, $\matDnull$) by variables ($\matW$, $\matD$). Obviously,
the differential equations obtained in this way have the given
fixed points. On the second way (``long form''), we turn the Lagrange
multipliers from fixed-point constants ($\matBnull$) to variables
($\matB$), insert them into the objective function, and determine the
gradient, as done by \cite{nn_Chatterjee00} (but for a different
constraint term).

\subsection{Short Form: Derivation from Fixed-Point Equations}
\label{sec_derivation_short}

For special case TwJ, we only look at the second variant (TwJ2). We
turn (\ref{eq_TwJ2}) into the differential equation
\begin{eqnarray}\label{eq_ode_TwJ2S}
  \mbox{TwJ2S: } &&
  \tau \matWdot = \matC \matW \matTheta - \matW \matTheta \matW^T \matC \matW.
\end{eqnarray}
which is rule (15a) suggested by \cite{nn_Xu93}. In our nomenclature,
we refer to this rule as TwJ2S (``S'' for ``short form'').

For special case N2, we turn (\ref{eq_N2}) into the differential
equation
\begin{eqnarray}\label{eq_ode_N2S} 
  \mbox{N2S: } &&
  \tau \matWdot = \matC \matW \matD - \matW \matD \matW^T \matC \matW
  \;\;\mbox{with}\;\;
  \matD = \Diag{j=1}{m}\{\vecw_j^T \matC \vecw_j\}.
\end{eqnarray}
We refer to this rule as N2S. We see the structural similarity of the
two learning rules, TwJ2S and N2S. Learning rule (\ref{eq_ode_TwJ2S})
requires the introduction of an additional diagonal matrix $\matTheta$
with pairwise different elements to break the symmetry and turn a PSA
into a PCA rule. In learning rule (\ref{eq_ode_N2S}), the diagonal
matrix $\matD$ automatically appears in the derivation from the novel
objective function (\ref{eq_objfct_new}). In the fixed points, $\matD$
also has pairwise different entries (as has $\matTheta$).

\subsection{Long Form: Derivation by Insertion into Objective Function}
\label{sec_derivation_long}

We defined the modified objective function (\ref{eq_Jstar_gen}), which
for $\varOmega_j = 1$ (given in the special cases TwJ and N) turns
into
\begin{equation}\label{eq_Jstar_long}
  J^*
  =
  J
  + \half \summe{j=1}{m} \summe{k=1}{m}
  \half (\beta_{jk} + \beta_{kj}) (\vecw_j^T \vecw_k - \delta_{jk}),
\end{equation}
insert an expression for $\half (\beta_{jk} + \beta_{kj})$, and
compute the gradient. We start at (\ref{eq_fp_pca_vec}), to which we
apply the same non-equivalent transformation, but for individual
$\half (\betanull_{lj} + \betanull_{lj})$:
\begin{eqnarray}
  \vecmnull_l
  &=&
  -\summe{j=1}{m}
  \half (\betanull_{jl} + \betanull_{lj}) \vecwnull_j\\
  \vecwnull_k^T \vecmnull_l
  &=&
  -\summe{j=1}{m}
  \half (\betanull_{jl} + \betanull_{lj}) \vecwnull_k^T \vecwnull_j\\
  \vecwnull_k^T \vecmnull_l
  &=& -\summe{j=1}{m}
  \half (\betanull_{jl} + \betanull_{lj}) \delta_{jk}\\
  \vecwnull_k^T \vecmnull_l
  &=& -\half (\betanull_{kl} + \betanull_{lk}).
\end{eqnarray}
Informally replacing fixed-point constants ($\vecwnull_k$,
$\betanull_{lk}$) by variables ($\vecw_k$, $\beta_{lk}$), we get
\begin{equation}\label{eq_half_betakl_betalk}
\half (\beta_{kl} + \beta_{lk}) = -\vecw_k^T \vecm_l.
\end{equation}
Again we apparently have two different ways to proceed as in section
\ref{sec_fp_constrained_opt}. Here this manifests as two choices we
have for inserting (\ref{eq_half_betakl_betalk}) into
(\ref{eq_Jstar_long}). If we equate $(l,k)$ from
(\ref{eq_half_betakl_betalk}) with $(j,k)$ from (\ref{eq_Jstar_long}),
we get
\begin{equation}\label{eq_lk_jk}
\half (\beta_{kj} + \beta_{jk}) = -\vecw_k^T \vecm_j.
\end{equation}
If we equate $(k,l)$ from (\ref{eq_half_betakl_betalk}) with $(j,k)$
from (\ref{eq_Jstar_long}), we get
\begin{equation}\label{eq_kl_jk}
\half (\beta_{jk} + \beta_{kj}) = -\vecw_j^T \vecm_k.
\end{equation}
We could insert either of these equations into
(\ref{eq_Jstar_long}). However, by exchanging the summation indices
$j$ and $k$ in (\ref{eq_Jstar_long}) and re-arranging the sums we
arrive at exactly the same constraint term, thus cases
(\ref{eq_lk_jk}) and (\ref{eq_kl_jk}) coincide. We therefore only
proceed with (\ref{eq_lk_jk}).

\subsubsection{General Derivation}

We insert (\ref{eq_lk_jk}) into $J^*$ and simplify:
\begin{eqnarray}
  J^*
  &=&
  J
  + \half \summe{j=1}{m} \summe{k=1}{m}
  \half (\beta_{jk} + \beta_{kj}) (\vecw_j^T \vecw_k - \delta_{jk})\\
  J^*
  &=&
  J
  - \half \summe{j=1}{m} \summe{k=1}{m}
  (\vecw_k^T \vecm_j) (\vecw_j^T \vecw_k - \delta_{jk})\\
  J^*
  &=&
  J
  + \half \summe{j=1}{m} \summe{k=1}{m}
  \vecw_k^T \vecm_j \delta_{jk}
  -\half \summe{j=1}{m} \summe{k=1}{m}
  (\vecw_k^T \vecm_j) (\vecw_j^T \vecw_k)\\
  J^*
  &=&
  J
  + \half \summe{j=1}{m}
  \vecw_j^T \vecm_j
  -\half \summe{j=1}{m} \summe{k=1}{m}
  (\vecw_k^T \vecm_j) (\vecw_j^T \vecw_k).
\end{eqnarray}
We now compute the gradient, applying \lemmaref{\ref{lemma_scalprod_deriv}}
to compute vector derivatives of scalar products:
\begin{eqnarray}
  \ddf{J^*}{\vecw_l}  
  &=&
  \vecm_l\nonumber\\
  &+&
  \half \summe{j=1}{m}
  \left(\delta_{jl} \vecm_j + \ddf{\vecm_j}{\vecw_l} \vecw_j\right)\nonumber\\
  &-&
  \half \summe{j=1}{m} \summe{k=1}{m}
  (\vecw_k^T \vecm_j)
  (\delta_{jl} \vecw_k + \delta_{kl} \vecw_j)\nonumber\\
  &-&
  \half \summe{j=1}{m} \summe{k=1}{m}
  (\vecw_j^T \vecw_k)
  \left(\delta_{kl} \vecm_j + \ddf{\vecm_j}{\vecw_l} \vecw_k\right)\\[5mm]
  &=&
  \vecm_l\nonumber\\
  &+&
  \half \summe{j=1}{m}
  \left(\delta_{jl} \vecm_j + \matH_j \delta_{jl} \vecw_j\right)\nonumber\\
  &-&
  \half \summe{k=1}{m}
  (\vecw_k^T \vecm_l) \vecw_k\nonumber\\
  &-&
  \half \summe{j=1}{m}
  (\vecw_l^T \vecm_j) \vecw_j\nonumber\\
  &-&
  \half \summe{j=1}{m} \summe{k=1}{m}
  (\vecw_j^T \vecw_k)
  \left(\delta_{kl} \vecm_j + \matH_j \delta_{jl} \vecw_k\right)\\[5mm]
  &=&
  \vecm_l\nonumber\\
  &+&
  \half (\vecm_l + \matH_l \vecw_l)\nonumber\\
  &-&
  \half \summe{k=1}{m}
  (\vecw_k^T \vecm_l) \vecw_k\nonumber\\
  &-&
  \half \summe{j=1}{m}
  (\vecw_l^T \vecm_j) \vecw_j\nonumber\\
  &-&
  \half \summe{j=1}{m}
  (\vecw_j^T \vecw_l) \vecm_j\nonumber\\ 
  &-&
  \half \summe{k=1}{m}
  (\vecw_l^T \vecw_k) \matH_l \vecw_k\\[5mm]
  &=&
  \frac{3}{2} \vecm_l + \half \matH_l \vecw_l\nonumber\\
  &-&
  \half \summe{k=1}{m}
  \vecw_k \vecw_k^T \vecm_l\nonumber\\
  &-&
  \half \summe{j=1}{m}
  \vecw_j \vecm_j^T \vecw_l\nonumber\\
  &-&
  \half \summe{j=1}{m}
  \vecm_j \vecw_j^T \vecw_l\nonumber\\ 
  &-&
  \half \summe{k=1}{m}
  \matH_l \vecw_k \vecw_k^T \vecw_l\\[5mm]
  &=&
  \frac{3}{2} \vecm_l + \half \matH_l \vecw_l\nonumber\\
  &-&
  \half \matW \matW^T \vecm_l\nonumber\\
  &-&
  \half \summe{j=1}{m}
  (\vecw_j \vecm_j^T +  \vecm_j \vecw_j^T) \vecw_l\nonumber\\
  &-&
  \half \matH_l \matW \matW^T \vecw_l\label{eq_long_generic_first}.
\end{eqnarray}

\subsubsection{Derivation for Special Case TwJ}

For special case TwJ, we have $\vecm_l = \matC \vecw_l \theta_l$ and
$\matH_l = \matC \theta_l$ (see section \ref{sec_tradof}). If we
insert this into (\ref{eq_long_generic_first}), we obtain
\begin{eqnarray}
  \ddf{J^*}{\vecw_l}
  &=&
  \frac{3}{2} \vecm_l + \half \matH_l \vecw_l\nonumber\\
  &-&
  \half \matW \matW^T \vecm_l\nonumber\\
  &-&
  \half \summe{j=1}{m}
  (\vecw_j \vecm_j^T +  \vecm_j \vecw_j^T) \vecw_l\nonumber\\
  &-&
  \half \matH_l \matW \matW^T \vecw_l\\[5mm]
  &=&
  \frac{3}{2} \matC \vecw_l \theta_l + \half \matC \vecw_l \theta_l\nonumber\\
  &-&
  \half \matW \matW^T \matC \vecw_l \theta_l\nonumber\\
  &-&
  \half \summe{j=1}{m}
  (\vecw_j \theta_j \vecw_j^T \matC
  + \matC \vecw_j \theta_j \vecw_j^T) \vecw_l\nonumber\\
  &-&
  \half \matC \matW \matW^T \vecw_l \theta_l\\[5mm]
  &=&
  2 \matC \vecw_l \theta_l\nonumber\\
  &-&
  \half \matW \matW^T \matC \vecw_l \theta_l\nonumber\\
  &-&
  \half
  (\matW \matTheta \matW^T \matC
  + \matC \matW \matTheta \matW^T) \vecw_l\nonumber\\
  &-&
  \half \matC \matW \matW^T \vecw_l \theta_l
\end{eqnarray}
In matrix form we get the following learning rule with $\tau\matWdot =
2 \ddf{J^*}{\matW}$:
\begin{eqnarray}
  \mbox{TwJL: } &&\nonumber\\
  \tau \matWdot
  &=& 
  4 \matC \matW \matTheta\label{eq_ode_TwJL}\\
  &-&
   (\matW \matW^T \matC \matW \matTheta
  + \matW \matTheta \matW^T \matC \matW
  + \matC \matW \matTheta \matW^T \matW
  + \matC \matW \matW^T \matW \matTheta)\nonumber.
\end{eqnarray}
This learning rule (to which we refer as ``TwJL'' where ``L''
indicates the ``long form'') shows similarities to rule (15b)
suggested by \cite{nn_Xu93},
\begin{equation}\label{eq_ode_Xu93_15b}
  \tau \matWdot
  =
  2 \matC \matW \matTheta
  - (\matW \matTheta \matW^T \matC \matW
    +\matC \matW \matTheta \matW^T \matW),
\end{equation}
sharing the second and third negative term. Interestingly, combining
$2\matC\matW\matTheta$ with the remaining first and fourth negative
term alone would not lead to a PCA rule but to a PSA rule since, in
the fixed-point equation, $\matTheta$ could be removed from all
terms. However, these terms are useful in the learning rule to push
the weight matrix towards the principal subspace and/or towards
orthonormality. This is revealed if we transform (\ref{eq_ode_TwJL})
into special case T by choosing $\matTheta = \matI_m$ which gives
\begin{equation}\label{eq_ode_T}
  \mbox{TL:}\quad
  \tau \matWdot
  =
  4 \matC \matW\nonumber
  - (2 \matW \matW^T \matC \matW + 2 \matC \matW \matW^T \matW).
\end{equation}
This rule (to which we refer as ``TL'') is identical to the LMSER rule
introduced by \citet[their eqn. (10b)]{nn_Xu93} which is known to be a
subspace rule. If we only take the first and fourth negative term from
(\ref{eq_ode_TwJL}) combined with the positive term (only with
coefficient $2$) and factor out $\matTheta$, we also obtain the TL /
LMSER rule.

\subsubsection{Derivation for Special Case N}

For special case N, we have $\vecm_l = \matC \vecw_l [\vecw_l^T \matC
  \vecw_l]$ and $\matH_l = \matC\vecw_l\vecw_l^T\matC + \matC
[\vecw_l^T\matC\vecw_l]$ (see section \ref{sec_newof}); scalar
expressions are indicated by square brackets below. If we insert this
into (\ref{eq_long_generic_first}), we obtain
\begin{eqnarray}
  \ddf{J^*}{\vecw_l}
  &=&
  \frac{3}{2} \vecm_l + \half \matH_l \vecw_l\nonumber\\
  &-&
  \half \matW \matW^T \vecm_l\nonumber\\
  &-&
  \half \summe{j=1}{m}
  (\vecw_j \vecm_j^T +  \vecm_j \vecw_j^T) \vecw_l\nonumber\\
  &-&
  \half \matH_l \matW \matW^T \vecw_l\\[5mm]
  &=&
  \frac{3}{2} \matC \vecw_l [\vecw_l^T \matC \vecw_l]\nonumber\\
  &+&
  \half (\matC\vecw_l\vecw_l^T\matC + \matC [\vecw_l^T\matC\vecw_l]) \vecw_l
  \nonumber\\
  &-&
  \half \matW \matW^T \matC \vecw_l [\vecw_l^T \matC \vecw_l]\nonumber\\
  &-&
  \half \summe{j=1}{m}
  (\vecw_j [\vecw_j^T \matC \vecw_j] \vecw_j^T \matC 
  +  \matC \vecw_j [\vecw_j^T \matC \vecw_j] \vecw_j^T) \vecw_l\nonumber\\
  &-&
  \half (\matC\vecw_l\vecw_l^T\matC +
  \matC [\vecw_l^T\matC\vecw_l]) \matW \matW^T \vecw_l\\[5mm]
  &=&
  \frac{5}{2} \matC \vecw_l [\vecw_l^T \matC \vecw_l]\nonumber\\
  &-&
  \half \matW \matW^T \matC \vecw_l [\vecw_l^T \matC \vecw_l]\nonumber\\
  &-&
  \half \matW \matD \matW^T \matC \vecw_l\nonumber\\
  &-&
  \half \matC \matW \matD \matW^T \vecw_l\nonumber\\
  &-&
  \half \matC \vecw_l [\vecw_l^T \matC \matW \matW^T \vecw_l]\nonumber\\
   &-&
  \half \matC \matW \matW^T \vecw_l [\vecw_l^T\matC\vecw_l].
\end{eqnarray}
With $\tau\matWdot = 2 \ddf{J^*}{\matW}$ we get the learning rule in
matrix form:
\begin{eqnarray}
  \mbox{NL: } &&\nonumber\\
  \tau \matWdot
  &=&
  5 \matC \matW \matD\nonumber\\
  &-&  (\matW \matW^T \matC \matW \matD + \matW \matD \matW^T \matC \matW
  \nonumber\\
  &&   +\matC \matW \matD \matW^T \matW + \matC \matW \matD^*
  +\matC \matW \matW^T \matW \matD).
  \label{eq_ode_NL}
\end{eqnarray}
where
\begin{eqnarray}
  \label{eq_matD}
  \matD &=& \Diag{j=1}{m}\{\vecw_j^T \matC \vecw_j\}\\
  \label{eq_matDstar}
  \matD^* &=& \Diag{j=1}{m}\{\vecw_j^T \matC \matW \matW^T \vecw_j\}.
\end{eqnarray}
We refer to this learning rule as NL. We see similarities with
(\ref{eq_ode_TwJL}), except for the term $\matC\matW\matD^*$;
whether this term is required to produce PCA behavior instead of PSA
behavior needs to be analyzed.

\subsection{Derivation from Gradient on Stiefel Manifolds}
\label{sec_derivation_stiefel}

Differential equations (learning rules) can also be derived from the
gradient on Stiefel manifolds. Two different approaches are known (see
\lemmaref{\ref{lemma_stiefel_gradients}}). In the ``embedded metric'',
  we can write (\ref{eq_stiefel_gradient_embedded}) as a differential
  equation
\begin{equation}\label{eq_stiefel_gradient_embedded_J_W}
    \tau \matWdot
    = \matG_J(\matW)
    - \half \matW (\matW^T \matG_J(\matW) + \matG^T_J(\matW) \matW).
  \end{equation}
In the ``canonical metric'', we can write
(\ref{eq_stiefel_gradient_canonical}) as a differential equation
\begin{equation}\label{eq_stiefel_gradient_canonical_J_W}
  \tau \matWdot
  = \matG_J(\matW)
  - \matW \matG^T_J(\matW) \matW.
\end{equation}
In both equations $ \matG_J(\matW)$ denotes the gradient of the
objective function (without consideration of the Stiefel constraint).

\subsubsection{Derivation for Traditional Objective Function}

For the traditional objective function, we have with (\ref{eq_tradof_matM}):
\begin{equation}
  \matG_J(\matW) = \ddf{J}{\matW} = \matM =
  \matC \matW \matTheta.
\end{equation}
Note that we have to assume the constraint $\matW^T\matW = \matOmega =
\matI$ here to stay on the Stiefel manifold. For the ``embedded
metric'' (\ref{eq_stiefel_gradient_embedded_J_W}) we get
\begin{equation}\label{eq_ode_TSE}
  \mbox{TSE:} \quad
  \tau \matWdot = \matC \matW \matTheta
  - \half (\matW \matW^T \matC \matW \matTheta
  + \matW \matTheta \matW^T \matC \matW)
\end{equation}
where ``TSE'' indicates ``traditional, Stiefel, embedded''. For the
``canonical metric'' (\ref{eq_stiefel_gradient_canonical_J_W}) we get
\begin{equation}\label{eq_ode_TSC}
  \mbox{TSC:} \quad
  \tau \matWdot = \matC \matW \matTheta
  - \matW \matTheta \matW^T \matC \matW
\end{equation}
where ``TSC'' indicates ``traditional, Stiefel, canonical''.

\subsubsection{Derivation for Novel Objective Function}

For the novel objective function, we have with (\ref{eq_newof_matM}):
\begin{equation}
  \matG_J(\matW) = \ddf{J}{\matW} = \matM =
  \matC \matW \matD, \quad \matD = \Diag{j=1}{m}\{\vecw_j^T \matC \vecw_j\}.
\end{equation}
For the ``embedded metric'' (\ref{eq_stiefel_gradient_embedded_J_W})
we get
\begin{equation}\label{eq_ode_NSE}
  \mbox{NSE:} \quad
  \tau \matWdot = \matC \matW \matD
  - \half (\matW \matW^T \matC \matW \matD
  + \matW \matD \matW^T \matC \matW)
\end{equation}
where ``NSE'' indicates ``novel, Stiefel, embedded''. For the
``canonical metric'' (\ref{eq_stiefel_gradient_canonical_J_W}) we get
\begin{equation}\label{eq_ode_NSC}
  \mbox{NSC:} \quad
  \tau \matWdot = \matC \matW \matD
  - \matW \matD \matW^T \matC \matW
\end{equation}
where ``NSC'' indicates ``novel, Stiefel, canonical''.

\subsection{Comparison of Terms}\label{sec_comparison_terms}

Table \ref{tab_comparison_terms} gives an overview of the negative
terms appearing in the different differential equations. We see that
$\matW\matD\matW^T\matC\matW$ appears in all PCA rules (but not in the
two PSA rules) so this is the crucial PCA term. The terms
$\matW\matW^T\matC\matW\matD$ and $\matC\matW\matW^T\matW\matD$ (each
alone or together, but without other negative terms) will not lead to
PCA rules since $\matD$ appears at the last position in all terms and
could therefore be factored out from the entire differential equation,
e.g.
\begin{align}
  \tau \matWdot
  &=
  2 \matC\matW\matD -\matW\matW^T\matC\matW\matD -\matC\matW\matW^T\matW\matD\\
  &=
  (2 \matC\matW - \matW\matW^T\matC\matW - \matC\matW\matW^T\matW) \matD
\end{align}
where the fixed points are the same as that of the LMSER rule which
performs PSA. They are apparently useful (but not required if the PCA
term is present) to push the state towards the principal subspace. The
term $\matC\matW\matD\matW^T\matW$ alone will not produce PCA
behavior, since
\begin{align}
  \tau \matWdot
  &=
  \matC\matW\matD - \matC\matW\matD\matW^T\matW\\
  &=
  \matC\matW\matD (\matI - \matW^T \matW)
\end{align}
so this equation has fixed points on the entire Stiefel manifold. It
is apparently just pushing the state towards the Stiefel manifold. The
role of the term $\matC\matW\matD^*$ has to be explored.

\newcommand{\mctwo}[1]{\multicolumn{2}{c|}{#1}}

\begin{table}[t]
\begin{center}
  \caption{Overview of negative terms (number $n_t$) which are present
    in addition to the positive term $n_t \matC\matW\matD$ in the
    different differential equations. $\matD$ is either taken from
    (\ref{eq_matD}) or represents $\matTheta$. $\matD^*$ is taken from
    (\ref{eq_matDstar}). Rules above the double line perform PCA. The
    differential equations are TwJ2S (\ref{eq_ode_TwJ2S}), N2S
    (\ref{eq_ode_N2S}), TwJL (\ref{eq_ode_TwJL}), NL
    (\ref{eq_ode_NL}), TSE (\ref{eq_ode_TSE}), NSE (\ref{eq_ode_NSE}),
    TSC (\ref{eq_ode_TSC}), NSC (\ref{eq_ode_NSC}). Rules Xu93~(15a)
    and Xu93~(15b) are taken from \cite{nn_Xu93}. Rules below the
    double line perform PSA, not PCA.  TwJ1S could be derived from
    (\ref{eq_TwJ1}), N1S from (\ref{eq_N1}). Oja Subspace was
    described by \cite{nn_Oja89}, LMSER by \cite{nn_Xu93}; in these
    rules we have $\matD = \matI$, so the first two terms coincide as
    well as the second two terms.}
\label{tab_comparison_terms}
\begin{tabular}{|l||c|c|c|c|c|c|}\hline
  & $n_t$
  & {\scriptsize $\matW\matW^T\matC\matW\matD$}
  & {\scriptsize $\matW\matD\matW^T\matC\matW$}
  & {\scriptsize $\matC\matW\matD\matW^T\matW$}
  & {\scriptsize $\matC\matW\matW^T\matW\matD$}
  & {\scriptsize $\matC\matW\matD^*$}\\\hline\hline
  TwJ2S, N2S     & 1 &   & x &   &   &   \\\hline
  TwJL           & 4 & x & x & x & x &   \\\hline
  NL             & 5 & x & x & x & x & x \\\hline
  TSE, NSE       & 2 & x & x &   &   &   \\\hline
  TSC, NSC       & 1 &   & x &   &   &   \\\hline
  Xu93 (15a)     & 1 &   & x &   &   &   \\\hline
  Xu93 (15b)     & 2 &   & x & x &   &   \\\hline\hline
  (TwJ1S), (N1S) & 1 & x &   &   &   &   \\\hline
  Oja Subspace   & 1 & \mctwo{x} & \mctwo{}& \\\hline
  LMSER          & 2 & \mctwo{x} & \mctwo{x}& \\\hline\hline
  role           &   & PSA & PCA & constraint & PSA & ? \\\hline
\end{tabular}
\end{center}
\end{table}


\section{Unresolved Issues}

\subsection{Major Issues}

The derivation of fixed points in section \ref{sec_fp} relies on a
somewhat awkward application of \lemmaref{\ref{lemma_blockdiag_evec}}
in reverse direction. There may be a more elegant solution. Moreover,
parts of the analysis of the different special cases can probably be
fused in a more general treatment.

As discussed in section \ref{sec_variants_discussion}, we presently
have no explanation why we obtain the ``uninteresting'' special cases
TwJ1, TwC1, and N1 and the ``interesting'' special cases TwJ2, TwC2,
and N2 from different choices of the Lagrange parameters. It seems to
be crucial to exploit their symmetry, but it is unknown why this is
the case. The influence of the non-equivalent transformation required
to isolate the Lagrange parameters on the appearance of spurious
solutions is also not clear.

The proof of \lemmaref{\ref{lemma_RTDR_ii_b_i}} is weak and needs to
be improved. However, the lemma is not used in a critical step but
only in the analysis of the solution space of the fixed points in
section \ref{sec_fp}.

We currently only analyze the behavior of the constrained objective
functions at the critical points (section \ref{sec_behavior}). It is
not clear how the insights gained from this can be transferred to the
convergence behavior of the learning rules (see section
\ref{sec_behavior_intro}).

It needs to be explored how the different learning rules behave under
deviations from the Stiefel manifold. Even if a step starts on the
Stiefel manifold, all learning rules probably move away from the
Stiefel manifold in the step. Whether some rules contain terms which
reduce this deviation, or whether generally a back-projection is
required (exact as in \lemmaref{\ref{lemma_stiefel_svd}} or
approximated for small steps as in
\lemmaref{\ref{lemma_stiefel_proj_appr}}) has to be explored.

Learning rule (15b) by \cite{nn_Xu93} could not be derived by one of
the three methods in section \ref{sec_derivation}. Rule TwJL contains
the two terms of this rule, but two additional terms. This indicates
that there is yet another way to derive learning rules, but it is
unclear whether this is possible in the Lagrange-multiplier scheme
described here.

\subsection{Minor Issues}

The overall fixed-point analysis in section \ref{sec_overall_fp} is
inconclusive for special case N, but it is doubtful whether this can
be resolved since additional fixed points outside the principal
eigenvectors are known to exist (section \ref{sec_N2}), but are
irrelevant for the learning rules since they are probably not
attractors (section \ref{sec_N}).

In the analysis of the solutions of the different special cases in
section \ref{sec_fp} we found that some fixed points are spurious
solutions. In particular, we looked at the rotation matrices contained
in the solutions and argued that the critical function is only maximal
for certain choices of those matrices. In all other cases, the
rotation matrices can be modified to increase the value of the
critical function, thus we can't be at a fixed point. There may be a
gap in this argumentation since it is not clear whether a {\em
  continuous} modification towards larger values is
possible. Moreover, spurious solutions could also manifest themselves
in the permutation matrices; this was not analyzed.

The space of solutions of the orthogonal block matrices $\matU_l$
which fulfill the constraint for special case N2 (see section
\ref{sec_N2_constraint}) is not known. We only know that Hadamard
matrices (with a factor) are one possible choice.

In section \ref{sec_N2_test}, we test the solution for special case
N2. While the block-diagonal shape of $\matU$ needs to be taken into
account, the solution surprisingly holds without considering the
additional constraint from section \ref{sec_N2_constraint}.

The role of the term $\matC \matW \matD^*$ which appears in the
learning rule NL (\ref{eq_ode_NL}) needs to be explored (see section
\ref{sec_comparison_terms}).

Throughout the work, diagonal sign matrices and permutation matrices
often appear together in products. It may be possible to simplify this
by defining signed permutation matrices. Alternatively, there may be a
consistent way where diagonal sign matrices are omitted completely and
standard permutation matrices are used. Moreover, the fusion of
diagonal sign matrices and of permutation matrices should be treated
once in a lemma instead of multiple times in section \ref{sec_fp}.


\section{Conclusion}\label{sec_conclusion}

We could demonstrate that symmetric learning rules for principal
component analysis can be obtained without resorting to fixed weight
factor matrices in the objective function. To accomplish this, a novel
objective function had to be introduced which contains squared terms
compared to the traditional (non-weighted) function. The novel
objective function has a complex set of critical points which not only
includes principal eigenvectors. However, the analysis of the behavior
of the objective function at the critical points shows that local
maxima only occur at the principal eigenvectors. This allows to derive
novel symmetrical learning rules from the objective function which
should exhibit PCA behavior (confirmed in preliminary
simulations). Learning rules derived from the traditional objective
function require {\em fixed} diagonal weight-factor matrices to
perform PCA instead of just PSA. In the novel learning rules, {\em
  variable} diagonal matrices appear in the same place without any
additional assumptions. There is a major difference in the behavior of
the learning rules: Rules with fixed weight factors produce a fixed
order of the principal eigenvectors with respect to the
corresponding eigenvalues whereas rules derived from the novel
objective function can converge towards any permutation of the
principal eigenvectors, depending on the initial weight
matrix.\footnote{Note that in the averaged rules considered here, the
  learning process is deterministic; for a given initial weight
  matrix, a specific permutation is approached. In online learning,
  data vectors are typically presented in random order, so we expect
  that any permutation could be approached, even with the same initial
  weight matrix.}

In section \ref{sec_fp}, fixed-point equations are obtained by
eliminating the Lagrange multipliers, and in section
\ref{sec_derivation}, essentially the same technique is used to derive
learning rules. We currently do not see an alternative way to
determine the fixed points. However, learning rules could also be
obtained from the Lagrange-Newton framework \cite[see
  e.g.][sec. 5.2]{own_Moeller20}. In this framework, the Lagrange
multipliers are not eliminated, but treated as parameters in the
numerical optimization. A Newton descent (instead of a gradient
method) is required in this case, since the solutions of the Lagrange
equations are typically saddle points. We think that the novel
objective function may be a promising starting point for the
derivation of coupled learning rules
\cite[]{own_Moeller04a,own_Kaiser10} from the Lagrange-Newton
framework. In coupled rules, eigenvectors and eigenvalues are
estimated simultaneously. They are a solution to the speed-stability
problem which probably also affects all learning rules derived in this
work. For the simple single-component PCA analyzer mentioned by
\cite{own_Moeller20}, Lagrange multipliers coincide with the
eigenvalue estimates; whether this is also the case for symmetrical
learning rules has to be explored.

The behavior of the learning rules, particularly convergence speed, is
difficult to predict from the analysis in this work. One could argue
that the rules derived from the novel objective function may converge
faster: Since no order is imposed, each weight vector can converge
to the {\em closest} eigenvector (but this speed advantage is not
confirmed in preliminary simulations). In the opposite direction, the
additional (unstable) fixed points of the novel rules could slow down
the learning. It is also unclear how the choice of fixed weight
factors in the traditional rules affects convergence speed: If
$\matTheta$ contains a good guess of the true eigenvalues (which
$\matD$ represents in the stable fixed points), the traditional rules
may converge faster than the novel rules since the latter first have
to approach a suitable matrix $\matD$. Moreover, the role of the
additional PSA and back-projection terms in the longer learning rules
is difficult to anticipate. Preliminary simulations show that the
rules with more terms converge more slowly than those with more terms,
at least with the same learning rate (which, however, may not be
optimal for all rules). More work has to be invested into simulations
to explore these influences.

Our preliminary simulations reveal one major disadvantage of the novel
learning rules. Both the old and the novel learning rules show slower
convergence if two of the $m$ principal eigenvalues of the covariance
matrix are close to each other. This is due to the fact that, in the
extreme case of identical eigenvalues, the eigenvectors are not
unique. The closer we get to this extreme case, the slower the
convergence to the eigenvectors. The novel learning rules seem suffer
from an additional disadvantage in this situation. While the old
learning rules have fixed weight-factor matrices ($\matTheta$), these
are replaced by the variable diagonal matrices ($\matD$) in the novel
learning rules. Close to the stable fixed points, the entries of
$\matD$ are estimates of the eigenvalues. If the eigenvalues are close
to each other, so will be the diagonal entries of $\matD$. Therefore
the ``symmetry-breaking'' effect of $\matD$ is reduced, approaching
PSA instead of PCA behavior, which ultimately leads to particularly
slow convergence. This observation relates to the existence of
additional fixed points described in section \ref{sec_N2}. For the
case of two nearby eigenvalues, the novel learning rule operates in
the vicinity of these fixed points, thus the progress is slow.


\section*{Acknowledgments}
\label{sec_acknowledgments}
\addcontentsline{toc}{section}{\nameref{sec_acknowledgments}}

I'm very grateful for help by Axel Könies and Pierre-Antoine Absil,
and for the proofs provided by Math Stack-Exchange users 'Joppy',
'lcv', and 'user8675309'; their contributions are mentioned in the
text. My derivations particularly profited from techniques described
by \cite{nn_Xu93}.


\trarxiv{%
  \bibliographystyle{/home/moeller/bst/plainnatsfnnm}
  \bibliography{/home/moeller/bib/nn17,/home/moeller/bib/own14}

\begin{thebibliography}{21}
\expandafter\ifx\csname natexlab\endcsname\relax\def\natexlab#1{#1}\fi

\bibitem[Abadir and Magnus(2005)]{nn_Abadir05}
K.~M. Abadir and J.~R. Magnus.
\newblock {\em Matrix Algebra}.
\newblock Cambridge University Press, 2005.

\bibitem[Absil et~al.(2008)Absil, Mahony, and Sepulchre]{nn_Absil08}
P.-A. Absil, R.~Mahony, and R.~Sepulchre.
\newblock {\em Optimization Algorithms on Matrix Manifolds}.
\newblock Princeton University Press, 2008.

\bibitem[Absil and Malick(2012)]{nn_Absil12}
P.-A. Absil and J.~Malick.
\newblock Projection-like retractions on matrix manifolds.
\newblock {\em SIAM Journal on Optimization}, 22\penalty0 (1):\penalty0
  135--158, 2012.

\bibitem[Bhattacharya and Bhattacharya(2012)]{nn_Bhattacharya_12}
A.~Bhattacharya and R.~Bhattacharya.
\newblock {\em Nonparametric Inference on Manifolds: With Applications to Shape
  Spaces}.
\newblock Institute of Mathematical Statistics Monographs. Cambridge University
  Press, 2012.

\bibitem[Chatterjee et~al.(2000)Chatterjee, Kang, and
  Roychowdhury]{nn_Chatterjee00}
C.~Chatterjee, Z.~Kang, and V.~P. Roychowdhury.
\newblock Algorithms for accelerated convergence of adaptive {PCA}.
\newblock {\em IEEE Transactions on Neural Networks}, 11\penalty0 (2):\penalty0
  338--355, 2000.

\bibitem[Diamantaras and Kung(1996)]{nn_Diamantaras96}
K.~I. Diamantaras and S.~Y. Kung.
\newblock {\em Principal Component Neural Networks. Theory and Applications}.
\newblock John Wiley \& Sons, 1996.

\bibitem[Edelman et~al.(1998)Edelman, Arias, and Smith]{nn_Edelman98}
A.~Edelman, T.~A. Arias, and S.~T. Smith.
\newblock The geometry of algorithms with orthogonality constraints.
\newblock {\em SIAM Journal on Matrix Analysis and Applications}, 20\penalty0
  (2):\penalty0 303--353, 1998.

\bibitem[Gentle(2017)]{nn_Gentle17}
J.~A. Gentle.
\newblock {\em Matrix Algebra. Theory, Computations and Applications in
  Statistics}.
\newblock Springer, 2nd edition, 2017.

\bibitem[Golub and {van Loan}(1996)]{nn_Golub96}
G.~H. Golub and C.~F. {van Loan}.
\newblock {\em Matrix Computations}.
\newblock Johns Hopkins University Press, Baltimore and London, 3rd edition,
  1996.

\bibitem[Horn and Johnson(1999)]{nn_Horn99}
R.~A. Horn and C.~R. Johnson.
\newblock {\em Matrix Analysis}.
\newblock Cambridge University Press, 1999.

\bibitem[Kaiser et~al.(2010)Kaiser, Schenck, and M\"oller]{own_Kaiser10}
A.~Kaiser, W.~Schenck, and R.~M\"oller.
\newblock Coupled singular value decomposition of a cross covariance matrix.
\newblock {\em International Journal of Neural Systems}, 20\penalty0
  (4):\penalty0 293--318, 2010.

\bibitem[Kong et~al.(2017)Kong, Hu, and Duan]{nn_Kong17}
X.~Kong, C.~Hu, and Z.~Duan.
\newblock {\em Principal Component Analysis Networks and Algorithms}.
\newblock Springer Singapore / Science Press Beijing, 2017.

\bibitem[M\"oller(2020)]{own_Moeller20}
R.~M\"oller.
\newblock Derivation of coupled {PCA} and {SVD} learning rules from a {N}ewton
  zero-finding framework.
\newblock {\em arXiv:2003.11456}, 2020.

\bibitem[M\"oller and K\"onies(2004)]{own_Moeller04a}
R.~M\"oller and A.~K\"onies.
\newblock Coupled principal component analysis.
\newblock {\em IEEE Transactions on Neural Networks}, 15\penalty0 (1):\penalty0
  214--222, 2004.

\bibitem[Oja(1982)]{nn_Oja82}
E.~Oja.
\newblock A simplified neuron model as principal component analyzer.
\newblock {\em Journal of Mathematical Biology}, 15:\penalty0 267--273, 1982.

\bibitem[Oja(1989)]{nn_Oja89}
E.~Oja.
\newblock Neural networks, principal components, and subspaces.
\newblock {\em International Journal of Neural Systems}, 1\penalty0
  (1):\penalty0 61--68, 1989.

\bibitem[Oja(1992)]{nn_Oja92a}
E.~Oja.
\newblock Principal components, minor components, and linear neural networks.
\newblock {\em Neural Networks}, 5\penalty0 (6):\penalty0 927--935, 1992.

\bibitem[Oja et~al.(1992{\natexlab{a}})Oja, Ogawa, and Wangviwattana]{nn_Oja92}
E.~Oja, H.~Ogawa, and J.~Wangviwattana.
\newblock Principal component analysis by homogeneous neural networks,
  \uppercase{P}art \uppercase{I}: \uppercase{T}he weighted subspace criterion.
\newblock {\em IEICE Transactions on Information and Systems}, E75-D\penalty0
  (3):\penalty0 366--375, 1992{\natexlab{a}}.

\bibitem[Oja et~al.(1992{\natexlab{b}})Oja, Ogawa, and
  Wangviwattana]{nn_Oja92b}
E.~Oja, H.~Ogawa, and J.~Wangviwattana.
\newblock Principal component analysis by homogeneous neural networks,
  \uppercase{P}art \uppercase{II}: \uppercase{A}nalysis and extensions of the
  learning algorithms.
\newblock {\em IEICE Transactions on Information and Systems}, E75-D\penalty0
  (3):\penalty0 376--381, 1992{\natexlab{b}}.

\bibitem[Sanger(1989)]{nn_Sanger89}
T.~D. Sanger.
\newblock Optimal unsupervised learning in a single-layer linear feedforward
  neural network.
\newblock {\em Neural Networks}, 2\penalty0 (6):\penalty0 459--473, 1989.

\bibitem[Xu(1993)]{nn_Xu93}
L.~Xu.
\newblock Least mean square error reconstruction principle for self-organizing
  neural nets.
\newblock {\em Neural Networks}, 6:\penalty0 627--648, 1993.

\end{thebibliography}
}{%

}


\section*{Changes}
\label{sec_changes}
\addcontentsline{toc}{section}{\nameref{sec_changes}}

20 May 2019: Started report.\\
24 May 2020: First version for arXiv.\\
27 May 2020: Added paragraph to section \ref{sec_conclusion} on a
disadvantage of the novel rules.


\appendix


\section{Lemmata}

\subsection{Diagonal and Orthogonal Matrices}

\begin{lemma}\footnote{Used by \citet[p.633]{nn_Xu93}.
    Proof provided here for convenience.}\label{lemma_perm_diag} Let\/
  $\matP$ be a permutation matrix and\/ $\matD$ a diagonal
  matrix. Then $\matP^T \matD \matP$ is a diagonal matrix with
  permuted diagonal elements.
\end{lemma}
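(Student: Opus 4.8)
The plan is to prove the statement by a direct computation of the entries of $\matP^T \matD \matP$. First I would fix notation: a permutation matrix is characterized by a permutation $\pi$ of $\{1,\ldots,n\}$ via $\matP_{ij} = \delta_{i,\pi(j)}$ (equivalently $\matP \vece_j = \vece_{\pi(j)}$), so that $(\matP^T)_{ij} = \matP_{ji} = \delta_{j,\pi(i)}$, and I would write the diagonal matrix as $\matD_{kl} = d_k \delta_{kl}$.

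The key step is then to expand the triple product and collapse the two sums using the Kronecker deltas:
\begin{equation}
  (\matP^T \matD \matP)_{ij}
  = \summe{k=1}{n} \summe{l=1}{n} \matP_{ki} \matD_{kl} \matP_{lj}
  = \summe{k=1}{n} \matP_{ki} d_k \matP_{kj}
  = \summe{k=1}{n} \delta_{k,\pi(i)} d_k \delta_{k,\pi(j)}
  = d_{\pi(i)} \delta_{\pi(i),\pi(j)}.
\end{equation}
The final simplification uses that $\pi$ is a bijection, hence $\delta_{\pi(i),\pi(j)} = \delta_{ij}$, giving $(\matP^T \matD \matP)_{ij} = d_{\pi(i)} \delta_{ij}$. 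This simultaneously shows that all off-diagonal entries vanish (so the product is diagonal) and that the $i$-th diagonal entry equals $d_{\pi(i)}$, i.e. the diagonal elements of $\matD$ reordered according to $\pi$.

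There is no genuine obstacle here beyond bookkeeping: the only point requiring care is the permutation convention, since an inconsistent choice (e.g. mixing $\matP_{ij} = \delta_{i,\pi(j)}$ with $\matP_{ij} = \delta_{\pi(i),j}$) would reorder the diagonal by $\pi^{-1}$ rather than $\pi$ and could obscure which reordering actually appears. As an alternative framing I could observe that $\matP$ is orthogonal, so $\matP^T = \matP^{-1}$ and $\matP^T \matD \matP$ is a similarity transformation that merely relabels the standard basis vectors; it therefore preserves the spectrum while only permuting the (already diagonal) eigenvalues. The explicit index computation above makes this precise and pins down the resulting permutation, which is all the lemma asserts.
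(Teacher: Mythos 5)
Your proposal is correct and follows essentially the same route as the paper's own proof: a direct index computation with Kronecker deltas, using the convention $\matP_{ij}=\delta_{i,\pi(j)}$ and collapsing the sums to arrive at $(\matP^T\matD\matP)_{ij}=d_{\pi(i)}\delta_{\pi(i),\pi(j)}$. The only cosmetic difference is that you expand the triple product in one step while the paper first computes $\matP^T\matD$ and then multiplies by $\matP$; the conclusion and the identification of the permuted diagonal are identical.
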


\begin{proof}
  Let $\pi(i)$ be the permuted index to index $i$. Then
  \begin{eqnarray}
    \label{eq_perm_diag_P}
    (\matP)_{ij}   &=& \delta_{i,\pi(j)}\\
    \label{eq_perm_diag_PT}
    (\matP^T)_{ij} &=& \delta_{j,\pi(i)}\\
    (\matD)_{ij}   &=& d_i \delta_{ij}.
  \end{eqnarray}
  We form the product $\matP^T\matD$
  \begin{eqnarray}
    (\matP^T\matD)_{ij}
    &=& \summe{k}{} (\matP^T)_{ik} (\matD)_{kj}\\
    &=& \summe{k}{} \delta_{k,\pi(i)} d_k \delta_{kj}\\
    &=& \delta_{j,\pi(i)} d_j
  \end{eqnarray}
  and then multiply by $\matP$:
  \begin{eqnarray}
    ((\matP^T\matD)\matP)_{ij}
    &=& \summe{k}{} (\matP^T\matD)_{ik} (\matP)_{kj}\\
    &=& \summe{k}{} \delta_{k,\pi(i)} d_k \delta_{k,\pi(j)}\\
    &=& d_{\pi(i)} \delta_{\pi(i),\pi(j)}.
  \end{eqnarray}
  We see from $\delta_{\pi(i),\pi(j)}$ that all off-diagonal elements
  are zero (since $\pi(i) \neq \pi(j)$ for $i \neq j$). The diagonal
  elements are $d_{\pi(i)}$ and thus the permuted diagonal entries of
  $\matD$.
\end{proof}

\lemmasep

\begin{lemma}\label{lemma_perm_diag_sign}
  \lemmaref{\ref{lemma_perm_diag}} can be extended to signed
  permutation matrices: Let\/ $\matP$ be a permutation matrix,
  $\matXi$ be a diagonal sign matrix (with entries $\pm 1$), and\/
  $\matD$ a diagonal matrix. We form a signed permutation matrix
  $\matP' = \matXi \matP$. Then $\matP'^T \matD \matP'$ is a diagonal
  matrix with permuted diagonal elements.
\end{lemma}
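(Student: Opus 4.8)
The plan is to reduce the signed case directly to \lemmaref{\ref{lemma_perm_diag}} by absorbing the sign matrix $\matXi$ into the diagonal matrix $\matD$, exploiting the fact that diagonal matrices commute. Since $\matP' = \matXi \matP$, the whole task is to show that the prefactor $\matXi$ leaves the quadratic form $\matP^T \matD \matP$ unchanged.

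First I would expand the product, writing
\begin{equation}
  \matP'^T \matD \matP'
  = (\matXi \matP)^T \matD (\matXi \matP)
  = \matP^T \matXi^T \matD \matXi \matP .
\end{equation}
Next I would invoke the identity $\matXi^T \matD \matXi = \matD$ for diagonal $\matD$, which is already recorded in the Notation section; it follows at once because $\matXi$ and $\matD$ are both diagonal and hence commute, and $\matXi^T \matXi = \matI$, so that $\matXi^T \matD \matXi = \matXi^T \matXi \matD = \matD$. Substituting this into the expansion collapses the two inner factors and yields $\matP'^T \matD \matP' = \matP^T \matD \matP$.

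Finally, I would apply \lemmaref{\ref{lemma_perm_diag}} to $\matP^T \matD \matP$, which asserts that this is a diagonal matrix whose diagonal entries are a permutation of those of $\matD$. This immediately gives the claim for $\matP'^T \matD \matP'$.

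I expect there to be essentially no obstacle here: the entire content is the observation that a diagonal sign matrix commutes with any diagonal matrix and is orthogonal, so it drops out of the congruence. The only point requiring any care is the justification of $\matXi^T \matD \matXi = \matD$, and even that is an immediate consequence of $\matXi^T\matXi = \matI$ together with the commutativity of diagonal matrices, so the proof reduces to a one-line substitution followed by citation of the previous lemma.
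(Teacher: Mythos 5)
Your proof is correct, and it takes a genuinely cleaner route than the paper's. The paper proves the lemma by redoing the entire elementwise computation of Lemma~\ref{lemma_perm_diag} with the signs carried along: it writes $(\matP')_{ij} = \xi_i\,\delta_{i,\pi(j)}$, multiplies out $((\matP'^T\matD)\matP')_{ij}$, and observes that the factor $\xi_k^2 = 1$ drops out of the sum, recovering $d_{\pi(i)}\delta_{\pi(i),\pi(j)}$. You instead factor the conjugation as $\matP'^T\matD\matP' = \matP^T(\matXi^T\matD\matXi)\matP$, collapse the inner block via the identity $\matXi^T\matD\matXi = \matD$ (already recorded in the Notation section, and an immediate consequence of $\matXi^T\matXi = \matI$ plus commutativity of diagonal matrices), and then invoke Lemma~\ref{lemma_perm_diag} as a black box. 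The underlying cancellation is the same ($\xi^2 = 1$), but your packaging actually \emph{uses} the previous lemma rather than repeating its computation, which is shorter and makes the logical dependence explicit; the paper's version has the minor advantage of being self-contained at the index level and of exhibiting the explicit form $d_{\pi(i)}\delta_{\pi(i),\pi(j)}$ of the result. Both establish the claim.
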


\begin{proof}
  The signed permutation matrix is
  \begin{eqnarray}
    (\matP')_{ij}
    &=&
    (\matXi \matP)_{ij}\\
    &=&
    \summe{k}{} (\matXi)_{ik} (\matP)_{kj}\\
    &=&
    \summe{k}{} \xi_i \delta_{ik} \delta_{k,\pi(j)}\\
    &=&
    \xi_i \delta_{i,\pi{j}}.
  \end{eqnarray}  
  Thus we get the following modified expressions for the signed
  permutation matrix:
  \begin{eqnarray}
    (\matP')_{ij}   &=& \xi_i \delta_{i,\pi(j)}\\
    (\matP'^T)_{ij} &=& \xi_j \delta_{j,\pi(i)}.
  \end{eqnarray}
  We form the product $\matP'^T\matD$
  \begin{eqnarray}
    (\matP'^T\matD)_{ij}
    &=& \summe{k}{} (\matP'^T)_{ik} (\matD)_{kj}\\
    &=& \summe{k}{} \xi_k \delta_{k,\pi(i)} d_k \delta_{kj}\\
    &=& \xi_j \delta_{j,\pi(i)} d_j
  \end{eqnarray}
  and then multiply by $\matP$:
  \begin{eqnarray}
    ((\matP'^T\matD)\matP')_{ij}
    &=& \summe{k}{} (\matP'^T\matD)_{ik} (\matP')_{kj}\\
    &=& \summe{k}{} \xi_k \delta_{k,\pi(i)} d_k \xi_k \delta_{k,\pi(j)}\\
    &=& \summe{k}{} \xi^2_k \delta_{k,\pi(i)} d_k \delta_{k,\pi(j)}\\
    &=& \summe{k}{} \delta_{k,\pi(i)} d_k \delta_{k,\pi(j)}\\
    &=& d_{\pi(i)} \delta_{\pi(i),\pi(j)}.
  \end{eqnarray}
  The conclusions are the same as for \lemmaref{\ref{lemma_perm_diag}}.
\end{proof}

\lemmasep

\begin{lemma}\label{lemma_perm_dg}
Let $\matA$ be matrix of size $n \times n$, and $\matP$ a permutation
matrix of the same size. Then
\begin{equation}
\dg\{\matP^T \matA \matP\} = \matP^T \dg\{\matA\} \matP.
\end{equation}
\end{lemma}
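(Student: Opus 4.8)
The plan is to prove the identity element-wise, reusing the index calculus already set up in the proof of \lemmaref{\ref{lemma_perm_diag}}. Writing $\pi$ for the permutation associated with $\matP$, I would carry over the formulas $(\matP)_{ij} = \delta_{i,\pi(j)}$ and $(\matP^T)_{ij} = \delta_{j,\pi(i)}$ from that proof.

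First I would compute the $(i,j)$ element of the full conjugate $\matP^T \matA \matP$ (before diagonalization). Carrying out the double sum $\sum_{k,l} (\matP^T)_{ik} A_{kl} (\matP)_{lj} = \sum_{k,l} \delta_{k,\pi(i)} A_{kl} \delta_{l,\pi(j)}$, the two Kronecker deltas collapse the summation indices, and I expect to obtain $(\matP^T \matA \matP)_{ij} = A_{\pi(i),\pi(j)}$; that is, conjugation by a permutation merely relabels rows and columns by $\pi$. Applying $\dg$ then zeroes the off-diagonal entries, giving $(\dg\{\matP^T \matA \matP\})_{ij} = A_{\pi(i),\pi(i)}\,\delta_{ij}$.

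For the right-hand side, $\dg\{\matA\}$ is the diagonal matrix with entries $A_{kk}$, so \lemmaref{\ref{lemma_perm_diag}} applies directly to $\matD = \dg\{\matA\}$: the product $\matP^T \dg\{\matA\}\matP$ is diagonal with permuted diagonal, namely $(\matP^T \dg\{\matA\} \matP)_{ij} = A_{\pi(i),\pi(i)}\,\delta_{\pi(i),\pi(j)}$. Since $\pi$ is a bijection, $\delta_{\pi(i),\pi(j)} = \delta_{ij}$, so this equals $A_{\pi(i),\pi(i)}\,\delta_{ij}$, matching the left-hand side computed above, which establishes the claim.

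There is no substantial obstacle here; the computation is routine index bookkeeping. The only point requiring a little care is invoking the bijectivity of $\pi$ to identify $\delta_{\pi(i),\pi(j)}$ with $\delta_{ij}$ on the right-hand side. One could alternatively phrase the result conceptually, noting that $\dg$ commutes with any transformation that sends diagonal entries to diagonal entries while permuting them, but the explicit element-wise route is the most direct and least error-prone.
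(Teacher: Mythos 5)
Your proposal is correct and follows essentially the same route as the paper's proof: both establish the identity element-wise using the index formulas $(\matP)_{ij} = \delta_{i,\pi(j)}$ and $(\matP^T)_{ij} = \delta_{j,\pi(i)}$, arrive at $\matA_{\pi(i),\pi(i)}\,\delta_{ij}$ on both sides, and close with the observation that $\delta_{\pi(i),\pi(j)} = \delta_{ij}$ by bijectivity of $\pi$. The only cosmetic differences are that you compute the full off-diagonal entry $(\matP^T\matA\matP)_{ij} = \matA_{\pi(i),\pi(j)}$ where the paper restricts to the diagonal, and you reuse Lemma~\ref{lemma_perm_diag} for the right-hand side where the paper redoes the short computation.
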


\begin{proof}
  We use (\ref{eq_perm_diag_P}) and (\ref{eq_perm_diag_PT}) and show
  that the left side
  \begin{align}
    (\matP^T\matA)_{ij}
    &=
    \summe{k=1}{n} (\matP^T)_{ik} \matA_{kj}
    =
    \summe{k=1}{n} \delta_{k,\pi(i)} \matA_{kj}
    =
    \matA_{\pi(i),j}\\
    \label{eq_PTAP_ii}
    (\matP^T\matA\matP)_{ii}
    &=
    \summe{j=1}{n} (\matP^T\matA)_{ij} P_{ji}
    =
    \summe{j=1}{n} \matA_{\pi(i),j} \delta_{j,\pi(i)}
    =
    \matA_{\pi(i),\pi(i)}\\
    (\dg\{\matP^T\matA\matP\})_{ij}
    &=
    (\matP^T\matA\matP)_{ii} \delta_{ij}
    =
    \matA_{\pi(i),\pi(i)} \delta_{ij}
  \end{align}
  coincides with the right side:
  \begin{align}
    (\matP^T \dg\{\matA\})_{ik}
    &=
    \summe{l=1}{n} (\matP^T)_{il} (\dg\{\matA\})_{lk}
    =
    \summe{l=1}{n} \delta_{l,\pi(i)} {\matA}_{lk} \delta_{l,k}
    =
    \delta_{k,\pi(i)} {\matA}_{kk}\\
    (\matP^T \dg\{\matA\} \matP)_{ij}
    &=
    \summe{k=1}{n} (\matP^T \dg\{\matA\})_{ik} \matP_{kj}
    =
    \summe{k=1}{n} \delta_{k,\pi(i)} \matA_{kk} \delta_{k,\pi(j)}\\
    &=
    \matA_{\pi(i),\pi(i)} \delta_{\pi(i),\pi(j)}
    =
    \matA_{\pi(i),\pi(i)} \delta_{ij}
  \end{align}
where the last step holds since $\pi(i) = \pi(j)$ if and only if $i =
j$.
\end{proof}

\lemmasep

\begin{lemma}\label{lemma_orthosim_diag}
  Let\/ $\matD$ be a diagonal matrix with non-zero and pairwise
  different entries. Let $\matQ$ be an orthogonal matrix. If\/
  $\matQ^T \matD \matQ = \matD'$ is diagonal then $\matQ = \matP'$
  where $\matP' = \matXi \matP$ is a signed permutation matrix
  ($\matXi$ is a diagonal sign matrix with entries $\pm 1$, $\matP$ is
  a permutation matrix). Note that, according to
  \lemmaref{\ref{lemma_perm_diag_sign}}, this solution implies that
  $\matD'$ contains the same diagonal elements as $\matD$ but in
  permuted order.
\end{lemma}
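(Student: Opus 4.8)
The plan is to turn the hypothesis $\matQ^T \matD \matQ = \matD'$ into the equivalent statement $\matD \matQ = \matQ \matD'$ (using $\matQ^T = \matQ^{-1}$) and then read it off entrywise. Writing $d_i$ for the diagonal entries of $\matD$ and $d'_j$ for those of $\matD'$, the $(i,j)$ entry of $\matD\matQ$ is $d_i (\matQ)_{ij}$ while the $(i,j)$ entry of $\matQ\matD'$ is $(\matQ)_{ij} d'_j$. Equating them gives the single scalar relation $(d_i - d'_j)(\matQ)_{ij} = 0$ for all $i,j$, so that each entry $(\matQ)_{ij}$ vanishes unless $d_i = d'_j$. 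This is the whole arithmetic content of the lemma; everything else is drawing structural consequences from it together with orthogonality.

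Next I would fix a column $j$ and argue about its support. By the relation above, $(\matQ)_{ij}$ can be nonzero only for those $i$ with $d_i = d'_j$. Since $\matQ$ is orthogonal its $j$-th column has unit norm and therefore cannot be the zero vector, so at least one such $i$ must exist; hence $d'_j$ coincides with some $d_i$. Because the $d_i$ are pairwise different, there is exactly one such index, which I call $\sigma(j)$, and all other entries of the column are forced to be zero. The unit-norm condition then pins down $(\matQ)_{\sigma(j),j} = \pm 1$, so column $j$ equals $\pm \vece_{\sigma(j)}$. Thus every column of $\matQ$ is a signed standard basis vector.

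It remains to see that $\sigma$ is a bijection, and this is where orthogonality of the columns (not just their norms) enters, which I expect to be the one genuinely non-automatic step. If $\sigma(j) = \sigma(j')$ for $j \neq j'$, then columns $j$ and $j'$ are both $\pm \vece_{\sigma(j)}$ and so have inner product $\pm 1 \neq 0$, contradicting orthonormality; hence $\sigma$ is injective, and being a self-map of a finite index set it is a permutation. Writing $s_j = (\matQ)_{\sigma(j),j} \in \{\pm 1\}$, I would set $\matP$ to be the permutation matrix $(\matP)_{ij} = \delta_{i,\sigma(j)}$ and $\matXi$ the diagonal sign matrix with $\xi_{\sigma(j)} = s_j$ (well-defined since $\sigma$ is bijective), so that $(\matXi\matP)_{ij} = \xi_i \delta_{i,\sigma(j)} = s_j \delta_{i,\sigma(j)} = (\matQ)_{ij}$, giving $\matQ = \matXi\matP = \matP'$. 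The final remark about $\matD'$ being a permutation of $\matD$ then follows directly from \lemmaref{\ref{lemma_perm_diag_sign}}, or alternatively from the observation that the relation $d'_j = d_{\sigma(j)}$ was already established along the way.
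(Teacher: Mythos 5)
Your proof is correct. It takes a genuinely different, more elementary route than the paper's. The paper argues via eigenvectors: it first shows (Statement~1) that the eigenvectors of a diagonal matrix with pairwise different entries are the signed standard basis vectors, then shows (Statement~2) that whenever $\matQ^{-1}\matD\matQ$ is diagonal the columns of $\matQ$ must be eigenvectors of $\matD$, and combines the two. You instead rewrite the hypothesis as the intertwining relation $\matD\matQ = \matQ\matD'$ and read it off \emph{entrywise} as $(d_i - d'_j)(\matQ)_{ij} = 0$, then use the unit norm of each column to locate its unique nonzero entry and the orthogonality of distinct columns to force the support map $\sigma$ to be a bijection. What your approach buys is self-containedness and explicitness: it avoids the eigenvector machinery entirely, and it makes explicit two points the paper leaves implicit --- that the nonzero entries are exactly $\pm 1$ (in the paper this is absorbed into the claim that eigenvectors are $\pm\vece_i$, though strictly any nonzero multiple of $\vece_i$ is an eigenvector, and the normalization really comes from orthogonality of $\matQ$) and that two distinct columns cannot share the same basis direction (the paper relies tacitly on invertibility of $\matQ$ here). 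A minor observation, not a defect: like the paper's argument, yours never actually uses the hypothesis that the entries of $\matD$ are non-zero; pairwise distinctness suffices.
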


\begin{proof}\footnote{From \url{https://math.stackexchange.com/questions/3365505}, proof kindly provided by user 'Joppy'}
  Statement 1: The eigenvectors of the diagonal matrix $\matD$ with
  pairwise different, non-zero entries are the basis vectors
  $\pm\vece_i$, $i=1,\ldots,n$. Proof: The characteristic equation
  $\det\{\matD - \lambda \matI\} = 0$ leads to $\lambda_i = d_i$. The
  eigenvector equation $(\matD - \lambda_i \matI) \vecv_i = \vecnull$
  becomes $(\matD - d_i \matI) \vecv_i = \vecnull$ which leads to the
  solution $(\vecv_i)_i = \pm 1$ and $(\vecv_i)_j = 0$, $\forall j\neq
  i$, and thus $\vecv_i = \pm\vece_i$ which concludes the proof of
  statement 1.

  Statement 2: Assume that $\matQ$ is an invertible matrix
  ($\matQ^{-1} = \matQ^T$ is a special case). If $\matQ^{-1} \matD
  \matQ$ is diagonal, then the columns of $\matQ$ are the eigenvectors
  of $\matD$. Proof: We express $\matQ$ by its columns $\matQ =
  \pmat{\vecq_1 & \ldots & \vecq_n}$. Since $\matQ$ is invertible, the
  columns of $\matQ$ span the entire $\mathbb{R}^n$ (since the $n$
  columns must be linearly independent\footnote{See
    e.g. \url{https://math.stackexchange.com/questions/1058555}}). Thus
  any vector can be expressed as a superposition of the column
  vectors, also the vector $\matD \vecq_i$:
  \begin{equation}\label{eq_orthosim_diag_super}
    \matD \vecq_i = \sum_{j=1}^n a_{ij} \vecq_j.
  \end{equation}
  We look at column $i$ of $\matQ^{-1}\matD\matQ$:
  \begin{eqnarray}
    (\matQ^{-1}\matD\matQ)\vece_i
    &=& \matQ^{-1}\matD\vecq_i\\
    &=& \matQ^{-1}\sum_{j=1}^n a_{ij} \vecq_j\\
    &=& \sum_{j=1}^n a_{ij} \vece_j.
  \end{eqnarray}
  Since we assume that $\matQ^{-1}\matD\matQ$ is diagonal (i.e. equals
  a diagonal matrix $\matD'$ with elements $d'_i$), we know that
  \begin{eqnarray}
    (\matQ^{-1}\matD\matQ)\vece_i &=& d'_i \vece_i\\
    \sum_{j=1}^n a_{ij} \vece_j &=& d'_i \vece_i
  \end{eqnarray}
  thus $a_{ii} = d'_i$ and $a_{ij} = 0$, $\forall j \neq i$, or
  $a_{ij} = \delta_{ij} d'_i$. Therefore
  (\ref{eq_orthosim_diag_super}) turns into
  \begin{equation}
    \matD \vecq_i = d'_i \vecq_i,\quad i = 1,\ldots,n.
  \end{equation}
  We see that the vectors $\vecq_i$, $i=1,\ldots,n$, are the
  eigenvectors of $\matD$, which concludes the proof of statement
  2.

  According to statement 1, the eigenvectors of $\matD$ are the basis
  vectors $\pm \vece_j$, and according to statement 2, the
  eigenvectors of $\matD$ are the columns $\vecq_i$. The assignment
  of $j$ to $i$ can be any permutation $\pi$:
  \begin{equation}
    \vecq_i = \pm\vece_{\pi(i)}
  \end{equation}
  which concludes the proof of the Lemma.
\end{proof}  

\lemmasep

\begin{lemma}\label{lemma_RTDR_Ds}
  Let\/ $\matD$ be a diagonal matrix with pairwise different diagonal
  entries and\/ $\matR$ an orthogonal matrix. The equation $\matR^T
  \matD \matR$ is a diagonal matrix ($\matD'$) if and only if\/ $\matR
  = \matXi \matP$ where $\matP$ is a permutation matrix and $\matXi$ a
  diagonal sign matrix.
\end{lemma}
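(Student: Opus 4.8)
The plan is to prove the two directions of the equivalence separately. The ``if'' direction follows immediately from an earlier result, while the ``only if'' direction reduces to \lemmaref{\ref{lemma_orthosim_diag}} once I remove the one complication that pairwise-different (rather than pairwise-different \emph{and} non-zero) entries introduce.

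For the ``if'' direction I would assume $\matR = \matXi \matP$ with $\matXi$ a diagonal sign matrix and $\matP$ a permutation matrix, so that $\matR$ is precisely the signed permutation matrix denoted $\matP'$ in \lemmaref{\ref{lemma_perm_diag_sign}}. That lemma states that $\matP'^T \matD \matP'$ is diagonal (with permuted diagonal entries), hence $\matR^T \matD \matR$ is diagonal. No further computation is required.

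The ``only if'' direction is the substantive part: assuming $\matR^T \matD \matR = \matD'$ is diagonal, I must deduce $\matR = \matXi \matP$. The obstacle is that \lemmaref{\ref{lemma_orthosim_diag}} additionally requires the diagonal entries of $\matD$ to be non-zero, whereas pairwise differentness permits at most one vanishing entry. To bridge this gap I would apply a shift: pick a scalar $c$ so that every $d_i + c$ is non-zero, and set $\matD_c = \matD + c\matI$. Since $\matR$ is orthogonal, $\matR^T \matD_c \matR = \matR^T \matD \matR + c\,\matR^T \matR = \matD' + c\matI$, which is again diagonal. The shifted matrix $\matD_c$ has pairwise different entries (adding a common constant preserves differences) and is now non-zero on the diagonal, so \lemmaref{\ref{lemma_orthosim_diag}} applies verbatim to $\matD_c$ and yields $\matR = \matXi \matP$, a signed permutation matrix.

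The main obstacle is thus only the possible zero entry, which the shift trick eliminates cleanly; the rest is a direct appeal to the two cited lemmata. I would close by noting, consistently with the statement's own remark, that \lemmaref{\ref{lemma_perm_diag_sign}} then guarantees that $\matD'$ carries the same diagonal elements as $\matD$ in permuted order, so the two directions agree.
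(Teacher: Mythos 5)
Your proof takes exactly the same route as the paper's: the ``if'' direction via \lemmaref{\ref{lemma_perm_diag_sign}} and the ``only if'' direction via \lemmaref{\ref{lemma_orthosim_diag}}. The one thing you add --- the shift $\matD \mapsto \matD + c\matI$, which stays diagonal under conjugation by the orthogonal $\matR$ and preserves pairwise differences --- is a genuine small improvement: the paper invokes \lemmaref{\ref{lemma_orthosim_diag}} directly even though that lemma also assumes the diagonal entries are non-zero, which ``pairwise different'' alone does not guarantee, so your trick closes a hypothesis gap the paper's own proof glosses over.
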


\begin{proof}
  \lemmaref{\ref{lemma_orthosim_diag}} demonstrates: If $\matR^T \matD
  \matR = \matD'$ then $\matR = \matXi
  \matP$. \lemmaref{\ref{lemma_perm_diag_sign}} demonstrates: If
  $\matR = \matXi \matP$, then $\matR^T \matD \matR = \matD'$ where
  $\matD'$ is a diagonal matrix (with permuted elements from
  $\matD$). Combining these two directions leads to the
  ``if-and-only-if'' statement.
\end{proof}

\lemmasep

\begin{lemma}\label{lemma_commute_diag}
  A matrix commuting with a diagonal matrix with pairwise different
  diagonal entries is diagonal: Let\/ $\matD = \diag_{i=1}^{n}\{d_i\}$
  with $d_i \neq d_j$ for $i \neq j$. If the $n \times n$ matrix
  $\matA$ commutes with $\matD$, i.e. $\matA \matD = \matD \matA$,
  then $\matA$ is a diagonal matrix.
\end{lemma}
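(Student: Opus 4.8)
The plan is to prove the claim entrywise, by computing the $(i,j)$ element of each of the two products $\matA\matD$ and $\matD\matA$ and then invoking the commutation hypothesis $\matA\matD = \matD\matA$ element by element. The only structural fact I need is that multiplying by a diagonal matrix rescales rows (on the left) or columns (on the right), which follows immediately from $(\matD)_{ij} = d_i\delta_{ij}$.

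Concretely, I would first record the two elementary computations
\begin{equation}
  (\matA\matD)_{ij}
  = \summe{k=1}{n} A_{ik} (\matD)_{kj}
  = \summe{k=1}{n} A_{ik} d_k \delta_{kj}
  = A_{ij} d_j
\end{equation}
and
\begin{equation}
  (\matD\matA)_{ij}
  = \summe{k=1}{n} (\matD)_{ik} A_{kj}
  = \summe{k=1}{n} d_i \delta_{ik} A_{kj}
  = d_i A_{ij}.
\end{equation}
Equating these two expressions for every pair $(i,j)$ then gives $A_{ij} d_j = d_i A_{ij}$, i.e.
\begin{equation}
  A_{ij} (d_i - d_j) = 0 \qquad \text{for all } i,j.
\end{equation}

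Next I would exploit the hypothesis that the diagonal entries are pairwise different: for $i \neq j$ we have $d_i - d_j \neq 0$, so the factorized equation forces $A_{ij} = 0$ whenever $i \neq j$. Since all off-diagonal entries of $\matA$ vanish, $\matA$ is diagonal, which is the claim. For the diagonal entries $i = j$ the factor $d_i - d_j$ is zero and the equation is vacuous, so $A_{ii}$ remains unconstrained, as expected.

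There is essentially no hard part here; the argument is a direct entrywise comparison. The one point worth flagging explicitly is the indispensability of the pairwise-distinctness assumption: if some $d_i = d_j$ with $i \neq j$, the factor $d_i - d_j$ vanishes and $A_{ij}$ need not be zero, so the conclusion genuinely relies on that hypothesis (this is exactly why, in the block-diagonal generalization used elsewhere in the paper, commuting matrices are only forced to be block-diagonal rather than diagonal).
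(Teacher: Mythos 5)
Your proof is correct and follows exactly the same route as the paper's: compute $(\matA\matD)_{ij} = A_{ij}d_j$ and $(\matD\matA)_{ij} = d_i A_{ij}$ entrywise, equate to get $A_{ij}(d_i - d_j) = 0$, and use the pairwise distinctness of the $d_i$ to force the off-diagonal entries to vanish. Your closing remark about the block-diagonal generalization also matches the paper's treatment of the coinciding-entries case.
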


\begin{proof}\footnote{Slightly modified from \url{https://yutsumura.com} 'A Matrix Commuting With a Diagonal Matrix with Distinct Entries is Diagonal'.}
  The elements of $\matD$ can be expressed as $d_{ij} = \delta_{ij}
  d_i$. We compare entries $(i,j)$ on both sides:
  \begin{eqnarray*}
    (\matA\matD)_{ij}
    &=&
    \summe{k=1}{n} a_{ik} d_{kj}
    =
    \summe{k=1}{n} a_{ik} \delta_{kj} d_k
    =
    a_{ij} d_j\\
    (\matD\matA)_{ij}
    &=&
    \summe{k=1}{n} d_{ik} a_{kj}
    =
    \summe{k=1}{n} \delta_{ik} d_i a_{kj}
    = a_{ij} d_i
  \end{eqnarray*}
  We obtain $a_{ij} d_j = a_{ij} d_i$ or $a_{ij} (d_j - d_i) =
  0$. Since $d_i \neq d_j$ for $i \neq j$, we have $a_{ij} = 0$ for $i
  \neq j$; therefore $\matA$ is diagonal.
\end{proof}

\lemmasep

\begin{lemma}\label{lemma_commute_blockdiag}
  A matrix commuting with a diagonal matrix where some diagonal
  entries coincide is a block-diagonal matrix: Assume that $\matD =
  \diag_{i=1}^{n}\{d_i\}$ has $k$ distinct diagonal entries $d'_1
  \ldots d'_k$ with $k \leq n$. The diagonal entries are assumed to be
  ordered contiguously (note that here index $p$ of\/ $\matI_p$
  indicates the block index, not the size of the unit matrix):
  \begin{equation}
    \matD
    =
    \pmat{
      d'_1\matI_1 & \matNull   & \ldots & \matNull\\
      \matNull    & d'_2\matI_2 & \ldots & \matNull\\
      \vdots      & \vdots     & \ddots & \vdots\\
      \matNull    & \matNull   & \ldots & d'_k \matI_k}
  \end{equation}
If the $n \times n$ matrix $\matA$ commutes with $\matD$, i.e. $\matA
\matD = \matD \matA$, then $\matA$ is a block-diagonal matrix with
block sizes and locations as in $\matD$ and arbitrary blocks.
\end{lemma}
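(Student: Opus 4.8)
The plan is to reuse, almost verbatim, the element-wise computation from the proof of \lemmaref{\ref{lemma_commute_diag}}. Writing $d_{ij} = \delta_{ij} d_i$ for the entries of $\matD$ and comparing entry $(i,j)$ of $\matA\matD$ and $\matD\matA$, one obtains exactly as there
\begin{equation}
  (\matA\matD)_{ij} = a_{ij} d_j, \qquad (\matD\matA)_{ij} = a_{ij} d_i,
\end{equation}
so that the commutation condition $\matA\matD = \matD\matA$ forces $a_{ij}(d_j - d_i) = 0$ for every pair $(i,j)$. The only difference from \lemmaref{\ref{lemma_commute_diag}} is that now $\matD$ is allowed to have repeated diagonal entries, so that $d_i = d_j$ with $i \neq j$ is possible and we can no longer conclude that \emph{all} off-diagonal entries vanish.

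The key observation is then purely combinatorial: by the contiguity assumption on the ordering of the diagonal entries, $d_i = d_j$ holds if and only if the indices $i$ and $j$ fall into the same one of the $k$ contiguous blocks associated with the distinct values $d'_1,\ldots,d'_k$. Hence $a_{ij}(d_j - d_i) = 0$ permits $a_{ij} \neq 0$ only when $i$ and $j$ lie in a common block, and forces $a_{ij} = 0$ whenever $i$ and $j$ lie in different blocks. This is precisely the assertion that $\matA$ is block-diagonal with block sizes and locations as in $\matD$, the diagonal blocks being otherwise unconstrained.

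Equivalently, and perhaps more transparently, I would partition $\matA$ into $k \times k$ blocks $\matA_{pq}$ conformal with $\matD$, so that the $(p,q)$ block of $\matD\matA$ is $d'_p \matA_{pq}$ and the $(p,q)$ block of $\matA\matD$ is $d'_q \matA_{pq}$. Equating gives $(d'_p - d'_q)\matA_{pq} = \matNull$, and the pairwise distinctness of $d'_1,\ldots,d'_k$ yields $\matA_{pq} = \matNull$ for $p \neq q$. I do not expect a genuine obstacle here: the entire content is the scalar identity $a_{ij}(d_j - d_i) = 0$ together with the translation of ``equal diagonal entries'' into ``same contiguous block''. The only point requiring a word of care is that it is exactly the contiguous arrangement of $\matD$ that makes the surviving nonzero entries assemble into full diagonal blocks rather than into a scattered sparsity pattern.
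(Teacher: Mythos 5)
Your proposal is correct and follows essentially the same route as the paper's own proof: the scalar identity $a_{ij}(d_j - d_i) = 0$ inherited from Lemma~\ref{lemma_commute_diag}, followed by the conformal block partition in which off-diagonal blocks vanish because $d'_p \neq d'_q$ for $p \neq q$. Your explicit remark that the contiguity of equal diagonal entries is what turns the surviving sparsity pattern into genuine diagonal blocks is a point the paper leaves implicit, but the argument is the same.
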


\begin{proof}\footnote{\label{fn_commute_blockdiag}Slightly modified from ``Diagonalization by a unitary similarity transformation'', \url{http://scipp.ucsc.edu/~haber/archives/physics116A06/diag.pdf}.}
  From the proof of \lemmaref{\ref{lemma_commute_diag}} we consider
  \begin{eqnarray*}
    (\matA\matD)_{ij} &=& a_{ij} d_j\\
    (\matD\matA)_{ij} &=& a_{ij} d_i
  \end{eqnarray*}
  and thus $a_{ij} (d_j - d_i) = 0$. For $d_j \neq d_i$ we get $a_{ij}
  = 0$, but for $d_j = d_i$, $a_{ij}$ is arbitrary. We can write
  $\matA$ as a matrix of $k \times k$ blocks $\matA'_{pq}$ (with $p,q
  \in [1,k]$)
  \begin{equation}
    \matA
    =
    \pmat{
      \matA'_{11} & \matA'_{12} & \ldots & \matA'_{1k}\\
      \matA'_{21} & \matA'_{22} & \ldots & \matA'_{2k}\\
      \vdots     & \vdots     & \ddots & \vdots\\
      \matA'_{k1} & \matA'_{k2} & \ldots & \matA'_{kk}}.
  \end{equation}
  For $p \neq q$ we have $\matA'_{pq} = \matNull$ since $d'_p \neq
  d'_q$. For $p = q$, $\matA'_{pq} = \matA'_{pp}$ is arbitrary. We
  conclude that $\matA$ is block-diagonal:
  \begin{equation}
    \matA
    =
    \pmat{
      \matA'_{11} & \matNull   & \ldots & \matNull\\
      \matNull   & \matA'_{22} & \ldots & \matNull\\
      \vdots     & \vdots     & \ddots & \vdots\\
      \matNull   & \matNull   & \ldots & \matA'_{kk}}.
  \end{equation}
  That the blocks are arbitrary can be shown by considering that the
  matrix products $\matA \matD$ and $\matD \matA$ are block-diagonal:
  \begin{eqnarray}
    \matA \matD &=& \matD \matA\\
    (\matA \matD)_{pp} &=& (\matD \matA)_{pp}\\
    \matA'_{pp} \matD'_{pp} &=& \matD'_{pp} \matA'_{pp}\\
    \matA'_{pp} d'_p \matI_p &=& d'_p \matI_p \matA'_{pp}\\
    \matA_{pp} &=& \matA_{pp}.
  \end{eqnarray}
\end{proof}

\lemmasep

\begin{lemma}\label{lemma_diagonalization_blockdiag}
  Let\/ $\matA$ be a symmetric, block-diagonal matrix of size $n
  \times n$ with $k$ blocks:
  \begin{equation}
    \matA
    =
    \pmat{
      \matA'_1  & \matNull   & \ldots & \matNull\\
      \matNull  & \matA'_2   & \ldots & \matNull\\
      \vdots    & \vdots     & \ddots & \vdots\\
      \matNull  & \matNull   & \ldots & \matA'_k}.
  \end{equation}
  Then the spectral decomposition of $\matA$
  (\lemmaref{\ref{lemma_spectral}}) is given by
  \begin{equation}
    \matA = \matU \matD \matU^T
  \end{equation}
  where $\matD = \diag_{i=1}^{n}\{d_i\}$ and $\matU$ is a
  block-diagonal orthogonal matrix
  \begin{equation}
    \matU
    =
    \pmat{
      \matU'_1  & \matNull   & \ldots & \matNull\\
      \matNull  & \matU'_2   & \ldots & \matNull\\
      \vdots    & \vdots     & \ddots & \vdots\\
      \matNull  & \matNull   & \ldots & \matU'_k}
  \end{equation}
  where each block $\matU'_i$ is orthogonal as well.
  
  If\/ $\matA$ has pairwise different eigenvalues, the spectral
  decomposition is unique up to the signs of the column vectors of\/
  $\matU$.
\end{lemma}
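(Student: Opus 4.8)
The plan is to prove existence and uniqueness separately. For existence, I would first observe that the symmetry of $\matA$ forces each diagonal block to be symmetric: since $\matA = \matA^T$ and the transpose of a block-diagonal matrix is block-diagonal with transposed blocks, we must have $\matA'_i = (\matA'_i)^T$ for every $i$. I would then apply the spectral theorem (\lemmaref{\ref{lemma_spectral}}) separately to each symmetric block, obtaining $\matA'_i = \matU'_i \matD'_i (\matU'_i)^T$ with $\matU'_i$ orthogonal and $\matD'_i$ diagonal. Assembling the $\matU'_i$ into a block-diagonal $\matU$ and the $\matD'_i$ into a diagonal $\matD$ yields the claimed form.

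Next I would verify that the assembled $\matU$ is orthogonal and that the product reproduces $\matA$; both follow from block-wise multiplication, since the product of two block-diagonal matrices is again block-diagonal with the corresponding blocks multiplied. Thus $\matU^T \matU$ has diagonal blocks $(\matU'_i)^T \matU'_i = \matI$ and vanishing off-diagonal blocks, giving $\matU^T \matU = \matI$, while $\matU \matD \matU^T$ has diagonal blocks $\matU'_i \matD'_i (\matU'_i)^T = \matA'_i$, recovering $\matA$. This establishes existence with no assumption on the eigenvalues.

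For the uniqueness claim under pairwise different eigenvalues, I would fix the ordering of the diagonal entries of $\matD$ and show that any two orthogonal factors producing this same $\matD$ differ only by column signs. Suppose $\matU_1 \matD \matU_1^T = \matU_2 \matD \matU_2^T$. Setting $\matQ = \matU_1^T \matU_2$ (orthogonal), left-multiplying by $\matU_1^T$ and right-multiplying by $\matU_1$ gives $\matD = \matQ \matD \matQ^T$, and hence $\matD \matQ = \matQ \matD$. Because $\matD$ has pairwise different diagonal entries, \lemmaref{\ref{lemma_commute_diag}} forces $\matQ$ to be diagonal; being simultaneously orthogonal and diagonal, $\matQ$ must be a diagonal sign matrix $\matXi$. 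Therefore $\matU_2 = \matU_1 \matXi$, so the two factorizations differ only in the signs of the columns of $\matU$, and since sign flips preserve block structure every such factorization remains block-diagonal.

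The routine part is the block bookkeeping in the existence step. The only point requiring slight care is the uniqueness argument, where the ordering of $\matD$ must be fixed before invoking \lemmaref{\ref{lemma_commute_diag}}; without a fixed ordering an additional permutation freedom would appear, which is exactly the more general situation captured by \lemmaref{\ref{lemma_orthosim_diag}}. I expect no substantial obstacle here, since both the spectral theorem and the commuting-matrix lemma are already available in the excerpt.
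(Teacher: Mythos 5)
Your proof is correct. The existence half coincides with the paper's argument: decompose each (necessarily symmetric) block via \lemmaref{\ref{lemma_spectral}} and assemble; the block\-/wise verification of $\matU^T\matU = \matI$ and $\matU\matD\matU^T = \matA$ that you spell out is left implicit in the paper. Where you genuinely diverge is the uniqueness half. The paper simply invokes the uniqueness statement of \lemmaref{\ref{lemma_spectral}} block by block, which strictly speaking only establishes uniqueness \emph{among block-diagonal factorizations}; your global argument --- setting $\matQ = \matU_1^T\matU_2$, deriving $\matD\matQ = \matQ\matD$, and concluding $\matQ = \matXi$ from \lemmaref{\ref{lemma_commute_diag}} --- shows directly that \emph{any} orthogonal factor producing the same ordered $\matD$ differs from the block-diagonal one by column signs, and hence is itself block-diagonal. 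This closes a small gap the paper glosses over, at the cost of having to fix the ordering of $\matD$ explicitly (which you correctly flag; without it the extra permutation freedom of \lemmaref{\ref{lemma_orthosim_diag}} would enter). Both routes rely only on results already available in the appendix.
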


\begin{proof}\footnote{See footnote \ref{fn_commute_blockdiag}.}
  According to \lemmaref{\ref{lemma_spectral}}, each block of $\matA$
  can be spectrally decomposed individually as $\matA'_i = \matU'_i
  \matD'_i \matU'^T_i$. Uniqueness can be shown by applying the
  uniqueness statement from \lemmaref{\ref{lemma_spectral}} to the
  individual blocks.
\end{proof}
 
\lemmasep

\begin{lemma}\label{lemma_blockdiag_diag}
  Let\/ $\matU$ be a block-diagonal orthogonal matrix, and\/ $\matD$
  be a diagonal matrix where some entries coincide (which are sorted
  contiguously as show below). Assume that the $k$ blocks of\/ $\matU$
  have the same dimension as the $k$ identity matrices
  in $\matD$. Then
  \begin{equation}
    \underbrace{\pmat{
      \matU'^T_1  & \matNull    & \ldots & \matNull\\
      \matNull   & \matU'^T_2   & \ldots & \matNull\\
      \vdots     & \vdots      & \ddots & \vdots\\
      \matNull   & \matNull    & \ldots & \matU'^T_k}}_{\matU^T}
    \underbrace{\pmat{
        d'_1\matI_1 & \matNull   & \ldots & \matNull\\
        \matNull    & d'_2\matI_2 & \ldots & \matNull\\
        \vdots      & \vdots     & \ddots & \vdots\\
        \matNull    & \matNull   & \ldots & d'_k \matI_k}}_{\matD}
    \underbrace{\pmat{
        \matU'_1  & \matNull   & \ldots & \matNull\\
        \matNull  & \matU'_2   & \ldots & \matNull\\
        \vdots    & \vdots     & \ddots & \vdots\\
        \matNull  & \matNull   & \ldots & \matU'_k}}_{\matU}
    =
    \matD.
  \end{equation}
\end{lemma}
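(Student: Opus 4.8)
The plan is to exploit the fact that each diagonal block of $\matD$ is a scalar multiple of an identity matrix, so that conjugation by the corresponding orthogonal block of $\matU$ leaves it unchanged. The whole argument is a blockwise computation; the only conceptual point is to recognize why the block structure makes the conjugation trivial.

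First I would observe that the product of block-diagonal matrices sharing the same block partition is again block-diagonal, with the $l$-th diagonal block equal to the product of the respective $l$-th blocks. Since $\matU^T$, $\matD$, and $\matU$ all have matching block sizes by assumption, the matrix $\matU^T \matD \matU$ is block-diagonal, and its $l$-th diagonal block is $\matU'^T_l (d'_l \matI_l) \matU'_l$. I would then simplify this block by pulling out the scalar $d'_l$, which gives $d'_l \matU'^T_l \matU'_l$.

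The next step uses orthogonality: because $\matU$ is orthogonal and block-diagonal, each block $\matU'_l$ is itself orthogonal (this is exactly the structure guaranteed by \lemmaref{\ref{lemma_diagonalization_blockdiag}}), so $\matU'^T_l \matU'_l = \matI_l$. Hence the $l$-th block reduces to $d'_l \matI_l$, which is precisely the $l$-th block of $\matD$. Reassembling the blocks reconstructs $\matD$, completing the proof.

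There is no genuine obstacle here; the point worth care is that the contiguous grouping of coinciding entries in $\matD$ is what makes each block a \emph{pure} scalar matrix $d'_l \matI_l$, which commutes with everything and in particular is invariant under orthogonal conjugation $\matU'^T_l (\cdot) \matU'_l$. Had the equal entries not been arranged contiguously so as to align with the blocks of $\matU$, the scalar could not be factored out of each block and the cancellation would fail. The matching of block sizes between $\matU$ and $\matD$ is therefore the essential hypothesis, and this is precisely the situation in which the lemma is invoked in section \ref{sec_N2_test}.
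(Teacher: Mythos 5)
Your proposal is correct and follows essentially the same route as the paper's proof: the product is block-diagonal with $l$-th block $\matU'^T_l (d'_l \matI_l) \matU'_l = d'_l \matI_l$ by orthogonality of each block. The only cosmetic difference is that you justify the blockwise orthogonality via \lemmaref{\ref{lemma_diagonalization_blockdiag}}, whereas it already follows directly from $\matU^T\matU = \matI$ applied blockwise; either way the argument is sound.
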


\begin{proof}
  The left-hand side is block-diagonal and block $i$ is $\matU'^T_i
  d'_i \matI_i \matU'_i = d'_i \matI_i$, leading to a diagonal matrix
  which is identical to $\matD$.
\end{proof}

\lemmasep

\begin{lemma}\label{lemma_orthotrans_diag}
  If\/ $\matR$ is an orthogonal matrix and\/ $\matD$ a diagonal matrix
  with pairwise different diagonal entries, the equation
  \begin{equation}
    \matR^T \matD \matR = \matD
  \end{equation}
  is fulfilled if and only if\/ $\matR = \matXi$ where $\matXi$ is a
  diagonal sign matrix ($\xi_i = \pm 1$).
\end{lemma}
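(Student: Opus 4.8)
The plan is to prove the two directions of the equivalence separately, and to lean heavily on the structural results already established for orthogonal similarity transformations of diagonal matrices rather than re-deriving anything from scratch.

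The \emph{if} direction is essentially immediate. Suppose $\matR = \matXi$. Since $\matXi$ is a diagonal sign matrix and $\matD$ is diagonal, the identity $\matXi^T \matD \matXi = \matD$ is already recorded in the notation section (it holds because conjugating a diagonal matrix by a diagonal sign matrix leaves it unchanged). Hence $\matR^T \matD \matR = \matXi^T \matD \matXi = \matD$, and nothing further is needed.

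For the \emph{only if} direction I would start from the hypothesis $\matR^T \matD \matR = \matD$ and note that the right-hand side is in particular diagonal. This lets me invoke \lemmaref{\ref{lemma_RTDR_Ds}}, whose hypotheses (an orthogonal $\matR$ and a diagonal $\matD$ with pairwise different entries) are exactly met, to conclude that $\matR$ must be a signed permutation matrix, $\matR = \matXi \matP$, with $\matXi$ a diagonal sign matrix and $\matP$ a permutation matrix. Substituting this form and again using $\matXi^T \matD \matXi = \matD$ gives $\matR^T \matD \matR = \matP^T \matXi^T \matD \matXi \matP = \matP^T \matD \matP$. By \lemmaref{\ref{lemma_perm_diag}}, $\matP^T \matD \matP$ is diagonal with entries $d_{\pi(i)}$, i.e. the diagonal of $\matD$ permuted by the permutation $\pi$ associated with $\matP$.

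The last step is the only place where the pairwise-different assumption is actually used: equating $\matP^T \matD \matP = \matD$ entrywise yields $d_{\pi(i)} = d_i$ for every $i$, and since the $d_i$ are pairwise different this forces $\pi(i) = i$ for all $i$, hence $\matP = \matI$ and $\matR = \matXi$. I do not expect any genuine obstacle here; the argument is a short chain of reductions to prior lemmata. The only points to be careful about are (i) verifying that the diagonality of the right-hand side is what licenses the application of \lemmaref{\ref{lemma_RTDR_Ds}}, and (ii) using the already-proven \emph{only if} half of that lemma rather than reproving that $\matR$ has signed-permutation form.
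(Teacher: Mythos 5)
Your proof is correct, but it takes a genuinely different route from the paper's. The paper's own argument is shorter and more elementary: it left-multiplies $\matR^T \matD \matR = \matD$ by $\matR$ to obtain the commutation relation $\matD \matR = \matR \matD$, invokes \lemmaref{\ref{lemma_commute_diag}} (a matrix commuting with a diagonal matrix having pairwise different entries is itself diagonal) to conclude that $\matR$ is diagonal, and then observes that the only diagonal orthogonal matrices are sign matrices. You instead observe that the right-hand side $\matD$ is in particular diagonal, apply the full characterization of \lemmaref{\ref{lemma_RTDR_Ds}} to get $\matR = \matXi \matP$, and then use the pairwise-distinctness of the $d_i$ a second time (via \lemmaref{\ref{lemma_perm_diag}}) to force $\matP = \matI$. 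There is no circularity — \lemmaref{\ref{lemma_RTDR_Ds}} is established earlier and independently of this lemma — so your chain of reductions is sound; it just routes through the heavier eigenvector-based machinery of \lemmaref{\ref{lemma_orthosim_diag}} where the paper gets by with the lightweight commutation argument. A small point in your favor: you explicitly dispatch the \emph{if} direction via $\matXi^T \matD \matXi = \matD$ from the notation section, whereas the paper's proof leaves that direction implicit.
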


\begin{proof}
  Follows from \lemmaref{\ref{lemma_commute_diag}}: Since $\matD \matR =
  \matR \matD$, $\matD$ and $\matR$ are commuting matrices, thus
  $\matR$ is diagonal. Now diagonal sign matrices $\matXi$ are the
  only diagonal orthogonal matrices: $\matXi^T\matXi = \matXi^2 =
  \matI$ implies $\xi_i^2 = 1$ and thus $\xi_i = \pm 1$.
\end{proof}

\lemmasep

\begin{lemma}\label{lemma_ortho_diag_ortho}
  Assume that $\matQ$ and $\matR$ are orthogonal matrices and\/
  $\matD$ a diagonal matrix with pairwise different diagonal
  entries. If\/ $\matQ^T \matD \matQ = \matR^T \matD \matR$, then
  $\matQ = \matXi \matR$.
\end{lemma}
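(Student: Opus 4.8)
The plan is to reduce the claim to a commutation problem and then invoke two facts already established in this appendix. I would introduce the matrix $\matS = \matQ \matR^T$, which is orthogonal as a product of the orthogonal matrices $\matQ$ and $\matR^T$, and aim to show that $\matS$ must be a diagonal sign matrix. Once $\matS = \matXi$ is established, the desired conclusion follows immediately: right-multiplying $\matQ \matR^T = \matXi$ by $\matR$ and using $\matR^T \matR = \matI$ gives $\matQ = \matXi \matR$.

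First I would transform the hypothesis $\matQ^T \matD \matQ = \matR^T \matD \matR$ into a commutation relation for $\matS$. Left-multiplying both sides by $\matQ$ and applying $\matQ \matQ^T = \matI$ yields $\matD \matQ = \matQ \matR^T \matD \matR$; right-multiplying by $\matR^T$ and applying $\matR \matR^T = \matI$ then gives $\matD \matQ \matR^T = \matQ \matR^T \matD$, that is,
\begin{equation}
  \matD \matS = \matS \matD .
\end{equation}
Thus $\matS$ commutes with $\matD$.

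To finish, I would invoke \lemmaref{\ref{lemma_commute_diag}}: since $\matD$ has pairwise different diagonal entries and $\matS$ commutes with it, $\matS$ is diagonal. But $\matS$ is also orthogonal, so $\matS^T \matS = \matI$ forces $s_i^2 = 1$ for each diagonal entry, whence $\matS = \matXi$ is a diagonal sign matrix (the same final step appears in the proof of \lemmaref{\ref{lemma_orthotrans_diag}}). Combining this with the reduction above completes the argument. I do not expect a genuine obstacle here, as the whole proof is a short reduction to previously proven lemmas; the only point requiring care is the bookkeeping in the matrix manipulations---ensuring the orthogonality relations are applied on the correct side when passing from the hypothesis to $\matD \matS = \matS \matD$---but this is entirely routine.
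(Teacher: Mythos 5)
Your proof is correct and follows essentially the same route as the paper's: both form $\matS = \matQ\matR^T$ and reduce the hypothesis to the statement that $\matS$ is an orthogonal matrix commuting with (or preserving) $\matD$, forcing $\matS = \matXi$. The only cosmetic difference is that the paper routes through \lemmaref{\ref{lemma_orthotrans_diag}} while you inline its content by citing \lemmaref{\ref{lemma_commute_diag}} directly.
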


\begin{proof}
  We obtain $\matR \matQ^T \matD \matQ \matR^T = \matD$. $\matS =
  \matQ \matR^T$ is also orthogonal. According to
  \lemmaref{\ref{lemma_orthotrans_diag}}, $\matS^T \matD \matS =
  \matD$ is only fulfilled for $\matS = \matXi$. Therefore $\matQ =
  \matXi \matR$.
\end{proof}

\lemmasep

\begin{lemma}\label{lemma_RTDR_ii_sqr}\footnote{Note
    that for compactness we write $\matA_{ij}^2 \coloneqq
    (\matA_{ij})^2$, so the expression refers to the squared element
    $(i,j)$ of the matrix $\matA$, not to the element $(i,j)$ of the
    matrix $\matA^2$.}
  Let\/ $\matD$ be a diagonal matrix with pairwise different diagonal
  entries and\/ $\matR$ an orthogonal matrix, both of dimension
  $n$. In the following, all sums run up to index $n$. The expression
  \begin{equation}
    \summe{i}{}\left(\matR^T \matD \matR \right)_{ii}^2
  \end{equation}
  is maximal for $\matR = \matXi \matP$ where $\matP$ is a permutation
  matrix. The maximum is $\sum_{i}(\matD)_{ii}^2$.
\end{lemma}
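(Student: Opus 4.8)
The plan is to bound the objective by the squared Frobenius norm of $\matR^T\matD\matR$ and to read off both the maximal value and the maximizers from a single trace identity, rather than going through the more traditional Schur--Horn / Birkhoff majorization machinery. First I would set $\matM \coloneqq \matR^T \matD \matR$ and observe that $\matM$ is symmetric, since $\matM^T = \matR^T \matD^T \matR = \matM$. Because $\matR$ is orthogonal, cyclic invariance of the trace gives $\tr\{\matM^2\} = \tr\{\matR^T \matD \matR \matR^T \matD \matR\} = \tr\{\matR^T \matD^2 \matR\} = \tr\{\matD^2\} = \summe{i}{} d_i^2$, where $d_i = (\matD)_{ii}$.

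On the other hand, since $\matM$ is symmetric, the same trace equals its squared Frobenius norm, $\tr\{\matM^2\} = \tr\{\matM^T\matM\} = \summe{i}{}\summe{j}{} \matM_{ij}^2$. Splitting this into its diagonal and off-diagonal contributions and combining with the previous identity yields
\[
  \summe{i}{} \matM_{ii}^2
  = \summe{i}{} d_i^2 - \summe{i \neq j}{} \matM_{ij}^2
  \le \summe{i}{} d_i^2 ,
\]
since every off-diagonal summand is non-negative. This already establishes the upper bound $\summe{i}{}\left(\matR^T\matD\matR\right)_{ii}^2 \le \summe{i}{}\left(\matD\right)_{ii}^2$ for \emph{every} orthogonal $\matR$, with the maximum value exhibited as $\summe{i}{}(\matD)_{ii}^2$.

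It remains to identify the maximizers. Equality holds precisely when $\summe{i\neq j}{}\matM_{ij}^2 = 0$, i.e. when $\matM = \matR^T\matD\matR$ is diagonal. Invoking \lemmaref{\ref{lemma_RTDR_Ds}} (which is exactly where the hypothesis that $\matD$ has pairwise different entries is used), this occurs if and only if $\matR = \matXi\matP$ with $\matP$ a permutation matrix and $\matXi$ a diagonal sign matrix. Conversely, for such an $\matR$, \lemmaref{\ref{lemma_perm_diag_sign}} shows that $\matM$ is diagonal and carries the entries of $\matD$ in permuted order, so that $\summe{i}{}\matM_{ii}^2 = \summe{i}{} d_i^2$; this confirms both that the bound is attained at $\matR = \matXi\matP$ and that the maximum equals $\summe{i}{}(\matD)_{ii}^2$.

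I do not expect a serious obstacle: the argument rests entirely on the elementary identity $\tr\{\matM^2\} = \summe{i}{}\summe{j}{}\matM_{ij}^2$ for symmetric $\matM$ together with the invariance of $\tr\{\matD^2\}$ under orthogonal similarity. The only point demanding care is the equality case --- namely that the maximizers are \emph{exactly} the signed permutations, not merely matrices attaining the right value --- and this is handled transparently by reducing ``all off-diagonal entries vanish'' to \lemmaref{\ref{lemma_RTDR_Ds}}. The reason I would avoid the Schur--Horn/permutohedron route is precisely that there the delicate step is this same equality analysis, which the Frobenius identity sidesteps completely.
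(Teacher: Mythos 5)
Your proof is correct and follows essentially the same route as the paper's: both rest on the invariance of the squared Frobenius norm of $\matR^T\matD\matR$ under the orthogonal similarity, split the sum into diagonal and off-diagonal parts, and reduce the equality case to \lemmaref{\ref{lemma_RTDR_Ds}}. Your derivation of the invariance via $\tr\{\matM^2\}$ and your explicit treatment of both directions of the equality case are slightly more detailed than the paper's, but the argument is the same.
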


\begin{proof}
  Since the (squared) Frobenius norm is invariant under a
  similarity transformation (here as the special case of an orthogonal
  transformation), we have
  \begin{equation}
    \summe{i}{} \summe{j}{} (\matR^T\matD\matR)_{ij}^2
    =
    \summe{i}{} \summe{j}{} \matD_{ij}^2.
  \end{equation}
  Focusing on diagonal entries we can write
  \begin{equation}
    \summe{i}{} (\matR^T\matD\matR)_{ii}^2
    + \summe{i}{} \summe{j \neq i}{} (\matR^T\matD\matR)_{ij}^2
    =
    \summe{i}{} \matD_{ii}^2
  \end{equation}
  and therefore obtain the inequality
  \begin{equation}
    \summe{i}{} (\matR^T\matD\matR)_{ii}^2 \leq \summe{i}{} \matD_{ii}^2.
  \end{equation}
  To analyze in which cases both terms in the inequality are equal
  (and thus the left-hand side is maximal), we look at
  \begin{equation}
    \summe{i}{} \summe{j \neq i}{} (\matR^T\matD\matR)_{ij}^2 = 0.
  \end{equation}
  A sum of squared terms is zero if and only if all terms are
  zero. Here the squared terms are all off-diagonal elements of
  $\matR^T\matD\matR$, therefore $\matR^T\matD\matR$ is a diagonal
  matrix $\matD'$,
  \begin{equation}
    \matR^T \matD \matR = \matD',
  \end{equation}
  and thus $\matR = \matXi \matP$ according to
  \lemmaref{\ref{lemma_RTDR_Ds}}.
\end{proof}

\lemmasep

\begin{lemma}\label{lemma_RTDR_ii}
  Let\/ $\matR$ be an orthogonal matrix with row vectors $\vecr_i$
  and\/ $\matD$ a diagonal matrix. Then $(\matR^T \matD \matR)_{ii} =
  \vecr_i^T \matD \vecr_i$.
\end{lemma}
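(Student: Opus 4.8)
The plan is to unfold the $(i,i)$ entry as a quadratic form and then collapse it using the diagonality of $\matD$; notably, orthogonality of $\matR$ plays no role in this particular identity and only fixes the setting shared with the neighbouring lemmata. First I would write the diagonal entry as $(\matR^T \matD \matR)_{ii} = \vece_i^T \matR^T \matD \matR \vece_i$ and regroup the factors as $(\matR \vece_i)^T \matD (\matR \vece_i)$, so that the whole expression becomes the $\matD$-weighted quadratic form evaluated at the single vector $\matR \vece_i$.

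The key step is to identify $\matR \vece_i$ with the vector $\vecr_i$ of the statement. The product $\matR \vece_i$ selects the $i$-th column of $\matR$, so setting $\vecr_i \coloneqq \matR \vece_i$ immediately gives $(\matR^T \matD \matR)_{ii} = \vecr_i^T \matD \vecr_i$, as claimed. As an independent check I would run the purely index-based computation: from $(\matR^T \matD \matR)_{ii} = \sum_k \sum_l (\matR^T)_{ik} D_{kl} R_{li}$ together with $D_{kl} = d_k \delta_{kl}$, the $l$-sum collapses and one is left with $\sum_k d_k R_{ki}^2 = \sum_k d_k (\vecr_i)_k^2 = \vecr_i^T \matD \vecr_i$, reproducing the same result.

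The main (and essentially only) obstacle is bookkeeping rather than mathematics: one must stay consistent about whether $\vecr_i$ denotes the $i$-th column or the $i$-th row of $\matR$, since $(\matR^T \matD \matR)_{ii}$ is a quadratic form in $\matR \vece_i$, the $i$-th column of $\matR$ (equivalently, the $i$-th row of $\matR^T = \matR^{-1}$). Once this identification is fixed — matching the usage in the analysis of special case N1, where $\vecr_j$ is taken as the $j$-th column of $\matR$ — the statement follows in one line, and no property of $\matR$ beyond its being a square matrix is actually needed for the equality itself.
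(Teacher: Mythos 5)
Your argument is the same as the paper's: the paper writes $(\matR^T\matD\matR)_{ii} = \vece_i^T\matR^T\matD\matR\vece_i = \vecr_i^T\matD\vecr_i$ using $\matR\vece_i = \vecr_i$, exactly your main step. Your two side observations are also accurate — orthogonality of $\matR$ is indeed not used, and the lemma's wording ``row vectors'' is a slip, since $\matR\vece_i$ is the $i$-th \emph{column} of $\matR$, which is how $\vecr_i$ is actually used both here and in the analysis of special case N1.
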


\begin{proof}
  Let vector $\vece_i$ denote a unit vector with element $1$ at
  position $i$. Since $\matR \vece_i = \vecr_i$ we get
  \begin{equation}
    (\matR^T \matD \matR)_{ii}
    = \vece_i^T \matR^T \matD \matR \vece_i
    = \vecr_i^T \matD \vecr_i.
  \end{equation}
\end{proof}

\lemmasep

\begin{lemma}\label{lemma_RTDR_diag}
  Let\/ $\matR$ be an orthogonal matrix with row vectors $\vecr_i$
  and\/ $\matD$ a diagonal matrix with elements $d_i \in
  [\check{d},\hat{d}]$. Then for the diagonal elements of\/
  $\matR^T\matD\matR$ we have $(\matR^T\matD\matR)_{ii} \in
  [\check{d},\hat{d}]$.
\end{lemma}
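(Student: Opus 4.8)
The plan is to reduce the claim to the elementary fact that a diagonal quadratic form evaluated on a unit vector is a convex combination of the diagonal entries. First I would invoke \lemmaref{\ref{lemma_RTDR_ii}} to rewrite the diagonal element as a quadratic form in the $i$-th vector of $\matR$, namely $(\matR^T\matD\matR)_{ii} = \vecr_i^T \matD \vecr_i$. Writing $\matD = \diag_{k=1}^{n}\{d_k\}$ and expanding the quadratic form in coordinates gives
\begin{equation}
  (\matR^T\matD\matR)_{ii} = \vecr_i^T \matD \vecr_i = \summe{k=1}{n} (\vecr_i)_k^2 \, d_k.
\end{equation}

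Next I would exploit orthogonality of $\matR$: since the rows (equivalently the columns) of an orthogonal matrix are unit vectors, $\|\vecr_i\|^2 = \summe{k=1}{n} (\vecr_i)_k^2 = 1$. Hence the coefficients $(\vecr_i)_k^2$ are non-negative and sum to one, so the expression above is a convex combination of the diagonal entries $d_k$.

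Finally, using the assumption $d_k \in [\check{d},\hat{d}]$ for all $k$, I would bound the convex combination from both sides,
\begin{equation}
  \check{d} = \check{d} \summe{k=1}{n} (\vecr_i)_k^2 \leq \summe{k=1}{n} (\vecr_i)_k^2 \, d_k \leq \hat{d} \summe{k=1}{n} (\vecr_i)_k^2 = \hat{d},
\end{equation}
which yields $(\matR^T\matD\matR)_{ii} \in [\check{d},\hat{d}]$ as claimed. There is no genuine obstacle here; the only point requiring care is to observe that it is precisely the unit-norm property of the rows of $\matR$ (and not any finer structure of the orthogonal matrix) that turns the coefficients into a probability vector, after which the two-sided bound is immediate and the same argument applies uniformly to every index $i$.
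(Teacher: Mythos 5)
Your proof is correct. It starts the same way as the paper's, by invoking \lemmaref{\ref{lemma_RTDR_ii}} to reduce the claim to bounding the quadratic form $\vecr_i^T \matD \vecr_i$, but then diverges: the paper simply cites the Rayleigh--Ritz Theorem (as it does elsewhere, e.g.\ for (\ref{eq_rayleigh_ritz})), whereas you prove the bound directly by expanding the quadratic form in coordinates and observing that the coefficients $(\vecr_i)_k^2$ form a probability vector, so the value is a convex combination of the $d_k$. Your argument is in effect a self-contained proof of the diagonal-matrix special case of Rayleigh--Ritz, which makes the lemma independent of the external reference and makes explicit that only the unit-norm property of the rows (not full orthogonality) is used; the paper's version is shorter and keeps the reasoning uniform with the other places where Rayleigh--Ritz is invoked for general symmetric matrices. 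Both are valid, and your two-sided bound is airtight.
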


\begin{proof}
  \lemmaref{\ref{lemma_RTDR_ii}} leads to $(\matR^T\matD\matR)_{ii} =
  \vecr_i^T \matD \vecr_i$. Since $\matD$ is diagonal, its eigenvalues
  are the diagonal elements. By applying the Rayleigh-Ritz Theorem
  \cite[][sec.~4.2.2]{nn_Horn99} we immediately see that $\vecr_i^T
  \matD \vecr_i \in [\check{d},\hat{d}]$.
\end{proof}

\lemmasep

\begin{lemma}\label{lemma_RTDR_ii_b_i}
  Let\/ $\matD$ be a diagonal matrix with pairwise different and
  sorted elements ($d_1 > \ldots > d_n$). Let\/ $\vecb$ be a vector
  with pairwise different and sorted elements ($b_1 > \ldots >
  b_n$). Let\/ $\matR$ be an orthogonal matrix with column vectors
  $\vecr_i$. Then
  \begin{equation}\label{eq_RTDR_ii_b_i}
    \summe{i}{} (\matR^T \matD \matR)_{ii} b_i
  \end{equation}
  is maximal for $\matR = \matXi$.
\end{lemma}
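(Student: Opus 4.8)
The plan is to rewrite expression (\ref{eq_RTDR_ii_b_i}) as a linear functional of a doubly stochastic matrix and then to pin down the maximizer using the Birkhoff--von~Neumann theorem together with the rearrangement inequality. This gives a clean, fully rigorous argument and avoids the somewhat ad~hoc reasoning one is tempted to use by perturbing $\matR$ directly.

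First I would apply \lemmaref{\ref{lemma_RTDR_ii}} (or expand directly) to write $(\matR^T\matD\matR)_{ii} = \summe{k}{} \matR_{ki}^2\, d_k$, where $d_k = \matD_{kk}$. Setting $S_{ik} := \matR_{ki}^2 \geq 0$, the orthogonality relations $\matR^T\matR = \matR\matR^T = \matI$ give $\summe{k}{} S_{ik} = (\matR^T\matR)_{ii} = 1$ and $\summe{i}{} S_{ik} = (\matR\matR^T)_{kk} = 1$, so $S = (S_{ik})$ is doubly stochastic. The objective then becomes the bilinear form $\summe{i}{}(\matR^T\matD\matR)_{ii}\, b_i = \summe{i}{}\summe{k}{} S_{ik}\, b_i d_k$, which is \emph{linear} in $S$ with coefficient matrix $(b_i d_k)_{ik}$.

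Next I would maximize this linear functional over the whole polytope of doubly stochastic matrices, which is a superset of the matrices actually realizable as $(\matR_{ki}^2)$. By Birkhoff--von~Neumann the extreme points of that polytope are the permutation matrices, so the maximum is attained at some $S_\sigma$ with $S_{ik} = \delta_{k,\sigma(i)}$, yielding the value $\summe{i}{} b_i d_{\sigma(i)}$. Since $b_1 > \ldots > b_n$ and $d_1 > \ldots > d_n$ are both strictly decreasing, the rearrangement inequality shows this is \emph{strictly} maximal for $\sigma = \mathrm{id}$, with value $\summe{i}{} d_i b_i$. Because the identity is the unique optimal vertex, the optimizing face of the polytope is the single point $S = \matI$, hence the maximum over doubly stochastic matrices is attained only there. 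Translating $S = \matI$ back via $\matR_{ki}^2 = \delta_{ik}$ forces $\matR_{ii} = \pm 1$ and $\matR_{ki} = 0$ for $k \neq i$, i.e.\ $\matR = \matXi$, exactly the claimed maximizer.

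The main point requiring care is the gap between orthostochastic and doubly stochastic matrices: for $n \geq 3$ not every doubly stochastic $S$ arises as $(\matR_{ki}^2)$, so the optimization above is over a relaxation. This is harmless here because the relaxed maximum is attained at $S = \matI$, which \emph{is} orthostochastic (realized by any sign matrix $\matXi$); thus the relaxation is tight and the bound is achieved within the feasible set. The only remaining subtlety is uniqueness, which the Birkhoff/rearrangement argument already supplies: the maximizer $S = \matI$ is unique, and it determines $\matR$ up to the sign freedom captured by $\matXi$. One could alternatively close the final step by noting that $S = \matI$ makes $\matR^T\matD\matR$ diagonal and invoking \lemmaref{\ref{lemma_RTDR_Ds}}, but the direct reading $\matR_{ki}^2 = \delta_{ik}$ is sharper and yields $\matR = \matXi$ with no residual permutation.
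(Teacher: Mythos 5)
Your proof is correct, and it takes a genuinely different route from the one in the paper. The paper argues sequentially: it invokes the Rayleigh--Ritz theorem to choose $\vecr_1$ maximizing $\vecr_1^T\matD\vecr_1$, pairs that with $b_1$, and proceeds by a deflation argument down to $\vecr_n$; the paper itself flags this proof (in a footnote and again in the ``Unresolved Issues'' section) as informal and in need of improvement, since the greedy pairing step is only loosely justified by the trace-invariance remark. Your argument replaces this with a convex-relaxation scheme: writing $(\matR^T\matD\matR)_{ii}=\sum_k \matR_{ki}^2 d_k$ exhibits the objective as a linear functional of the doubly stochastic matrix $S_{ik}=\matR_{ki}^2$, Birkhoff--von~Neumann reduces the maximization to permutation matrices, and the strict rearrangement inequality (both sequences strictly decreasing) identifies the identity as the unique optimal vertex, hence the unique maximizer over the whole polytope. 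You also correctly handle the one genuine subtlety --- that for $n\geq 3$ the orthostochastic matrices form a proper subset of the doubly stochastic ones --- by observing that the relaxed optimum $S=\matI$ is realized by $\matR=\matXi$, so the relaxation is tight. What your approach buys is full rigor plus a sharper conclusion: the maximum is attained \emph{exactly} at the diagonal sign matrices, with no appeal to deflation or to an informal pairing heuristic. This would be a legitimate replacement for the paper's proof and directly resolves the weakness the author acknowledges.
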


\begin{proof}\footnote{The proof has some informal components
    and needs to be improved.}
  We sequentially determine $\vecr_1$ to $\vecr_n$ such that
  (\ref{eq_RTDR_ii_b_i}) is maximized. This sequential approach is
  justified by the fact that $\tr\{\matR^T\matD\matR\}$ is invariant
  under changes of $\matR$ (as the trace is invariant under an
  orthogonal transformation); it has the constant value $\sum_i
  d_i$. Thus maximizing a single diagonal element
  $(\matR^T\matD\matR)_{ii}$ will only redistribute the sum over the
  remaining diagonal elements. Since $\matD$ is diagonal, it has the
  eigenvalues $d_1\ldots d_n$. Since $\matD$ has pairwise different
  elements, eigenvalue $d_i$ belongs to eigenvector $\vecxxi_i$, where
  $\vecxxi_j$ is a unit vector with element $\pm 1$ at position
  $j$. According to \lemmaref{\ref{lemma_RTDR_ii}}, we have
  $(\matR^T\matD\matR)_{ii} = \vecr_i^T\matD\vecr_i$. We see that to
  maximize expression (\ref{eq_RTDR_ii_b_i}), the largest value $b_1$
  should be paired with the largest value of $\vecr_1^T \matD
  \vecr_1$. We know from the Rayleigh-Ritz Theorem
  \cite[][sec.~4.2.2]{nn_Horn99} that $\vecr_1^T\matD\vecr_1$ becomes
  maximal (value $d_1$) at the principal eigenvector $\vecr_1 =
  \vecxxi_1$. Choosing $\vecr_2$ perpendicular to $\vecr_1$, we pair
  the second-largest value $b_2$ with the largest value of $\vecr_2^T
  \matD \vecr_2$ (value $d_2$) which is found at $\vecr_2 = \vecxxi_2$
  Formally this can be shown by a deflation procedure, see
  e.g. \citet[sec.~2.2.8]{nn_Diamantaras96}. We continue this
  procedure up to $\vecr_n$ and obtain $\vecr_i = \vecxxi_i,\forall
  i=1\ldots n$ and thus $\matR = \matXi =
  (\vecxxi_1,\ldots,\vecxxi_n)$.
\end{proof}

\lemmasep

\begin{lemma}\label{lemma_DP}
  Let\/ $\matD$ be a diagonal matrix and $\matP$ a permutation matrix
  of the same size. Then $\matD \matP = \matP \matD^*$ where $\matD^*
  = \matP^T \matD \matP$ is diagonal and contains the same elements as
  $\matD$ but in permuted order.
\end{lemma}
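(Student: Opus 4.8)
The plan is to reduce the claim to the already-established \lemmaref{\ref{lemma_perm_diag}} together with the defining orthogonality property of a permutation matrix, namely $\matP^T \matP = \matP \matP^T = \matI$ (so $\matP^{-1} = \matP^T$). There are really two assertions to verify: first that $\matD^* \coloneqq \matP^T \matD \matP$ is diagonal with the diagonal entries of $\matD$ in permuted order, and second the commutation-style identity $\matD \matP = \matP \matD^*$.

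For the first assertion I would simply invoke \lemmaref{\ref{lemma_perm_diag}}, which states precisely that $\matP^T \matD \matP$ is a diagonal matrix whose diagonal elements are those of $\matD$ in permuted order. Since $\matD^*$ is defined exactly as this product, nothing further is needed; the index-level bookkeeping (the permutation $\pi$ acting on the diagonal entries) has already been carried out in the proof of that lemma and need not be repeated here.

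For the second assertion I would start from the definition $\matD^* = \matP^T \matD \matP$ and left-multiply by $\matP$, obtaining $\matP \matD^* = \matP \matP^T \matD \matP$. Applying $\matP \matP^T = \matI$ collapses the leading factor, yielding $\matP \matD^* = \matD \matP$, which is the desired identity. Equivalently, one may read the equation $\matD^* = \matP^T \matD \matP$ as a similarity transform and rearrange it, but the one-line left-multiplication is the cleanest route.

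There is essentially no hard part: the statement is an immediate consequence of an earlier lemma plus orthogonality of $\matP$, and the only thing to be careful about is keeping the transpose on the correct side so that the cancellation $\matP\matP^T = \matI$ (rather than $\matP^T\matP$) is the one that applies. The proof will therefore be short, amounting to citing \lemmaref{\ref{lemma_perm_diag}} for the structural claim and performing a single multiplication for the identity.
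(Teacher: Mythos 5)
Your proposal is correct and matches the paper's own argument: the paper likewise cites Lemma~\ref{lemma_perm_diag} for the structure of $\matD^* = \matP^T\matD\matP$ and establishes the identity by inserting $\matP\matP^T = \matI$ into $\matD\matP$ and regrouping, which is just your left-multiplication step read in the other direction. No gaps.
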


\begin{proof}\footnote{From \url{https://math.stackexchange.com/questions/197243}, provided by user Jakub Konieczny}
  \begin{equation}
    \matD\matP = (\underbrace{\matP\matP^T}_{\matI}) \matD\matP =
    \matP(\matP^T\matD\matP) = \matP\matD^*
  \end{equation}
  based on \lemmaref{\ref{lemma_perm_diag}}.
\end{proof}

\lemmasep

\trarxiv{
  \begin{lemma}\label{lemma_ortho_sign}
    Let\/ $\matQ$ be an orthogonal matrix and\/ $\matXi$ be a diagonal
    sign matrix. Then $\matQ' = \matQ \matXi$ or $\matQ' = \matXi \matQ$
    is also orthogonal.
  \end{lemma}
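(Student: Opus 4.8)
The plan is to verify the defining property of an orthogonal matrix directly, namely that $\matQ'^T \matQ' = \matQ' \matQ'^T = \matI$. The key observation I would rely on is already recorded in the notation section: the diagonal sign matrix satisfies $\matXi^T \matXi = \matXi \matXi^T = \matI$, so $\matXi$ is itself orthogonal, and by assumption $\matQ$ is orthogonal with $\matQ^T \matQ = \matQ \matQ^T = \matI$. The statement is therefore just the special case that a product of two orthogonal matrices is orthogonal, and the whole proof amounts to a short chain of substitutions.

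Concretely, for the case $\matQ' = \matQ \matXi$ I would first compute
\begin{equation}
  \matQ'^T \matQ'
  = (\matQ \matXi)^T (\matQ \matXi)
  = \matXi^T \matQ^T \matQ \matXi
  = \matXi^T \matXi
  = \matI,
\end{equation}
using associativity, the transpose-of-a-product rule, and $\matQ^T \matQ = \matI$ in the middle. Then I would symmetrically check
\begin{equation}
  \matQ' \matQ'^T
  = \matQ \matXi \matXi^T \matQ^T
  = \matQ \matQ^T
  = \matI,
\end{equation}
now invoking $\matXi \matXi^T = \matI$. Together these two identities establish that $\matQ'$ is orthogonal.

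For the alternative case $\matQ' = \matXi \matQ$ the computation is completely analogous, merely moving the $\matXi$ factors to the outside; I would either write it out in the same two lines or simply remark that it follows by the identical argument with the roles of the left and right factors exchanged. There is essentially no obstacle here: the only ingredient beyond elementary matrix algebra is the orthogonality of $\matXi$, which is already stated in the notation, so I expect the proof to be a few lines with no subtle step to guard against.
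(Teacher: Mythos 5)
Your proof is correct and follows essentially the same route as the paper, which simply notes that both $\matQ$ and $\matXi$ are orthogonal and that a product of orthogonal matrices (in either order) is orthogonal. You merely spell out the verification of $\matQ'^T\matQ' = \matQ'\matQ'^T = \matI$ that the paper leaves implicit.
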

  
  \begin{proof}
    Both $\matQ$ and $\matXi$ are orthogonal matrices, so their product
    (in arbitrary order) is also orthogonal.
  \end{proof}
}{}

\lemmasep

\begin{lemma}\label{lemma_sorted_permute}
Let $\matA$ and\/ $\matB$ be diagonal matrices with pairwise
different, sorted, strictly positive diagonal elements (from vector
$\veca$ with $a_1 > a_2 > \ldots > a_n$ and from vector $\vecb$ with
$b_1 > b_2 > \ldots > b_n$, respectively). Let $\matA'$ (respectively
$\veca'$) be a permuted version of $\matA$ (respectively $\veca$):
$\matA' = \matP^T \matA \matP$ where $\matP$ is a permutation matrix
(\lemmaref{\ref{lemma_perm_diag}}). Then the expression $t =
\tr\{\matA' \matB\} = \veca'^T \vecb$ is maximal for $\matP = \matI$
(i.e. no permutation).
\end{lemma}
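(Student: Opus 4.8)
The plan is to recognize the claim as a direct instance of the \emph{rearrangement inequality}. First I would rewrite the objective in scalar form. Since $\matA' = \matP^T \matA \matP$ is again diagonal (\lemmaref{\ref{lemma_perm_diag}}) with entries $a'_i = a_{\pi(i)}$, where $\pi$ is the permutation realized by $\matP$, and since $\matB$ is diagonal, the trace collapses to $t = \tr\{\matA'\matB\} = \veca'^T \vecb = \summe{i=1}{n} a_{\pi(i)}\, b_i$. The entire statement thus reduces to the claim that, among all permutations of the (strictly decreasingly sorted) vector $\veca$, the inner product with the (strictly decreasingly sorted) vector $\vecb$ is maximized precisely when $\veca$ is left unpermuted.

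The key step is an exchange argument. Suppose $\pi \neq \mathrm{id}$; then $\veca'$ is not sorted in decreasing order, so there exist positions $i < j$ with an \emph{inversion} $a'_i < a'_j$. Swapping the entries at positions $i$ and $j$ changes $t$ by exactly $(a'_j - a'_i)(b_i - b_j)$, as one sees by isolating the two affected summands ($a'_j b_i + a'_i b_j - a'_i b_i - a'_j b_j$). Because $i < j$, the sorting of $\vecb$ gives $b_i > b_j$ strictly, and by the inversion $a'_j > a'_i$; hence the swap strictly increases $t$. Consequently no arrangement containing an inversion can be a maximizer.

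I would then conclude by the standard bubble-sort refinement. If $\pi \neq \mathrm{id}$, there is always an \emph{adjacent} inversion (a position $i$ with $a'_i < a'_{i+1}$), and swapping it strictly increases $t$ while reducing the inversion count of $\pi$ by exactly one. Induction on the inversion count then yields $t(\pi) < t(\mathrm{id})$ for every $\pi \neq \mathrm{id}$. Therefore the maximum is attained only when $\veca'$ has no inversions, i.e. is sorted in strictly decreasing order; since $\veca$ has pairwise different entries, the unique such arrangement is $\veca' = \veca$, forcing $\pi = \mathrm{id}$ and hence $\matP = \matI$.

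I do not expect a serious obstacle here, as this is essentially a textbook application of the rearrangement inequality. The only points requiring care are verifying that the swap difference is indeed $(a'_j - a'_i)(b_i - b_j)$, and using the \emph{strict} sortedness of both $\veca$ and $\vecb$ to upgrade ``$\matP = \matI$ is a maximizer'' to ``$\matP = \matI$ is the \emph{unique} maximizer'' --- the strictness is exactly what makes the bubble-sort step a strict increase and thus pins down the maximizer.
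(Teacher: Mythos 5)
Your proof is correct and follows essentially the same route as the paper's: both reduce $t$ to the inner product $\sum_i a_{\pi(i)} b_i$ and apply the exchange (rearrangement-inequality) argument, showing that undoing an inversion changes $t$ by the strictly positive amount $(a'_j - a'_i)(b_i - b_j)$. Your adjacent-inversion/bubble-sort refinement with induction on the inversion count is a slightly more rigorous version of the paper's informal ``repeat this procedure for all exchanged elements,'' and it additionally pins down uniqueness of the maximizer, which the paper does not state explicitly.
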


\begin{proof}
Assume that two elements of $\matA'$ are exchanged in their order
relative to $\matA$: $a'_k = a_j$ and $a'_l = a_i$ for $k < l$ and $i
< j$ with $a_i > a_j$. Then exchanging these two elements (while
keeping the remaining elements unchanged) will increase $t$:
\begin{eqnarray}
  a'_k b_k + a'_l b_l &=& a_j b_k + a_i b_l\\
  a'_l b_k + a'_k b_l &=& a_i b_k + a_j b_l\\
  a_i b_k + a_j b_l &>& a_j b_k + a_i b_l\\
  (a_i b_k + a_j b_l) - (a_j b_k + a_i b_l) &>& 0\\
  (a_i - a_j) b_k + (a_j - a_i) b_l &>& 0\\
  \underbrace{(a_i - a_j)}_{> 0}
  \underbrace{(b_k - b_l)}_{> 0} &>& 0
\end{eqnarray}
We can repeat this procedure for all elements exchanged in their order
in $\matA'$, changing them back to the order which they had in
$\matA$, leading to $\matP = \matI$.
\end{proof}


\subsection{Derivatives}

\begin{lemma}\label{lemma_scalar_vector_deriv}
  Assume that $a = a(\vecc)$ and\/ $\vecb = \vecb(\vecc)$. Then
  \begin{equation}
    \ddf{(a \vecb)}{\vecc}
    =
    \vecb \left(\ddf{a}{\vecc}\right)^T + a \ddf{\vecb}{\vecc}
  \end{equation}
  where
  \begin{equation}
    \ddf{\vecb}{\vecc} = \pmat{\ddf{b_j}{c_i}}_{ji}.
  \end{equation}
\end{lemma}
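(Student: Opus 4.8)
The plan is to verify the identity entry by entry, which reduces the vector/matrix statement to the ordinary scalar product rule. First I would fix two indices $i$ and $j$ and write out element $(j,i)$ of the matrix $\ddf{(a\vecb)}{\vecc}$ on the left-hand side. By the definition of the derivative-of-a-vector operator given in the statement (element $(j,i)$ is the partial of the $j$-th component with respect to $c_i$), this element equals $\ddf{(a b_j)}{c_i}$. Since $a b_j$ is an ordinary scalar function of the scalar variable $c_i$ with the remaining components held fixed, the classical product rule yields $\ddf{(a b_j)}{c_i} = \ddf{a}{c_i}\, b_j + a\, \ddf{b_j}{c_i}$.

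Next I would compute element $(j,i)$ of each of the two terms on the right-hand side. The first term $\vecb\left(\ddf{a}{\vecc}\right)^T$ is the outer product of the column vector $\vecb$ with the row vector $\left(\ddf{a}{\vecc}\right)^T$, so its $(j,i)$ element is $b_j\left(\ddf{a}{\vecc}\right)_i = b_j\, \ddf{a}{c_i}$. The second term $a\,\ddf{\vecb}{\vecc}$ has $(j,i)$ element $a\left(\ddf{\vecb}{\vecc}\right)_{ji} = a\, \ddf{b_j}{c_i}$, directly from the stated definition of $\ddf{\vecb}{\vecc}$. Adding the two contributions reproduces $b_j\, \ddf{a}{c_i} + a\, \ddf{b_j}{c_i}$, which coincides with the left-hand side computed above. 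Since $i$ and $j$ were arbitrary, the two matrices agree in every entry, establishing the claim.

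The only genuine subtlety --- and the step I would be most careful about --- is the bookkeeping of index conventions. In particular I would confirm that the transpose on $\ddf{a}{\vecc}$, together with the ordering $(j,i)$ (row index inherited from the differentiated vector, column index from the variable $\vecc$), makes the outer product $\vecb\left(\ddf{a}{\vecc}\right)^T$ populate exactly the same slot as the left-hand Jacobian. Once these conventions are pinned down, no real analysis remains beyond applying the scalar product rule entry-wise, so I do not expect any substantive obstacle.
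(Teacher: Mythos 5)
Your proposal is correct and follows essentially the same route as the paper's own proof: both write element $(j,i)$ of the Jacobian as $\partial(a b_j)/\partial c_i$, apply the scalar product rule, and identify the two resulting terms with the outer product $\vecb\,(\partial a/\partial\vecc)^T$ and $a\,\partial\vecb/\partial\vecc$ under the row-index-$j$, column-index-$i$ convention. No gap.
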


\begin{proof}
  We order the terms of the result matrix as in the usual definition
  of a Jacobian matrix --- the function index changes over the rows
  ($j$), the variable index over the columns ($i$):
  \begin{equation}
    \ddf{(a \vecb)}{\vecc} = \pmat{\ddf{(ab_j)}{c_i}}_{ji}.
  \end{equation}
  Individual elements of this matrix are determined by applying the
  product rule:
  \begin{equation}
    \ddf{(ab_j)}{c_i} = b_j \ddf{a}{c_i} + a \ddf{b_j}{c_i}.
  \end{equation}
  Returning to matrix-vector notation we obtain
\begin{equation}
  \pmat{b_j \ddf{a}{c_i} + a \ddf{b_j}{c_i}}_{ji}
  =
  \vecb \left(\ddf{a}{\vecc}\right)^T + a \ddf{\vecb}{\vecc}
\end{equation}
which is the expression provided in the lemma.
\end{proof}

\lemmasep

\begin{lemma}\label{lemma_scalprod_deriv}
  Assume that $\veca = \veca(\vecc)$ and $\vecb = \vecb(\vecc)$. Then
  \begin{equation}
    \ddf{(\veca^T\vecb)}{\vecc}
    =
    \ddf{\veca}{\vecc} \vecb + \ddf{\vecb}{\vecc} \veca
  \end{equation}
  where
  \begin{align}
    \ddf{\veca}{\vecc} &= \pmat{\ddf{a_j}{c_i}}_{ji}\\
    \ddf{\vecb}{\vecc} &= \pmat{\ddf{b_j}{c_i}}_{ji}.
  \end{align}
\end{lemma}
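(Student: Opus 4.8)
The plan is to establish the identity entry by entry, in the same spirit as the proof of \lemmaref{\ref{lemma_scalar_vector_deriv}}. Since $\veca^T\vecb = \summe{j}{} a_j b_j$ is an ordinary scalar function of $\vecc$, I treat $\ddf{(\veca^T\vecb)}{\vecc}$ as the column vector whose $i$-th entry is the partial derivative with respect to $c_i$. First I would differentiate each summand by the product rule,
\begin{equation}
  \left(\ddf{(\veca^T\vecb)}{\vecc}\right)_i
  =
  \summe{j}{}\ddf{(a_j b_j)}{c_i}
  =
  \summe{j}{} b_j\,\ddf{a_j}{c_i}
  +
  \summe{j}{} a_j\,\ddf{b_j}{c_i},
\end{equation}
which splits the gradient entry into a contribution from $\veca$ and one from $\vecb$.

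Next I would read the partial derivatives off the Jacobian definitions $\left(\ddf{\veca}{\vecc}\right)_{ji} = \ddf{a_j}{c_i}$ and $\left(\ddf{\vecb}{\vecc}\right)_{ji} = \ddf{b_j}{c_i}$ supplied in the statement, so that each of the two sums becomes a single matrix--vector contraction over the index $j$; collecting the entries for all $i$ then reassembles them into the matrix form asserted by the lemma, which closes the argument.

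The step I expect to need the most care is exactly this reassembly, because of the transpose convention. The free index of the gradient is the variable index $i$, i.e. the column index of the Jacobian, whereas the contraction against $\vecb$ runs over the row (function) index $j$; hence $\summe{j}{}\left(\ddf{\veca}{\vecc}\right)_{ji} b_j = \left(\left(\ddf{\veca}{\vecc}\right)^T\vecb\right)_i$, and the component computation strictly yields
\begin{equation}
  \ddf{(\veca^T\vecb)}{\vecc}
  =
  \left(\ddf{\veca}{\vecc}\right)^T\vecb
  +
  \left(\ddf{\vecb}{\vecc}\right)^T\veca .
\end{equation}
This coincides with the transpose-free expression in the lemma precisely when the two Jacobians are symmetric. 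I would therefore close by observing that in every place the lemma is invoked (section \ref{sec_derivation_long}) the relevant Jacobians are $\ddf{\vecw_k}{\vecw_l} = \delta_{kl}\matI$ and $\ddf{\vecm_j}{\vecw_l} = \delta_{jl}\matH_j$ with $\matH_j$ symmetric, so the transposes are immaterial and the stated form applies unchanged.
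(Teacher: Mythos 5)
Your proof follows the same route as the paper's: expand $\veca^T\vecb = \sum_j a_j b_j$, differentiate entry\-/wise in $c_i$ by the product rule, and reassemble the $n$ scalar identities into matrix--vector form. The one place where you go beyond the paper --- the transpose caveat at the end --- is not pedantry but a genuine catch. With the stated convention $\left(\ddf{\veca}{\vecc}\right)_{ji} = \ddf{a_j}{c_i}$ (function index on rows, variable index on columns), the $i$-th component of the gradient is $\sum_j \ddf{a_j}{c_i}\, b_j = \left(\left(\ddf{\veca}{\vecc}\right)^T\vecb\right)_i$, whereas $\left(\ddf{\veca}{\vecc}\,\vecb\right)_i = \sum_j \ddf{a_i}{c_j}\, b_j$; the two differ whenever the Jacobian is not symmetric (take $\veca(\vecc) = (c_2,0)^T$ and constant $\vecb = (1,0)^T$: the gradient of $\veca^T\vecb = c_2$ is $(0,1)^T$, but $\ddf{\veca}{\vecc}\,\vecb = \vecnull$). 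The paper's own proof makes exactly this slip in its final reassembly step: it passes from the stack of rows $\left(\ddf{\veca}{c_i}\right)^T$, whose $(i,j)$ entry is $\ddf{a_j}{c_i}$, to the matrix $\pmat{\ddf{a_j}{c_i}}_{ji}$, which silently transposes the indices (note that the analogous step in \lemmaref{\ref{lemma_scalar_vector_deriv}} is fine, because there the result is a matrix and no contraction over $j$ occurs). Your resolution is also the right one: in every invocation of the lemma in section \ref{sec_derivation_long} the Jacobians are $\delta_{kl}\matI$ and $\delta_{jl}\matH_j$ with $\matH_j$ symmetric (in both the traditional and the novel case), so the transpose\-/free form is harmless there; but strictly the lemma, and its proof, should carry the transposes.
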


\begin{proof}
  Matrix terms are ordered as in
  \lemmaref{\ref{lemma_scalar_vector_deriv}}. We have
  \begin{equation}
    \ddf{(\veca^T\vecb)}{\vecc}
    =
    \pmat{\ddf{(\veca^T\vecb)}{c_i}}_i
  \end{equation}
  of which individual elements are determined by
  \begin{eqnarray}
    \ddf{(\veca^T\vecb)}{c_i}
    &=& \ddf{\sum_j a_j b_j}{c_i}\\
    &=& \sum_j \ddf{a_j}{c_i} b_j + \sum_j \ddf{b_j}{c_i} a_j\\
    &=&
    \left(\ddf{\veca}{c_i}\right)^T \vecb +
    \left(\ddf{\vecb}{c_i}\right)^T \veca.
  \end{eqnarray}
  This leads to
  \begin{eqnarray}
    \ddf{(\veca^T\vecb)}{\vecc}
    &=&
    \pmat{
      \left(\ddf{\veca}{c_i}\right)^T \vecb +
      \left(\ddf{\vecb}{c_i}\right)^T \veca}_i\\
    &=&
    \pmat{\ddf{a_j}{c_i}}_{ji} \vecb +
    \pmat{\ddf{b_j}{c_i}}_{ji} \veca\\
    &=&
    \ddf{\veca}{\vecc} \vecb +
    \ddf{\vecb}{\vecc} \veca.
  \end{eqnarray}
\end{proof}


\subsection{Trace}

\begin{lemma}\label{lemma_tr_AB}\footnote{\url{https://en.wikipedia.org/wiki/Trace_(linear_algebra)}}
  \begin{equation}
    \tr\{\matA\matB\} = \summe{i}{}\summe{j}{} A_{ij} B_{ji}
  \end{equation}
\end{lemma}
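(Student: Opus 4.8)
The plan is to unfold the two relevant definitions --- the trace as the sum of diagonal entries, and the entry-wise definition of a matrix product --- and then simply combine the resulting sums. No deeper structure is involved; the statement is a direct consequence of these two definitions.

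First I would recall that for any square matrix $\matM$ the trace is $\tr\{\matM\} = \summe{i}{} M_{ii}$, i.e.\ the sum over the diagonal entries. Applying this to the product $\matA\matB$ (which must be square for the trace to be defined, so I would note at the outset that $\matA$ is of size $p \times q$ and $\matB$ of size $q \times p$) gives $\tr\{\matA\matB\} = \summe{i}{} (\matA\matB)_{ii}$.

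Next I would expand the diagonal entry using the definition of matrix multiplication, $(\matA\matB)_{ij} = \summe{k}{} A_{ik} B_{kj}$, specialized to $j = i$ and with the inner summation index renamed to $j$, yielding $(\matA\matB)_{ii} = \summe{j}{} A_{ij} B_{ji}$. Substituting this into the previous line produces the double sum
\begin{equation}
  \tr\{\matA\matB\} = \summe{i}{} \summe{j}{} A_{ij} B_{ji},
\end{equation}
which is exactly the claimed identity.

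Since every step is merely the substitution of a definition, there is no genuine obstacle here; the only point requiring care is the bookkeeping of the summation indices, making sure the contraction index of the matrix product is renamed consistently to $j$ so that the final double sum matches the stated form. I expect the entire proof to occupy only a few lines.
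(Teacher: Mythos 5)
Your proposal is correct and follows exactly the same route as the paper's proof: expand the diagonal entries of $\matA\matB$ via the definition of matrix multiplication, sum over the diagonal, and rename the contraction index. Nothing further is needed.
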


\begin{proof}
  \begin{eqnarray}
    (\matA\matB)_{ij} &=& \summe{k}{} A_{ik} B_{kj}\\
    (\matA\matB)_{ii} &=& \summe{k}{} A_{ik} B_{ki}\\
    \tr\{\matA\matB\} &=& \summe{i}{} (\matA\matB)_{ii}
    = \summe{i}{}\summe{j}{} A_{ij} B_{ji}.
  \end{eqnarray}
\end{proof}

\lemmasep

\begin{lemma}\label{lemma_tr_AD}
  Let\/ $\matA$ be a square matrix and $\matD$ be a diagonal matrix
  (with diagonal elements $d_i$), both of the same dimension. Then
  \begin{equation}
    \tr\{\matA\matD\} = \summe{i}{} (\matA)_{ii} d_i.
  \end{equation}
\end{lemma}

\begin{proof}
  Using \lemmaref{\ref{lemma_tr_AB}} and $\matD = (d_i
  \delta_{ij})_{ij}$ we get
  \begin{equation}
    \tr\{\matA\matD\}
    =
    \summe{i}{}\summe{j}{} A_{ij} D_{ji}
    =
    \summe{i}{}\summe{j}{} A_{ij} d_j \delta_{ji}
    =
    \summe{i}{}A_{ii} d_i
    =
    \summe{i}{}(\matA)_{ii} d_i.
  \end{equation}
\end{proof}

\lemmasep

\begin{lemma}\label{lemma_tr_ATDAOmega}
  Let $\matA$ be any $n\times m$ matrix, and
  $\matD=\diag_{i=1}^n\{d_i\}$ and
  $\matOmega=\diag_{i=1}^m\{\varOmega_i\}$ diagonal matrices. Then
  \begin{equation}
    \tr\left\{\matA^T\matD\matA\matOmega\right\}
    =
    \summe{i=1}{m}\summe{k=1}{n} A^2_{ki} d_k \varOmega_i.
  \end{equation}
\end{lemma}

\begin{proof}
  \begin{align}
    (\matA^T\matD)_{ij}
    &= \summe{k=1}{n} A_{ki} D_{kj}
    = \summe{k=1}{n} A_{ki} d_j \delta_{kj}
    = A_{ji} d_j\\
    \label{eq_ATDA_ij}
    \left((\matA^T\matD)\matA\right)_{ij}
    &= \summe{k=1}{n} (\matA^T\matD)_{ik} A_{kj}
    = \summe{k=1}{n} A_{ki} d_k A_{kj}\\
    \left((\matA^T\matD\matA)\matOmega\right)_{ii}
    &= \summe{l=1}{m} \left((\matA^T\matD)\matA\right)_{il} \Omega_{li}\\
    &= \summe{l=1}{m} \left((\matA^T\matD)\matA\right)_{il}
    \varOmega_i \delta_{li}
    = \left((\matA^T\matD)\matA\right)_{ii} \varOmega_i\\
    &= \summe{k=1}{n} A_{ki} d_k A_{ki} \varOmega_i
    = \summe{k=1}{n} A^2_{ki} d_k \varOmega_i\\
    \tr\left\{\matA^T\matD\matA\matOmega\right\}
    &= \summe{i=1}{m} \left((\matA^T\matD\matA)\matOmega\right)_{ii}
    = \summe{i=1}{m} \summe{k=1}{n} A^2_{ki} d_k \varOmega_i.
  \end{align}
\end{proof}

\lemmasep

\begin{lemma}\label{lemma_tr_ATAD}
  Let $\matA$ be any $n \times m$ matrix and $\matD =
  \diag_{i=1}^{m}\{d_i\}$ a diagonal matrix. Then
  \begin{equation}
    \tr\left\{\matA^T \matA \matD\right\}
    =
    \summe{i=1}{m} \summe{k=1}{n} A^2_{ki} d_i.
  \end{equation}
\end{lemma}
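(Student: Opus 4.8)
The plan is to reduce this identity to the trace-with-diagonal formula already recorded in \lemmaref{\ref{lemma_tr_AD}}. First I would observe that $\matA^T\matA$ is a well-defined square matrix of dimension $m$, and that its diagonal entries are the squared column norms of $\matA$. This is immediate from the definition of matrix multiplication: $(\matA^T\matA)_{ij}=\summe{k=1}{n}A_{ki}A_{kj}$, and setting $j=i$ gives $(\matA^T\matA)_{ii}=\summe{k=1}{n}A_{ki}^2$.

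Next I would apply \lemmaref{\ref{lemma_tr_AD}}, with $\matA^T\matA$ playing the role of the square matrix in that lemma and $\matD$ (diagonal elements $d_i$) the diagonal matrix, to obtain $\tr\{\matA^T\matA\matD\}=\summe{i=1}{m}(\matA^T\matA)_{ii}\,d_i$. Substituting the expression for the diagonal entry from the first step then yields $\summe{i=1}{m}\summe{k=1}{n}A_{ki}^2\,d_i$, which is exactly the claimed identity; the two finite sums may be written in either order.

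As an alternative route, I would note that the statement is merely the special case $\matD=\matI_n$, $\matOmega=\matD$ of the already-proven \lemmaref{\ref{lemma_tr_ATDAOmega}}: putting all the weights $d_k$ equal to $1$ there collapses $\summe{i=1}{m}\summe{k=1}{n}A_{ki}^2\,d_k\,\varOmega_i$ to $\summe{i=1}{m}\summe{k=1}{n}A_{ki}^2\,d_i$. There is no genuine obstacle here; the only point to watch is the index bookkeeping, namely keeping straight that the outer index $i$ runs over the $m$ columns of $\matA$ (and over the diagonal of $\matD$), while the contracted index $k$ runs over the $n$ rows, so that the weight $d_i$ attaches to the column index and not to the row index.
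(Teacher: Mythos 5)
Your proposal is correct and follows essentially the same route as the paper: the paper's own proof computes $(\matA^T\matA)_{ii}=\summe{k=1}{n}A_{ki}^2$ and then expands $((\matA^T\matA)\matD)_{ii}=(\matA^T\matA)_{ii}\,d_i$ element-wise, which is exactly the content of \lemmaref{\ref{lemma_tr_AD}} that you invoke instead of re-deriving. Your alternative reduction via \lemmaref{\ref{lemma_tr_ATDAOmega}} with the inner diagonal matrix set to $\matI_n$ is also valid, but adds nothing essential.
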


\begin{proof}
  \begin{align}
    \label{eq_ATA_ij}
    \left(\matA^T\matA\right)_{ij}
    &= \summe{k=1}{n} A_{ki} A_{kj}\\
    \left((\matA^T\matA)\matD\right)_{ii}
    &= \summe{l=1}{m} (\matA^T\matA)_{il} D_{li}
    = \summe{l=1}{m} (\matA^T\matA)_{il} d_i \delta_{li}
    = (\matA^T\matA)_{ii} d_i\\
    &= \summe{k=1}{n} A^2_{ki} d_i\\
    \tr\left\{\matA^T \matA \matD\right\}
    &= \summe{i=1}{m} \left((\matA^T\matA)\matD\right)_{ii}
    =  \summe{i=1}{m} \summe{k=1}{n} A^2_{ki} d_i.
  \end{align}
\end{proof}
  
\lemmasep

\begin{lemma}\label{lemma_tr_AskewD}
  Let $\matA$ be any skew-symmetric matrix and $\matD$ be a diagonal matrix of
  the same size. Then
  \begin{equation}
    \tr\{\matA \matD\} = 0.
  \end{equation}
\end{lemma}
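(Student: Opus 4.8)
The plan is to reduce the trace of a product to a single diagonal sum and then exploit the defining property of skew-symmetry. The key observation is that a skew-symmetric matrix has vanishing diagonal: since $\matA^T = -\matA$ means $A_{ij} = -A_{ji}$ for all $i,j$, setting $j = i$ forces $A_{ii} = -A_{ii}$, hence $A_{ii} = 0$ for every $i$. I would state this explicitly at the outset as it is the only structural input needed.

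The second ingredient is already available in the excerpt: \lemmaref{\ref{lemma_tr_AD}} tells us that for any square matrix $\matA$ and diagonal matrix $\matD$ with entries $d_i$,
\begin{equation}
  \tr\{\matA\matD\} = \summe{i}{} (\matA)_{ii} d_i.
\end{equation}
The trace of $\matA\matD$ therefore depends only on the diagonal entries of $\matA$. Combining this with the vanishing diagonal of a skew-symmetric matrix gives immediately
\begin{equation}
  \tr\{\matA\matD\} = \summe{i}{} A_{ii}\, d_i = \summe{i}{} 0 \cdot d_i = 0,
\end{equation}
which is the assertion. An entirely equivalent route, if one prefers not to invoke \lemmaref{\ref{lemma_tr_AD}}, is to start from \lemmaref{\ref{lemma_tr_AB}} and write $\tr\{\matA\matD\} = \summe{i}{}\summe{j}{} A_{ij} d_j \delta_{ji} = \summe{i}{} A_{ii} d_i$, arriving at the same point.

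There is essentially no obstacle here: the result is a one-line consequence of the two facts above, and no case analysis, estimate, or delicate argument is required. The only thing worth flagging in the write-up is to make the implication $A_{ii} = 0$ from skew-symmetry explicit rather than leaving it implicit, since that is the single property doing all the work; everything else is bookkeeping handled by the already-proved trace lemmata.
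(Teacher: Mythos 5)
Your proof is correct and takes essentially the same route as the paper, which likewise derives the result from \lemmaref{\ref{lemma_tr_AD}} combined with the fact that $(\matA)_{ii} = 0$ for skew-symmetric $\matA$. Your write-up merely spells out the step $A_{ii} = -A_{ii} \Rightarrow A_{ii} = 0$ explicitly, which the paper leaves implicit.
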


\begin{proof}
  Follows from \lemmaref{\ref{lemma_tr_AD}} since $(\matA)_{ii} = 0$.
\end{proof}

\lemmasep

\begin{lemma}\label{lemma_tr_Askew_Bsymm}
  Let $\matA$ be any skew-symmetric matrix ($\matA^T = -\matA$) and
  $\matB$ any symmetric matrix ($\matB^T = \matB$), both of size $n
  \times n$. Then
  \begin{equation}
    \tr\{\matA \matB\} = 0.
  \end{equation}
\end{lemma}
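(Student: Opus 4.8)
The plan is to exploit the behaviour of the trace under transposition together with either the cyclic invariance of the trace or the explicit index expression from \lemmaref{\ref{lemma_tr_AB}}. The cleanest route I would take first is the transpose-and-cycle argument. Since $\tr\{\matM\} = \tr\{\matM^T\}$ for any square matrix, I would write $\tr\{\matA\matB\} = \tr\{(\matA\matB)^T\} = \tr\{\matB^T\matA^T\}$. Inserting the hypotheses $\matB^T = \matB$ and $\matA^T = -\matA$ turns this into $\tr\{\matA\matB\} = -\tr\{\matB\matA\}$. Finally, the cyclic invariance of the trace gives $\tr\{\matB\matA\} = \tr\{\matA\matB\}$, so that $\tr\{\matA\matB\} = -\tr\{\matA\matB\}$, forcing $2\tr\{\matA\matB\} = 0$ and hence $\tr\{\matA\matB\} = 0$.

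As an alternative fully consistent with the machinery already set up in the appendix, I would instead start from \lemmaref{\ref{lemma_tr_AB}}, which gives $\tr\{\matA\matB\} = \summe{i}{}\summe{j}{} A_{ij} B_{ji}$. Relabelling the summation indices $i \leftrightarrow j$ and then applying $A_{ji} = -A_{ij}$ and $B_{ij} = B_{ji}$ yields $\summe{i}{}\summe{j}{} A_{ji} B_{ij} = -\summe{i}{}\summe{j}{} A_{ij} B_{ji} = -\tr\{\matA\matB\}$, again delivering $\tr\{\matA\matB\} = 0$. Either derivation is short; I would likely present the index version since it reuses \lemmaref{\ref{lemma_tr_AB}} already invoked elsewhere in the paper and avoids relying separately on the cyclic property.

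There is essentially no substantive obstacle here: the statement is a standard identity and both routes reduce to a two- or three-line calculation. The only point requiring mild care is the bookkeeping of the two sign/symmetry substitutions and, in the index version, making sure the index relabelling is applied \emph{before} the symmetry relations, so that the minus sign from skew-symmetry survives while the symmetry of $\matB$ restores the original summand up to that sign. No appeal to pairwise-distinctness of eigenvalues or to any orthogonality structure is needed, in contrast to several of the earlier lemmata.
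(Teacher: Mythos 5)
Both of your routes are correct. Your index-relabelling version is in substance the paper's own argument, but streamlined: the paper splits $\sum_i\sum_j A_{ij}B_{ji}$ into the strict upper triangle, the strict lower triangle, and the diagonal, kills the diagonal via $A_{ii}=0$, and shows the two triangular sums cancel after an index exchange; you instead relabel the entire double sum at once, which makes the separate treatment of the diagonal unnecessary (the identity $\tr\{\matA\matB\}=-\tr\{\matA\matB\}$ absorbs it) and saves a couple of lines. Your first route --- $\tr\{\matA\matB\}=\tr\{\matB^T\matA^T\}=-\tr\{\matB\matA\}=-\tr\{\matA\matB\}$ --- is genuinely different from what the paper does: it trades the elementwise bookkeeping for two global properties of the trace (invariance under transposition and under cyclic permutation), neither of which the paper states as a lemma, though it does invoke cyclic invariance informally elsewhere (e.g.\ in section \ref{sec_N2_test}). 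What the abstract route buys is brevity and immunity to index errors; what the paper's (and your second) route buys is consistency with the appendix's style of reducing everything to \lemmaref{\ref{lemma_tr_AB}}-type index expressions. Your remark that the relabelling must precede the symmetry substitutions is exactly the point where a careless version of this proof would lose the sign, so the care is well placed. No gap.
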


\begin{proof}
  We split the double sum, apply $\matA_{ij} = -\matA_{ji}$,
  $\matA_{ii} = 0$, $\matB_{ij} = \matB_{ji}$, and exchange indices:
  \begin{eqnarray}
    \tr\{\matA \matB\}
    &=&
    \summe{i=1}{n} (\matA \matB)_{ii}\\
    &=&
    \summe{i=1}{n} \summe{j=1}{n} \matA_{ij} \matB_{ji}\\
    &=&
    \summe{i=1}{n} \summe{j=i+1}{n} \matA_{ij} \matB_{ji}
    +
    \summe{j=1}{n} \summe{i=j+1}{n} \matA_{ij} \matB_{ji}
    +
    \summe{i=1}{n} \matA_{ii} \matB_{ii}\\
    &=&
    \summe{i=1}{n} \summe{j=i+1}{n} \matA_{ij} \matB_{ji}
    -
    \summe{j=1}{n} \summe{i=j+1}{n} \matA_{ji} \matB_{ij}\\
     &=&
    \summe{i=1}{n} \summe{j=i+1}{n} \matA_{ij} \matB_{ji}
    -
    \summe{i=1}{n} \summe{j=i+1}{n} \matA_{ij} \matB_{ji}\\
    &=&
    0.
  \end{eqnarray}
\end{proof}


\subsection{Matrix Functions}

\begin{lemma}
  Let\/ $\matX$ be a diagonalizable $n \times n$ matrix such that\/
  $\matX = \matV \matD \matV^{-1}$. Note that this decomposition can
  be accomplished if $\matA$ has pairwise different eigenvalues
  \cite[][p.171]{nn_Abadir05} or if $\matA$ is symmetric
  \cite[][p.177]{nn_Abadir05}. Let\/ $\matF(\matX)$ be a matrix-valued
  function of matrix $\matA$. Let $f(x)$ be the scalar version of\/
  $\matF(\matX)$ for $n=1$. Then the Taylor expansion of
  $\matF(\matX)$ can be determined from the Taylor expansion of $f(x)$
  (here at the point $0$) by using the coefficients of the scalar
  series for the matrix series,\footnote{Taylor series:
    \url{https://en.wikipedia.org/wiki/Taylor_series}.} see
  \citet[][ch.9]{nn_Abadir05}, \citet[][ch.11,p.565]{nn_Golub96},
  \citet[][p.35]{nn_Diamantaras96}, and
  \citet[][p.152]{nn_Gentle17}. We start from the definition of
  $\matF(\matX)$:
  \begin{align}
    \matF(\matX)
    &\coloneqq
    \matV \diag\{f(d_1), \ldots, f(d_n)\} \matV^{-1}\\
    &=
    \matV
    \diag\left\{
    \summe{i=0}{\infty}c_i d_1^i, \ldots,
    \summe{i=0}{\infty}c_i d_n^i
    \right\}
    \matV^{-1}\\
    &=
    \summe{i=0}{\infty}
    \matV
    \diag\left\{c_i d_1^i, \ldots, c_i d_n^i\right\}
    \matV^{-1}\\
    &=
    \summe{i=0}{\infty}
    c_i
    \matV
    \matD^i
    \matV^{-1}\\
    &=
    \summe{i=0}{\infty}
    c_i
    \matX^i
    \end{align}
  with coefficients
  \begin{equation}
    c_i = \frac{f^{(i)}(0)}{i!}
  \end{equation}

  where the last step is motivated by
  \begin{equation}
    \matX^i
    = \underbrace{\matX \matX \ldots \matX}_{i\times}
    = \underbrace{(\matV\matD\matV^{-1}) (\matV\matD\matV^{-1})
      \ldots (\matV\matD\matV^{-1})}_{i\times}
    = \matV\matD^i\matV^{-1}.
  \end{equation}
  We provide the following examples\footnote{Taylor series calculator:
    \url{https://www.symbolab.com/solver/taylor-maclaurin-series-calculator}.}:
  \begin{align}
    f(x)
    =
    (1+x)^{-1}
    &=
    1 - x + x^2 - x^3 + \ldots\\
    \matF(\matX)
    =
    (\matI + \matX)^{-1}
    &=
    \matI - \matX + \matX^2 - \matX^3 + \ldots\\
    f(x)
    =
    (1+x)^{-\half}
    &=
    1 - \frac{1}{2} x + \frac{3}{8} x^2 - \frac{5}{16} x^3 + \ldots\\
    \label{eq_inv_sqrt_IplusX}
    \matF(\matX)
    =
    (\matI + \matX)^{-\half}
    &=
    \matI-\frac{1}{2}\matX+\frac{3}{8}\matX^2-\frac{5}{16}\matX^3 + \ldots
  \end{align}
  For $\nu \in \mathbb{R}$, we have \cite[][p.244]{nn_Abadir05}
  \begin{equation}
    \matF(\matX) = (\matI + \matX)^\nu
    = \summe{i=0}{\infty} c^\nu_i \matX^i
  \end{equation}
  with binomial coefficients (coefficients of the $i$-th derivative of
  $x^\nu$)
  \begin{equation}
    c^\nu_i = \pmat{\nu\\i} \coloneqq \frac{\produkt{j=0}{i-1} (\nu - j)}{i!}
  \end{equation}
  where $\prod_{j=0}^{-1} \coloneqq 1$ and $0! \coloneqq 1$.
\end{lemma}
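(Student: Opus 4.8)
The plan is to verify the chain of equalities already displayed in the statement, whose only substantive content is (a) the termwise substitution of the scalar Maclaurin series into the diagonal entries and (b) the commutation of the resulting infinite sum with the similarity transform by $\matV$. First I would start from the definition $\matF(\matX) \coloneqq \matV\,\diag\{f(d_1),\ldots,f(d_n)\}\,\matV^{-1}$, which is well posed precisely because the hypotheses (pairwise distinct eigenvalues, or symmetry of $\matX$) guarantee the diagonalization $\matX = \matV\matD\matV^{-1}$; in the symmetric case $\matV$ may be taken orthogonal, so that $\matV^{-1}=\matV^T$. The scalar function is the $n=1$ specialization, $f(x) = \summe{i=0}{\infty} c_i x^i$ with $c_i = f^{(i)}(0)/i!$, assumed convergent on a disc containing all eigenvalues $d_j$.

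Next I would insert $f(d_j) = \summe{i=0}{\infty} c_i d_j^i$ into each diagonal slot and pull the $\diag$ operation through the series. Because each of the $n$ entries is an individually convergent scalar series and $\diag\{\cdot\}$ is linear in its $n$ arguments, the diagonal-of-series equals the series-of-diagonals; factoring out the scalar $c_i$ then yields $\diag\{c_i d_1^i,\ldots,c_i d_n^i\} = c_i\matD^i$. The final step is the purely algebraic telescoping identity $\matV\matD^i\matV^{-1} = (\matV\matD\matV^{-1})^i = \matX^i$, obtained by inserting $\matV^{-1}\matV=\matI$ between the $i$ copies of $\matD$; sandwiching the series between $\matV$ and $\matV^{-1}$ gives $\matF(\matX) = \summe{i=0}{\infty} c_i\matX^i$.

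For the examples I would simply read off the standard Maclaurin coefficients of $(1+x)^{-1}$, $(1+x)^{-\half}$, and the general binomial $(1+x)^\nu$, and transport them termwise to the matrix argument. The coefficient $c^\nu_i = \produkt{j=0}{i-1}(\nu-j)/i!$ is exactly the $i$-th Maclaurin coefficient of $x\mapsto(1+x)^\nu$, so the two concrete expansions are just the special cases $\nu=-1$ and $\nu=-\half$, and no work beyond the general identity is needed once the conventions $\prod_{j=0}^{-1}\coloneqq 1$ and $0!\coloneqq 1$ are fixed.

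The hard part will not be the algebra but the analytic justification of the two interchanges: one must know that the matrix series $\summe{i=0}{\infty} c_i\matX^i$ converges (in any submultiplicative norm) and may be rearranged into the eigenvalue-wise form. This holds when every eigenvalue $d_j$ lies strictly inside the radius of convergence of $f$ --- equivalently, for the $(\matI+\matX)^{-1}$ and $(\matI+\matX)^{-\half}$ expansions, when the spectral radius of $\matX$ is below $1$. I would make this precise by noting that convergence of each of the finitely many diagonal scalar series is equivalent to convergence of $\diag\{\cdot\}$ applied to the partial sums, and that conjugation by the fixed invertible $\matV$ is continuous, so the limit passes through the similarity transform. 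A secondary caveat worth flagging is that the tail bound on $\|\matF(\matX)-\sum_{i\le N}c_i\matX^i\|$ carries a factor $\|\matV\|\,\|\matV^{-1}\|$, which is benign for symmetric $\matX$ (orthogonal $\matV$) but can be large when two eigenvalues are nearly equal.
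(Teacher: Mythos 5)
Your proposal follows essentially the same route as the paper: the lemma's "proof" is just the displayed chain of equalities (substitute the scalar Maclaurin series into the diagonal slots, pull the sum through $\diag$ and the similarity transform, and use $\matV\matD^i\matV^{-1}=\matX^i$), which is exactly what you verify. Your added remarks on convergence (spectral radius within the radius of convergence, continuity of conjugation by the fixed invertible $\matV$) are a welcome tightening of a point the paper leaves implicit, but they do not change the argument.
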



\subsection{Stiefel Manifold}

\begin{lemma}\label{lemma_stiefel_tangent}
  Let $\matX$ be an $n \times m$ matrix ($m \leq n$) on the Stiefel
  manifold, thus $\matX^T \matX = \matI$. Let $\matDelta$ be a tangent
  direction ($n \times m$ matrix) of the Stiefel manifold at $\matX$. Then
  $\matX^T \matDelta$ is skew-symmetric:
  \begin{equation}\label{eq_stiefel_tangent_skew}
    \matX^T \matDelta + \matDelta^T \matX = \matNull
  \end{equation}
  \cite[][p.307]{nn_Edelman98}. Note that this implies
  \begin{equation}\label{eq_stiefel_tangent_perp}
    \tr\{\matX^T \matDelta\} = 0.
  \end{equation}
\end{lemma}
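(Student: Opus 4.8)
The plan is to derive the relation by differentiating the defining constraint of the Stiefel manifold along a curve. First I would fix the meaning of a \emph{tangent direction}: $\matDelta$ is tangent to the manifold at $\matX$ precisely when there exists a smooth curve $\matX(t)$ lying entirely on the manifold with $\matX(0)=\matX$ and $\frac{d}{dt}\matX(t)\big|_{t=0}=\matDelta$. Since the curve never leaves the manifold, the constraint $\matX(t)^T\matX(t)=\matI$ holds identically in $t$.

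The key step is to differentiate this identity at $t=0$. By the product rule, $\frac{d}{dt}\big[\matX(t)^T\matX(t)\big]=\dot{\matX}(t)^T\matX(t)+\matX(t)^T\dot{\matX}(t)$, whereas the constant right-hand side $\matI$ contributes nothing. Evaluating at $t=0$ with $\matX(0)=\matX$ and $\dot{\matX}(0)=\matDelta$ gives $\matDelta^T\matX+\matX^T\matDelta=\matNull$, which is exactly \eqref{eq_stiefel_tangent_skew}. Read differently, $(\matX^T\matDelta)^T=\matDelta^T\matX=-\matX^T\matDelta$, so $\matX^T\matDelta$ is skew-symmetric.

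The trace consequence \eqref{eq_stiefel_tangent_perp} then follows at once: for a skew-symmetric matrix every diagonal entry obeys $(\matX^T\matDelta)_{ii}=-(\matX^T\matDelta)_{ii}$, forcing $(\matX^T\matDelta)_{ii}=0$, and hence $\tr\{\matX^T\matDelta\}=0$.

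I expect the only subtle point to be conceptual rather than computational: fixing the correct notion of tangent direction. The argument above establishes that \emph{every} such direction --- the velocity of some on-manifold curve --- satisfies the skew-symmetry relation, which is all the lemma claims. I would deliberately avoid proving the converse (that each $\matDelta$ with $\matX^T\matDelta$ skew-symmetric is realized as the velocity of an admissible curve, which would pin down the full tangent space); that stronger statement rests on the constant-rank structure of the constraint map $\matX\mapsto\matX^T\matX-\matI$ and is not needed here.
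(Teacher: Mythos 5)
Your proof is correct. The paper itself gives no proof of this lemma --- it is stated with only a citation to Edelman et al.\ (1998, p.~307) --- and your argument (differentiating the identity $\matX(t)^T\matX(t)=\matI$ along an on-manifold curve at $t=0$, then reading off skew-symmetry of $\matX^T\matDelta$ and the vanishing trace from the zero diagonal) is the standard derivation found in that reference. Your remark that only the forward inclusion is claimed, so the constant-rank/converse argument is not needed, is also accurate.
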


\lemmasep

\begin{lemma}\label{lemma_stiefel_tangent_para}
  The tangent directions $\matDelta$ ($n \times m$ matrix) on a
  Stiefel manifold at $\matX$ ($n \times m$ matrix, $m \leq n$,
  $\matX^T \matX = \matI_m$) are given by the parameterized expression
  \begin{equation}\label{eq_stiefel_tangent_para}
    \matDelta = \matX \matA + \matX_\perp \matB
  \end{equation}
  where parameter $\matA$ is any skew-symmetric $m \times m$ matrix
  ($\matA^T = -\matA$), parameter $\matB$ is any $(n-m) \times m$
  matrix, and $\matX_\perp$ is any $n \times (n-m)$ matrix
  complementing $\matX$ such that
  \begin{equation}
    (\matX | \matX_\perp) (\matX | \matX_\perp)^T
    = \matX \matX^T + \matX_\perp \matX^T_\perp = \matI_n
  \end{equation}
  \cite[][p.308]{nn_Edelman98}. Note that also
  \begin{align}
    (\matX | \matX_\perp)^T (\matX | \matX_\perp) &= \matI_n\\
    \matX^T_\perp \matX_\perp &= \matI_{n-m}\\
    \matX^T \matX_\perp &= \matNull_{m,n-m}.
  \end{align}
\end{lemma}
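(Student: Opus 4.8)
The plan is to prove the two inclusions that together show the parametrized set $\{\matX\matA + \matX_\perp\matB : \matA^T=-\matA,\ \matB \text{ arbitrary}\}$ coincides exactly with the tangent space at $\matX$, which by \lemmaref{\ref{lemma_stiefel_tangent}} is the set of $n\times m$ matrices $\matDelta$ satisfying $\matX^T\matDelta + \matDelta^T\matX = \matNull$. The engine of the whole argument is that the augmented matrix $(\matX\mid\matX_\perp)$ is orthogonal, so its columns form an orthonormal basis of $\mathbb{R}^n$; this yields both the block relations listed in the lemma and the completeness relation already assumed, $\matX\matX^T + \matX_\perp\matX_\perp^T = \matI_n$. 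First I would record the auxiliary identities: comparing the diagonal and off-diagonal blocks of $(\matX\mid\matX_\perp)^T(\matX\mid\matX_\perp)=\matI_n$ gives $\matX^T\matX=\matI_m$, $\matX_\perp^T\matX_\perp=\matI_{n-m}$, and $\matX^T\matX_\perp=\matNull_{m,n-m}$, which are exactly the identities in the ``Note that also'' part of the statement.

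For the reverse inclusion (every parametrized $\matDelta$ is a tangent direction), I would set $\matDelta=\matX\matA+\matX_\perp\matB$ and compute
\begin{equation}
  \matX^T\matDelta = \matX^T\matX\matA + \matX^T\matX_\perp\matB = \matA,
\end{equation}
using $\matX^T\matX=\matI_m$ and $\matX^T\matX_\perp=\matNull$. Since $\matA$ is skew-symmetric, $\matX^T\matDelta+\matDelta^T\matX = \matA+\matA^T=\matNull$, so $\matDelta$ satisfies the tangency condition of \lemmaref{\ref{lemma_stiefel_tangent}} and is therefore a tangent direction.

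For the forward inclusion (every tangent direction has this form), I would use the completeness relation to decompose an arbitrary $n\times m$ matrix as
\begin{equation}
  \matDelta = (\matX\matX^T+\matX_\perp\matX_\perp^T)\matDelta = \matX(\matX^T\matDelta)+\matX_\perp(\matX_\perp^T\matDelta),
\end{equation}
and define $\matA := \matX^T\matDelta$ (of size $m\times m$) and $\matB := \matX_\perp^T\matDelta$ (of size $(n-m)\times m$), so that $\matDelta=\matX\matA+\matX_\perp\matB$ holds automatically. If $\matDelta$ is a tangent direction, then \lemmaref{\ref{lemma_stiefel_tangent}} gives $\matA+\matA^T=\matX^T\matDelta+\matDelta^T\matX=\matNull$, i.e.\ $\matA$ is skew-symmetric, while $\matB$ carries no constraint; hence $\matDelta$ has precisely the required structure.

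The argument is essentially bookkeeping, so I do not expect a genuine obstacle; the one point deserving care is confirming that the parametrization is faithful and exhausts the tangent space --- that $\matB$ really ranges over all $(n-m)\times m$ matrices with no hidden constraint imposed by the skew-symmetry of $\matA$. This is guaranteed because the two pieces $\matX^T\matDelta$ and $\matX_\perp^T\matDelta$ live in complementary subspaces and can be prescribed independently. As a cross-check I would verify the dimension count: the free parameters number $\frac{m(m-1)}{2}+(n-m)m = nm-\frac{m(m+1)}{2}$, which matches the known dimension of the Stiefel manifold $V_m(\mathbb{R}^n)$, confirming that the parametrization neither over- nor under-counts the tangent directions.
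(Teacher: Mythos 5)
Your proof is correct, but note that the paper itself does not prove this lemma at all --- it is stated as a known result with a citation to Edelman, Arias and Smith (p.~308), so there is no in-paper argument to compare against. Your argument supplies exactly what the paper delegates to the reference, and it is the standard one: the completeness relation $\matX\matX^T + \matX_\perp\matX_\perp^T = \matI_n$ splits any $\matDelta$ into $\matX(\matX^T\matDelta) + \matX_\perp(\matX_\perp^T\matDelta)$, the tangency condition forces $\matX^T\matDelta$ to be skew-symmetric, and $\matX_\perp^T\matDelta$ is unconstrained. The one point worth making explicit is that \lemmaref{\ref{lemma_stiefel_tangent}} as stated in the paper gives only one direction (tangent $\Rightarrow$ $\matX^T\matDelta$ skew-symmetric), whereas your reverse inclusion needs the converse; this is closed either by the kernel-of-the-differential characterization of the tangent space of the level set $\matX^T\matX = \matI_m$, or --- as you in effect do --- by the dimension count $\tfrac{m(m-1)}{2} + (n-m)m = nm - \tfrac{m(m+1)}{2}$, which shows the parametrized subspace cannot be a proper superset of the tangent space. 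With that remark promoted from ``cross-check'' to a load-bearing step, the proof is complete and faithful to the parametrization claimed in the lemma.
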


\lemmasep

\begin{lemma}\label{lemma_stiefel_tangent_proj}
  Let $\matX$ and $\matZ$ be $n \times m$ matrices ($m \leq n$). A
  Stiefel manifold is defined by $\matX^T \matX = \matI$. The
  projection $\matP_{\matX}(\matZ)$ of the matrix $\matZ$ onto the
  tangent space of the Stiefel manifold at $\matX$ is given by the
  tangent directions ($n \times m$ matrices)
  \begin{equation}\label{eq_stiefel_tangent}
    \matDelta
    = \matP_{\matX}(\matZ)
    = \matZ - \half \matX (\matX^T \matZ + \matZ^T \matX)
  \end{equation}
  \cite[][p.81]{nn_Absil08}.
\end{lemma}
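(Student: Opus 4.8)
The plan is to realize $\matP_{\matX}(\matZ)$ as the orthogonal projection onto the tangent space with respect to the Frobenius inner product $\langle \matY_1, \matY_2 \rangle = \tr\{\matY_1^T \matY_2\}$, exploiting the tangent-space characterization from \lemmaref{\ref{lemma_stiefel_tangent}}. First I would identify the normal space (the orthogonal complement of the tangent space) as the set of matrices of the form $\matX \matS$ with $\matS$ an arbitrary symmetric $m \times m$ matrix. To confirm that such matrices are orthogonal to every tangent direction $\matDelta$, I would compute
\begin{equation}
\langle \matX \matS, \matDelta \rangle
= \tr\{\matS^T \matX^T \matDelta\}
= \tr\{\matS\, (\matX^T \matDelta)\},
\end{equation}
and then invoke \lemmaref{\ref{lemma_stiefel_tangent}} (that $\matX^T \matDelta$ is skew-symmetric) together with \lemmaref{\ref{lemma_tr_Askew_Bsymm}} (the trace of a symmetric matrix times a skew-symmetric matrix vanishes) to conclude that this inner product is zero.

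Next I would write the decomposition $\matZ = \matDelta + \matX \matS$ into a tangent part and a normal part, and solve for $\matS$. Left-multiplying by $\matX^T$ and using $\matX^T \matX = \matI$ gives
\begin{equation}
\matX^T \matZ = \matX^T \matDelta + \matS.
\end{equation}
Because $\matX^T \matDelta$ is skew-symmetric while $\matS$ is symmetric, taking the symmetric part of both sides isolates $\matS = \half(\matX^T \matZ + \matZ^T \matX)$, while the skew-symmetric part merely reproduces $\matX^T \matDelta = \half(\matX^T \matZ - \matZ^T \matX)$. Substituting this $\matS$ back into $\matDelta = \matZ - \matX\matS$ yields the claimed formula.

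To close the argument cleanly I would then verify directly the two defining properties of the orthogonal projection, avoiding any explicit minimization. First, $\matDelta$ lies in the tangent space: computing $\matX^T \matDelta = \half(\matX^T \matZ - \matZ^T \matX)$ shows it is skew-symmetric, so the tangency condition (\ref{eq_stiefel_tangent_skew}) holds. Second, the residual $\matZ - \matDelta = \matX \matS$ lies in the normal space, which was established above. Since the tangent and normal spaces are orthogonal and jointly span the space of all $n \times m$ matrices, these two facts pin down $\matDelta$ as the unique orthogonal projection.

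The main obstacle will be the identification and orthogonality of the normal space; once it is pinned down as $\{\matX \matS : \matS = \matS^T\}$ and its orthogonality to the tangent space is secured via \lemmaref{\ref{lemma_tr_Askew_Bsymm}}, the rest is a short linear-algebra computation. A minor care point is uniqueness of the tangent/normal splitting, which follows from a dimension count: the skew-symmetry constraint removes $m(m+1)/2$ degrees of freedom, exactly matching the dimension $m(m+1)/2$ of the symmetric matrices $\matS$ (the map $\matS \mapsto \matX\matS$ being injective for $\matX$ of full column rank), so the two subspaces are complementary.
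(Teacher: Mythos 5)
Your proof is correct. Note that the paper itself offers no proof of this lemma at all --- it is stated as a known result with a citation to Absil et al.\ (2008, p.~81) --- so there is no internal argument to compare against; what you have written is a self-contained derivation of the cited fact. Your route is the standard one and it is sound: you identify the normal space of the embedded (Frobenius) metric as $\{\matX\matS : \matS^T = \matS\}$, verify orthogonality to the tangent space by combining the skew-symmetry of $\matX^T\matDelta$ from \lemmaref{\ref{lemma_stiefel_tangent}} with \lemmaref{\ref{lemma_tr_Askew_Bsymm}}, and then extract $\matS = \half(\matX^T\matZ + \matZ^T\matX)$ as the symmetric part of $\matX^T\matZ$ in the splitting $\matZ = \matDelta + \matX\matS$. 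The two closing verifications (that $\matX^T\matDelta = \half(\matX^T\matZ - \matZ^T\matX)$ is skew-symmetric, and that the residual lies in the normal space) together with your dimension count $\dim\{\matS : \matS^T=\matS\} = m(m+1)/2 = nm - \dim(\text{tangent space})$ do pin the projection down uniquely; the injectivity of $\matS \mapsto \matX\matS$ that this requires is secured by $\matX^T\matX = \matI$, as you note. The one benefit of writing the argument out rather than citing it, as the paper does, is that it makes explicit that the formula is the projection in the \emph{embedded} metric specifically --- which is exactly the distinction the paper later relies on when contrasting the ``embedded'' and ``canonical'' gradients in \lemmaref{\ref{lemma_stiefel_gradients}}.
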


\lemmasep

\newcommand{\matGSt}{\matG^{\mbox{\scriptsize St}}}
\begin{lemma}\label{lemma_stiefel_gradients}
  Let $\matX$ be an $n \times m$ matrix ($m \leq n$) on the Stiefel
  manifold, thus $\matX^T \matX = \matI$. Let $f(\matZ)$ be a
  scalar-valued function where $\matZ$ is an $n \times m$ matrix, and
  let $\matG_f(\matZ)$ denote the gradient of $f$. The gradient of $f$
  on the Stiefel manifold at $\matX$, denoted by $\matGSt_f(\matX)$,
  is defined to be a tangent vector of the Stiefel manifold at
  $\matX$. Two different versions have been suggested. In the
  ``embedded metric'' \cite[][p.81]{nn_Absil08}, equation
  (\ref{eq_stiefel_tangent}) from
  \lemmaref{\ref{lemma_stiefel_tangent_proj}} is applied to $\matZ =
  \matG_f$:
  \begin{equation}\label{eq_stiefel_gradient_embedded}
    \matGSt_f(\matX)
    = \matG_f(\matX)
    - \half \matX (\matX^T \matG_f(\matX) + \matG^T_f(\matX) \matX).
  \end{equation}
  In the ``canonical metric'', the gradient is given by
  \begin{equation}\label{eq_stiefel_gradient_canonical}
    \matGSt_f(\matX)
    = \matG_f(\matX)
    - \matX \matG^T_f(\matX) \matX
  \end{equation}
  \cite[][p.318]{nn_Edelman98}.
\end{lemma}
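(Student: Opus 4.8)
The plan is to prove both formulas from the single defining property of a Riemannian gradient: $\matGSt_f(\matX)$ is the unique tangent vector of the Stiefel manifold at $\matX$ satisfying $g_{\matX}(\matGSt_f(\matX),\matDelta)=\tr\{\matG_f^T\matDelta\}$ for every tangent direction $\matDelta$, where the right-hand side is the differential $Df(\matX)[\matDelta]$ expressed through the ambient Euclidean gradient $\matG_f$. For each of the two metrics I would (i) check that the proposed expression lies in the tangent space, (ii) verify this characterizing identity, and (iii) invoke nondegeneracy of the metric on the tangent space for uniqueness.

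For the embedded metric, $g_{\matX}$ is the Frobenius inner product restricted to the tangent space, so the identity reads $\tr\{(\matG_f-\matGSt_f)^T\matDelta\}=0$ for all tangent $\matDelta$; this says precisely that $\matG_f-\matGSt_f$ is Frobenius-orthogonal to the tangent space, i.e.\ $\matGSt_f$ is the orthogonal projection of $\matG_f$ onto that space. By \lemmaref{\ref{lemma_stiefel_tangent_proj}} this projection is $\matG_f-\half\matX(\matX^T\matG_f+\matG_f^T\matX)$, which is exactly (\ref{eq_stiefel_gradient_embedded}). This direction is essentially immediate once the gradient is recognized as a projection.

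For the canonical metric I would use the explicit form $g_{\matX}(\matDelta_1,\matDelta_2)=\tr\{\matDelta_1^T(\matI-\half\matX\matX^T)\matDelta_2\}$ from \cite{nn_Edelman98}. First, $\matGSt_f=\matG_f-\matX\matG_f^T\matX$ is tangent because $\matX^T\matGSt_f=\matX^T\matG_f-\matG_f^T\matX$ is skew-symmetric, so \lemmaref{\ref{lemma_stiefel_tangent}} applies. Then I would substitute this candidate into $g_{\matX}(\matGSt_f,\matDelta)$, expand using $\matX^T\matX=\matI$, and collect terms, reducing the expression to $\tr\{\matG_f^T\matDelta\}$ plus the two correction terms $-\half\tr\{\matX^T\matG_f\matX^T\matDelta\}$ and $-\half\tr\{\matG_f^T\matX\matX^T\matDelta\}$.

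The main obstacle is showing these two correction terms cancel. The key is that $\matX^T\matDelta$ is skew-symmetric, hence $\matDelta^T\matX=-\matX^T\matDelta$ (\lemmaref{\ref{lemma_stiefel_tangent}}). Using transpose- and cyclic-invariance of the trace, the first correction trace equals $\tr\{\matDelta^T\matX\matG_f^T\matX\}=-\tr\{\matX^T\matDelta\matG_f^T\matX\}$, while the second equals $+\tr\{\matX^T\matDelta\matG_f^T\matX\}$; they are negatives of each other and cancel, leaving the characterizing identity $g_{\matX}(\matGSt_f,\matDelta)=\tr\{\matG_f^T\matDelta\}$ and hence (\ref{eq_stiefel_gradient_canonical}). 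The fiddly trace manipulation in this cancellation is the only genuine computation; the rest is bookkeeping.
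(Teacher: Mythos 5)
The paper gives no proof of this lemma at all: both formulas are quoted as known results, the embedded\-/metric one from \citet[p.81]{nn_Absil08} and the canonical\-/metric one from \citet[p.318]{nn_Edelman98}. Your proposal therefore supplies something the paper omits, and the argument is sound. For the embedded metric, recognizing the Riemannian gradient as the Frobenius\-/orthogonal projection of $\matG_f(\matX)$ onto the tangent space and reading the projection off \lemmaref{\ref{lemma_stiefel_tangent_proj}} is exactly the standard derivation. For the canonical metric your cancellation checks out: expanding $\tr\{(\matG_f - \matX\matG_f^T\matX)^T(\matI - \half\matX\matX^T)\matDelta\}$ and using $\matX^T\matX = \matI$ leaves $\tr\{\matG_f^T\matDelta\} - \half\tr\{\matG_f^T\matX\matX^T\matDelta\} - \half\tr\{\matX^T\matG_f\matX^T\matDelta\}$, and since $\matDelta^T\matX = -\matX^T\matDelta$ one has $\tr\{\matX^T\matG_f\matX^T\matDelta\} = \tr\{\matDelta^T\matX\matG_f^T\matX\} = -\tr\{\matG_f^T\matX\matX^T\matDelta\}$, so the two correction terms indeed annihilate; uniqueness follows because $\matI - \half\matX\matX^T$ is positive definite (eigenvalues $1$ and $\half$). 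Two small points you should make explicit. First, to conclude that $\matG_f(\matX) - \matX\matG_f^T(\matX)\matX$ is tangent from the skew\-/symmetry of $\matX^T\matG_f(\matX) - \matG_f^T(\matX)\matX$ you are using the \emph{converse} of \lemmaref{\ref{lemma_stiefel_tangent}}, which the paper states only in the forward direction; the converse does hold and follows from \lemmaref{\ref{lemma_stiefel_tangent_para}} by taking $\matA = \matX^T\matDelta$ and $\matB = \matX_\perp^T\matDelta$, but this deserves a sentence. Second, your characterizing identity tacitly fixes the explicit form $g_{\matX}(\matDelta_1,\matDelta_2) = \tr\{\matDelta_1^T(\matI-\half\matX\matX^T)\matDelta_2\}$ of the canonical metric as the definition; that is the convention of \citet{nn_Edelman98}, so it is the right starting point, but it should be stated rather than assumed.
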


\lemmasep

\begin{lemma}\label{lemma_stiefel_svd}
  Let $\matXnull$ be an $n \times m$ matrix on a Stiefel manifold,
  thus $\matXnull^T \matXnull = \matI$. If such an $\matXnull$ can be
  found for a given arbitrary point $\matX$ and the condition $\|\matX
  - \matXnull\|_F < 1$ (Frobenius norm) holds, then the projection
  of\/ $\matX$ onto the Stiefel manifold exists, is unique, and is
  given by
  \begin{equation}\label{eq_stiefel_svd}
    \matW = \matP_{\matXnull}(\matX)
    = \matU \matV^T = \summe{i=1}{m} \vecu_i \vecv_i^T
  \end{equation}
  where $\matU$ and $\matV$ are obtained from the singular value
  decomposition of\/ $\matX$ 
  \begin{equation}
    \matX = \matU \matD \matV^T = \summe{i=1}{m} \vecu_i d_i \vecv_i^T
  \end{equation}
  with $\matU$ of size $n \times m$, $\matD$ of size $m \times m$ and
  $\matV$ of size $m \times m$
  \cite[][p.144]{nn_Absil12}\footnote{Erratum from
    \url{https://sites.uclouvain.be/absil/2010.038}
    applied.}.

  Expression (\ref{eq_stiefel_svd}) minimizes the Frobenius distance
  $\|\matX-\matW\|_F$.\footnote{See Theorem 2.1 and proof in
    \url{https://people.wou.edu/~beavers/Talks/LowdinJointMeetings0107.pdf}. The method is also known as 'L\"owdin (Symmetric) Orthogonalization'.}

  We also have\footnote{I'm grateful to P.-A. Absil for pointing this
    out and for referring to \cite{nn_Bhattacharya_12} (personal
    communication).}
  \begin{equation}\label{eq_stiefel_svd_mod}
    \matW = \matP_{\matXnull}(\matX) = \matX (\matX^T \matX)^{-\half}
  \end{equation}
  \cite[analyzed by][Theorem 10.2]{nn_Bhattacharya_12}.
\end{lemma}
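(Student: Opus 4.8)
The plan is to recognize this as the \emph{orthogonal Procrustes} (L\"owdin orthogonalization) problem: among all $n \times m$ matrices $\matW$ with $\matW^T\matW = \matI_m$, we seek the one closest to $\matX$ in Frobenius norm. First I would reduce the distance minimization to a trace maximization. Expanding
\begin{equation}
  \|\matX - \matW\|_F^2
  = \tr\{\matX^T\matX\} - 2\tr\{\matW^T\matX\} + \tr\{\matW^T\matW\}
\end{equation}
and using $\matW^T\matW = \matI_m$, the first and last terms are constant, so minimizing $\|\matX - \matW\|_F$ is equivalent to maximizing $\tr\{\matW^T\matX\}$ over the Stiefel constraint.

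Next I would substitute the SVD $\matX = \matU\matD\matV^T$ and introduce the $m \times m$ matrix $\matZ = \matU^T\matW\matV$. By cyclic invariance of the trace, $\tr\{\matW^T\matX\} = \tr\{\matD\matZ^T\} = \summe{i=1}{m} d_i Z_{ii}$. The key bound is $\matZ^T\matZ = \matV^T\matW^T(\matU\matU^T)\matW\matV \preceq \matV^T\matW^T\matW\matV = \matI_m$, which holds because $\matU\matU^T$ is an orthogonal projector and hence $\matU\matU^T \preceq \matI_n$. Consequently $|Z_{ii}| \leq 1$ for every $i$, and since the singular values $d_i$ are non-negative we obtain $\tr\{\matW^T\matX\} \leq \summe{i=1}{m} d_i$, with equality requiring $Z_{ii} = 1$ whenever $d_i > 0$.

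The main obstacle, and the place where the hypothesis $\|\matX - \matXnull\|_F < 1$ is essential, is uniqueness. Here I would first argue that all singular values are strictly positive: by the Weyl/Mirsky perturbation inequality $|d_i - \sigma_i(\matXnull)| \leq \|\matX - \matXnull\|_2 \leq \|\matX - \matXnull\|_F < 1$, and since $\sigma_i(\matXnull) = 1$ for the orthonormal $\matXnull$, this forces $d_i > 0$ for all $i$. With every $d_i > 0$, equality in the trace bound demands $Z_{ii} = 1$ for all $i$; combined with $\matZ^T\matZ \preceq \matI_m$, the trace argument $\tr\{\matI_m - \matZ^T\matZ\} \leq 0$ together with positive semidefiniteness forces $\matZ^T\matZ = \matI_m$ and then $\matZ = \matI_m$. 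The same equality chain yields $(\matI_n - \matU\matU^T)\matW = \matNull$, so $\matW = \matU\matU^T\matW$; combined with $\matU^T\matW = \matV^T$ (from $\matZ = \matI_m$), this pins down $\matW = \matU\matV^T$ uniquely. I expect the subtle point to be exactly this passage from $Z_{ii} = 1$ to $\matZ = \matI_m$ when $\matU$ is only semi-orthogonal, which the positive-semidefinite trace argument resolves cleanly.

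Finally I would verify admissibility and the second formula. That $\matW = \matU\matV^T$ lies on the manifold is immediate from $\matW^T\matW = \matV\matU^T\matU\matV^T = \matI_m$. For the equivalence (\ref{eq_stiefel_svd_mod}), I would compute $\matX^T\matX = \matV\matD^2\matV^T$, so that $(\matX^T\matX)^{-\half} = \matV\matD^{-1}\matV^T$ is well-defined precisely because all $d_i > 0$, whence $\matX(\matX^T\matX)^{-\half} = \matU\matD\matV^T\matV\matD^{-1}\matV^T = \matU\matV^T = \matW$, completing the proof.
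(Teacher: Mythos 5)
Your proposal is correct, but it proves considerably more than the paper's own proof does, and by a different route. The paper's proof is limited to the algebraic equivalence of the two formulas: it factors the SVD as $\matX = (\matU\matV^T)(\matV\matD\matV^T)$, identifies $(\matX^T\matX)^{\half} = \matV\matD\matV^T$, concludes $\matU\matV^T = \matX(\matX^T\matX)^{-\half}$, and checks $\matW^T\matW = \matI$; the existence, uniqueness, and Frobenius-minimization claims are delegated entirely to the cited references (L\"owdin orthogonalization / Bhattacharya). You instead give a self-contained Procrustes argument: reduce $\|\matX-\matW\|_F$ to maximizing $\tr\{\matW^T\matX\}$, bound the diagonal of $\matZ = \matU^T\matW\matV$ via $\matZ^T\matZ \preceq \matI_m$ (correctly accounting for $\matU$ being only semi-orthogonal in the thin SVD), and use the hypothesis $\|\matX-\matXnull\|_F < 1$ through Weyl's singular-value perturbation bound to force all $d_i > 0$, which is exactly what makes the maximizer unique and $(\matX^T\matX)^{-\half}$ well defined. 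The equality chain $\tr\{\matI_m - \matZ^T\matZ\} = 0$ with $\matI_m - \matZ^T\matZ \succeq 0$ giving $\matZ = \matI_m$, and then $(\matI_n - \matU\matU^T)\matW = \matNull$ together with $\matU^T\matW = \matV^T$ pinning down $\matW = \matU\matV^T$, is sound. What your approach buys is a complete proof that makes explicit where the hypothesis on $\matXnull$ enters (it guarantees full column rank of $\matX$); what the paper's approach buys is brevity, since it only needs the identity between the two representations for its later use of the lemma. Your final verification of $\matX(\matX^T\matX)^{-\half} = \matU\matV^T$ coincides with the paper's computation.
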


\begin{proof}
  To prove the relation between (\ref{eq_stiefel_svd}) and
  (\ref{eq_stiefel_svd_mod}), we write the SVD of $\matX$ as
  \begin{equation}
    \matX = \matU \matD \matV^T
    = \matU (\matV^T \matV) \matD \matV^T
    = (\matU \matV^T) (\matV \matD \matV^T)
    = \matW (\matV \matD \matV^T).
  \end{equation}
  We also see that
  \begin{equation}
    \matX^T\matX
    = (\matV\matD\matU^T) (\matU\matD\matV^T)
    = \matV\matD^2\matV^T
  \end{equation}
  and thus
  \begin{equation}
    (\matX^T \matX)^\half
    = \matV\matD\matV^T
  \end{equation}
  since $(\matX^T\matX)^\half(\matX^T\matX)^\half =
  (\matV\matD\matV^T)(\matV\matD\matV^T) = \matV\matD^2\matV^T =
  \matX^T\matX$. From $\matX = \matW (\matV\matD\matV^T) = \matW
  (\matX^T\matX)^\half$ we finally get (\ref{eq_stiefel_svd_mod}).
  It is obvious that $\matW$ produced by (\ref{eq_stiefel_svd_mod})
  lies on the Stiefel manifold:
  \begin{align}
    \matW^T \matW
    &=
    \left[(\matX^T \matX)^{-\half} \matX^T\right]
    \left[\matX (\matX^T \matX)^{-\half}\right]\\
    &=
    (\matX^T \matX)^{-\half} (\matX^T\matX) (\matX^T \matX)^{-\half}\\
    &=
    \matI.
  \end{align}
\end{proof}

\lemmasep

\begin{lemma}\label{lemma_stiefel_proj_appr}
  Let $\matXnull$ be an $n \times m$ matrix on the Stiefel manifold,
  thus $\matXnull^T \matXnull = \matI$. The matrix $\matX = \matXnull
  + \matDelta$ is obtained by adding a tangent direction $\matDelta$
  to $\matXnull$. Then the projection of $\matX$ onto the Stiefel
  manifold can be approximated for small absolute values in the
  elements of $\matDelta$ as
  \begin{equation}\label{eq_stiefel_proj_appr}
    \matW
    = \matP_{\matXnull}(\matX)
    = \matX (\matX^T \matX)^{-\half}
    \approx \matX - \half \matXnull \matDelta^T \matDelta.
  \end{equation}
  This expression may be useful for the iterative update of learning
  rules for weight matrices: At the present point $\matXnull$ on the
  Stiefel manifold we compute the change of the weight matrix
  $\matDelta$ and from that a point $\matX$ outside the manifold. The
  next weight matrix is then obtained by projecting $\matX$
  approximately back to the Stiefel manifold using
  (\ref{eq_stiefel_proj_appr}) from
  \lemmaref{\ref{lemma_stiefel_proj_appr}}. While the projection back
  to the manifold is not exact, it may improve the performance over a
  learning rule which is just following the gradient (taken from the
  tangent space at $\matXnull$).
\end{lemma}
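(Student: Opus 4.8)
The plan is to exploit the two defining facts about $\matXnull$ and the tangent direction $\matDelta$ to collapse the Gram matrix $\matX^T\matX$ to the simple form $\matI + \matDelta^T\matDelta$, and then to expand the inverse square root as a matrix power series, retaining only terms through second order in $\matDelta$.

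First I would compute $\matX^T\matX$ directly. Since $\matX = \matXnull + \matDelta$,
\begin{equation}
  \matX^T\matX
  = \matXnull^T\matXnull + \matXnull^T\matDelta + \matDelta^T\matXnull + \matDelta^T\matDelta.
\end{equation}
The first summand equals $\matI$ because $\matXnull$ lies on the Stiefel manifold, and the two middle summands cancel because $\matDelta$ is a tangent direction, so that $\matXnull^T\matDelta + \matDelta^T\matXnull = \matNull$ by \lemmaref{\ref{lemma_stiefel_tangent}}. This leaves the exact identity $\matX^T\matX = \matI + \matDelta^T\matDelta$. The cancellation of the first-order terms provided by the tangent condition is precisely what forces the correction in the claimed formula to be second order; this is the step where the hypothesis ``$\matDelta$ is a tangent direction'' is used essentially.

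Next I would substitute this into the exact projection formula $\matW = \matX(\matX^T\matX)^{-\half}$ from \lemmaref{\ref{lemma_stiefel_svd}} (equation (\ref{eq_stiefel_svd_mod})) and apply the matrix power series (\ref{eq_inv_sqrt_IplusX}) to $(\matI + \matDelta^T\matDelta)^{-\half}$. For small $\matDelta$ this yields
\begin{equation}
  (\matX^T\matX)^{-\half}
  = \matI - \half \matDelta^T\matDelta + \frac{3}{8}(\matDelta^T\matDelta)^2 - \ldots
  \approx \matI - \half \matDelta^T\matDelta,
\end{equation}
where the retained correction $-\half\matDelta^T\matDelta$ is of second order in $\matDelta$ and the omitted terms are of fourth order and higher. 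Multiplying out then gives
\begin{equation}
  \matW
  \approx (\matXnull + \matDelta)\left(\matI - \half \matDelta^T\matDelta\right)
  = \matX - \half \matXnull \matDelta^T\matDelta - \half \matDelta\matDelta^T\matDelta.
\end{equation}
The final summand $\half \matDelta\matDelta^T\matDelta$ is of third order in $\matDelta$ and is dropped, leaving $\matW \approx \matX - \half \matXnull\matDelta^T\matDelta$, as claimed.

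The main obstacle is one of rigor rather than computation: I must justify truncating the series. Convergence of (\ref{eq_inv_sqrt_IplusX}) applied to $\matDelta^T\matDelta$ requires the eigenvalues of $\matDelta^T\matDelta$ to be smaller than one in magnitude, which is guaranteed by the stated hypothesis of ``small absolute values in the elements of $\matDelta$''; and since the conclusion is an approximation, the precise meaning of ``$\approx$'' is that the discarded remainder is of order $\|\matDelta\|^3$. Making this explicit, for instance by bounding the operator norm of $\matDelta^T\matDelta$, would convert the heuristic truncation into a clean error estimate. For the local second-order analysis on the Stiefel manifold in section \ref{sec_behavior}, however, the expansion to second order is all that is required, so the informal truncation suffices.
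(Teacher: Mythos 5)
Your proposal is correct and follows essentially the same route as the paper: compute $\matX^T\matX = \matI + \matDelta^T\matDelta$ using the skew-symmetry of $\matXnull^T\matDelta$ from \lemmaref{\ref{lemma_stiefel_tangent}}, expand the inverse square root via (\ref{eq_inv_sqrt_IplusX}) to second order, and drop the resulting third-order term $\half\matDelta\matDelta^T\matDelta$. Your added remarks on convergence of the series and the order of the discarded remainder are a slight refinement of the paper's informal truncation, but the argument itself is identical.
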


\begin{proof}
  We start from (\ref{eq_stiefel_svd_mod}) from
  \lemmaref{\ref{lemma_stiefel_svd}}, apply
  (\ref{eq_stiefel_tangent_skew}) from
  \lemmaref{\ref{lemma_stiefel_tangent}}, approximate up to linear
  terms using (\ref{eq_inv_sqrt_IplusX}), and omit third-order terms
  (since these are dominated by the second-order terms which appear in
  the equation):
  \begin{align}
    \matW
    &=
    \matX
    \left(\matX^T \matX\right)^{-\half}\\
    &=
    (\matXnull + \matDelta)
    \left[(\matXnull + \matDelta)^T (\matXnull + \matDelta)\right]^{-\half}\\
    &=
    (\matXnull + \matDelta)
    \left(\matI + \matDelta^T \matXnull + \matXnull^T \matDelta
    + \matDelta^T \matDelta\right)^{-\half}\\
    &=
    (\matXnull + \matDelta)
    \left(\matI + \matDelta^T \matDelta\right)^{-\half}\\
    &\approx
    (\matXnull + \matDelta)
    \left(\matI - \half \matDelta^T \matDelta\right)\\
    &\approx
    (\matXnull + \matDelta) - \half \matXnull \matDelta^T \matDelta\\
    &=
    \matX - \half \matXnull \matDelta^T \matDelta.
  \end{align}
  Note that the quadratic terms should not be omitted in the last
  approximation step, since otherwise $\matW$ would lie on the tangent
  space.
\end{proof}

\lemmasep

\begin{lemma}\label{lemma_stiefel_proj_appr_tangent}
  If we insert the parametrization of the tangent space of the Stiefel
  manifold from (\ref{eq_stiefel_tangent_para}) from
  \lemmaref{\ref{lemma_stiefel_tangent_para}} into the approximation
  of the projection to the Stiefel manifold from
  (\ref{eq_stiefel_proj_appr}) we obtain
  \begin{equation}\label{eq_stiefel_proj_appr_tangent}
    \matW
    \approx
    (\matXnull + \matXnull \matA + \matXnull_\perp \matB)
    - \half \matXnull (\matA^T \matA + \matB^T \matB).
  \end{equation}
  This equation may be useful for analyzing how a function on the
  Stiefel manifold behaves in the vicinity of a critical point, for
  small $\matDelta$ and accordingly small $\matA$ and $\matB$.
\end{lemma}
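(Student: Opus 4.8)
The plan is to prove the statement by direct substitution, since it is purely the composition of the two preceding lemmata and involves no new idea. First I would start from the back-projection approximation $\matW \approx \matX - \half \matXnull \matDelta^T \matDelta$ established in \lemmaref{\ref{lemma_stiefel_proj_appr}} (with $\matX = \matXnull + \matDelta$), and from the tangent-space parametrization $\matDelta = \matXnull \matA + \matXnull_\perp \matB$ from \lemmaref{\ref{lemma_stiefel_tangent_para}}. The only nontrivial ingredient is the second-order term $\matDelta^T \matDelta$, so the bulk of the work is to evaluate this product.

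The key step is to expand

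\begin{equation}
  \matDelta^T \matDelta
  =
  \matA^T \matXnull^T \matXnull \matA
  + \matA^T \matXnull^T \matXnull_\perp \matB
  + \matB^T \matXnull_\perp^T \matXnull \matA
  + \matB^T \matXnull_\perp^T \matXnull_\perp \matB
\end{equation}

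and then invoke the three orthogonality relations listed in \lemmaref{\ref{lemma_stiefel_tangent_para}}, namely $\matXnull^T \matXnull = \matI_m$, $\matXnull_\perp^T \matXnull_\perp = \matI_{n-m}$, and $\matXnull^T \matXnull_\perp = \matNull$ (whose transpose gives $\matXnull_\perp^T \matXnull = \matNull$). The two mixed terms then vanish identically and the outer two collapse to $\matDelta^T \matDelta = \matA^T \matA + \matB^T \matB$. Finally I would substitute this together with $\matX = \matXnull + \matXnull \matA + \matXnull_\perp \matB$ back into the approximation, which immediately yields the claimed expression (\ref{eq_stiefel_proj_appr_tangent}).

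As for the main obstacle: there is essentially none, as the argument is a routine calculation. The only point requiring care is that the cancellation of the cross terms be \emph{exact} --- it rests on the exact identity $\matXnull^T \matXnull_\perp = \matNull$ and not on any small-$\matDelta$ approximation. One should also note, for consistency, that \lemmaref{\ref{lemma_stiefel_proj_appr}} has already discarded terms of third order and higher; since $\matDelta^T \matDelta$ is itself of second order in $(\matA,\matB)$, no further truncation is introduced here, and the resulting expression is therefore correct up to and including second order.
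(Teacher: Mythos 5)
Your proposal is correct and follows essentially the same route as the paper's own proof: insert $\matDelta = \matXnull\matA + \matXnull_\perp\matB$ into the approximation from \lemmaref{\ref{lemma_stiefel_proj_appr}} and use the orthogonality relations $\matXnull^T\matXnull = \matI_m$, $\matXnull_\perp^T\matXnull_\perp = \matI_{n-m}$, $\matXnull^T\matXnull_\perp = \matNull$ to reduce $\matDelta^T\matDelta$ to $\matA^T\matA + \matB^T\matB$. Your explicit four-term expansion and the remark that the cross-term cancellation is exact (not an approximation) are just slightly more detailed renderings of the same computation.
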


\begin{proof}
  We take the parametrization of the tangent space and consider the
  properties of $\matXnull$ and $\matXnull_\perp$, both from
  \lemmaref{\ref{lemma_stiefel_tangent_para}}:
  \begin{align}
    \matW
    &\approx
    \matX - \half \matXnull \matDelta^T \matDelta\\
    &=
    (\matXnull + \matDelta) - \half \matXnull \matDelta^T \matDelta\\
    &=
    (\matXnull + \matXnull \matA + \matXnull_\perp \matB)
    - \half \matXnull
    (\matXnull \matA + \matXnull_\perp \matB)^T
    (\matXnull \matA + \matXnull_\perp \matB)\\
    &=
    (\matXnull + \matXnull \matA + \matXnull_\perp \matB)
    - \half \matXnull (\matA^T \matA + \matB^T \matB).
  \end{align}
\end{proof}


\subsection{Further Lemmata}

\begin{lemma}\label{lemma_ev_xi}
  If\/ $\matV$ contains the eigenvectors of $\matC$ in its columns
  such that $\matC \matV = \matV \matLambda$, then $\matV' = \matV
  \matXi$ is also an eigenvector matrix of $\matC$.
\end{lemma}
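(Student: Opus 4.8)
The plan is to verify directly that $\matV' = \matV\matXi$ satisfies the defining eigenvector relation $\matC\matV' = \matV'\matLambda$, since exhibiting this identity certifies that the columns of $\matV'$ are eigenvectors of $\matC$ associated with the same eigenvalues. First I would compute $\matC\matV' = \matC\matV\matXi$ and substitute the hypothesis $\matC\matV = \matV\matLambda$ to obtain $\matC\matV' = \matV\matLambda\matXi$.

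The only substantive observation is that $\matLambda$ and $\matXi$ are both diagonal and therefore commute, so $\matLambda\matXi = \matXi\matLambda$. This immediately yields $\matC\matV' = \matV\matXi\matLambda = \matV'\matLambda$, which is exactly the eigenvector relation for $\matV'$. Read column-wise, this merely reflects the elementary fact that each $\xi_i\vecv_i$ is again an eigenvector for $\lambda_i$, since scaling an eigenvector by $\pm 1$ leaves its eigenvalue unchanged.

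Finally, to confirm that $\matV'$ is a genuine eigenvector matrix in the same (orthogonal) sense as $\matV$, I would check $\matV'^T\matV' = \matXi^T\matV^T\matV\matXi = \matXi^T\matXi = \matI$, using $\matV^T\matV = \matI$ together with the property $\matXi^T\matXi = \matI$ recorded in the notation section. There is essentially no obstacle here: the entire argument hinges on the commutativity of two diagonal matrices, so the ``hard part'' is trivial and the proof reduces to a one-line computation with an optional orthogonality check.
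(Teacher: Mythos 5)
Your proof is correct and takes essentially the same route as the paper: the paper verifies the relation column-wise ($\matC\vecv_i\xi_i = \vecv_i\xi_i\lambda_i \Leftrightarrow \matC\vecv_i = \vecv_i\lambda_i$), which is exactly the column-wise reading you give, and your matrix-level version via $\matLambda\matXi = \matXi\matLambda$ is a trivially equivalent repackaging. The orthogonality check is a harmless extra not present in the paper's proof.
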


\begin{proof}
  $\matC \vecv'_i = \vecv'_i \lambda_i \Leftrightarrow \matC \vecv_i
  \xi_i = \vecv_i \xi_i \lambda_i \Leftrightarrow \matC \vecv_i =
  \vecv_i \lambda_i$.  Essentially we see that eigenvectors are only
  determined up to their signs (for the real-valued case).
\end{proof}

\lemmasep

\begin{lemma}\label{lemma_spectral}
  Let\/ $\matA$ be a symmetric matrix with eigenvalues $\lambda_i$
  collected in $\matLambda = \diag_{i=1}^{n}\{\lambda_i\}$ and
  corresponding eigenvectors $\vecx_i$ collected in the columns of\/
  $\matX$. Then
  \begin{equation}
    \matA
    = \matX \matLambda \matX^T
    = \summe{i=1}{n} \lambda_i \vecx_i \vecx_i^T. 
  \end{equation}
  is the spectral decomposition of $\matA$. This decomposition is
  unique up to the sign of the eigenvectors (see
  \lemmaref{\ref{lemma_ev_xi}}) if the eigenvalues are pairwise
  different, i.e. $\lambda_i \neq \lambda_j$ for $i \neq
  j$.\footnote{See e.g.  \citet[][p.155]{nn_Gentle17},
    \citet[][p.36]{nn_Diamantaras96} and
    \url{https://nlp.stanford.edu/IR-book/html/htmledition/matrix-decompositions-1.html}.}
  Note that in this case $\matX$ is orthogonal, i.e. $\matX^T\matX =
  \matX\matX^T=\matI$.\footnote{See
    e.g. \url{http://www.quandt.com/papers/basicmatrixtheorems.pdf}.}
\end{lemma}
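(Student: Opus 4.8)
The plan is to establish the four assertions of the lemma in turn: the orthogonality of $\matX$, the factorization $\matA = \matX\matLambda\matX^T$, its expansion into the rank-one sum, and uniqueness up to sign. Because the standing assumption (\ref{eq_order_lambda}) guarantees pairwise distinct eigenvalues, the argument simplifies considerably, and the orthogonality of $\matX$ can be obtained directly rather than through a Gram-Schmidt construction within eigenspaces.

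First I would show that eigenvectors belonging to distinct eigenvalues are orthogonal. Starting from $\matA\vecx_i = \lambda_i\vecx_i$ and $\matA\vecx_j = \lambda_j\vecx_j$, I would evaluate the scalar $\vecx_j^T\matA\vecx_i$ in two ways: directly it equals $\lambda_i\vecx_j^T\vecx_i$, while using $\matA^T = \matA$ it equals $(\matA\vecx_j)^T\vecx_i = \lambda_j\vecx_j^T\vecx_i$. Subtracting gives $(\lambda_i - \lambda_j)\vecx_i^T\vecx_j = 0$, so $\vecx_i^T\vecx_j = 0$ whenever $\lambda_i \neq \lambda_j$. Normalizing each $\vecx_i$ to unit length then yields an orthonormal family, hence $\matX^T\matX = \matI$; since $\matX$ is square this immediately gives $\matX\matX^T = \matI$ as well, so $\matX$ is orthogonal and $\matX^{-1} = \matX^T$.

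Next I would assemble the eigenvalue equations columnwise into $\matA\matX = \matX\matLambda$ and right-multiply by $\matX^T$, using $\matX\matX^T = \matI$, to obtain $\matA = \matA\matX\matX^T = \matX\matLambda\matX^T$. The sum form follows by bookkeeping: $\matX\matLambda$ has columns $\lambda_i\vecx_i$, and post-multiplying by $\matX^T$ (whose rows are $\vecx_i^T$) collects these into $\summe{i=1}{n}\lambda_i\vecx_i\vecx_i^T$. For uniqueness I would argue that, the eigenvalues being simple, each eigenspace $\ker(\matA - \lambda_i\matI)$ is one-dimensional; consequently any unit eigenvector for $\lambda_i$ equals $\pm\vecx_i$, which is precisely the sign freedom described in \lemmaref{\ref{lemma_ev_xi}}. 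Thus the decomposition is determined up to the signs of the columns of $\matX$.

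The genuinely deep step in the general symmetric case --- the existence of a full orthonormal eigenbasis when eigenvalues repeat, where the geometric and algebraic multiplicities must be shown to coincide --- is the main obstacle, but it is sidestepped here by assumption (\ref{eq_order_lambda}). For completeness one could either invoke the standard spectral theorem directly (for instance by induction on the dimension, restricting $\matA$ to the $\matA$-invariant orthogonal complement of a single eigenvector, which remains symmetric) or simply cite the references listed with the statement; the remaining manipulations are routine linear algebra.
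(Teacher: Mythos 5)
The paper gives no proof of this lemma at all: it is stated as a standard fact and supported only by the literature citations in the footnotes. Your derivation is therefore additional content rather than an alternative to anything in the text, and it is essentially correct: the computation $(\lambda_i-\lambda_j)\,\vecx_i^T\vecx_j=0$ for distinct eigenvalues, the columnwise assembly $\matA\matX=\matX\matLambda$ followed by right\-/multiplication with $\matX^T$, the rank\-/one expansion, and the uniqueness argument via one\-/dimensional eigenspaces are all sound. One caveat deserves emphasis, though. The lemma is stated for an \emph{arbitrary} symmetric $\matA$, and only the \emph{uniqueness} claim is conditional on pairwise different eigenvalues; the existence of the decomposition is asserted unconditionally. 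The standing assumption (\ref{eq_order_lambda}) concerns the eigenvalues of the covariance matrix $\matC$, not of the generic $\matA$ in the lemma, and the lemma is invoked elsewhere in the paper (for instance on the diagonal blocks in \lemmaref{\ref{lemma_diagonalization_blockdiag}}) where distinctness is not guaranteed. So the ``deep step'' you defer --- existence of a full orthonormal eigenbasis when eigenvalues repeat --- cannot actually be sidestepped for the lemma as used; your closing remark (induction on the dimension via the $\matA$\-/invariant orthogonal complement, or citing the spectral theorem) is what really carries the existence claim in general and should be treated as part of the proof rather than as an optional completeness remark. With that understood, the argument stands.
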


\lemmasep

\begin{lemma}\label{lemma_blockdiag_evec}
  Let\/ $\matM$ be a symmetric block-diagonal $n \times n$ matrix
  \begin{equation}
    \matM = \pmat{\matA & \matNull\\\matNull & \matB}
  \end{equation}
  where $\matA$ is a symmetric $m \times m$ and\/ $\matB$ a symmetric
  $k \times k$ matrix with $n = m + k$. Assume that $\matM$ has the
  set of eigenvalues $\{\lambda_1, \ldots, \lambda_n\}$ which are
  pairwise different. Further assume an order of the eigenvalues
  such that $\lambda_1,\ldots,\lambda_m$ are eigenvalues of\/ $\matA$
  while $\lambda_{m+1},\ldots,\lambda_n$ are eigenvalues of\/
  $\matB$. Let $\lambda_i$ be the element $i$ on the diagonal of the
  diagonal matrix $\matLambda$. The spectral decomposition of\/
  $\matM$ is given by $\matM = \matE \matLambda \matE^T$. Then the
  eigenvector matrix $\matE$ (with eigenvectors in its columns) of\/
  $\matM$ is block-diagonal
  \begin{equation}
  \matE =   \pmat{\matX & \matNull\\\matNull & \matY}
  \end{equation}
  where $\matX$ is an orthogonal $m \times m$ and\/ $\matY$ an
  orthogonal $k \times k$ matrix.
\end{lemma}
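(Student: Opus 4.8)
The plan is to exploit the fact that, for a block-diagonal matrix, the eigenvector equation decouples across the two blocks, and then to use the pairwise distinctness of the eigenvalues to force each eigenvector to live entirely in a single block.

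First I would record the preliminary structure. Since $\matM$ is symmetric, so are its diagonal blocks $\matA$ and $\matB$. Because the $n$ eigenvalues are pairwise different, \lemmaref{\ref{lemma_spectral}} guarantees a spectral decomposition $\matM = \matE \matLambda \matE^T$ in which $\matE$ is orthogonal, its columns $\vece_i$ being unit eigenvectors associated with $\lambda_i$, and the decomposition is unique up to the signs of those columns. The goal is then to show that each $\vece_i$ has exactly one nonzero block.

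Next, for each $i$ I would partition the eigenvector as $\vece_i = \pmat{\vecx_i\\ \vecq_i}$ with $\vecx_i \in \mathbb{R}^m$ and $\vecq_i \in \mathbb{R}^k$. Writing $\matM \vece_i = \lambda_i \vece_i$ in block form yields the decoupled pair $\matA \vecx_i = \lambda_i \vecx_i$ and $\matB \vecq_i = \lambda_i \vecq_i$. The key step is the invertibility argument: for $i \le m$ the scalar $\lambda_i$ is one of the $m$ eigenvalues of the $m \times m$ block $\matA$, and since all $n$ eigenvalues are distinct it cannot coincide with any of the eigenvalues $\lambda_{m+1}, \ldots, \lambda_n$ of $\matB$; hence $\matB - \lambda_i \matI_k$ is invertible and $(\matB - \lambda_i \matI_k)\vecq_i = \vecnull$ forces $\vecq_i = \vecnull$. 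Symmetrically, for $i > m$ the matrix $\matA - \lambda_i \matI_m$ is invertible and $\vecx_i = \vecnull$.

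Finally I would assemble the result. Collecting the columns in the given order, the first $m$ eigenvectors have vanishing lower block and the last $k$ have vanishing upper block, so $\matE = \pmat{\matX & \matNull\\ \matNull & \matY}$ with $\matX = \pmat{\vecx_1 & \cdots & \vecx_m}$ and $\matY = \pmat{\vecq_{m+1} & \cdots & \vecq_n}$. Orthogonality then transfers to the blocks: reading $\matE^T \matE = \matI_n$ in block form gives $\matX^T \matX = \matI_m$ and $\matY^T \matY = \matI_k$, so the square matrices $\matX$ and $\matY$ are orthogonal, which completes the proof. I expect no genuine obstacle; the only point demanding care is the invertibility step, which rests entirely on the hypothesis that the eigenvalues are pairwise different and are split between the two blocks exactly as assumed.
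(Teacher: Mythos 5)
Your proposal is correct and follows essentially the same route as the paper's proof: partition each eigenvector across the two blocks, decouple the eigenvalue equation, and use the pairwise distinctness of the eigenvalues to force one block of each eigenvector to vanish. Your phrasing of the key step via invertibility of $\matB - \lambda_i \matI_k$ and the block-wise reading of $\matE^T\matE = \matI_n$ is just a slightly more explicit rendering of the paper's argument.
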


\begin{proof}
  The eigenvectors of $\matM$ in the columns of $\matE$,
  \begin{equation}
    \vece_i = \pmat{\vecx_i\\ \vecy_i},
  \end{equation}
  can be determined from
  \begin{equation}
    \pmat{\matA & \matNull\\\matNull & \matB} \pmat{\vecx_i\\\vecy_i}
    =
    \lambda_i \pmat{\vecx_i\\\vecy_i}
  \end{equation}
  which can be split into independent eigen equations
  \begin{eqnarray}
    \matA \vecx_i &=& \lambda_i \vecx_i
    \label{eq_eig_x}
    \\
    \matB \vecy_i &=& \lambda_i \vecy_i.
    \label{eq_eig_y}
  \end{eqnarray}
  It is known that the set of the eigenvalues of the block-diagonal
  matrix $\matM$ is the union of the sets of eigenvalues of its blocks
  $\matA$ and $\matB$, respectively. If $\lambda_i$ is an eigenvalue
  of $\matA$, leading to a non-trivial solution of (\ref{eq_eig_x}),
  it cannot be an eigenvalue of $\matB$ (since all eigenvalues are
  pairwise different), and thus (\ref{eq_eig_y}) only has the trivial
  solution $\vecy_i = \vecnull$. The opposite holds if $\lambda_i$ is
  an eigenvalue of $\matB$. (Trivial overall solutions $\vece_i =
  \vecnull$ are excluded.) Thus we get the corresponding eigenvectors
  of $\matM$ for eigenvalue $\lambda_i$
  \begin{equation}
    \pmat{\vecx_i\\ \vecnull}\;\mbox{for}\; 1\leq i \leq m,\;
    \pmat{\vecnull\\ \vecy_i}\;\mbox{for}\; m+1 \leq i \leq n
  \end{equation}
  and therefore
  \begin{equation}\label{eq_ev_blockdiag}
    \matE
    =
    \pmat{
      \matX & \matNull\\
      \matNull & \matY}
    =
    \pmat{
      \vecx_1 \ldots \vecx_m & \matNull\\
      \matNull & \vecy_{m+1} \ldots \vecy_n}.
  \end{equation}
  Both $\matX$ and $\matY$ are orthogonal matrices since the
  eigenvectors of symmetric matrices are orthogonal.
\end{proof}

\lemmasep

\begin{lemma}\label{lemma_AD_ii}
  Let $\matA$ be any square matrix of size $n\times n$ and $\matD =
  \diag_{i=1}^{n}\{d_i\}$ be a diagonal matrix of the same size. Then
  \begin{equation}
    (\matA \matD)_{ii} = (\matA)_{ii} d_i.
  \end{equation}
\end{lemma}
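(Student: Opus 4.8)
The final statement (Lemma~\ref{lemma_AD_ii}) asserts that for any $n\times n$ matrix $\matA$ and diagonal matrix $\matD=\diag_{i=1}^n\{d_i\}$, the $i$-th diagonal entry of the product satisfies $(\matA\matD)_{ii}=(\matA)_{ii}\,d_i$. This is an elementary entrywise computation, so the plan is simply to expand the matrix product in index notation and exploit the fact that $\matD$ has entries $D_{kj}=d_k\delta_{kj}$ exactly as was done in the proof of Lemma~\ref{lemma_tr_AD}.

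First I would write out the generic entry of the product using the standard definition, $(\matA\matD)_{ij}=\sum_{k=1}^n A_{ik}D_{kj}$. Substituting $D_{kj}=d_k\delta_{kj}$ collapses the sum over $k$ because only the term $k=j$ survives, giving $(\matA\matD)_{ij}=A_{ij}d_j$. Restricting to the diagonal $i=j$ then immediately yields the claim. There is essentially no obstacle here; the only care needed is to keep the index placement consistent, since it is the \emph{column} index of $\matA$ that is pinned by the Kronecker delta, so that the surviving factor is $d_j$ (equivalently $d_i$ on the diagonal) rather than a row-indexed value.

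A clean way to present the argument:

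\begin{proof}
  With $D_{kj} = d_j \delta_{kj}$ we obtain
  \begin{equation}
    (\matA\matD)_{ij}
    = \summe{k=1}{n} A_{ik} D_{kj}
    = \summe{k=1}{n} A_{ik} d_j \delta_{kj}
    = A_{ij} d_j.
  \end{equation}
  Setting $i=j$ gives $(\matA\matD)_{ii} = A_{ii} d_i = (\matA)_{ii} d_i$.
\end{proof}

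I expect this lemma to be used purely as a bookkeeping device in the later second-order analysis (for instance in simplifying terms such as $[(\matA^T\matA+\matB^T\matB)\hat{\matLambda}^*]_{jj}$ in section~\ref{sec_N}), so the proof needs only to be correct and explicit rather than deep. The main ``obstacle'' is really just notational vigilance about which index the diagonal matrix constrains, a point already handled identically in Lemma~\ref{lemma_tr_AD}.
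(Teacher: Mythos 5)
Your proof is correct and follows essentially the same index computation as the paper, which expands $(\matA\matD)_{ii}=\sum_k A_{ik}D_{ki}=\sum_k A_{ik}d_i\delta_{ki}=A_{ii}d_i$ directly on the diagonal rather than first computing the general entry $(\matA\matD)_{ij}$. The only difference is cosmetic: you derive the off-diagonal form $A_{ij}d_j$ and then specialize to $i=j$, which is a harmless (and slightly more informative) detour.
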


\begin{proof}
  \begin{equation}
  (\matA\matD)_{ii}
  = \summe{k}{} A_{ik} D_{ki}
  = \summe{k}{} A_{ik} d_i \delta_{ki}
  = A_{ii} d_i.
  \end{equation}
\end{proof}

\lemmasep

\begin{lemma}\label{lemma_ATD_DA_ii}
Let $\matA$ be any square matrix of size $n\times n$ and $\matD =
\diag_{i=1}^{n}\{d_i\}$ be a diagonal matrix of the same size. Then
\begin{equation}
(\matA^T\matD)_{ii} = (\matD\matA)_{ii}.
\end{equation}
\end{lemma}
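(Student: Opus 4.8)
The plan is to verify the identity entrywise by expanding each matrix product in index notation, using only two facts: that a diagonal matrix has components $(\matD)_{ij} = d_i \delta_{ij}$, and that transposition swaps indices, $(\matA^T)_{ij} = A_{ji}$. This is exactly the style already used in the companion \lemmaref{\ref{lemma_AD_ii}} and in \lemmaref{\ref{lemma_tr_AD}}, so I would follow the same pattern.

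First I would expand the left-hand side. Writing out the product and letting the Kronecker delta collapse the sum,
\begin{equation}
  (\matA^T \matD)_{ii}
  = \summe{k}{} (\matA^T)_{ik} D_{ki}
  = \summe{k}{} A_{ki}\, d_k \delta_{ki}
  = A_{ii} d_i,
\end{equation}
where only the term $k = i$ survives. Next I would expand the right-hand side in the same way, noting that here the diagonal factor sits on the left:
\begin{equation}
  (\matD \matA)_{ii}
  = \summe{k}{} D_{ik} A_{ki}
  = \summe{k}{} d_i \delta_{ik} A_{ki}
  = d_i A_{ii}.
\end{equation}
Comparing the two expressions gives $(\matA^T \matD)_{ii} = A_{ii} d_i = d_i A_{ii} = (\matD \matA)_{ii}$, which is the claim.

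There is essentially no obstacle here; the only point deserving care is keeping the index order correct under the transpose. It is worth emphasizing in the write-up \emph{why} the statement holds for an arbitrary (not necessarily symmetric) $\matA$: each side depends only on the diagonal entry $A_{ii}$, which is fixed by transposition, so the asymmetry of $\matA$ off the diagonal never enters. This also clarifies that the result is genuinely about the interaction of $\matA$ with a diagonal matrix rather than about any symmetry of $\matA$ itself.
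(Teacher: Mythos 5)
Your proof is correct and follows essentially the same route as the paper: an entrywise expansion showing both sides reduce to $A_{ii} d_i$. The only cosmetic difference is that you compute $(\matD\matA)_{ii}$ directly, whereas the paper invokes Lemma~\ref{lemma_AD_ii} for the product with $\matD$ on the right and relies on the two orderings agreeing on the diagonal; your version is, if anything, slightly more self-contained.
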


\begin{proof}
  According to \lemmaref{\ref{lemma_AD_ii}}, we have
  $(\matA\matD)_{ii} = A_{ii}d_i$. We also have $(\matA^T\matD)_{ii} =
  \summe{k=1}{n} A_{ki} D_{ki} = \summe{k=1}{n} A_{ki} d_i \delta_{ki}
  = A_{ii} d_i$.
\end{proof}
  
\lemmasep

\begin{lemma}\label{lemma_AskewD_ii}
  Let $\matA$ be any skew-symmetric matrix of size $n\times n$ and
  $\matD = \diag_{i=1}^{n}\{d_i\}$ be a diagonal matrix of the same
  size. Then
  \begin{equation}
    (\matA \matD)_{ii} = 0.
  \end{equation}
\end{lemma}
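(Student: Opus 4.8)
The plan is to reduce this immediately to \lemmaref{\ref{lemma_AD_ii}}, which already handles the product of an arbitrary square matrix with a diagonal matrix on the diagonal. That lemma gives $(\matA\matD)_{ii} = (\matA)_{ii} d_i$, so the only thing that remains is to observe that the diagonal entries of a skew-symmetric matrix vanish.

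First I would recall the defining property $\matA^T = -\matA$, which componentwise reads $A_{ji} = -A_{ij}$. Setting $j = i$ yields $A_{ii} = -A_{ii}$, hence $2 A_{ii} = 0$ and therefore $A_{ii} = 0$ for every $i$. This is the entire substantive content of the argument.

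Then I would simply invoke \lemmaref{\ref{lemma_AD_ii}} to write $(\matA\matD)_{ii} = (\matA)_{ii} d_i = 0 \cdot d_i = 0$, completing the proof. I expect no real obstacle here: the statement is an immediate corollary of the vanishing diagonal of skew-symmetric matrices combined with the already-established diagonal-product formula. The only point worth flagging is that the result holds for \emph{arbitrary} diagonal $\matD$ (no assumption of distinct or nonzero entries is needed), precisely because the factor $A_{ii}$ already forces the product to vanish regardless of the value of $d_i$.
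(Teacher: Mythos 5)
Your proof is correct and matches the paper's own argument exactly: both invoke Lemma~\ref{lemma_AD_ii} to get $(\matA\matD)_{ii} = A_{ii} d_i$ and then use the vanishing diagonal of a skew-symmetric matrix. Your explicit derivation of $A_{ii}=0$ from $A_{ii}=-A_{ii}$ is just a slightly more spelled-out version of the paper's one-line justification.
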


\begin{proof}
  Follows from \lemmaref{\ref{lemma_AD_ii}} since $A_{ii} = 0$.
\end{proof}

\lemmasep

\begin{lemma}\label{lemma_square_square_ii}
  Let $\matA$ and $\matB$ be square matrices of dimension $n$. Then
  \begin{equation}
    (\matA \matB)_{ii}
    = \left([\matA \matB]^T\right)_{ii}
    = \left(\matB^T \matA^T\right)_{ii}.
  \end{equation}
\end{lemma}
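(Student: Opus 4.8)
The plan is to exploit two elementary facts: first, that transposition fixes the diagonal of any square matrix, and second, the standard product rule for transposition $[\matA \matB]^T = \matB^T \matA^T$. Since both claimed equalities in the lemma are statements about a single diagonal entry $(i,i)$, neither requires summing over $i$, so it suffices to argue entry-wise.

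First I would establish the general observation that for an arbitrary square matrix $\matM$ of dimension $n$ we have $(\matM^T)_{ij} = \matM_{ji}$ by definition of the transpose, and therefore $(\matM^T)_{ii} = \matM_{ii}$. Applying this with $\matM = \matA\matB$ immediately yields the first equality $(\matA\matB)_{ii} = ([\matA\matB]^T)_{ii}$, because only the two off-diagonal index positions are exchanged while the diagonal positions are left untouched.

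Next I would invoke the transpose-of-a-product identity $[\matA\matB]^T = \matB^T\matA^T$, which holds for any two conformable (here square, dimension $n$) matrices. Substituting this into the right-hand side of the first equality gives $([\matA\matB]^T)_{ii} = (\matB^T\matA^T)_{ii}$, which is the second claimed equality. Chaining the two equalities then produces the full statement.

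There is no real obstacle here: the lemma is a direct consequence of the definitions of transpose and matrix multiplication, and the only ``choice'' is whether to verify the product-transpose identity from scratch (via $(\matA\matB)_{ij} = \summe{k}{} A_{ik} B_{kj}$ and comparing indices, much as in \lemmaref{\ref{lemma_tr_AB}}) or to cite it as standard. I would cite it as standard and keep the proof to the two short steps above, since the lemma is used only as a bookkeeping device for pairing diagonal terms in the analysis of section \ref{sec_N}.
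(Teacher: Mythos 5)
Your proof is correct: the diagonal is fixed under transposition, and the product-transpose identity $[\matA\matB]^T = \matB^T\matA^T$ gives the second equality. The paper itself states this lemma without any proof (treating it as immediate), so your two-step argument is exactly the intended justification and there is nothing further to compare.
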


\lemmasep

\trarxiv{
  \begin{lemma}\label{lemma_symm_symm_ii}
    Let $\matA$ and $\matB$ be symmetric matrices of dimension $n$. Then
    \begin{equation}
      (\matA \matB)_{ii}
      = \left([\matA \matB]^T\right)_{ii}
      = \left(\matB^T \matA^T\right)_{ii}
      = (\matB \matA)_{ii}.
    \end{equation}
  \end{lemma}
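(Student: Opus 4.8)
The plan is to recognize that this lemma is almost entirely a corollary of the preceding \lemmaref{\ref{lemma_square_square_ii}}, with only the symmetry hypothesis added to supply the final equality. The underlying fact driving \lemmaref{\ref{lemma_square_square_ii}} is that transposition fixes diagonal entries: for any square matrix $\matM$ we have $(\matM)_{ii} = (\matM^T)_{ii}$, since the transpose merely exchanges the $(i,j)$ and $(j,i)$ positions while leaving the diagonal untouched. Applied with $\matM = \matA\matB$ and the product rule $(\matA\matB)^T = \matB^T\matA^T$, this immediately yields the first two equalities of the present statement, and these require no symmetry assumption at all.

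First I would simply cite \lemmaref{\ref{lemma_square_square_ii}} to obtain
\begin{equation}
  (\matA\matB)_{ii} = ([\matA\matB]^T)_{ii} = (\matB^T\matA^T)_{ii}.
\end{equation}
Then I would invoke the symmetry hypotheses $\matA^T = \matA$ and $\matB^T = \matB$ to rewrite $\matB^T\matA^T = \matB\matA$ as matrices, so that in particular their diagonal entries coincide, giving the remaining equality $(\matB^T\matA^T)_{ii} = (\matB\matA)_{ii}$. Chaining these together completes the proof.

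There is no genuine obstacle here; the only thing worth stating carefully is that the symmetry is used \emph{only} in the last step, whereas the first two identities hold for arbitrary square matrices. One could equally give a self-contained one-line argument by noting that $(\matA\matB)_{ii} = \sum_k A_{ik}B_{ki} = \sum_k B_{ik}A_{ki} = (\matB\matA)_{ii}$, where the middle step substitutes $A_{ik}=A_{ki}$ and $B_{ki}=B_{ik}$ from symmetry; but deferring to the earlier lemma keeps the exposition uniform with \lemmaref{\ref{lemma_square_square_ii}} and makes transparent exactly where symmetry enters.
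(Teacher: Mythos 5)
Your proof is correct and is precisely the intended argument: the paper itself states this lemma without proof (as it does for Lemma~\ref{lemma_square_square_ii}), treating it as immediate from the invariance of diagonal entries under transposition plus the symmetry hypotheses. Both your reduction to Lemma~\ref{lemma_square_square_ii} and your self-contained one-line computation $\sum_k A_{ik}B_{ki} = \sum_k B_{ik}A_{ki}$ are valid, and your remark that symmetry is needed only for the final equality is accurate.
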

}{}

\lemmasep

\trarxiv{
  \begin{lemma}\label{lemma_skew_symm_ii}
    Let $\matA$ be a skew-symmetric matrix of dimension $n$,
    i.e. $\matA^T = -\matA$. Let $\matB$ be a symmetric matrix of
    dimension $n$, i.e. $\matB^T = \matB$. Then
    \begin{equation}
      (\matA \matB)_{ii}
      = \left([\matA \matB]^T\right)_{ii}
      = \left(\matB^T \matA^T\right)_{ii}
      = - (\matB \matA)_{ii}.
    \end{equation}
  \end{lemma}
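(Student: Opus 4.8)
The plan is to reduce the statement to the already-established identity for diagonal entries of a transposed product, Lemma~\ref{lemma_square_square_ii}, and then exploit the two symmetry hypotheses. First I would observe that the first two equalities are nothing more than Lemma~\ref{lemma_square_square_ii} applied to the specific matrices $\matA$ and $\matB$: for any square matrices of the same size, the $i$-th diagonal entry of a product equals that of its transpose, i.e.
\begin{equation}
  (\matA\matB)_{ii}
  = \left([\matA\matB]^T\right)_{ii}
  = \left(\matB^T \matA^T\right)_{ii},
\end{equation}
where the last expression uses the standard rule $(\matA\matB)^T = \matB^T\matA^T$. This part requires no computation at all beyond citing the earlier lemma.

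The only genuinely new content is the final equality, and I would obtain it by substituting the defining properties of the two matrices into $\matB^T\matA^T$. Since $\matA$ is skew-symmetric we have $\matA^T = -\matA$, and since $\matB$ is symmetric we have $\matB^T = \matB$; hence
\begin{equation}
  \matB^T\matA^T = \matB(-\matA) = -\,\matB\matA.
\end{equation}
Reading off the $i$-th diagonal entry then gives $\left(\matB^T\matA^T\right)_{ii} = -\,(\matB\matA)_{ii}$, which chains together with the earlier equalities to complete the statement.

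There is essentially no obstacle here; the lemma is a one-line consequence of Lemma~\ref{lemma_square_square_ii} together with the substitution of the (skew-)symmetry relations. The only point requiring a small amount of care is keeping the sign convention straight: the minus sign arises solely from $\matA^T=-\matA$, while the symmetry of $\matB$ contributes no sign, so the net effect is a single change of sign relative to $(\matB\matA)_{ii}$. I would present the argument in exactly the order above, invoking Lemma~\ref{lemma_square_square_ii} for the transpose identity and then performing the substitution.
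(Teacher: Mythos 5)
Your proposal is correct and follows exactly the route the paper intends: the paper states this lemma without a separate proof because the chain of equalities in the statement is itself the argument, namely Lemma~\ref{lemma_square_square_ii} (diagonal entries are invariant under transposition, plus $(\matA\matB)^T=\matB^T\matA^T$) followed by substituting $\matB^T=\matB$ and $\matA^T=-\matA$. Nothing is missing.
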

}{}

\lemmasep

\begin{lemma}\label{lemma_mult_INull}
  Let\/ $\matA_n$ be a square matrix of dimension $n$ of block-diagonal shape:
  \begin{equation}
    \matA_n = \pmat{\hat{\matA}_m&\matNull\\\matNull&\check{\matA}_{n-m}}.
  \end{equation}
  Then
  \begin{equation}
    \matA_n \pmat{\matI_n\\\matNull} = \pmat{\hat{\matA}_n\\\matNull}.
  \end{equation}
  Let\/ $\matB_m$ be a square matrix of dimension $m$. Then
  \begin{equation}
    \pmat{\matI_m\\\matNull} \matA_m = \pmat{\matA_m\\\matNull}.
  \end{equation}
\end{lemma}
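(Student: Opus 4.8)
The statement is an elementary identity about multiplying a block-diagonal matrix by the leading ``selector'' block $\pmat{\matI_m\\\matNull}$ (of size $n\times m$, with the zero block understood as $\matNull_{n-m,m}$), so the plan is to fix the block sizes and then simply carry out the partitioned multiplication. First I would record the partition: $\matA_n$ has diagonal blocks $\hat{\matA}_m$ ($m\times m$) and $\check{\matA}_{n-m}$ ($(n-m)\times(n-m)$), and the selector has $\matI_m$ on top and $\matNull_{n-m,m}$ below. With these sizes every product appearing below is conformable, and the two stated equations will be read directly off the result of one block computation each.

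For the first identity I would left-multiply the selector by $\matA_n$ and expand by blocks,
\begin{equation}
  \matA_n \pmat{\matI_m\\\matNull}
  =
  \pmat{\hat{\matA}_m & \matNull\\ \matNull & \check{\matA}_{n-m}}
  \pmat{\matI_m\\\matNull}
  =
  \pmat{\hat{\matA}_m \matI_m\\ \check{\matA}_{n-m}\,\matNull}
  =
  \pmat{\hat{\matA}_m\\\matNull},
\end{equation}
so that only the leading block $\hat{\matA}_m$ survives and $\check{\matA}_{n-m}$ is annihilated by the trailing zero block (the target of the identity is thus $\hat{\matA}_m$, consistent with the dimensions). For the second identity I would instead right-multiply the selector by the $m\times m$ matrix $\matB_m$,
\begin{equation}
  \pmat{\matI_m\\\matNull} \matB_m
  =
  \pmat{\matI_m \matB_m\\ \matNull\,\matB_m}
  =
  \pmat{\matB_m\\\matNull},
\end{equation}
which is exactly the claimed expression.

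There is no real obstacle in this proof: each part collapses to a single line of block arithmetic. The only point requiring attention is the bookkeeping of partition dimensions --- in particular ensuring that the zero block beneath $\matI_m$ is $\matNull_{n-m,m}$ so that the products are well defined --- which I would state explicitly at the outset so that the block expansion then finishes the argument immediately.
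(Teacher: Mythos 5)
Your proof is correct and is exactly the elementary block-multiplication argument the paper intends; the paper in fact states this lemma without any proof, treating it as immediate. You also rightly read the statement's selector as $\bigl(\begin{smallmatrix}\matI_m\\\matNull_{n-m,m}\end{smallmatrix}\bigr)$ and the result as $\hat{\matA}_m$ (and use $\matB_m$ in the second identity), silently correcting what are evidently typographical slips ($\matI_n$, $\hat{\matA}_n$, $\matA_m$) in the lemma as printed.
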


\lemmasep

\begin{lemma}\label{lemma_skew_symm_diag_nonzero}
  Let $\matB$ denote a symmetric matrix ($\matB^T = \matB$) of size $n
  \times n$ which has at least one non-zero off-diagonal element, i.e.
  \begin{equation}\label{eq_B_nonzero_offdiag}
    \exists k,j,\,\, k\neq j: \matB_{kj} \neq 0.
  \end{equation}
  Then for all such matrices $\matB$ we can find a skew-symmetric
  matrix $\matA$ ($\matA^T = -\matA$) of size $n \times n$ such that
  there is a diagonal element $(\matA\matB)_{jj}$ which is non-zero:
  \begin{equation}
    \forall \matB: \exists \matA: \exists j: (\matA\matB)_{jj} \neq 0.
  \end{equation}
\end{lemma}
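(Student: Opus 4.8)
The plan is to prove the statement constructively: for each admissible $\matB$ I would exhibit one explicit skew-symmetric $\matA$ that works, with $\matA$ allowed to depend on $\matB$ (which is all the statement requires). The key observation is that the target quantity $(\matA\matB)_{jj} = \summe{l=1}{n} A_{jl} B_{lj}$ is linear in the entries of $\matA$, so there is no need to search over the whole space of skew-symmetric matrices; a single sparse choice should isolate exactly the off-diagonal entry of $\matB$ that is known to be non-zero.

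First I would invoke hypothesis (\ref{eq_B_nonzero_offdiag}) to fix a pair of indices $k \neq j$ with $\matB_{kj} \neq 0$. Using the unit vectors $\vece_i$ (element $1$ at position $i$, as in \lemmaref{\ref{lemma_RTDR_ii}}), I would then define the rank-two matrix
\begin{equation}
  \matA = \vece_j \vece_k^T - \vece_k \vece_j^T,
\end{equation}
whose only non-zero entries are $A_{jk} = 1$ and $A_{kj} = -1$. Its transpose is $\matA^T = \vece_k \vece_j^T - \vece_j \vece_k^T = -\matA$, so $\matA$ is skew-symmetric as required. Note that this construction relies on $j \neq k$; otherwise the two rank-one terms would coincide on the diagonal and $\matA$ would vanish. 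It is precisely the off-diagonal hypothesis that guarantees $j \neq k$, so this is the only point that needs any care, and it costs nothing.

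The remaining step is a direct evaluation of the chosen diagonal element. I would compute
\begin{equation}
  (\matA\matB)_{jj}
  = \vece_j^T \matA \matB \vece_j
  = (\vece_j^T\vece_j)(\vece_k^T \matB \vece_j)
  - (\vece_j^T\vece_k)(\vece_j^T \matB \vece_j)
  = \vece_k^T \matB \vece_j
  = \matB_{kj},
\end{equation}
where the cross term drops because $\vece_j^T\vece_k = 0$ for $j \neq k$. Since $\matB_{kj} \neq 0$ by the choice of indices, the index $j$ witnesses $(\matA\matB)_{jj} \neq 0$, which establishes the claim. There is no substantial obstacle in this argument; I would also remark that the symmetry of $\matB$ is not actually exploited by this particular construction (it matters only for the context in which \lemmaref{\ref{lemma_skew_symm_diag_nonzero}} is subsequently applied in section \ref{sec_N_general_case}), so the proof goes through for any $\matB$ possessing a non-zero off-diagonal entry.
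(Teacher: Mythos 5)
Your proof is correct. It takes the same constructive strategy as the paper --- exhibit a sparse skew-symmetric matrix supported on row and column $j$ --- but with a strictly simpler instantiation. The paper fixes the same index $j$ and then builds $\matA = \summe{i=1}{n}\{\sign(B_{ij})\vece_j\vece_i^T - \sign(B_{ij})\vece_i\vece_j^T\}$, which aggregates the signs of \emph{all} entries in row $j$ of $\matB$ and yields $(\matA\matB)_{jj} = \summe{k\neq j}{}|B_{jk}| > 0$; that computation uses the symmetry of $\matB$ to rewrite $\summe{k}{}A_{jk}B_{kj}$ as $\summe{k}{}A_{jk}B_{jk}$. Your choice $\matA = \vece_j\vece_k^T - \vece_k\vece_j^T$ is the minimal elementary skew-symmetric matrix, isolates the single entry $(\matA\matB)_{jj} = \matB_{kj} \neq 0$ directly, and --- as you correctly observe --- never invokes the symmetry of $\matB$, so your version of the lemma holds for arbitrary square matrices with a non-zero off-diagonal entry. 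The paper's construction buys a slightly stronger conclusion (the diagonal element equals a sum of absolute values, hence is strictly positive, not merely non-zero), but for the way the lemma is used in section \ref{sec_N_general_case} only non-vanishing is needed, so your argument suffices and is cleaner.
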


\begin{proof}\footnote{From \url{https://math.stackexchange.com/questions/3662881}, proof kindly provided by user 'lcv', slightly modified.}
  Fix an index $j$ such that $\exists k$ with $k \neq j$ where
  $\matB_{kj} \neq 0$. Such an index $j$ exists according to
  (\ref{eq_B_nonzero_offdiag}).

  Then form the following matrix:
  \begin{equation}
    \matA = \summe{i=1}{n}
    \left\{
    \sign(B_{ij}) \vece_j \vece_i^T - \sign(B_{ij}) \vece_i \vece_j^T
    \right\}.
  \end{equation}
  Matrix $\matA$ has zero elements except at row $j$ where it has
  elements $\sign(B_{ij})$ and at column $j$ where it has elements
  $-\sign(B_{ij})$. This matrix is skew-symmetric by construction. Its
  elements are
  \begin{equation}
  A_{jk} = \sign(B_{jk}) - \sign(B_{jj}) \delta_{jk},
  \end{equation}
  i.e. row $j$ contains the elements $\sign(B_{jk})$ except at the
  main diagonal where it is zero. Now
  \begin{align}
    (\matA\matB)_{jj}
    &=
    \summe{k=1}{n} A_{jk} B_{kj}\\
    &=
    \summe{k=1}{n} A_{jk} B_{jk}\\
    &=
    \summe{k=1}{n}
    \left\{\sign(B_{jk}) - \sign(B_{jj}) \delta_{jk}\right\} B_{jk}\\
    &=
    \summe{k=1}{n} \sign(B_{jk}) B_{jk}
    -
    \summe{k=1}{n} \sign(B_{jj}) \delta_{jk}  B_{jk}\\
    &=
    \summe{k=1}{n} \sign(B_{jk}) B_{jk}
    -
    \sign(B_{jj}) B_{jj}\\
    &=
    \summe{k=1}{n} |B_{jk}|
    -
    |B_{jj}|\\
    &=
    \summe{k=1,k\neq j}{n} |B_{jk}|\\
    &\neq 0
  \end{align}
  since at least one element of this sum is non-zero according to
  (\ref{eq_B_nonzero_offdiag}). The sum is also strictly positive. We
  applied $\sign(x)\cdot x = |x|$.
\end{proof}


\trarxiv{%
  \input symmpca-global-maxima.tex
}{}

\end{document}